\journalname{}
\definecolor{dhcol}{rgb}{0,0.5,0}
\definecolor{sccol}{rgb}{0,0,0.5}
\definecolor{amcol}{rgb}{0.5,0,0}
\DeclareMathOperator{\supp}{supp}
\DeclareMathOperator{\diam}{diam}
\DeclareMathOperator{\dist}{dist}
\begin{document}

\title{Boundary element methods for acoustic scattering by fractal screens}
\author{Simon N. Chandler-Wilde \and David P. Hewett \and Andrea Moiola \and Jeanne Besson}
\institute{
Simon N. Chandler-Wilde \at Department of Mathematics and Statistics, University of Reading, Whiteknights PO Box 220, Reading RG6 6AX, UK\\
\email{s.n.chandler-wilde@reading.ac.uk}
\and
David P. Hewett \at Department of Mathematics, University College London, Gower Street, London, WC1E 6BT, UK\\
\email{d.hewett@ucl.ac.uk}
\and
Andrea Moiola \at Dipartimento di Matematica, Universit\`a degli Studi di Pavia, Via Ferrata, 27100 Pavia, Italia\\
\email{andrea.moiola@unipv.it}
\and
Jeanne Besson \at ENSTA Paris, 828 Boulevard des Mar\'echaux, 91120 Palaiseau, France\\
\& Universit\'e Paris-Sud, 15 Rue Georges Clemenceau, 91400 Orsay, France\\
\email{jeanne.besson@ensta-paris.fr}
}

\date{Received: date / Accepted: date}

\maketitle

\newcommand{\rf}[1]{(\ref{#1})}
\newcommand{\mmbox}[1]{\fbox{\ensuremath{\displaystyle{ #1 }}}}	
\newcommand{\hs}[1]{\hspace{#1mm}}
\newcommand{\vs}[1]{\vspace{#1mm}}
\newcommand{\ri}{{\mathrm{i}}}
\newcommand{\re}{{\mathrm{e}}}
\newcommand{\rd}{\mathrm{d}}
\newcommand{\R}{\mathbb{R}}
\newcommand{\Q}{\mathbb{Q}}
\newcommand{\N}{\mathbb{N}}
\newcommand{\Z}{\mathbb{Z}}
\newcommand{\C}{\mathbb{C}}
\newcommand{\K}{{\mathbb{K}}}
\newcommand{\cA}{\mathcal{A}}
\newcommand{\cB}{\mathcal{B}}
\newcommand{\cC}{\mathcal{C}}
\newcommand{\cS}{\mathcal{S}}
\newcommand{\cD}{\mathcal{D}}
\newcommand{\cH}{\mathcal{H}}
\newcommand{\cI}{\mathcal{I}}
\newcommand{\cItilde}{\tilde{\mathcal{I}}}
\newcommand{\cIhat}{\hat{\mathcal{I}}}
\newcommand{\cIcheck}{\check{\mathcal{I}}}
\newcommand{\cIstar}{{\mathcal{I}^*}}
\newcommand{\cJ}{\mathcal{J}}
\newcommand{\cM}{\mathcal{M}}
\newcommand{\cP}{\mathcal{P}}
\newcommand{\cV}{{\mathcal V}}
\newcommand{\cW}{{\mathcal W}}
\newcommand{\scrD}{\mathscr{D}}
\newcommand{\scrS}{\mathscr{S}}
\newcommand{\scrJ}{\mathscr{J}}
\newcommand{\sD}{\mathsf{D}}
\newcommand{\sN}{\mathsf{N}}
\newcommand{\sS}{\mathsf{S}}
 \newcommand{\sT}{\mathsf{T}}
 \newcommand{\sH}{\mathsf{H}}
 \newcommand{\sI}{\mathsf{I}}
\newcommand{\bs}[1]{\mathbf{#1}}
\newcommand{\bb}{\mathbf{b}}
\newcommand{\bd}{\mathbf{d}}
\newcommand{\bn}{\mathbf{n}}
\newcommand{\bp}{\mathbf{p}}
\newcommand{\bP}{\mathbf{P}}
\newcommand{\bv}{\mathbf{v}}
\newcommand{\bx}{\mathbf{x}}
\newcommand{\by}{\mathbf{y}}
\newcommand{\bz}{{\mathbf{z}}}
\newcommand{\bxi}{\boldsymbol{\xi}}
\newcommand{\boldeta}{\boldsymbol{\eta}}	
\newcommand{\ts}{\tilde{s}}
\newcommand{\tGamma}{{\tilde{\Gamma}}}
 \newcommand{\tbx}{\tilde{\bx}}
 \newcommand{\tbd}{\tilde{\bd}}
 \newcommand{\txi}{\xi}
\newcommand{\done}[2]{\dfrac{d {#1}}{d {#2}}}
\newcommand{\donet}[2]{\frac{d {#1}}{d {#2}}}
\newcommand{\pdone}[2]{\dfrac{\partial {#1}}{\partial {#2}}}
\newcommand{\pdonet}[2]{\frac{\partial {#1}}{\partial {#2}}}
\newcommand{\pdonetext}[2]{\partial {#1}/\partial {#2}}
\newcommand{\pdtwo}[2]{\dfrac{\partial^2 {#1}}{\partial {#2}^2}}
\newcommand{\pdtwot}[2]{\frac{\partial^2 {#1}}{\partial {#2}^2}}
\newcommand{\pdtwomix}[3]{\dfrac{\partial^2 {#1}}{\partial {#2}\partial {#3}}}
\newcommand{\pdtwomixt}[3]{\frac{\partial^2 {#1}}{\partial {#2}\partial {#3}}}
\newcommand{\bnabla}{\boldsymbol{\nabla}}
\newcommand{\dive}{\boldsymbol{\nabla}\cdot}
\newcommand{\curl}{\boldsymbol{\nabla}\times}
\newcommand{\Phixy}{\Phi(\bx,\by)}
\newcommand{\PhiOxy}{\Phi_0(\bx,\by)}
\newcommand{\dxPhixy}{\pdone{\Phi}{n(\bx)}(\bx,\by)}
\newcommand{\dyPhixy}{\pdone{\Phi}{n(\by)}(\bx,\by)}
\newcommand{\dxPhiOxy}{\pdone{\Phi_0}{n(\bx)}(\bx,\by)}
\newcommand{\dyPhiOxy}{\pdone{\Phi_0}{n(\by)}(\bx,\by)}
\newcommand{\eps}{\varepsilon}
\newcommand{\real}[1]{{\rm Re}\left[#1\right]} 
\newcommand{\im}[1]{{\rm Im}\left[#1\right]}
\newcommand{\ol}[1]{\overline{#1}}
\newcommand{\ord}[1]{\mathcal{O}\left(#1\right)}
\newcommand{\oord}[1]{o\left(#1\right)}
\newcommand{\Ord}[1]{\Theta\left(#1\right)}
\newcommand{\hsnorm}[1]{||#1||_{H^{s}(\bs{R})}}
\newcommand{\hnorm}[1]{||#1||_{\tilde{H}^{-1/2}((0,1))}}
\newcommand{\norm}[2]{\left\|#1\right\|_{#2}}
\newcommand{\normt}[2]{\|#1\|_{#2}}
\newcommand{\on}[1]{\Vert{#1} \Vert_{1}}
\newcommand{\tn}[1]{\Vert{#1} \Vert_{2}}
\newcommand{\xt}{\mathbf{x},t}
\newcommand{\PhiF}{\Phi_{\rm freq}}
\newcommand{\cone}{{c_{j}^\pm}}
\newcommand{\ctwo}{{c_{2,j}^\pm}}
\newcommand{\cthree}{{c_{3,j}^\pm}}
\newtheorem{thm}{Theorem}[section]
\newtheorem{lem}[thm]{Lemma}
\newtheorem{defn}[thm]{Definition}
\newtheorem{prop}[thm]{Proposition}
\newtheorem{cor}[thm]{Corollary}
\newtheorem{rem}[thm]{Remark}
\newtheorem{conj}[thm]{Conjecture}
\newtheorem{ass}[thm]{Assumption}
\newcommand{\tH}{\widetilde{H}}
\newcommand{\Hze}{H_{\rm ze}} 	
\newcommand{\uze}{u_{\rm ze}}		
\newcommand{\dimH}{{\rm dim_H}}
\newcommand{\dimB}{{\rm dim_B}}
\newcommand{\IntClosOm}{\mathrm{int}(\overline{\Omega})}
\newcommand{\IntClosOmOne}{\mathrm{int}(\overline{\Omega_1})}
\newcommand{\IntClosOmTwo}{\mathrm{int}(\overline{\Omega_2})}
\newcommand{\Ccomp}{C^{\rm comp}}
\newcommand{\tCcomp}{\tilde{C}^{\rm comp}}
\newcommand{\uC}{\underline{C}}
\newcommand{\utC}{\underline{\tilde{C}}}
\newcommand{\oC}{\overline{C}}
\newcommand{\otC}{\overline{\tilde{C}}}
\newcommand{\capcomp}{{\rm cap}^{\rm comp}}
\newcommand{\Capcomp}{{\rm Cap}^{\rm comp}}
\newcommand{\tcapcomp}{\widetilde{{\rm cap}}^{\rm comp}}
\newcommand{\tCapcomp}{\widetilde{{\rm Cap}}^{\rm comp}}
\newcommand{\hcapcomp}{\widehat{{\rm cap}}^{\rm comp}}
\newcommand{\hCapcomp}{\widehat{{\rm Cap}}^{\rm comp}}
\newcommand{\tcap}{\widetilde{{\rm cap}}}
\newcommand{\tCap}{\widetilde{{\rm Cap}}}
\newcommand{\ccap}{{\rm cap}}
\newcommand{\ucap}{\underline{\rm cap}}
\newcommand{\uCap}{\underline{\rm Cap}}
\newcommand{\cCap}{{\rm Cap}}
\newcommand{\ocap}{\overline{\rm cap}}
\newcommand{\oCap}{\overline{\rm Cap}}
\DeclareRobustCommand
{\mathringbig}[1]{\accentset{\smash{\raisebox{-0.1ex}{$\scriptstyle\circ$}}}{#1}\rule{0pt}{2.3ex}}
\newcommand{\cirH}{\mathringbig{H}}
\newcommand{\cirHs}{\mathringbig{H}{}^s}
\newcommand{\cirHt}{\mathringbig{H}{}^t}
\newcommand{\cirHm}{\mathringbig{H}{}^m}
\newcommand{\cirHzero}{\mathringbig{H}{}^0}
\newcommand{\deO}{{\partial\Omega}}
\newcommand{\OO}{{(\Omega)}}
\newcommand{\Rn}{{(\R^n)}}
\newcommand{\Id}{{\mathrm{Id}}}
\newcommand{\gap}{\mathrm{Gap}}
\newcommand{\ggap}{\mathrm{gap}}
\newcommand{\isom}{{\xrightarrow{\sim}}}
\newcommand{\half}{{1/2}}
\newcommand{\mhalf}{{-1/2}}
\newcommand{\inter}{{\mathrm{int}}}

\newcommand{\Hsp}{H^{s,p}}
\newcommand{\Htq}{H^{t,q}}
\newcommand{\tHsp}{{{\widetilde H}^{s,p}}}
\newcommand{\SP}{\ensuremath{(s,p)}}
\newcommand{\Xsp}{X^{s,p}}

\newcommand{\dd}{{d}}\newcommand{\pp}{{p_*}}

\newcommand{\Rnn}{\R^{n_1+n_2}}
\newcommand{\Tr}{{\mathrm{Tr}}}
\newcommand{\sO}{\mathsf{O}}
\newcommand{\sC}{\mathsf{C}}
\newcommand{\sA}{\mathsf{A}}
\newcommand{\sM}{\mathsf{M}}
\newcommand{\sF}{\mathsf{F}}
\newcommand{\sG}{\mathsf{G}}
\newcommand{\mS}{\Gamma}
\newcommand{\omS}{{\overline{\mS}}}
\newcommand{\sumpm}[1]{\{\!\!\{#1\}\!\!\}}
\newcommand{\GammajFat}{{\Gamma_j^{\delta}}}
\newcommand{\phijBEM}{{\phi_j^h}}
\newcommand{\VjBEM}{{V_j^h}}
\newcommand{\FAT}{^{\text{\sc fat}}}

\begin{abstract}
We study boundary element methods for time-harmonic %
scattering in $\R^n$ ($n=2,3$) by a %
fractal planar screen, assumed to be a non-empty bounded subset $\Gamma$ of the hyperplane $\Gamma_\infty=\R^{n-1}\times \{0\}$. We consider two distinct cases: (i) $\Gamma$ is a relatively open subset of $\Gamma_\infty$ with fractal boundary (e.g.\ the interior of the Koch snowflake in the case $n=3$); (ii) $\Gamma$ is a compact fractal subset of $\Gamma_\infty$
with empty interior (e.g.\ the Sierpinski triangle in the case $n=3$). In both cases our numerical simulation strategy involves approximating the fractal screen $\Gamma$ by a sequence of smoother ``prefractal'' screens, for which we compute the scattered field using boundary element methods that discretise the associated first kind boundary integral equations.
We prove sufficient conditions on the mesh sizes guaranteeing convergence to the limiting fractal solution, using the
framework of Mosco convergence. We also provide numerical examples illustrating our theoretical results.
\keywords{Diffraction \and fractal \and diffractal
\and boundary integral equation \and Mosco convergence}
\end{abstract}

\section{Introduction}
The scattering of acoustic waves by screens (or ``cracks'' in the elasticity literature) is a classical topic in physics, applied mathematics and scientific computing.
The basic scattering problem involves an incident wave propagating in $\R^n$ ($n=2$ or $3$), striking a screen $\Gamma$, assumed to be a bounded subset (typically, a relatively open subset) of some $(n-1)$-dimensional submanifold of $\R^n$, and producing a scattered field which radiates outward to infinity.
In a homogeneous background medium, the scattering problem can be reformulated as a boundary integral equation (BIE) on the screen, as described in e.g.\ \cite{Du:83,StWe84,WeSt:90,stephan87,Ha-Du:90}, %
and numerical solutions can then be computed using the boundary element method (BEM), as in e.g.\ \cite{StWe84,WeSt:90,stephan87,holm1996hp}. %
The classical work cited above has since been extended in many directions, e.g.\ to the electromagnetic case \cite{BuCh:03,bespalov2010convergence}, to ``multi-screens'' (the union of multiple screens intersecting non-trivially) \cite{ClHi:13}, and to hybrid numerical-asymptotic approximation spaces \cite{ScreenBEM} for high-frequency problems.

For simplicity we focus on the case where the screen is flat, and the underlying $(n-1)$-dimensional submanifold is the hyperplane $\Gamma_\infty:=\R^{n-1}\times \{0\}\subset \R^n$.
Restricted to this setting, existing studies all assume (either explicitly or implicitly) that the screen $\Gamma\subset \Gamma_\infty$ is a (relatively) open set with smooth (or piecewise smooth) boundary.
In our recent paper \cite{ScreenPaper} we derived well-posed boundary value problem (BVP) and BIE formulations for sound-soft and sound-hard acoustic scattering by \textit{arbitrary} bounded screens $\Gamma\subset \Gamma_\infty$, including cases where $\Gamma$ or $\partial\Gamma$ has fractal structure.
In this paper we consider the numerical solution of these BVPs/BIEs using the BEM.

Our focus is on scattering by two general classes of fractal screens\footnote{While our focus in this paper is on fractal screens, our main results (for instance, Theorems \ref{prop:DiscreteOpen} and \ref{prop:DiscreteCompact}) do not require fractality (neither self-similarity nor non-integer fractal dimension), but apply to any non-smooth screen approximated by a sequence of smoother screens.}:
\begin{enumerate}[(i)]
\item $\Gamma$ is a bounded, relatively open subset of $\Gamma_\infty$ with fractal boundary, for instance the interior of the Koch snowflake in the case $n=3$;
\item $\Gamma$ is a compact fractal subset of $\Gamma_\infty$ with empty relative interior, for instance the Sierpinski triangle in the case $n=3$.
\end{enumerate}
In both cases our general approach to analysis and numerical simulation is to approximate the fractal screen $\Gamma$ by a sequence of smoother ``prefractal'' screens $\Gamma_j$, $j\in\N_0$, for which BVP/BIE well-posedness and BEM approximation is classical.
The key question we address in this paper is:
\begin{quote}
\textit{Given a fractal screen $\Gamma$, how should the prefractals $\Gamma_j$ and the corresponding BEM discretisations be chosen so as to ensure convergence of the numerical solutions on $\Gamma_j$ to the limiting solution on $\Gamma$?}
\end{quote}

In this paper we focus exclusively on sound-soft screens, on which homogeneous Dirichlet boundary conditions are imposed.
Our decision to restrict attention to this case was made partly to make numerical simulations as simple as possible,
since one has only to discretise the weakly-singular single-layer boundary integral operator.
But the sound-soft case is also particularly interesting from a physical point of view, as there is a strong dependence of the scattering properties on the fractal dimension of the screen (see Proposition~\ref{prop:CantorDust} and the numerical results in \S\S\ref{sec:NumCantorSet}--\ref{sec:NumCantorDust}).
We leave for future work the application of the techniques developed in this paper to the numerical simulation of the well-posed BVP/BIE formulations for scattering by fractal sound-hard and impedance screens presented in \cite{CWHewettWaves2017,ScreenPaper,HewettBannisterWaves2019}. (See \cite{CWHewettWaves2017} for simulations of diffraction through fractal apertures in sound-hard screens, equivalent, by Babinet's Principle \cite{Bouwkamp:54}, to the sound-soft screen problem that we focus on in this paper.)%

\paragraph{Our main results and their novelty.} The main focus of this paper is to address the ``key question'' above, proving results (Theorems \ref{prop:DiscreteOpen} and \ref{prop:DiscreteCompact}) that specify, for each of the classes (i) and (ii), how to choose a sequence of prefractals and their BEM discretisations so as to achieve convergence of the resulting numerical scheme to the limiting solution on $\Gamma$. While BEM simulations have been carried out previously on sequences of prefractals (see Jones et al.~\cite{jones1994fast} in the context of potential theory and Panagiotopulos and Panagouli \cite{panagiotopoulos1996bem} in elasticity),  prior to the results in this paper there does not appear to exist any analysis to justify the convergence of such simulations, that the sequence of numerical solutions converges to the desired limiting solution on $\Gamma$.   %

Our focus throughout the paper is on particular BIEs for sound-soft fractal screens, but we expect the methods and arguments that we introduce to be much more widely applicable.  Indeed, our analysis is based on a variational formulation of the BIEs in terms of (complex-valued) continuous sesquilinear forms, which allows the question of prefractal to fractal convergence to be rephrased in terms of the Mosco %
convergence\footnote{Mosco convergence (Definition \ref{def:mosco} below) is a standard notion in the study of variational inequalities, closely related to Gamma convergence. Introduced by U.\ Mosco in \cite{Mosco69} (almost exactly 50 years ago), it has been applied by a number of authors to the study of PDEs on sequences of domains, see e.g.\ \cite{BuVa2000,Daners:03,MenegattiRondi13,ArLa:17} and the references therein.
It has been mainly used (e.g.\ the references just cited) in relation to convergence at a continuous rather than a discrete level (i.e. in the context of mathematical analysis rather than numerical analysis). But it is also relevant to proving convergence of numerical methods, as was illustrated already in \cite[Chapt.~3]{Mosco1971} in the context of numerical methods for variational inequalities.}
of the discrete BEM subspaces to the fractional Sobolev space in which the limiting fractal solution lives.
The methods that we develop, to reduce proof of convergence of the numerical solution to the Mosco convergence of the BEM subspaces (Lemma \ref{lem:dec3}), and to prove Mosco convergence of the BEM subspaces, are potentially widely applicable to Galerkin discretisations of other integral or differential equations posed on rough (not necessarily fractal) domains that are approximated by more regular sets. For example, the proof of Theorem \ref{prop:DiscreteCompact} depends only on a characterisation of Mosco convergence (Lemma~\ref{lem:MoscoSuff}), quantitative bounds on the norms of mollification operators on scales of Sobolev spaces (Appendix \ref{sec:Mollification}), and a quantitative extension of standard piecewise constant finite element approximation theory (Lemma \ref{lem:PWc}), all of which should be widely applicable.

A feature of our numerical analysis based on Mosco convergence is that our discrete BEM subspaces need not be subspaces of the Hilbert space
 in which the solution on $\Gamma$ lies; indeed, this is crucial whenever the prefractals are not subsets of the limiting fractal $\Gamma$. Our Lemma \ref{lem:dec3}, which applies in such cases (and is proved in slightly more generality than we need for the results in \S\ref{sec:Convergence}, anticipating wider application) can be seen as a replacement in this circumstance for the standard C\'ea lemma and its generalisation to compact perturbations (see, e.g.~\cite[Thms.~ 8.10, 8.11]{Steinbach}).

While our main results, Theorems \ref{prop:DiscreteOpen} and \ref{prop:DiscreteCompact}, are quite general in terms of geometry of the screen $\Gamma$ and its approximating prefractal sequence, we pay particular attention in our theory, examples, and numerical simulations to cases where $\Gamma$ (or the boundary of $\Gamma$) is the fixed point of an iterated function system (IFS) (e.g.\ Corollaries \ref{cor:ifs} and \ref{cor:dsetconv}). In particular our examples in \S\ref{sec:examples} and \S\ref{sec:Numerics} include the cases where $\Gamma$ is (when $n=2$) a Cantor set and (when $n=3$) a Cantor dust, the Sierpinski triangle, or the interior of a Koch snowflake.  These are cases where the BIEs we wish to solve are posed either on fractal sets or on rough domains with fractal boundaries. Unsurprisingly,  subtle and interesting properties of fractional Sobolev spaces and integral operators on rough sets, explored recently in \cite{ChaHewMoi:13,InterpolationCWHM,HewMoi:15,CoercScreen2,caetano2018}, are crucial to our arguments throughout.

A novel feature of our BEM and its analysis is that convergence can be achieved in regimes where each boundary element contains many disjoint components of a prefractal (e.g.\ Corollary \ref{cor:dsetconv}(ii) and Figure \ref{fig:PreConvexMeshes}). To justify this we need an extension, that applies in such cases, of standard piecewise constant approximation theory in scales of Sobolev spaces with explicit constants. We supply this in Lemma \ref{lem:PWc}.

\paragraph{Applications and motivations.}
Wave scattering by fractal structures is relevant for numerous applications, since
fractals provide a natural mathematical framework for modelling the multiscale roughness of many natural and man-made scatterers. We highlight in particular the propagation of acoustic and electromagnetic waves in dendritic structures like the human lung in medical science \cite{achdou2007transparent,JoSe:11}, and
the scattering of electromagnetic waves by snowflakes, ice crystals and other atmospheric particles in climate science \cite{Baran:2009,tyynela2011radar,ceolato2013spectral,StWe:15}.
 But particularly close to the fractal screen scattering problems that we focus on in this paper are configurations arising in the design of fractal antennas for electromagnetic wave transmission/reception (see e.g.\ \cite{PBaRomPouCar:98,WeGa:03,GhSiKa:14}) and fractal piezoelectric ultrasound transducers (see e.g.\ \cite{MuWa:11,AlMu:15,algehyne2018analysis,fang2018broadband})  (fractal structures being attractive in these contexts because of the possibility of wideband performance), and configurations that arise in the study of fractal aperture problems in laser physics \cite{jaggard1998cantor,horvath2010koch,verma2013robustness,Ch:16}.
The current study into acoustic scattering by fractal screens represents a first step towards the rigorous numerical analysis of integral equation methods for the study of such challenging problems involving fractal scatterers.

\paragraph{Related literature.}
One of the first studies of wave scattering by fractals appears to be M. Berry's 1979 paper \cite{Berry79} on scattering by ``random phase screens'', in which Berry coins the term ``diffractal'' to describe waves that have undergone interactions with fractal structures. The difficult problem of studying high frequency asymptotics for fractal scattering problems was investigated by Sleeman and Hua in \cite{sleeman1992analogue,hua2001high}.
Concerning the study of PDE problems on fractal domains more generally, U. Mosco notes in \cite{mosco2013analysis} that \textit{``\ldots introducing fractal constructions into the classic theory of PDEs opens a vast new field of study, both theoretically and numerically''}, but also that \textit{``this new field has been only scratched''}.
In addition to papers cited above, research that has started to explore this field of study includes work on finite element method approximations of heat transmission across fractal interfaces \cite{Bagnerini13,capitanelli2017fem,cefalo2013heat,cefalo2014optimal,lancia2012numerical} and $H^1$ extension problems \cite{evans2012finite};
Dirichlet-to-Neumann (Poincar\'e-Steklov) operators on domains with fractal boundaries \cite{arfi2017dirichlet}; and finite difference \cite{neuberger2006computing} and conformal mapping \cite{banjai2017poincar} approaches to the computation of Laplace eigenfunctions on fractal domains.

\paragraph{Structure of the paper.}
In \S\ref{sec:Prelim} we collect some important Hilbert and Sobolev space results that will be used throughout the paper. The main new result here is Lemma \ref{lem:dec3}, which proves convergence of solutions of variational problems for ``compactly perturbed coercive'' sesquilinear forms on a Mosco-convergent sequence of closed subspaces.
The compactly perturbed coercive setting is more general than we need for the particular problem under consideration in this paper, since for flat sound-soft screens the first-kind formulation is coercive (strongly elliptic) \cite{CoercScreen2}.
However, it is included here to lay the foundations for future investigations into other, closely related problems, such as curved sound-soft screens (as studied e.g.\ in \cite{StWe84,WeSt:90}), and impedance screens (as studied e.g.\ in  \cite{kress2003integral,ben2013application,HewettBannisterWaves2019,BaGiHe:20}).
In \S\ref{sec:BVPs} we state well-posed BVPs and BIEs for open and closed screens, refining results from \cite{ScreenPaper}, and in \S\ref{sec:Conv} we prove convergence of solutions on prefractal screens to solutions on limiting fractal screens using the Mosco framework; in particular we prove for the first time convergence in cases where the prefractal sequence is not monotonic, and it holds neither that $\Gamma\subset \Gamma_j$ for all $j$, nor that $\Gamma_j\subset \Gamma$ for all $j$. In \S\ref{sec:Convergence} we study numerical discretisations based on piecewise constant BEM approximations on prefractals, determining conditions under which the BEM solution converges to the limiting fractal solution in the joint limit of prefractal and mesh refinement. In \S\ref{sec:examples} we present examples of the kind of fractal screens  we have in mind, and in \S\ref{sec:Numerics} we provide numerical results which illustrate our theoretical predictions through a number of concrete examples. We also include in this section preliminary numerical investigations into physical questions such as how the fractal dimension of a screen affects the magnitude of the resulting scattered field. In \S\ref{sec:conclude} we make a brief conclusion and list some of the many intriguing open problems.

\section{Preliminaries}\label{sec:Prelim}
\subsection{Dual space realisations}\label{sec:Hilbert}
We say that a linear isomorphism between Hil\-bert spaces is \emph{unitary} if it preserves the inner product (equivalently, if it is isometric \cite[Prop.~5.2]{Conway}).
If $H$ is a complex Hilbert space (as all the Hilbert spaces in this paper are)
 by its {\em dual space} $H^*$ we mean, following Kato \cite{Ka:95}, the space of {\em anti-linear} continuous functionals on $H$.
It is often convenient to identify $H^*$, itself a Hilbert space with the standard induced operator norm, with some other Hilbert space $\mathcal{H}$, termed a {\em realisation} of $H^*$.
If $\langle\cdot, \cdot \rangle$ is a continuous sesquilinear form on $\mathcal{H}\times H$, and the mapping taking $\phi\in \mathcal{H}$ to $\ell_\phi\in H^*$, given by $\ell_\phi(\psi)= \langle\phi,\psi\rangle$, $\psi\in H$, is a unitary isomorphism,
then we say that $\cH$
is a {\em unitary realisation} of $H^*$ with associated {\em duality pairing}
$\langle\cdot, \cdot \rangle$.
If
$W\subset H$ is a closed subspace, then a unitary realisation of $W^*$ is provided by the following simple but important result, e.g.\ \cite[Lem.~2.2]{ChaHewMoi:13}, which is a special case of a more general Banach space result, e.g.\ \cite[Thm.~4.9]{Ru91}.

\begin{lem} \label{lem:hs_orth} Suppose that $H$ and $\cH$ are Hilbert spaces, and $\cH$ is a unitary realisation of $H^*$, with duality pairing $\langle\cdot,\cdot\rangle$,
and $W\subset H$ is a closed linear subspace.
Set ${\mathcal W} := \left(W^{a,\mathcal{H}}\right)^\perp\subset \cH$, where $\perp$ denotes orthogonal complement and
\begin{align*}%
W^{a,\mathcal{H}} := \{\psi\in \cH:\langle \psi,\phi\rangle = 0, \mbox{ for all }\phi\in W\}\subset\cH
\end{align*}
is the {\em annihilator of $W$ in $\cH$}.
Then $\mathcal W$ is a unitary realisation of $W^*$, and its associated duality pairing is just the restriction to $\mathcal{W}\times W$ of the duality pairing on $\mathcal{H}\times H$.
\end{lem}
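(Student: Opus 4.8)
The plan is to reduce the claim to a statement about the restriction (transpose) map between the ambient duals, and then to prove the essential isometry using the norm-preserving Hahn--Banach extension theorem. Write $J\colon\cH\to H^*$ for the given unitary isomorphism, $J\phi=\ell_\phi$ with $\ell_\phi(\psi)=\langle\phi,\psi\rangle$, and let $R\colon H^*\to W^*$ be the restriction map $R\ell=\ell|_W$, which is linear and of norm at most $1$. Since $W$ is a closed subspace of the Hilbert space $H$ it is itself a Hilbert space, so its anti-dual $W^*$ is meaningful, and the assertion to be proved is precisely that the map $T\colon\cW\to W^*$, $T\phi:=\ell_\phi|_W=\langle\phi,\cdot\rangle|_W$, is a unitary isomorphism. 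Observing that $T=R\circ J|_{\cW}$, I would factor the proof through $J$ and $R$ separately.

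First I would identify the image $J(\cW)$. By definition $\psi\in W^{a,\cH}$ if and only if $\ell_\psi(\phi)=\langle\psi,\phi\rangle=0$ for all $\phi\in W$, i.e.\ if and only if $J\psi\in N:=\ker R$; thus $W^{a,\cH}=J^{-1}(N)$. As $R$ is bounded, $N$ is closed, and since $J$ is unitary it carries orthogonal complements to orthogonal complements, giving $J(\cW)=J\big((W^{a,\cH})^\perp\big)=N^\perp$, the orthogonal complement taken in $H^*$. Because $J|_{\cW}\colon\cW\to N^\perp$ is then a unitary isomorphism, it remains only to show that $R|_{N^\perp}\colon N^\perp\to W^*$ is a unitary isomorphism; the composition $T=R\circ J|_{\cW}$ will then be unitary, and one checks directly that its duality pairing $(T\phi)(\psi)=\langle\phi,\psi\rangle$ is the restriction of $\langle\cdot,\cdot\rangle$ to $\cW\times W$, as required.

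For $R|_{N^\perp}$, injectivity is immediate since its kernel is $N\cap N^\perp=\{0\}$, and surjectivity follows from the surjectivity of $R$ (every $f\in W^*$ extends to $H$ by Hahn--Banach) together with the decomposition $H^*=N\oplus N^\perp$ and $R(N)=\{0\}$. The crux is the isometry. Given $f\in W^*$, any $\ell\in H^*$ with $R\ell=f$ satisfies $\|f\|_{W^*}=\|R\ell\|_{W^*}\le\|\ell\|_{H^*}$, while the norm-preserving version of Hahn--Banach furnishes at least one extension with $\|\ell\|_{H^*}=\|f\|_{W^*}$; hence $\min\{\|\ell\|_{H^*}:R\ell=f\}=\|f\|_{W^*}$. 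The extensions of $f$ form the coset $\ell_0+N$, whose element of least norm is exactly its unique representative $\ell^*$ lying in $N^\perp$. Therefore the unique $\ell^*\in N^\perp$ with $R\ell^*=f$ obeys $\|\ell^*\|_{H^*}=\|f\|_{W^*}$, so $R|_{N^\perp}$ is a norm-preserving linear bijection; by the polarisation identity it preserves inner products and is thus unitary.

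I expect the isometry step to be the main obstacle: it is the one place where more than linear bookkeeping is needed, resting on the norm-preserving Hahn--Banach extension and on the identification of the minimal-norm extension with the representative of the coset $\ell_0+N$ in $N^\perp$ (equivalently, the orthogonal projection of any extension onto $N^\perp$). The remaining ingredients---closedness of $N$, the orthogonal decomposition of $H^*$, transfer through the unitary $J$, and the verification that the pairing is the restriction---are routine.
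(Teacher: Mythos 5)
Your proof is correct. Note first that the paper does not actually prove this lemma: it cites \cite[Lem.~2.2]{ChaHewMoi:13}, which in turn is a Hilbert-space specialisation of the Banach-space duality theorem \cite[Thm.~4.9]{Ru91} identifying $W^*$ isometrically with the quotient $H^*/W^{a}$ (where $W^a\subset H^*$ is the annihilator of $W$). Your argument is, in essence, a self-contained Hilbert-space version of exactly that result: you transport $\cW$ through $J$ onto $N^\perp$ with $N=\ker R$ the annihilator of $W$ in $H^*$, and then realise each coset $\ell_0+N$ (i.e.\ each element of the quotient) concretely by its unique representative in $N^\perp$, which is also its minimal-norm element; the Hahn--Banach step supplies the norm equality $\min\{\|\ell\|_{H^*}:R\ell=f\}=\|f\|_{W^*}$, and polarisation upgrades the isometry to unitarity (consistent with the paper's convention that isometric isomorphisms of Hilbert spaces are unitary). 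All the individual steps check out: $W^{a,\cH}=J^{-1}(N)$, a unitary surjection carries orthogonal complements to orthogonal complements, $\ker(R|_{N^\perp})=N\cap N^\perp=\{0\}$, surjectivity follows from extension plus $H^*=N\oplus N^\perp$, and the Pythagorean identity shows the $N^\perp$-representative minimises the norm over the coset. The only simplification worth mentioning is that in the Hilbert setting Hahn--Banach is overkill: given $f\in W^*$, the functional $\ell:=f\circ P_W$ (with $P_W$ the orthogonal projection of $H$ onto $W$) is already a norm-preserving extension, since $\|\ell\|\le\|f\|\,\|P_W\|=\|f\|$ and $\ell|_W=f$; this avoids any appeal to the (anti-linear, complex) Hahn--Banach theorem and makes the whole argument elementary.
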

For a closed subspace $W\subset H$,
$(W^\perp)^{a,\mathcal{H}}= (W^{a,\mathcal{H}})^\perp$ and $(W^{a,\mathcal{H}})^{a,H} = W$.
\subsection{Variational problems} \label{sec:var}
Suppose $H$ is a Hilbert space with norm $\|\cdot\|_H$, and $a(\cdot,\cdot)$ is a sesquilinear form on $H\times H$ that is {\em continuous}, i.e., for some $C>0$ (the {\em continuity constant}), $|a(u,v)|\le C\|u\|_H \|v\|_H$, for all $u,v\in H$. To each such $a$, and each unitary realisation $\cH$ of the dual space $H^*$, with associated duality pairing $\langle\cdot,\cdot\rangle$, there corresponds a unique bounded linear operator $A:H\to \cH$ defined by
\begin{equation} \label{eq:blo}
a(u,v) = \langle Au,v\rangle, \quad u,v\in H.
\end{equation}
Conversely, every bounded linear operator $A:H\to \cH$ defines via \eqref{eq:blo} an associated sesquilinear form $a(\cdot,\cdot)$ on $H\times H$.

We say that $a$ and
$A$ are {\em coercive} if, for some $\alpha>0$ (the {\em coercivity constant}),
\begin{equation*} %
|a(u,u)|\ge \alpha\|u\|^2_H,
\quad \mbox{for all } u\in H.
\end{equation*}
We recall that $A$ is compact, i.e.\ maps bounded sets to relatively compact sets, if and only if it is completely continuous, i.e., for every sequence $(u_j)\subset H$ and $u\in H$,
$u_j\rightharpoonup u$  implies that $Au_j\to Au$. (Here $\to$ denotes norm convergence in $\cH$ and $\rightharpoonup$ weak convergence in $H$.) We say that $a$ is {\em compact} if $A$ is compact: equivalently,
if $a(u_j,v_j)\to a(u,v)$ whenever $(u_j)\subset H$, $(v_j)\subset H$ and $u,v\in H$ satisfy $u_j\rightharpoonup u$ and $v_j\rightharpoonup v$.
We say that $a$ and $A$ are {\em compactly perturbed coercive} if $a=a_0+a_1$ %
with $a_0$ coercive and $a_1$ compact; equivalently, $A=A_0+A_1$ with $A_0$ coercive and $A_1$ compact.
\footnote{Terminology varies: what we call here {\em coercive} is often termed {\em $H$-elliptic} or {\em strongly elliptic}, and what we call here {\em compactly perturbed coercive} is often termed {\em coercive} (e.g.\ \cite{Steinbach}).}

Let $W\subset H$ be a closed subspace of $H$ %
and let $\cW := (W^{a,\cH})^\perp\subset \cH$ be the unitary realisation of $W^*$ provided by Lemma \ref {lem:hs_orth}.  %
We say that $a$ is invertible on $W$ if, for every $f\in \cW$, the variational problem: find $u_W\in W$ such that
\begin{equation} \label{eq:varW}
a(u_W,v) = \langle f,v\rangle, \quad \mbox{for all } v\in W,
\end{equation}
has exactly one solution $u_W\in W$. This holds (e.g.\ \cite[Thm.~2.15]{Ihlenburg}) if and only if
\begin{equation*} %
\beta := \inf_{u\in W, \,\|u\|_H=1}\sup_{v\in W,\, \|v\|=1}|a(u,v)| >0
\quad\text{and} \; \sup_{u\in W}|a(u,v)| >0 \quad\forall v\in W \setminus\{0\}.
\end{equation*}
In terms of the associated operator $A$, \eqref{eq:varW} can be written equivalently as
\begin{equation} \label{eq:proj}
P_{\cW}A u = f,
\end{equation}
 where $P_\cW$ is orthogonal projection in $\cH$ onto $\cW$, so that $a$ is invertible on $W$ if and only if $P_{\cW}A|_W$ is invertible, in which case $\|\left(P_{\cW}A|_W\right)^{-1}\|=\beta^{-1}$.
If $a$ is coercive then $a$ is invertible on $W$ by the Lax--Milgram lemma and $\beta\ge\alpha$.
More generally, if $a$ is compactly perturbed coercive, then $a$ is invertible on $W$ if and only if it is injective, meaning that the problem \eqref{eq:varW} has at most one solution $u_W\in W$ for every $f\in \cW$.

\subsection{Mosco convergence} \label{sec:Mosco}
We now consider the problem of approximating the solution of the variational problem \eqref{eq:varW} by the solutions of variational problems posed on a sequence of closed subspaces $(W_j)_{j=1}^\infty \subset H$.
We say that $a$ is \emph{uniformly invertible} on such a sequence %
$(W_j)_{j=1}^\infty$ if $a$ is invertible on $W_j$ for all $j\in \N$ and the inverses are uniformly bounded, meaning that, for some constant $C>0$ and all $j\in \N$ and $f_j\in \cW_j:= (W_j^{a,\cH})^\perp$,
$$
\|u_{W_j}\|_H \leq C \|f_j\|_{\cW_j},
$$
where $u_{W_j}$ is the unique solution of \eqref{eq:varW} with $W$ replaced by $W_j$ and $f$ by $f_j$. Equivalently, $a$ is uniformly invertible on $(W_j)$ if $P_{\cW_j}A|_{W_j}$ is invertible for $j\in \N$ and
$$
\sup_{j\in \N} \left\|\left(P_{\cW_j}A|_{W_j}\right)^{-1}\right\| <\infty.
$$

Roughly speaking, given a variational problem \eqref{eq:varW}, to ensure that the corresponding solutions on $(W_j)_{j=1}^\infty$ converge to the solution in $W$ we require that $a$ is sufficiently ``well-behaved'' %
 and that $W_j$ approximates $W$ increasingly well as $j\to\infty$ in an appropriate sense.  The precise requirement on $W_j$ is that it converges to  $W$ in the Mosco sense (Lemma \ref{lem:Mosco} below), and Lemma \ref{lem:dec3} makes clear that $a$ being compactly perturbed coercive is sufficiently ``well-behaved''. 
The following definition of Mosco convergence is precisely the notion of set convergence for convex sets introduced in \cite[Definition 1.1]{Mosco69}, except that we specialise here from the general Banach space setting to the Hilbert space case, and our convex sets are specifically closed linear subspaces (as, for example, in \cite{MenegattiRondi13}).

\begin{defn}[Mosco convergence] \label{def:mosco} Let $W$ and $W_j$, for $j\in \N$, be closed subspaces of a Hilbert space $H$. We say that $W_j$ converges in the Mosco sense, or Mosco-converges, to $W$ (written $W_j\xrightarrow M W$) if the following conditions hold:
\begin{itemize}
\item[(i)]  For  every $w\in W$ and $j\in \N$ there exists $w_j\in W_j$ such that $\|w_j- w\|_H\to 0$.
\item[(ii)] If $(W_{j_m})$ is a subsequence of $(W_j)$, $w_m\in W_{j_m}$, for $m\in \N$, and  $w_m\rightharpoonup w$ as $m\to\infty$, then $w\in W$.
\end{itemize}
\end{defn}
Two simple cases in which Mosco convergence holds \cite[Lems.~1.2 and 1.3]{Mosco69} %
are %
\begin{align} \label{eq:increase}
W_1\subset W_2 \subset \cdots, \quad &\mbox{with} \quad W = \overline{\bigcup_{j=1}^\infty W_j}, \quad\text{and}
\\
\label{eq:decrease}
W_1\supset W_2 \supset \cdots \quad &\mbox{with} \quad W = \bigcap_{j=1}^\infty W_j.
\end{align}
The following ``sandwich lemma'' is a trivial consequence of Definition \ref{def:mosco}.
\begin{lem}
\label{lem:Sandwich}
If $W^+_j$, $W^-_j$ and $W_j$ are closed subspaces of $H$ satisfying $W^-_j\subset W_j\subset W^+_j$ for each $j\in\N$, and both $W^+_j$ and $W^-_j$ Mosco-converge to some closed subspace $W$ of $H$, then $W_j$ also Mosco-converges to $W$.
\end{lem}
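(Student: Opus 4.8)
The plan is to verify the two conditions of Definition \ref{def:mosco} directly, pairing the lower inclusion $W^-_j\subset W_j$ with the approximation property (i) and the upper inclusion $W_j\subset W^+_j$ with the weak-closure property (ii). The point is that condition (i) is an ``existence of approximants from within'' statement that survives \emph{enlarging} the subspaces, whereas condition (ii) is a ``weak limits stay in $W$'' statement that survives \emph{shrinking} them; the two one-sided inclusions supply exactly what is needed for each.

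For condition (i), I would fix $w\in W$. Since $W^-_j\xrightarrow{M}W$, condition (i) of Definition \ref{def:mosco} applied to $(W^-_j)$ produces, for each $j\in\N$, an element $w_j\in W^-_j$ with $\|w_j-w\|_H\to 0$. Because $W^-_j\subset W_j$, each such $w_j$ in fact lies in $W_j$, so the very same sequence $(w_j)$ serves as a sequence of approximants in $(W_j)$, establishing condition (i) for $(W_j)$.

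For condition (ii), I would take a subsequence $(W_{j_m})$ of $(W_j)$, elements $w_m\in W_{j_m}$ for $m\in\N$, and suppose $w_m\rightharpoonup w$ as $m\to\infty$. Since $W_{j_m}\subset W^+_{j_m}$, we have $w_m\in W^+_{j_m}$, so $(w_m)$ is a sequence whose $m$-th term lies in the $m$-th term of the subsequence $(W^+_{j_m})$ of $(W^+_j)$, and it still satisfies $w_m\rightharpoonup w$. Applying condition (ii) of the hypothesis $W^+_j\xrightarrow{M}W$ then gives $w\in W$, which is condition (ii) for $(W_j)$.

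No step presents a genuine obstacle, consistent with the lemma being described as a trivial consequence of the definition; the only care required is to match each Mosco condition with the inclusion under which it is preserved. One might also remark that the hypotheses can be weakened: condition (i) for $(W_j)$ needs only $W^-_j\xrightarrow{M}W$ and $W^-_j\subset W_j$, while condition (ii) needs only $W_j\subset W^+_j$ and property (ii) of $W^+_j\xrightarrow{M}W$, so full Mosco convergence of \emph{both} sandwiching sequences is more than is strictly used.
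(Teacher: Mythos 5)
Your proof is correct and is exactly the direct verification the paper has in mind: the paper gives no proof, stating only that the lemma ``is a trivial consequence of Definition \ref{def:mosco}'', and your pairing of the lower inclusion with condition (i) and the upper inclusion with condition (ii) is precisely that trivial argument. Your closing remark that each condition uses only one of the two sandwiching hypotheses is a valid (if minor) observation and does not affect the correctness.
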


The following lemma will also be useful. Again the proof is straightforward.
\begin{lem}
\label{lem:MoscoSuff}
Let $W_j$ and $W$ be closed subspaces of $H$. To prove that $W_j \xrightarrow{M} W $ it suffices to show that (i) there exists a dense subspace $\widetilde{W}\subset W$ such that for every $w\in \widetilde{W}$ and $j\in \N$ there exists $w_j\in W_j$ such that $\|w_j- w\|_H\to 0$, and (ii) there exists a sequence of closed subspaces $W^+_j$ of $H$ such that $W_j\subset W^+_j$ for all $j\in\N$ and $W^+_j \xrightarrow{M} W $.
\end{lem}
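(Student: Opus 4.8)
The plan is to verify directly the two conditions of the definition of Mosco convergence (Definition \ref{def:mosco}) for the sequence $(W_j)$ with limit $W$, drawing on the two hypotheses of the lemma. Hypothesis (ii) of the lemma (the inclusion $W_j\subset W_j^+$ together with $W_j^+\xrightarrow{M}W$) will give condition (ii) of Mosco convergence almost for free, while hypothesis (i) of the lemma (density of $\widetilde W$ together with strong approximability of its elements) will give condition (i) of Mosco convergence via a standard density-plus-diagonalisation argument.

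First I would dispatch condition (ii) of Mosco convergence. Suppose $(W_{j_m})$ is a subsequence of $(W_j)$, that $w_m\in W_{j_m}$ for each $m$, and that $w_m\rightharpoonup w$. Since $W_{j_m}\subset W_{j_m}^+$ by hypothesis (ii), we also have $w_m\in W_{j_m}^+$, so $(w_m)$ is a weakly convergent sequence drawn from the subsequence $(W_{j_m}^+)$ of $(W_j^+)$. As $W_j^+\xrightarrow{M}W$, condition (ii) of Mosco convergence applied to the sequence $(W_j^+)$ forces $w\in W$. This is the entire argument for this part; no estimates are required.

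Next I would establish condition (i) of Mosco convergence. Fix $w\in W$; I want to produce $w_j\in W_j$ with $\|w_j-w\|_H\to 0$. Using density of $\widetilde W$ in $W$, for each $k\in\N$ choose $\tilde w^{(k)}\in\widetilde W$ with $\|\tilde w^{(k)}-w\|_H<1/k$. Hypothesis (i) supplies, for each fixed $k$, a sequence $(\tilde w^{(k)}_j)_j$ with $\tilde w^{(k)}_j\in W_j$ and $\|\tilde w^{(k)}_j-\tilde w^{(k)}\|_H\to 0$ as $j\to\infty$, so there is $J_k\in\N$ with $\|\tilde w^{(k)}_j-\tilde w^{(k)}\|_H<1/k$ for all $j\ge J_k$; I may arrange $J_1<J_2<\cdots$. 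I then diagonalise, setting $w_j:=\tilde w^{(k)}_j\in W_j$ for $J_k\le j<J_{k+1}$ and $w_j:=0$ for $j<J_1$. On each block $J_k\le j<J_{k+1}$ the triangle inequality gives $\|w_j-w\|_H\le\|\tilde w^{(k)}_j-\tilde w^{(k)}\|_H+\|\tilde w^{(k)}-w\|_H<2/k$, and since $k\to\infty$ as $j\to\infty$ this yields $\|w_j-w\|_H\to 0$, as required.

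The only point requiring any care — and the sole potential obstacle — is the diagonalisation in the condition (i) step: one must extract the thresholds $J_k$ and arrange them to increase strictly, so that the approximation level $k$ tends to infinity along $j$, while noting that each $w_j$ lies genuinely in $W_j$ (automatic, since $\tilde w^{(k)}_j\in W_j$). Everything else is immediate, which is consistent with the remark preceding the statement that the proof is straightforward.
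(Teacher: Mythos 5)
Your proof is correct: the paper itself gives no proof of Lemma \ref{lem:MoscoSuff} (it only remarks that the proof is straightforward), and your argument --- deducing Mosco condition (ii) immediately from the inclusions $W_j\subset W_j^+$ and the assumed Mosco convergence of $(W_j^+)$, and Mosco condition (i) by density of $\widetilde{W}$ in $W$ combined with a diagonalisation over strictly increasing thresholds $J_k$ --- is precisely the standard argument the authors are alluding to. The details you flag as needing care (arranging $J_1<J_2<\cdots$ so the approximation index tends to infinity along $j$, and using $0\in W_j$ for the initial indices) are handled correctly, so there is no gap.
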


Mosco convergence was introduced to study convergence of approximate solutions to variational inequalities. The following lemma, which applies to variational {\em equalities}, appears to be new, but its proof, if specialised to the coercive case, has something of the flavour of the original arguments of Mosco \cite[Thm.~A and its Corollary]{Mosco69}. Indeed, in the case that $H$ is a real Hilbert space and $a$ is coercive, this lemma is a corollary of the results in \cite{Mosco69}, since variational inequalities on linear subspaces of real Hilbert spaces are in fact equalities.

\begin{lem} \label{lem:dec3} Let $W\subset H$ and $W_j\subset H$, for $j\in \N$, be closed subspaces such that $W_j$ Mosco-converges to $W$ %
as $j\to\infty$. Let $a$ be compactly perturbed coercive and invertible on $W$. Then there exists $J\in \N$ such that $a$ is uniformly invertible on $W_j$ for $j\geq J$. Further, if, for some $f\in \cH$, $u_W$ denotes the solution to \eqref{eq:varW} and, for $j\geq J$, $u_{W_j}$ denotes the solution to \eqref{eq:varW} with $W$ replaced by $W_j$,
then $\|u_{W_j}-u_W\|_H\to 0$ as $j\to\infty$.
\end{lem}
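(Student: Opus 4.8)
The plan is to prove the two assertions — uniform invertibility on a tail of $(W_j)$, and norm convergence of the solutions — by combining weak compactness in $H$ with the two Mosco conditions of Definition \ref{def:mosco} and the splitting $a = a_0 + a_1$ into coercive plus compact parts. Throughout I would use repeatedly that, for fixed $v$, the map $u \mapsto a(u,v)$ is a bounded linear functional on $H$ and hence weakly sequentially continuous, that for fixed $u$ the map $v\mapsto a(u,v)$ is bounded anti-linear and hence likewise weakly continuous, and that (by compactness of $a_1$, in the sense recalled in \S\ref{sec:var}) $a_1(u_j,v_j)\to a_1(u,v)$ whenever $u_j\rightharpoonup u$ and $v_j\rightharpoonup v$. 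The consequence I shall lean on is the weak--strong continuity of $a$: if $u_j\rightharpoonup u$ and $v_j\to v$ in norm, then $a(u_j,v_j)\to a(u,v)$, via $a(u_j,v_j)=a(u_j,v)+a(u_j,v_j-v)$.

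For uniform invertibility, write $B_j := P_{\cW_j}A|_{W_j}$ and $\beta_j := \inf_{u\in W_j,\,\|u\|_H=1}\|B_j u\|_{\cW_j}$. Since $B_j$ is a compact perturbation of the coercive (hence invertible, with inverse bounded by $1/\alpha$) operator $P_{\cW_j}A_0|_{W_j}$, it is Fredholm of index zero, so $\beta_j>0$ forces $B_j$ to be invertible with $\|B_j^{-1}\|=\beta_j^{-1}$; thus uniform invertibility on a tail is equivalent to $\liminf_j\beta_j>0$. I would argue by contradiction: if $\liminf_j\beta_j=0$, pass to a subsequence and pick near-minimisers $u_m\in W_{j_m}$ with $\|u_m\|_H=1$ and $\|B_{j_m}u_m\|_{\cW_{j_m}}\to 0$. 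Extracting a further weakly convergent subsequence $u_m\rightharpoonup u$, Mosco condition (ii) gives $u\in W$. For any $v\in W$, Mosco condition (i) supplies $v_m\in W_{j_m}$ with $v_m\to v$; since $a(u_m,v_m)=\langle B_{j_m}u_m,v_m\rangle\to 0$ while weak--strong continuity gives $a(u_m,v_m)\to a(u,v)$, we obtain $a(u,v)=0$ for all $v\in W$, whence $u=0$ by injectivity of $a$ on $W$. Testing with $v=u_m$ and using coercivity, $\alpha=\alpha\|u_m\|_H^2\le |a_0(u_m,u_m)|\le \|B_{j_m}u_m\|_{\cW_{j_m}}+|a_1(u_m,u_m)|$, and both terms tend to $0$ (the first by construction, the second because $u_m\rightharpoonup 0$ and $a_1$ is compact), contradicting $\alpha>0$. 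This yields the required $J$.

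For convergence of solutions, fix $f\in\cH$; for $j\ge J$ uniform invertibility bounds $\|u_{W_j}\|_H\le C\|f\|_{\cH}$, so $(u_{W_j})$ is bounded. I would first show every weakly convergent subsequence has limit $u_W$: if $u_{W_{j_m}}\rightharpoonup u^*$ then $u^*\in W$ by Mosco (ii), and for $v\in W$ with approximants $v_m\to v$ from Mosco (i), passing to the limit in $a(u_{W_{j_m}},v_m)=\langle f,v_m\rangle$ (weak--strong continuity on the left, continuity of $\langle f,\cdot\rangle$ on the right) gives $a(u^*,v)=\langle f,v\rangle$ for all $v\in W$, so $u^*=u_W$ by uniqueness. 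Since $(u_{W_j})$ is bounded with every weak subsequential limit equal to $u_W$, the full sequence satisfies $u_{W_j}\rightharpoonup u_W$. To upgrade to norm convergence, set $w_j:=u_{W_j}-u_W\rightharpoonup 0$ and use coercivity, $\alpha\|w_j\|_H^2\le |a_0(w_j,w_j)|=|a(w_j,w_j)-a_1(w_j,w_j)|$. Here $a_1(w_j,w_j)\to 0$ by compactness, and $a(w_j,w_j)=a(u_{W_j},w_j)-a(u_W,w_j)\to 0$: indeed $a(u_W,w_j)\to 0$ by weak continuity, while $a(u_{W_j},w_j)=\langle f,u_{W_j}\rangle-a(u_{W_j},u_W)\to\langle f,u_W\rangle-a(u_W,u_W)=0$, using $a(u_W,u_W)=\langle f,u_W\rangle$. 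Hence $\|w_j\|_H\to 0$.

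The main obstacle is the uniform invertibility step: coercivity controls $a_0$ but not the compact part $a_1$, and the argument must convert a putative failure of the uniform inf--sup bound into a nonzero weak limit lying in $W$ and solving the homogeneous equation. This is precisely where Mosco condition (ii) (to capture the weak limit inside $W$) and the weak-to-strong continuity built from the compactness of $a_1$ (to pass to the limit in $a$ with both arguments varying) are indispensable; coercivity is then used only at the very end to rule out a unit-norm sequence collapsing weakly to zero. Once this stability is in hand, the consistency and convergence argument is comparatively routine, though it again relies on both Mosco conditions and on the compactness of $a_1$ to promote weak convergence of the solutions to norm convergence.
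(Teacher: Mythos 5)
Your proof is correct and follows essentially the same route as the paper's: a contradiction argument for uniform invertibility that extracts a weakly convergent unit-norm sequence, places its limit in $W$ via Mosco condition (ii), kills it by testing against Mosco (i) approximants and invoking invertibility on $W$, and then contradicts coercivity of $a_0$ using compactness of $a_1$; followed by the same two-stage (weak, then norm) convergence of the $u_{W_j}$. The only difference is presentational: you unify the paper's two failure cases (non-invertibility versus unbounded inverses) into the single condition $\liminf_j\beta_j=0$ by making the Fredholm-index-zero property of $P_{\cW_j}A|_{W_j}$ explicit, whereas the paper handles the two cases separately, relying on its earlier remark that compactly perturbed coercive forms are invertible iff injective.
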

\begin{proof}
We show first that, for some $J\in \N$, $a$ is uniformly invertible on $W_j$ for $j\geq J$.
Suppose first that $a$ is not invertible on $W_j$ for all sufficiently large $j$, in which case neither is it injective on $W_j$. Then there exists a subsequence of $(W_j)$, which we will denote again by $(W_j)$, and $v_j\in W_j$ with $\|v_j\|_H=1$, such that
\begin{equation} \label{eqA}
a(v_j,v) = 0, \quad v\in W_j.
\end{equation}
If on the other hand $a$ is invertible on $W_j$ for all sufficiently large $j$ but is not uniformly invertible then there exists a subsequence of $(W_j)$, which we will denote again by $(W_j)$, and $v_j\in W_j$ with $\|v_j\|_H=1$ such that
\begin{equation} \label{eqB}
\sup_{v\in W_j,\, \|v\|_H=1}|a(v_j,v)|%
\to 0\mbox{ as } j\to\infty.
\end{equation}
In both of these cases as $\|v_j\|_H=1$ is bounded we can extract a subsequence that is weakly convergent to some $v\in H$. Denoting the subsequence again by $(v_j)$, we have that $v_j\in W_j$ and $v\in W$ by (ii) in Definition \ref{def:mosco}. Further, by (i) in Definition \ref{def:mosco}, for all $w\in W$, there exists a sequence $(w_j)\subset H$ with $w_j\in W_j$ such that $\|w_j-w\|_H\to 0$. Thus, and by \eqref{eqA} or \eqref{eqB},
$$
a(v_j,w) = a(v_j,w_j) + a(v_j,w-w_j) \to 0 \mbox{ as } j\to\infty.
$$
But also $a(v_j,w)\to a(v,w)$. %
Thus $a(v,w)=0$ for all $w\in W$, so that $v=0$ as $a$ is invertible on $W$. So $v_j\rightharpoonup v=0$ and, by \eqref{eqA} or \eqref{eqB}, $a(v_j,v_j)\to 0$. Further,  recalling that $a=a_0+a_1$ with $a_0$ coercive and $a_1$ compact, we have also $a_1(v_j,v_j)\to 0$ as $a_1$ is compact. But this implies that $a_0(v_j,v_j)\to 0$, which contradicts that $a_0$ is coercive. Thus, for some $J\in \N$, $a$ is invertible on $W_j$ for $j\geq J$ and is uniformly bounded.

Thus, for $j\geq J$, $u_{W_j}$ is well-defined and $(u_{W_j})_{j=J}^\infty$ is bounded and so has a weakly convergent subsequence, converging to a limit $u_*$, and $u_*\in W$ by (ii) in Definition \ref{def:mosco}. Further, by (i) in Definition \ref{def:mosco}, for all $w\in W$ there exists $w_j\in W_j$ such that $\|w_j-w\|_H\to 0$, and \eqref{eq:varW} gives
$$
a(u_W,w)=\langle f, w\rangle = \langle f, w-w_j\rangle + \langle f,w_j\rangle = \langle f, w-w_j\rangle + a(u_{W_j},w_j) \to a(u_*,w),
$$
as $j\to\infty$ through that subsequence. Thus $a(u_W,w)=a(u_*,w)$, for all $w\in W$, so that $u_*=u_W$ by the invertibility of $a$ on $W$. By the same argument every subsequence of $(u_{W_j})_{j=J}^\infty$ has a subsequence converging weakly to $u_W$, so that $(u_{W_j})_{j=J}^\infty$ converges weakly to $u_W$. Finally, we see that
\begin{eqnarray*}
a(u_{W_j}-u_W,u_{W_j}-u_W)
 &=& \langle f,u_{W_j}\rangle - a(u_{W_j},u_W)-a(u_W,u_{W_j}-u_W)\to 0
\end{eqnarray*}
as $j\to \infty$, by the weak convergence of $(u_{W_j})_{j=J}^\infty$ and \eqref{eq:varW}. Since $a_1$ is compact, $a_1(u_{W_j}-u_W,u_{W_j}-u_W)\to 0$, so that also $a_0(u_{W_j}-u_W,u_{W_j}-u_W)\to 0$. Since $a_0$ is coercive it follows that $u_{W_j}\to u_W$.
\end{proof}

\begin{rem} \label{rem:coer}
The statement of Lemma \ref{lem:dec3} can be strengthened if additional assumptions are made on $a$, $(W_j)_{j=1}^\infty$ and $W$.

\begin{enumerate}[(i)]
\item If $a$ is coercive then
one can take
$J=1$ (since a coercive sesquilinear form is automatically invertible on every subspace).
In the special case when \eqref{eq:decrease} holds, this was noted already in \cite[Lem.~2.4]{ChaHewMoi:13}.

\item If $W_j\subset W$ for each $j\in\N$ %
then (ii) in Definition \ref{def:mosco} holds automatically and %
{\em quasi-optimality} holds asymptotically \cite[Thms.~8.10-11]{Steinbach}, meaning that, for some $M>0$ and $J\in \N$, $a$ is invertible on $W_j$ for $j\geq J$, and
\begin{equation} \label{eq:quasiopt}
\|u_{W_j}-u_W\|_H \leq M \inf_{w_j\in W_j}\|u_W-w_j\|_H, \quad \mbox{for } j\geq J.
\end{equation}
Furthermore, if $a$ is coercive then, by C\'ea's lemma, \rf{eq:quasiopt} holds with $J=1$ and $M=C/\alpha$, where $C$ and $\alpha$ are the continuity and coercivity constants for $a$.
\end{enumerate}
\end{rem}

The following lemma \cite[Theorem 2]{Ch:20} (and see \cite[Proposition 9.4]{Daners:03}) makes clear that $W_j\xrightarrow{M} W$ is necessary for the convergence in Lemma \ref{lem:dec3} to hold.
\begin{lem}
\label{lem:Mosco} Suppose that $W\subset H$ and $W_j\subset H$, for $j\in \N$, are closed subspaces, and that $a$ is invertible on $W$ and, for some $J\in \N$, on $W_j$ for $j\geq J$.  Suppose also that $\|u_{W_j}-u_W\|_H\to 0$ as $j\to\infty$, for every $f\in \cH$, where $u_W$ and $u_{W_j}$ are as defined in Lemma \ref{lem:dec3}. Then $W_j\xrightarrow{M} W$ as $j\to\infty$.
\end{lem}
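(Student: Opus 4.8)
The plan is to verify directly the two defining conditions of Mosco convergence in Definition \ref{def:mosco}, exploiting the hypothesis in its full strength: that for \emph{every} datum the Galerkin solutions converge. Concretely, for $g\in\cH$ let $u_W^g\in W$ and (for $j\ge J$) $u_{W_j}^g\in W_j$ denote the solutions of \eqref{eq:varW} on $W$ and on $W_j$ with $f$ replaced by $g$; the assumption is that $\|u_{W_j}^g-u_W^g\|_H\to 0$ for each such $g$. Two elementary observations drive everything: by \eqref{eq:blo} and uniqueness one has $u_W^{Aw}=w$ for every $w\in W$; and the Galerkin property of $u_W^g$ says precisely that $Au_W^g-g$ annihilates $W$, i.e.\ $Au_W^g-g\in W^{a,\cH}$.

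Condition (i) is then immediate: given $w\in W$, I would apply the hypothesis with datum $g=Aw$. Since $u_W^{Aw}=w$, the elements $w_j:=u_{W_j}^{Aw}\in W_j$ (for $j\ge J$, with $w_j:=0$ otherwise) satisfy $\|w_j-w\|_H\to 0$, which is exactly what (i) requires.

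Condition (ii) is the crux, and the place where the argument must be chosen with care. Suppose $(W_{j_m})$ is a subsequence, $w_m\in W_{j_m}$, and $w_m\rightharpoonup w$; I must show $w\in W$. The naive route — passing to the limit in the defining variational identity tested by $w_m$ — founders because it pairs two sequences that converge only weakly, and a continuous sesquilinear form need not be sequentially weakly continuous. The fix is to test against the weakly convergent $w_m$ using a family that converges \emph{strongly}. Fix any $\phi\in\cH$ and set $\chi_m:=u_{W_{j_m}}^{\phi}\in W_{j_m}$, which by hypothesis converges strongly to $\chi:=u_W^{\phi}\in W$. Because $w_m\in W_{j_m}$ is an admissible test function, the Galerkin equation gives $a(\chi_m,w_m)=\langle\phi,w_m\rangle$ for all large $m$. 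Now $a(\chi_m,w_m)\to a(\chi,w)$ (splitting $a(\chi_m-\chi,w_m)+a(\chi,w_m-w)$: the first term is bounded by $C\|\chi_m-\chi\|_H\|w_m\|_H\to0$, the second tends to $0$ as $\langle A\chi,\cdot\rangle\in H^*$ is weakly continuous), while $\langle\phi,w_m\rangle\to\langle\phi,w\rangle$ since $\langle\phi,\cdot\rangle\in H^*$. Hence
\begin{equation*}
a(u_W^{\phi},w)=\langle\phi,w\rangle\qquad\text{for every }\phi\in\cH.
\end{equation*}

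The final step converts this into the membership $w\in W$ through the annihilator calculus attached to Lemma \ref{lem:hs_orth}. Writing $a(u_W^{\phi},w)=\langle Au_W^{\phi},w\rangle$ via \eqref{eq:blo} and recalling $r_\phi:=Au_W^{\phi}-\phi\in W^{a,\cH}$, the displayed identity collapses to $\langle r_\phi,w\rangle=0$. As $\phi$ ranges over $\cH$, the residual $r_\phi$ ranges over \emph{all} of $W^{a,\cH}$: indeed, for $\psi\in W^{a,\cH}$ the choice $\phi=-\psi$ forces $u_W^{\phi}=0$ (its data annihilates $W$), whence $r_\phi=\psi$. Therefore $\langle\psi,w\rangle=0$ for every $\psi\in W^{a,\cH}$, that is $w\in(W^{a,\cH})^{a,H}=W$, using the identity recorded immediately after Lemma \ref{lem:hs_orth}. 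This yields (ii) and, with (i), the claimed Mosco convergence $W_j\xrightarrow{M}W$. I expect the only genuine obstacle to be condition (ii), and specifically the realisation that one should test the \emph{prefractal} equation with the strongly convergent primal solutions $\chi_m$ rather than attempting a symmetric/adjoint argument; note that this route needs neither coercivity nor any convergence of adjoint solutions.
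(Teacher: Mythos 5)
Your proof is correct. Note first that the paper itself does not prove Lemma \ref{lem:Mosco}: it is quoted from the references \cite[Theorem 2]{Ch:20} and \cite[Proposition 9.4]{Daners:03}, so there is no in-paper argument to compare against; your proposal supplies a valid self-contained proof. Both halves check out. For (i), taking the datum $g=Aw$ and using uniqueness on $W$ to get $u_W^{Aw}=w$ is exactly the right move, and the first finitely many $j<J$ are harmless. For (ii), your key device --- testing the Galerkin equation on $W_{j_m}$ against the weakly convergent $w_m$ while the trial functions $\chi_m=u_{W_{j_m}}^{\phi}$ converge \emph{strongly} --- correctly circumvents the failure of joint weak continuity of $a$; the splitting $a(\chi_m-\chi,w_m)+a(\chi,w_m-w)$ is the standard strong--weak pairing argument (the same mechanism the paper uses inside the proof of Lemma \ref{lem:dec3}, in the opposite direction). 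The closing annihilator calculus is also sound: the Galerkin residual $r_\phi=Au_W^{\phi}-\phi$ always lies in $W^{a,\cH}$, the choice $\phi=-\psi$ with $\psi\in W^{a,\cH}$ shows the residuals exhaust $W^{a,\cH}$, and then $\langle\psi,w\rangle=0$ for all $\psi\in W^{a,\cH}$ gives $w\in (W^{a,\cH})^{a,H}=W$ by the identity recorded after Lemma \ref{lem:hs_orth}. One presentational remark: it is worth stating explicitly that $w_m\rightharpoonup w$ implies $(w_m)$ is bounded (uniform boundedness), since that is what makes the term $C\|\chi_m-\chi\|_H\|w_m\|_H$ vanish; you use it implicitly. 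Your observation that the argument needs only invertibility (not coercivity, not a compactly perturbed structure, and no adjoint problem) is also accurate, and is consistent with the generality in which the lemma is stated.
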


\subsection{Sobolev spaces and trace operators} \label{sec:SobolevSpaces}

Our notation follows that of \cite{McLean} and \cite{ChaHewMoi:13}.
Let $m\in\N$.
For a subset $E\subset\R^m$ we denote its complement $E^c:=\R^m\setminus E$, its closure $\overline{E}$ and its interior $E^\circ$.
We denote the Hausdorff (fractal) dimension of $E$ by $\dimH{E}$ (see, e.g.,\ \cite[\S I.2]{Triebel97FracSpec}).
We say that a non-empty closed set $F\subset\R^m$ is a $d$-set for some $0\leq d\leq m$,
if it is ``uniformly $d$-dimensional'',
more precisely if there exist $c_{1},c_{2}>0$
such that
\[
c_{1}r^{d}\leq\mathcal{H}^{d}\big(B_{r}(\bx)\cap F\big)\leq c_{2}r^{d},\quad \bx\in F,\,\,0<r\leq1,
\]
where $B_{r}(\bx)$ is the closed ball of radius $r$ with centre at $\bx$ and $\mathcal{H}^{d}$ denotes the $d$-dimensional Hausdorff measure on $\R^{m}$ \cite[\S I.3]{Triebel97FracSpec}.
We say that a non-empty open set $\Omega\subset \R^m$ is $C^0$ (respectively Lipschitz) if its boundary $\partial\Omega$ can at each point be locally represented as the graph (suitably rotated) of a $C^0$ (respectively Lipschitz) function from $\R^{m-1}$ to $\R$, with $\Omega$ lying only on one side of $\partial\Omega$.
For a more detailed definition see, e.g., \cite[Defn~1.2.1.1]{Gri}.
For $m=1$ these definitions coincide: we interpret them both to mean that $\Omega$ is a countable union of open intervals whose closures are disjoint and whose endpoints have no limit points.

For $s\in \R$, let $H^s(\R^m)$ denote the Hilbert space of tempered distributions
whose Fourier transforms (defined for $\bxi\in\R^m$ as $\hat{u}(\bxi):= \frac{1}{(2\pi)^{m/2}}\int_{\R^m}\re^{-\ri \bxi\cdot \bx}u(\bx)\,\rd \bx$ in the case that $u\in C_0^\infty(\R^m)$) are locally integrable with
\[\|u\|_{H^s(\R^m)}^2:=\int_{\R^m}(1+|\bxi|^2)^{s}\,|\hat{u}(\bxi)|^2\,\rd \bxi < \infty.\]
In particular,
$H^0(\R^m)=L^2(\R^m)$ with equal norms. For the dual space of $H^s(\R^m)$ we have the unitary realisation $(H^s(\R^m))^*\cong H^{-s}(\R^m)$,
with duality pairing
\begin{align}\label{DualDef}
\left\langle u, v \right\rangle_{H^{-s}(\R^m)\times H^{s}(\R^m)}:= \int_{\R^m}\hat{u}(\bxi) \overline{\hat{v}(\bxi)}\,\rd \bxi,
\end{align}
which coincides with the $L^2(\R^m)$ inner product when both $u$ and $v$ are in $L^2(\R^m)$.

Given a closed set $F\subset \R^m$, we define
\begin{equation*} %
H_F^s := \{u\in H^s(\R^m): \supp{u} \subset F\},
\end{equation*}
and given a non-empty open set $\Omega\subset\R^m$, we %
define
\begin{equation*}%
\tH^s(\Omega):=\overline{C^\infty_0(\Omega)}^{H^s(\R^m)},
\end{equation*}
the closure of $C^\infty_0(\Omega)$ in $H^s(\R^m)$.
Clearly $\tH^s(\Omega)\subset H^s_{\overline\Omega}$, and when $\Omega$ is sufficiently regular
it holds that $\tH^s(\Omega)=H^s_{\overline\Omega}$ (see Proposition \ref{prop:TildeSubscript} for results relevant to the current study); however, in general these spaces can be different \cite[\S3.5]{ChaHewMoi:13}.
For non-empty open $\Omega$ we also define %
\begin{align*}
H^s(\Omega)&:=\{u=U|_\Omega \textrm{ for some }U\in H^s(\R^m)\},\\
\|u\|_{H^{s}(\Omega)}&:=\inf_{\substack{U\in H^s(\R^m)\\ U|_{\Omega}=u}}\normt{U}{H^{s}(\R^m)}.
\end{align*}
Although $H^s(\Omega)$ is a space of distributions on $\Omega$, it can be naturally identified with a space of distributions on $\R^m$, namely $(H^s_{\Omega^c})^\perp\subset H^s(\R^m)$, where $^\perp$ denotes orthogonal complement in $H^s(\R^m)$, with the restriction operator $|_\Omega :(H^s_{\Omega^c})^\perp\to H^s(\Omega)$ providing a unitary isomorphism between the two spaces. %

Regarding duality,
for arbitrary $F\subset\R^m$ closed and $\Omega\subset\R^m$ open, we can unitarily realise the dual spaces of $H^s_F$ and $\tH^s(\Omega)$ by certain closed subspaces of $H^{-s}(\R^m)$, with the duality pairing inherited from \eqref{DualDef}.  %
Precisely, by Lemma \ref{lem:hs_orth},
\begin{align} \label{dualrelns1}
(H^s_F)^*&\cong (\tH^{-s}(F^c))^\perp,\\
\label{dualrelns2}
(\tH^s(\Omega))^*&\cong (H^{-s}_{\Omega^c})^\perp,
\end{align}
where $^\perp$ denotes orthogonal complement in $H^{-s}(\R^m)$, since \cite[\S3.2]{ChaHewMoi:13} $\tH^{-s}(F^c)$ and $H^{-s}_{\Omega^c}$ are the annihilators of $H^s_F$ and $\tH^s(\Omega)$, respectively, with respect to the duality pairing \eqref{DualDef}.
An alternative, and more widely-known unitary realisation of $(\tH^s(\Omega))^*$ (also valid for arbitrary open $\Omega\subset\R^m$, see \cite[Thm.~3.3]{ChaHewMoi:13}) is
\begin{align}
\label{isdual}
(\tH^s(\Omega))^*  \cong H^{-s}(\Omega) \quad \mbox{with} \quad
\langle u,v \rangle_{H^{-s}(\Omega)\times \tH^s(\Omega)}:=\langle U,v \rangle_{H^{-s}(\R^m)\times H^s(\R^m)},
\end{align}
where $U\in H^{-s}(\R^m)$ is any extension of $u\in H^{-s}(\Omega)$ with $U|_\Omega=u$. That \rf{dualrelns2} and \rf{isdual} are both unitary realisations of $(\tH^s(\Omega))^*$ is consistent with the fact that $|_\Omega :(H^{-s}_{\Omega^c})^\perp\to H^{-s}(\Omega)$ is a unitary isomorphism, as mentioned above.

In the context of our screen scattering problem, we define Sobolev spaces on the hyperplane $\Gamma_\infty = \R^{n-1}\times\{0\}$ by associating $\Gamma_\infty$ with $\R^{n-1}$ and setting $H^s(\Gamma_\infty) := H^s(\R^{n-1})$, for $s\in\R$, which we shall frequently abbreviate to simply $H^s$.
For $E\subset\Gamma_\infty$ we set $\widetilde{E}:=\{\widetilde \bx\in\R^{n-1}: (\widetilde \bx,0)\in E\}\subset \R^{n-1}$.
Then for a closed subset $F\subset\Gamma_\infty$ we define $H^s_F:=H^s_{\widetilde{F}}\subset H^s$, and for a (relatively) open subset $\Omega\subset \Gamma_\infty$ we set
$\tH^{s}(\Omega):=\tH^{s}(\widetilde{\Omega})\subset H^s$
and $H^{s}(\Omega):=H^{s}(\widetilde{\Omega})$, etc.
We stress that all Sobolev spaces on subsets of $\Gamma_\infty$ such as $H^s_F$, $\tH^s\OO$ and $H^s\OO$ are defined starting from $H^s=H^s(\Gamma_\infty)=H^s(\R^{n-1})$, as opposed to $H^s\Rn$; in other words, in the definitions earlier in this section we have $m=n-1$.

In the exterior domain $D:=\R^n\setminus\overline{\Gamma}$ we work with Sobolev spaces defined via weak derivatives. %
Given a non-empty open $\Omega\subset \R^n$, let
$W^1(\Omega) := \{u\in L^2(\Omega): \nabla u \in L^2(\Omega)\}$
and let $W^{1,\mathrm{loc}}(\Omega)$ denote the ``local'' space in which square integrability of $u$ and $\nabla u$ is required only on bounded subsets of $\Omega$.
We note that, typically, $H^1(D)\subsetneqq W^1(D)$, since the restriction space inherits from $H^1\Rn$ a requirement of (weak) continuity across $\Gamma$.
We define
$U^+:=\{(x_1,\ldots,x_n)\in\R^n:x_n>0\}$ and $U^-:=\R^n\setminus\overline{U^+}$, and adopt the convention that the unit normal vector $\bn$ on $\Gamma_\infty$ points into $U^+$.
From the half spaces $U^\pm$ to the hyperplane $\Gamma_\infty$ we define the standard trace operators $\gamma^\pm:W^1(U^\pm)\to H^{1/2}= H^{1/2}(\Gamma_\infty)$ and $\partial_\bn^\pm:
\{u\in W^1(U^\pm): \Delta u \in L^2(U^\pm)\}
\to H^{-1/2}=H^{-1/2}(\Gamma_\infty)$.
We shall frequently abuse notation and apply $\gamma^\pm$ and $\partial_\bn^\pm $ to elements $u$ of the local space $W^{1,\mathrm{loc}}(D)$, assuming implicitly that $u$ has been pre-multiplied by a cutoff $\phi \in C^\infty_0(\R^n)$ satisfying $\phi=1$ in some neighbourhood of $\overline\Gamma$, and restricted to $U^\pm$ as appropriate; for example, $\gamma^+u$ should be interpreted as $\gamma^+((u\phi)|_{U^+})$.

\section{Boundary value problems and boundary integral equations}
\label{sec:BVPs}

Given a screen $\Gamma\subset\Gamma_\infty$ (a bounded subset of $\Gamma_\infty$), and an incident wave $u^i\in H^{1,\mathrm{loc}}(\R^n)$ (for instance, a plane wave $u^i(\bx):=\re^{\ri k \bd\cdot \bx}$ with $\bd$ a unit direction vector), we seek a scattered acoustic field $u$
satisfying
the Helmholtz equation %
\begin{align}
\label{eqn:HE}
\Delta u + k^2 u = 0, \qquad k>0,
\end{align}
in $D=\R^n\setminus \omS$,
the Sommerfeld radiation condition
\begin{align}
\label{eqn:SRC}
\pdone{u(\bx)}{r} - \ri k u(\bx) = o(r^{(1-n)/2}), \qquad r:=|\bx|\to\infty, \text{ uniformly in } \hat\bx:=\bx/|\bx|,
\end{align}
and the Dirichlet boundary condition
\begin{align}
\label{eqn:DirBC}
u=-u^i \qquad \text{on } \Gamma.
\end{align}
To formulate a well-posed BVP, one needs to be more precise about the sense in which the boundary condition \rf{eqn:DirBC} holds. A detailed investigation into this issue for general bounded subsets $\Gamma\subset \Gamma_\infty$ was carried out in \cite{ScreenPaper}.
Here we apply the results of \cite{ScreenPaper} to describe well-posed BVP formulations for the two types of screen mentioned in the Introduction, namely (i) bounded, relatively open subsets of $\Gamma_\infty$, and (ii) compact subsets of $\Gamma_\infty$, possibly with empty relative interior.
From now on, for brevity we shall omit the words ``relative'' and ``relatively'' when discussing relatively open subsets of $\Gamma_\infty$ and relative complements, boundaries and interiors of subsets of $\Gamma_\infty$.

\subsection{Well-posed BVPs and BIEs for bounded open screens}
\label{sec:Open}
Let $\Gamma$ be a non-empty bounded open subset of $\Gamma_\infty$. Then we can formulate the scattering BVP by imposing the Dirichlet boundary condition \rf{eqn:DirBC} by restriction to $\Gamma$ (denoting this problem as $\sD(\Gamma)^\mathrm{r}$, $\sD$ for Dirichlet, $\mathrm{r}$ for restriction).

\begin{defn}[Problem $\sD(\Gamma)^{\mathrm{r}}$ for bounded open screens]
\label{def:OpenPrime}
Let $\Gamma\subset\Gamma_\infty$ be non-empty, bounded and open. Given $g^{\mathrm{r}}\in H^{1/2}(\Gamma)$ (specifically $g^{\mathrm{r}}:=-(\gamma^\pm u^i)|_{\Gamma}$ for the scattering problem), find $u\in C^2\left(D\right)\cap  W^{1,\mathrm{loc}}(D)$ satisfying \rf{eqn:HE} in $D$, \rf{eqn:SRC}, and %
\begin{align*}%
(\gamma^\pm u)|_{\Gamma}&=g^{\mathrm{r}}.
\end{align*}
\end{defn}

A well-posedness result for this formulation is provided in Theorem \ref{thm:OpenPrime}, which is proved in \cite[Thm.~6.2(a)]{ScreenPaper}.
Before stating it we need some more notation. For $\Gamma\subset\Gamma_\infty$ non-empty, bounded and open let $\cS_{\Gamma}:\tH^{-1/2}(\Gamma)\to C^2(D)\cap W^{1,{\rm loc}}(\R^n)$ denote the standard single-layer potential, defined for $\phi\in C^\infty_0(\Gamma)$ by
\begin{align}\label{SLP}
\cS_{\Gamma}\phi(\bx)=\int_{\Gamma}\Phi(\bx,\by)\phi(\by)\,\rd s(\by), \qquad \bx\in D,
\end{align}
with $\Phi(\bx,\by)=\re^{\ri k |\bx-\by|}/(4\pi |\bx-\by|)$ ($n=3$) or $\Phi(\bx,\by)=(\ri/4)H^{(1)}_0(k|\bx-\by|)$ ($n=2$), %
and $S^{\mathrm r}_{\Gamma}:\tH^{-1/2}(\Gamma)\to H^{1/2}(\Gamma)$ the single layer boundary integral operator (BIO), the bounded linear operator
defined by
\begin{align}\label{SLBIO}
S^{\mathrm r}_{\Gamma}\phi:=(\gamma^\pm\cS_{\Gamma}\phi)|_{\Gamma}.
\end{align}

\begin{thm}[\!\!{\cite[Thm.~6.2(a)]{ScreenPaper}, \cite[Lem.~4.15(ii)]{caetano2018}}]
\label{thm:OpenPrime}
Let $\Gamma\subset\!\Gamma_\infty$ be non-empty, bounded and open, with %
$\tH^{-1/2}(\Gamma)=H^{-1/2}_{\overline\Gamma}$.
Then problem $\sD(\Gamma)^{\mathrm{r}}$
has a unique solution. Moreover, it satisfies the representation formula
\begin{align}
\label{eqn:Rep}
u(\bx )= -\cS_{\Gamma}\phi(\bx), \qquad \bx\in D,
\end{align}
where $\phi = \partial_\bn^+u-\partial_\bn^-u \in \tH^{-1/2}(\Gamma)$ is the unique solution of the BIE
\begin{align}
\label{eqn:SBIEPrime}
S^{\mathrm r}_{\Gamma}\phi = -g^{\mathrm{r}}.
\end{align}
\end{thm}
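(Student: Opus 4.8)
The plan is to prove the result by establishing the equivalence of the BVP $\sD(\Gamma)^{\mathrm{r}}$ with the BIE \eqref{eqn:SBIEPrime}, proving well-posedness of the BIE directly by a coercivity argument, and then transferring existence, uniqueness and the representation formula back to the BVP. The single-layer BIE is the engine of the proof: once it is known to be uniquely solvable for every right-hand side in $H^{1/2}(\Gamma)$, the representation formula \eqref{eqn:Rep} simultaneously constructs the solution of the BVP and encodes its uniqueness.

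For the BIE I would work variationally. Since $\Gamma$ is bounded and open, \eqref{isdual} (with $s=-1/2$, $m=n-1$) provides the unitary realisation $(\tH^{-1/2}(\Gamma))^*\cong H^{1/2}(\Gamma)$, so the equation $S^{\mathrm r}_\Gamma\phi=-g^{\mathrm r}$ is equivalent to the variational problem of finding $\phi\in\tH^{-1/2}(\Gamma)$ with $a(\phi,\psi)=\langle -g^{\mathrm r},\psi\rangle$ for all $\psi\in\tH^{-1/2}(\Gamma)$, where $a(\phi,\psi):=\langle S^{\mathrm r}_\Gamma\phi,\psi\rangle$. This form is continuous because $S^{\mathrm r}_\Gamma$ is bounded, and, crucially, it is coercive on $\tH^{-1/2}(\Gamma)$: the single-layer form is coercive on the whole of $H^{-1/2}=H^{-1/2}(\Gamma_\infty)$ by the strong ellipticity of the flat sound-soft single-layer operator established in \cite{CoercScreen2}, and coercivity is inherited by the closed subspace $\tH^{-1/2}(\Gamma)\subset H^{-1/2}$, so the bound is uniform over arbitrary bounded open $\Gamma$. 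The Lax--Milgram lemma (\S\ref{sec:var}) then yields a unique $\phi\in\tH^{-1/2}(\Gamma)$ for each $g^{\mathrm r}$, i.e.\ $S^{\mathrm r}_\Gamma$ is invertible.

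To connect the two formulations I would use the jump relations for the single-layer potential together with Green's representation theorem. Given a solution $u$ of $\sD(\Gamma)^{\mathrm{r}}$, set $\phi:=\partial_\bn^+u-\partial_\bn^-u$. Elliptic regularity for \eqref{eqn:HE} across the open set $\Gamma_\infty\setminus\overline\Gamma$ shows that both the Dirichlet jump $\gamma^+u-\gamma^-u$ and the Neumann jump $\phi$ are supported in $\overline\Gamma$; since $\phi\in H^{-1/2}=H^{-1/2}(\Gamma_\infty)$ this gives $\phi\in H^{-1/2}_{\overline\Gamma}$, and the hypothesis $\tH^{-1/2}(\Gamma)=H^{-1/2}_{\overline\Gamma}$ promotes this to $\phi\in\tH^{-1/2}(\Gamma)$, exactly the space on which the BIE is posed. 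The restriction condition $(\gamma^\pm u)|_\Gamma=g^{\mathrm r}$ forces the Dirichlet jump to vanish on $\Gamma$, hence to be supported on $\partial\Gamma$; showing it vanishes identically reduces Green's representation of the radiating field $u$ to the single-layer term alone, giving \eqref{eqn:Rep}. Applying $\gamma^\pm$ and restricting to $\Gamma$ turns \eqref{eqn:Rep} and the boundary condition into \eqref{eqn:SBIEPrime}. Conversely, for $\phi$ solving \eqref{eqn:SBIEPrime} the field $u:=-\cS_\Gamma\phi$ is a radiating $C^2(D)\cap W^{1,\mathrm{loc}}(\R^n)$ solution of \eqref{eqn:HE} whose trace satisfies the boundary condition by construction, so $u$ solves the BVP. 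Existence and uniqueness for the BVP then follow from the corresponding properties of the invertible BIE.

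I expect the \emph{main obstacle} to be the vanishing of the Dirichlet jump $\gamma^+u-\gamma^-u$ on $\partial\Gamma$, which is what legitimises the pure single-layer representation \eqref{eqn:Rep} and removes the double-layer contribution from Green's formula. For a smooth boundary this is classical, but for the rough (possibly fractal) boundaries admitted here it is genuinely delicate: one must rule out any distributional trace jump concentrated on $\partial\Gamma$ and control the boundary terms in the Green identity underlying uniqueness. This is precisely where the fine theory of Sobolev spaces on rough sets from \cite{ChaHewMoi:13,caetano2018} and the equivalence $\tH^{-1/2}(\Gamma)=H^{-1/2}_{\overline\Gamma}$ (together with the dual regularity it entails) become indispensable, and it is the part of the argument carried out in detail in \cite{ScreenPaper}.
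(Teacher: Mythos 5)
Your proposal is correct and takes essentially the same route as the proof this theorem rests on: the paper itself gives no self-contained proof but imports the result from \cite[Thm.~6.2(a)]{ScreenPaper} together with \cite[Lem.~4.15(ii)]{caetano2018}, and that argument is exactly your coercive variational treatment of the single-layer BIE (coercivity from \cite{CoercScreen2}, inherited by the closed subspace $\tH^{-1/2}(\Gamma)\subset H^{-1/2}$) combined with Green's representation and the jump relations. The obstacle you flag but leave open --- ruling out a Dirichlet jump $\gamma^+u-\gamma^-u$ concentrated on $\partial\Gamma$ --- is precisely what Remark~\ref{rem:TilSubImpliesNull} records: that jump lies in $H^{1/2}_{\partial\Gamma}$, and by \cite[Lem.~4.15(ii)]{caetano2018} the hypothesis $\tH^{-1/2}(\Gamma)=H^{-1/2}_{\overline\Gamma}$ forces $H^{1/2}_{\partial\Gamma}=\{0\}$, which is exactly why the extra assumption $H^{1/2}_{\partial\Gamma}=\{0\}$ made in \cite[Thm.~6.2(a)]{ScreenPaper} is superfluous in the statement here.
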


\begin{rem}
\label{rem:TilSubImpliesNull}
The statement of \cite[Thm.~6.2(a)]{ScreenPaper} includes an extra assumption that $H^{1/2}_{\partial\Gamma}=\{0\}$, where $\partial\Gamma$ denotes the relative boundary of $\Gamma\subset\Gamma_\infty$. But
this extra assumption is superfluous, since by \cite[Lem.~4.15(ii)]{caetano2018} it follows automatically from the assumption that $\tH^{-1/2}(\Gamma)=H^{-1/2}_{\overline\Gamma}$.
\end{rem}

The key condition for the well-posedness in Theorem~\ref{thm:OpenPrime} is $\tH^{-1/2}(\Gamma)=H^{-1/2}_{\overline\Gamma}$; the next proposition gives sufficient conditions on $\Gamma$ for this to hold.

\begin{prop}
\label{prop:TildeSubscript}
Each of the following are sufficient %
for $\tH^{-1/2}(\Gamma)=H^{-1/2}_{\overline\Gamma}$:
\begin{itemize}
\item[(i)] $\Gamma$ is $C^0$ (which holds in particular if $\Gamma$ is Lipschitz) \cite[Thm.~3.29]{McLean};
\item[(ii)] $\Gamma$ is $C^0$ except at a set of countably many points $P\subset\partial\Gamma$ such that $P$ has only finitely many limit points %
\cite[Thm.~3.24]{ChaHewMoi:13};
\item[(iii)] $|\partial\Gamma|=0$, where $|\cdot|$ denotes the Lebesgue measure on $\Gamma_\infty\cong \R^{n-1}$, and $\Gamma$ is ``thick'' in the sense of Triebel (\!\!{\cite[Def.~4.5(iii)]{caetano2018}} or \cite[Def.~3.1(ii)–(iv), Rem.~3.2]{Tri08}).
\end{itemize}
\end{prop}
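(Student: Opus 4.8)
The plan is to recognise that the claimed identity $\tH^{-1/2}(\Gamma)=H^{-1/2}_{\overline\Gamma}$ is a purely Sobolev-space-theoretic statement about the open set $\widetilde\Gamma\subset\R^{n-1}$ at the single exponent $s=-1/2$, entirely decoupled from the scattering problem; throughout I would therefore work in $H^{-1/2}(\R^{n-1})$, i.e.\ with $m=n-1$ and $s=-1/2$ in the definitions of \S\ref{sec:SobolevSpaces}. Since the inclusion $\tH^{-1/2}(\Gamma)\subset H^{-1/2}_{\overline\Gamma}$ always holds (as noted immediately after the definition of $\tH^s(\Omega)$), the only content is the reverse inclusion $H^{-1/2}_{\overline\Gamma}\subset\tH^{-1/2}(\Gamma)$, equivalently the density of $C^\infty_0(\widetilde\Gamma)$ in $H^{-1/2}_{\overline{\widetilde\Gamma}}$. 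My strategy is to treat the three conditions in turn, in each case reducing to a known density/removability result and checking only that its hypotheses apply at $s=-1/2$ in dimension $n-1$.

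For (i) I would invoke \cite[Thm.~3.29]{McLean}, which gives $\tH^s(\Omega)=H^s_{\overline\Omega}$ for a bounded $C^0$ open set; applying it at $s=-1/2$ yields the claim, the Lipschitz case being subsumed since Lipschitz implies $C^0$. For (ii) the observation is that full $C^0$ regularity is more than required: it suffices that the exceptional points form a countable set $P\subset\partial\Gamma$ with only finitely many limit points, and \cite[Thm.~3.24]{ChaHewMoi:13} establishes the identity under exactly this hypothesis. In both cases the underlying mechanism is local and identical: where $\partial\Gamma$ is a (rotated) graph one can translate an element of $H^{-1/2}_{\overline\Gamma}$ slightly into $\Gamma$ and mollify, so that a countable exceptional set with finitely many accumulation points can be excised, the point being that in dimension $n-1\geq 1$ such a set carries no nonzero $H^{-1/2}$ distribution and hence presents no obstruction to the approximation.

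The substantive case, and the one I expect to be the main obstacle, is (iii), where $\partial\Gamma$ may be genuinely fractal and no graph structure is available. Here I would appeal to the Triebel-type theory, quoting the precise statement of \cite[Lem.~4.15(ii)]{caetano2018} together with the notion of a ``thick'' set from \cite[Def.~4.5(iii)]{caetano2018} (equivalently \cite[Def.~3.1, Rem.~3.2]{Tri08}), and then verifying that its standing assumptions are exactly conditions (iii). The delicate point is that a null boundary is not on its own enough at negative smoothness: a measure-zero but positive-dimensional fractal $\partial\Gamma$ can be non-removable (it can support nonzero $H^{-1/2}$ distributions once its dimension exceeds $n-2$), so the condition $|\partial\Gamma|=0$ must be supplemented by thickness, which supplies the intrinsic extension/restriction structure securing removability, and hence density, precisely at the level $s=-1/2$. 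The overall proof is thus a careful assembly of three cited results rather than a single new argument, with essentially all of the real work lying in matching hypotheses and exponents in case (iii).
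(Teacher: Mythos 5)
Your proposal is correct and follows essentially the same route as the paper, which proves this proposition purely by assembling the three cited results: \cite[Thm.~3.29]{McLean} for (i), \cite[Thm.~3.24]{ChaHewMoi:13} for (ii), and the thickness theory of \cite{caetano2018}/\cite{Tri08} for (iii), exactly as you do, with your added remarks on the trivial inclusion and the density reformulation being standard framing rather than a different argument. One small slip: for case (iii) you point to \cite[Lem.~4.15(ii)]{caetano2018}, but that lemma is the implication $\tH^{-1/2}(\Gamma)=H^{-1/2}_{\overline\Gamma}\Rightarrow H^{1/2}_{\partial\Gamma}=\{0\}$ (used in Remark~\ref{rem:TilSubImpliesNull}), not the result that thickness together with $|\partial\Gamma|=0$ yields the identity, so the correct appeal is to the thick-domain theorem of \cite{caetano2018} rather than that lemma.
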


In \S\ref{sec:examples} we shall combine Theorem \ref{thm:OpenPrime} and Proposition \ref{prop:TildeSubscript} to obtain well-posed\-ness results for three-dimensional scattering by certain generalisations of the classical Koch snowflake, as immediate corollaries of the recently established thickness results in \cite{caetano2018} (which build on earlier results in \cite[Prop.~3.8(iii)]{Tri08}).
We shall also deduce well-posedness results for scattering by the standard prefractal approximations to various well-known fractals including the Koch snowflake (and its generalisations), the Sierpinski triangle, and the Cantor dust. In all these cases the standard prefractals are either $C^0$ or $C^0$ except at a finite set of points.

Recalling from \S\ref{sec:SobolevSpaces} that $|_\Gamma :(H^{1/2}_{\Gamma^c})^\perp\to H^{1/2}(\Gamma)$ is a unitary isomorphism, we note that
the problem $\sD(\Gamma)^{\mathrm{r}}$ can be equivalently stated with the boundary condition \rf{eqn:DirBC} imposed by orthogonal projection. (See Remark \ref{rem:phys} below for an explanation of why this makes sense physically.)
It is instructive to write down this equivalent formulation explicitly, since we will adopt a similar viewpoint when defining BVPs for scattering by compact screens in \S\ref{sec:Closed}. In the following let
$P_\Gamma:H^{1/2}\to(H^{1/2}_{\Gamma^c})^\perp$
denote %
orthogonal projection %
and define  $S_\Gamma:\tH^{-1/2}(\Gamma)\to (H^{1/2}_{\Gamma^c})^\perp$ by $S_\Gamma := P_\Gamma\gamma^\pm \cS_\Gamma$.
\begin{defn}[Problem $\sD(\Gamma)$ for bounded open screens]
\label{def:Open}
Let $\Gamma\subset\Gamma_\infty$ be non-empty, bounded and open.
Given $g\in (H^{1/2}_{\Gamma^c})^\perp$
(specifically $g:=-P_\Gamma \gamma^\pm u^i$  for the scattering problem), find $u\in C^2\left(D\right)\cap  W^{1,\mathrm{loc}}(D)$ satisfying
\rf{eqn:HE} in $D$, \rf{eqn:SRC}, and the boundary condition
\begin{align}\label{BC:Open}
P_\Gamma\gamma^\pm u&=g.
\end{align}
\end{defn}

\begin{rem} \label{rem:phys} To understand why the boundary condition \eqref{BC:Open} makes sense in the scattering problem,
let $u^t:= u+u^i$ be the total field (the sum of the scattered and incident fields), and consider the traces $\gamma^\pm u^t$ of $u^t$ on $\Gamma_\infty \supset \Gamma$. According to formulation $\sD(\Gamma)^{\mathrm{r}}$ these traces vanish on $\Gamma$, so their supports lie in the complement $\Gamma^c$, i.e.\ $\gamma^\pm u^t \in H^{1/2}_{\Gamma^c}$ (more precisely $\gamma^\pm (\chi u^t) \in H^{1/2}_{\Gamma^c}$ for every $\chi\in C_0^\infty(\R^n)$ with $\chi=1$ in a neighbourhood of $\overline{\Gamma}$). But since the kernel of $P_\Gamma$ is $H^{1/2}_{\Gamma^c}$, this is equivalent to $P_\Gamma \gamma^\pm u^t = 0$ (more precisely  $P_\Gamma \gamma^\pm (\chi u^t) = 0$), which is just \eqref{BC:Open} with $g:=-P_\Gamma \gamma^\pm u^i$.
\end{rem}

Since the restriction operator $|_\Gamma: (H^{-1/2}_{\Gamma^c})^\perp\to H^{-1/2}(\Gamma)$ is unitary, the following proposition is a restatement of Theorem \ref{thm:OpenPrime}.
\begin{prop}
\label{prop:equiv}
Problems $\sD(\Gamma)^{\mathrm{r}}$ and $\sD(\Gamma)$ are equivalent, under the identification $g^{\mathrm{r}}= g|_\Gamma$.
Furthermore, when %
$\tH^{-1/2}(\Gamma)=H^{-1/2}_{\overline\Gamma}$ the common unique solution of both problems can be represented as \rf{eqn:Rep} where $\phi = \partial_\bn^+u-\partial_\bn^-u \in \tH^{-1/2}(\Gamma)$ is the common unique solution of the BIE \rf{eqn:SBIEPrime} and the BIE
\begin{align}
\label{eqn:SBIE}
S_{\Gamma}\phi = -g.
\end{align}
\end{prop}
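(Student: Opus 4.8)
The plan is to exploit the single structural fact, recalled in \S\ref{sec:SobolevSpaces}, that the restriction operator $|_\Gamma$ is a unitary isomorphism from $(H^{1/2}_{\Gamma^c})^\perp$ onto $H^{1/2}(\Gamma)$ with kernel $H^{1/2}_{\Gamma^c}$, so that the two formulations merely encode the same Dirichlet data in two ways. First I would record the elementary identity $(P_\Gamma v)|_\Gamma = v|_\Gamma$ for every $v\in H^{1/2}$: since $I-P_\Gamma$ is orthogonal projection onto $H^{1/2}_{\Gamma^c}$, the difference $v-P_\Gamma v$ lies in $H^{1/2}_{\Gamma^c}$ and therefore restricts to zero on $\Gamma$.

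Applying this identity with $v=\gamma^\pm u$, the boundary condition \eqref{BC:Open} of $\sD(\Gamma)$ restricts to $(\gamma^\pm u)|_\Gamma = g|_\Gamma$, which is exactly the boundary condition of $\sD(\Gamma)^{\mathrm r}$ under the identification $g^{\mathrm r}=g|_\Gamma$. Conversely, given $(\gamma^\pm u)|_\Gamma = g^{\mathrm r}=g|_\Gamma$, both $P_\Gamma\gamma^\pm u$ and $g$ lie in $(H^{1/2}_{\Gamma^c})^\perp$ and, by the same identity, have equal restriction to $\Gamma$; injectivity of $|_\Gamma$ on $(H^{1/2}_{\Gamma^c})^\perp$ then forces $P_\Gamma\gamma^\pm u=g$, recovering \eqref{BC:Open}. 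Since the Helmholtz equation \eqref{eqn:HE}, the radiation condition \eqref{eqn:SRC}, and the regularity requirement $u\in C^2(D)\cap W^{1,\mathrm{loc}}(D)$ are identical in both problems, this yields the equivalence of $\sD(\Gamma)^{\mathrm r}$ and $\sD(\Gamma)$.

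For the second assertion I would invoke Theorem \ref{thm:OpenPrime}: under the hypothesis $\tH^{-1/2}(\Gamma)=H^{-1/2}_{\overline\Gamma}$, problem $\sD(\Gamma)^{\mathrm r}$, and hence $\sD(\Gamma)$, has a unique solution, represented by \eqref{eqn:Rep} with $\phi=\partial_\bn^+u-\partial_\bn^-u\in\tH^{-1/2}(\Gamma)$ solving \eqref{eqn:SBIEPrime}. It then remains to verify that $\phi$ equivalently solves \eqref{eqn:SBIE}. This follows by the same mechanism: applying the identity $(P_\Gamma v)|_\Gamma=v|_\Gamma$ with $v=\gamma^\pm\cS_\Gamma\phi$ gives $(S_\Gamma\phi)|_\Gamma=(\gamma^\pm\cS_\Gamma\phi)|_\Gamma=S^{\mathrm r}_\Gamma\phi$, so \eqref{eqn:SBIE} restricts to \eqref{eqn:SBIEPrime} under $g^{\mathrm r}=g|_\Gamma$; conversely, $S_\Gamma\phi$ and $-g$ both lie in $(H^{1/2}_{\Gamma^c})^\perp$ and agree after restriction, so injectivity of $|_\Gamma$ there forces $S_\Gamma\phi=-g$.

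The argument is essentially bookkeeping around a unitary isomorphism, so I anticipate no genuine obstacle; the one point deserving care is the abuse of notation by which the traces $\gamma^\pm u$ are defined only after premultiplication by a cutoff $\chi\in C_0^\infty(\R^n)$ equal to $1$ near $\overline\Gamma$ (as in Remark \ref{rem:phys}). One must confirm that the identity $(P_\Gamma v)|_\Gamma=v|_\Gamma$ and the resulting equivalences are insensitive to the choice of $\chi$. This is immediate: if $\chi_1,\chi_2$ both equal $1$ on a neighbourhood of $\overline\Gamma$, then $(\chi_1-\chi_2)u$ vanishes near $\overline\Gamma$, so $\gamma^\pm((\chi_1-\chi_2)u)$ is supported in the closed set $\Gamma^c$; hence the two candidate traces differ by an element of $H^{1/2}_{\Gamma^c}$, which leaves both $(\gamma^\pm u)|_\Gamma$ and $P_\Gamma\gamma^\pm u$ unchanged.
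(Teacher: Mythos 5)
Your proposal is correct and follows essentially the same route as the paper: the paper dispenses with the proof in one line, observing that since the restriction operator $|_\Gamma$ from $(H^{1/2}_{\Gamma^c})^\perp$ (resp.\ $(H^{-1/2}_{\Gamma^c})^\perp$) onto the corresponding space on $\Gamma$ is a unitary isomorphism with kernel-free action, the proposition is a restatement of Theorem \ref{thm:OpenPrime}. Your write-up simply makes this bookkeeping explicit --- the identity $(P_\Gamma v)|_\Gamma = v|_\Gamma$, injectivity of $|_\Gamma$ on the orthogonal complement, and the insensitivity to the cutoff $\chi$ --- all of which is sound.
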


\subsection{Well-posed BVPs and BIEs for compact screens}
\label{sec:Closed}
Now let $\Gamma\subset\Gamma_\infty$ be compact. In particular we have in mind the case where $\Gamma$ has empty interior, in which case it is not possible to impose the boundary condition \rf{eqn:DirBC} by restriction. %
But, inspired by Definition \ref{def:Open} and Proposition \ref{prop:equiv}, we can impose \rf{eqn:DirBC} by an appropriate orthogonal projection; we justify this in Remark \ref{rem:phys2} below. Extending our existing notation, for compact $\Gamma$ let $P_\Gamma$ denote the orthogonal projection $P_\Gamma:H^{1/2}\to(\tH^{1/2}(\Gamma^c))^\perp$.
\begin{defn}[Problem $\sD(\Gamma)$ for compact screens] \label{def:Compact}
Let $\Gamma\subset\Gamma_\infty$ be non-empty and compact. Given $g\in (\tH^{1/2}(\Gamma^c))^\perp$ %
(specifically $g:=-P_\Gamma \gamma^\pm u^i$ for the scattering problem), find $u\in C^2\left(D\right)\cap  W^{1,\mathrm{loc}}(D)$ satisfying
\rf{eqn:HE} in $D$, \rf{eqn:SRC}, and %
\begin{align}\label{a1bc}
P_\Gamma\gamma^\pm u&=g.
\end{align}
\end{defn}

\begin{rem}\label{rem:phys2} We can justify the formulation in Definition \ref{def:Compact}, in particular
\eqref{a1bc}, by relating it to a more familiar formulation of the scattering problem for compact screens \cite[Def.~3.1]{ScreenPaper} that replaces \eqref{a1bc} with the requirement that $u^t:= u+u^i\in W_0^{1,\mathrm{loc}}(D)$, where $W_0^{1,\mathrm{loc}}(D)$ is the ``local'' version of $W_0^{1}(D)$, and $W_0^{1}(D)$ is the closure of $C_0^\infty(D)$ in $W^1(D)$. This formulation is well-posed \cite[Thm~3.1]{ScreenPaper}
when $\Gamma$ is any compact subset of $\R^n$. In the particular case when $\Gamma\subset \Gamma_\infty$ is a screen it is easy to see that $u^t\in W_0^{1,\mathrm{loc}}(D)$ implies that $\gamma^\pm u^t \in \tH^{1/2}(\Gamma^c)$ (more accurately $\gamma^\pm (\chi u^t) \in \tH^{1/2}(\Gamma^c)$ for every $\chi\in C_0^\infty(\R^n)$ with $\chi=1$ in a neighbourhood of $\Gamma$). %
But since the kernel of $P_\Gamma$ is $\tH^{1/2}(\Gamma^c)$, this is equivalent to $P_\Gamma \gamma^\pm u^t = 0$ (more precisely  $P_\Gamma \gamma^\pm (\chi u^t) = 0$), which is just \eqref{a1bc} with $g:=-P_\Gamma \gamma^\pm u^i$.
\end{rem}
Before stating a well-posedness result for this formulation (which we do in Theorem \ref{thm:Closed}), we need some more notation.
Given $\Gamma\subset\Gamma_\infty$ compact and $\epsilon>0$, let $\Gamma(\epsilon):=\{\bx\in\Gamma_\infty:\dist(\bx,\Gamma)<\epsilon\}$ %
and $D_\epsilon:=\R^n\setminus\overline{\Gamma(\epsilon)}$. Define $\cS_{\Gamma(\epsilon)}:\tH^{-1/2}(\Gamma(\epsilon))\to C^2(D_\epsilon)\cap W^{1,{\rm loc}}(\R^n)$
by \eqref{SLP}
with $\Gamma$ replaced by $\Gamma(\epsilon)$.
Define $\cS_{\Gamma}:H^{-1/2}_{\Gamma}\to C^2(D)\cap W^{1,{\rm loc}}(\R^n)$ by $\cS_{\Gamma}\phi(\bx):=\cS_{\Gamma(\epsilon)}\phi(\bx)$ for $\bx\in D$ and $0<\epsilon<\dist(\bx,\Gamma)$, which is well-defined and independent of $\epsilon>0$ since $H^{-1/2}_{\Gamma}\subset \tH^{-1/2}(\Gamma(\epsilon))$ for every $\epsilon>0$. %
Define $S_{\Gamma}:H^{-1/2}_{\Gamma}\to (\tH^{1/2}(\Gamma^c))^\perp$ by $S_{\Gamma}:=  P_\Gamma\gamma^\pm\cS_{\Gamma}$.
\begin{thm}[\!\!{\cite[Thm.~3.29 and Thm.~6.4]{ScreenPaper}}]
\label{thm:Closed}
Let $\Gamma\subset\Gamma_\infty$ be non-empty and compact.
Then problem $\sD(\Gamma)$
has a unique solution %
satisfying
the representation formula
\rf{eqn:Rep}, where $\phi = \partial_\bn^+u-\partial_\bn^-u \in H^{-1/2}_{\Gamma}$ is the unique solution of the BIE
\begin{equation*}%
S_\Gamma\phi = -g.
\end{equation*}
\end{thm}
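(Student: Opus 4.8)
The plan is to reduce the theorem to the already-established well-posedness of the standard ``$W_0^{1,\mathrm{loc}}(D)$'' formulation of the scattering problem, and then to convert that solution into the single-layer representation and boundary integral equation by classical potential-theoretic arguments, taking care throughout that $\Gamma$ may have empty interior. First I would establish existence and uniqueness of the BVP solution. By Remark \ref{rem:phys2}, problem $\sD(\Gamma)$ is equivalent to the formulation in which the boundary condition \eqref{a1bc} is replaced by the requirement $u^t := u + u^i \in W_0^{1,\mathrm{loc}}(D)$, and this formulation is well-posed for every compact $\Gamma\subset\R^n$ by \cite[Thm.~3.1]{ScreenPaper}. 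Hence $\sD(\Gamma)$ has exactly one solution $u$.

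Next I would derive the representation formula. Since $u$ satisfies the Helmholtz equation in $D$ and the Sommerfeld radiation condition, Green's representation theorem (justified for the rough domain $D$ by an approximation argument, applying the formula first on smooth subdomains exhausting $D$ and passing to the limit) expresses $u$ as the sum of a single-layer potential with density $\phi := \partial_\bn^+ u-\partial_\bn^- u$ and a double-layer potential with density $\gamma^+ u - \gamma^- u$, a priori elements of $H^{-1/2}$ and $H^{1/2}$ on $\Gamma_\infty$. On $\Gamma_\infty\setminus\omS$ the field $u$ is smooth, so both jumps are supported in $\omS = \Gamma$; in particular $\phi\in H^{-1/2}_\Gamma$. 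Moreover, since $u^t\in W_0^{1,\mathrm{loc}}(D)$ and $u^i$ is smooth across $\Gamma_\infty$, the total-field Dirichlet trace is continuous, giving $\gamma^+ u - \gamma^- u = -(\gamma^+ u^i - \gamma^- u^i) = 0$. The double-layer term therefore vanishes and we obtain exactly the representation $u = -\cS_\Gamma\phi$ of \eqref{eqn:Rep}.

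Then I would derive and invert the BIE. Applying $P_\Gamma\gamma^\pm$ to $u = -\cS_\Gamma\phi$ and using the definition $S_\Gamma = P_\Gamma\gamma^\pm\cS_\Gamma$ together with the boundary condition \eqref{a1bc} gives $-S_\Gamma\phi = g$, i.e.\ $S_\Gamma\phi = -g$. For uniqueness of $\phi$, I would observe that by Lemma \ref{lem:hs_orth} (cf.\ \eqref{dualrelns1}) the target space $(\tH^{1/2}(\Gamma^c))^\perp$ is a unitary realisation of $(H^{-1/2}_\Gamma)^*$, so $S_\Gamma$ is the operator associated with the single-layer sesquilinear form on $H^{-1/2}_\Gamma\times H^{-1/2}_\Gamma$; for a flat screen this form is coercive \cite{CoercScreen2}, whence $S_\Gamma$ is invertible by Lax--Milgram and the BIE has a unique solution. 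Alternatively, uniqueness of $\phi$ follows directly from uniqueness of the BVP: any BIE solution $\tilde\phi$ yields, via $u=-\cS_\Gamma\tilde\phi$, a solution of $\sD(\Gamma)$, which must coincide with the unique BVP solution, and the single-layer jump relations then force $\tilde\phi = \partial_\bn^+ u-\partial_\bn^- u = \phi$.

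The hard part will be the rigorous handling of traces for the rough, possibly interior-free, compact screen $\Gamma$. One must verify that $\phi=\partial_\bn^+ u-\partial_\bn^- u$ is well-defined in $H^{-1/2}$ (using the cutoff convention for applying $\partial_\bn^\pm$ to elements of $W^{1,\mathrm{loc}}(D)$) and that its support, together with that of the Dirichlet jump, really is contained in $\Gamma$ rather than merely in some neighbourhood; this is precisely what delivers $\phi\in H^{-1/2}_\Gamma$ and what makes $\cS_\Gamma\phi$ (defined through the limiting $\epsilon$-construction on $\Gamma(\epsilon)$) coincide with the potential appearing in Green's formula. Establishing Green's representation theorem itself on the non-Lipschitz domain $D$, via approximation by smooth domains, is the other technically delicate ingredient.
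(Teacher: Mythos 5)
You should note first that the paper contains no proof of Theorem~\ref{thm:Closed} at all: it is quoted verbatim from \cite[Thms.~3.29, 6.4]{ScreenPaper}, and your outline (existence via the $W_0^{1,\mathrm{loc}}$ formulation, Green's representation, jump relations, invertibility of $S_\Gamma$ from coercivity \cite{CoercScreen2}) follows broadly the same route as that source. However, your argument has a genuine gap at its foundation. Remark~\ref{rem:phys2} does \emph{not} establish an equivalence between $\sD(\Gamma)$ and the $W_0^{1,\mathrm{loc}}(D)$ formulation: it proves only the forward implication, that $u^t\in W_0^{1,\mathrm{loc}}(D)$ implies \eqref{a1bc} (the word ``equivalent'' in that remark refers only to the trivial rewriting of ``$\gamma^\pm u^t\in \tH^{1/2}(\Gamma^c)$'' as ``$P_\Gamma \gamma^\pm u^t=0$'', via the kernel of $P_\Gamma$). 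The converse --- that \emph{every} solution of $\sD(\Gamma)$ satisfies $u^t\in W_0^{1,\mathrm{loc}}(D)$ --- is precisely the substantive fact that makes Definition~\ref{def:Compact} well-posed, and it cannot be read off from the remark. This gap propagates: you invoke $u^t\in W_0^{1,\mathrm{loc}}(D)$ a second time to conclude that the Dirichlet jump $[u]:=\gamma^+u-\gamma^-u$ vanishes, so for an \emph{arbitrary} solution of $\sD(\Gamma)$ (which is what uniqueness requires) neither that step nor the uniqueness claim is justified. What your argument actually delivers, modulo the Green's-identity technicalities you rightly flag, is existence (the $W_0^{1,\mathrm{loc}}$ solution solves $\sD(\Gamma)$, admits the representation \eqref{eqn:Rep}, and its density solves the BIE) together with uniqueness of the BIE solution $\phi$; uniqueness of the BVP solution is missing, and your ``alternative'' uniqueness argument for $\phi$ is circular for the same reason, since it presupposes BVP uniqueness.

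The gap is repairable with tools already quoted in the paper, avoiding the unproven converse entirely. For any solution $u$ of $\sD(\Gamma)$, smoothness of $u$ across $\Gamma_\infty\setminus\Gamma$ gives $[u]\in H^{1/2}_\Gamma$, while applying \eqref{a1bc} with both signs gives $[u]\in\ker P_\Gamma=\tH^{1/2}(\Gamma^c)$. Now $H^{1/2}_\Gamma\subset H^{-1/2}_\Gamma$, and $H^{-1/2}_\Gamma$ is the annihilator of $\tH^{1/2}(\Gamma^c)$ with respect to the pairing \eqref{DualDef} (see the discussion following \eqref{dualrelns2}), so
$\|[u]\|_{L^2}^2=\langle [u],[u]\rangle_{H^{-1/2}\times H^{1/2}}=0$,
i.e.\ $[u]=0$. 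Hence \emph{every} solution of $\sD(\Gamma)$ --- not only the one produced by the $W_0^{1,\mathrm{loc}}$ formulation --- has the pure single-layer representation $u=-\cS_\Gamma\phi$ with $\phi=\partial_\bn^+u-\partial_\bn^-u\in H^{-1/2}_\Gamma$ and $S_\Gamma\phi=-g$; injectivity of $S_\Gamma$ (coercivity of $a_\Gamma$ on the closed subspace $H^{-1/2}_\Gamma$, cf.\ \S\ref{sec:varf}) then yields uniqueness of $u$ directly, with the $W_0^{1,\mathrm{loc}}$ formulation needed only for existence.
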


\begin{rem}
\label{rem:equiv}
Suppose that $\Gamma\subset\Gamma_\infty$ is non-empty, bounded and open, in which case $\overline{\Gamma}$ is compact, and suppose also that %
$\widetilde{H}^{-1/2}(\Gamma) = H^{-1/2}_{\overline{\Gamma}}$. Then we have a choice of well-posed formulations for the scattering problem, potentially with different solutions: problem $\sD(\Gamma)$ (see Definition \ref{def:Open}) with $g:= -P_\Gamma \gamma^\pm u^i$ (equivalently, $\sD(\Gamma)^{\mathrm{r}}$ with $g^{\mathrm{r}}:= -(\gamma^\pm u^i)|_\Gamma$) and problem $\sD(\overline\Gamma)$ (see Definition \ref{def:Compact}) with $g:= -P_{\overline \Gamma} \gamma^\pm u^i$.
But the assumption that $\widetilde{H}^{-1/2}(\Gamma)= H^{-1/2}_{\overline{\Gamma}}$ %
implies (in fact, is equivalent to) $\tH^{1/2}(\overline\Gamma^c) = H_{\Gamma^c}^{1/2}$ (see \cite[Lem.~3.26]{ChaHewMoi:13} and the proof of \cite[Lem.~4.15]{caetano2018}),
and from this it follows that $\cS_{\overline{\Gamma}}=\cS_{\Gamma}$, $P_{\overline\Gamma}=P_\Gamma$, and $S_{\overline{\Gamma}}=S_{\Gamma} = |_{\Gamma}^{-1}\circ S_{\Gamma}^{\mathrm{r}}$. %
 So the two problems $\sD(\Gamma)$ and $\sD(\overline\Gamma)$ are equivalent, sharing the same unique solution.
\end{rem}

It is natural to ask whether, in the case where $\Gamma\subset\Gamma_\infty$ is compact with empty interior, the screen scatters waves at all. This question was answered in \cite{ScreenPaper}.
\begin{prop}[\!\!{\cite[Thm.~4.6]{ScreenPaper}}]
\label{prop:Scatters}\quad
Let $\Gamma\subset\Gamma_\infty$ be non-empty and compact.
If $H^{-1/2}_\Gamma= \{0\}$ then the solution of $\sD(\Gamma)$ satisfies $u=0$ for all $g\in (\tH^{1/2}(\Gamma^c))^\perp$ (and so for all incident waves $u^i$). If $H^{-1/2}_\Gamma\neq \{0\}$ and $0\neq g\in (\tH^{1/2}(\Gamma^c))^\perp$ (in particular, if $g=-P_\Gamma \gamma^\pm u^i$ and $u^i$ is $C^\infty$ in a neighbourhood of $\Gamma$ with $u^i(\bx)\neq 0$ for all $\bx\in\Gamma$) then $u\neq 0$.
\end{prop}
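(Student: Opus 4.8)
Both assertions will follow directly from Theorem~\ref{thm:Closed}, which tells us that the unique solution of $\sD(\Gamma)$ is given by the representation formula $u=-\cS_\Gamma\phi$, where the density $\phi=\partial_\bn^+u-\partial_\bn^-u\in H^{-1/2}_\Gamma$ is the unique solution of the boundary integral equation $S_\Gamma\phi=-g$. For the first assertion, suppose $H^{-1/2}_\Gamma=\{0\}$. Then $\phi$, being an element of $H^{-1/2}_\Gamma$, must be zero, and the representation formula gives $u=-\cS_\Gamma\phi=0$ regardless of the data. (The annihilator identity, that $\tH^{1/2}(\Gamma^c)=\ker P_\Gamma$ is the annihilator of $H^{-1/2}_\Gamma$, in fact forces $(\tH^{1/2}(\Gamma^c))^\perp=\{0\}$ here, so that $g=0$ is the only admissible datum; but the representation formula yields $u=0$ without needing this observation.)

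For the second assertion, assume $H^{-1/2}_\Gamma\neq\{0\}$ and $0\neq g\in(\tH^{1/2}(\Gamma^c))^\perp$, and argue by contraposition. If $u=0$ throughout $D$, then since $U^\pm\subset D$ its one-sided normal traces vanish, so $\phi=\partial_\bn^+u-\partial_\bn^-u=0$; the boundary integral equation then gives $g=-S_\Gamma\phi=0$, contradicting $g\neq0$. Hence $u\neq0$. Equivalently, $g\neq0$ forces $\phi\neq0$ through $S_\Gamma\phi=-g$, and since $\phi$ is exactly the normal-derivative jump of $u$, the field $u$ cannot be identically zero.

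The substantive part, and the main obstacle, is the parenthetical claim that for the physical datum $g=-P_\Gamma\gamma^\pm u^i$ the hypothesis $g\neq0$ genuinely holds when $u^i$ is $C^\infty$ near $\Gamma$ with $u^i\neq0$ on $\Gamma$ (so that the second assertion is not vacuous). Since $\ker P_\Gamma=\tH^{1/2}(\Gamma^c)$ is the annihilator of $H^{-1/2}_\Gamma$, one has $g=0$ if and only if $\langle\psi,\gamma^\pm u^i\rangle=0$ for all $\psi\in H^{-1/2}_\Gamma$. I would derive a contradiction from this using two ingredients: that multiplication by a smooth compactly supported $\eta$ preserves $H^{-1/2}_\Gamma$ (preserving both $H^{-1/2}$-regularity and the condition $\supp\subset\Gamma$), and the pairing identity $\langle\eta\psi,\gamma^\pm u^i\rangle=\langle\psi,\overline\eta\,\gamma^\pm u^i\rangle$. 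Fixing any nonzero $\psi\in H^{-1/2}_\Gamma$ and using that $\gamma^\pm u^i$ equals the smooth, nowhere-vanishing trace $u^i|_{\Gamma_\infty}$ near $\Gamma$, one can choose $\eta$ so that $\overline\eta\,\gamma^\pm u^i$ agrees near $\Gamma$ with an arbitrary $\zeta\in C^\infty_0$ supported close to $\Gamma$; since $\eta\psi\in H^{-1/2}_\Gamma$ and $\psi$ is supported in $\Gamma$, the assumption $g=0$ then yields $\langle\psi,\zeta\rangle=0$ for every such $\zeta$, whence $\psi=0$, a contradiction. The delicate points requiring care are the multiplier mapping property and the pairing identity on the negative-order space (both justified by density from smooth $\psi$), and the reduction to testing $\psi$ against all test functions via a partition of unity separating a neighbourhood of $\Gamma$ from the rest; once these are in place, the two implications above complete the proof.
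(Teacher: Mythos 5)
Your proposal is correct. One important point of comparison: this paper never proves Proposition~\ref{prop:Scatters} itself --- it is imported wholesale from \cite[Thm.~4.6]{ScreenPaper} --- so there is no internal proof to match against; what you have produced is a self-contained derivation from Theorem~\ref{thm:Closed}, which is a legitimate and arguably more transparent route. The first two assertions are indeed immediate: $H^{-1/2}_\Gamma=\{0\}$ forces $\phi=0$ and hence $u=-\cS_\Gamma\phi=0$ (and your parenthetical remark that then $(\tH^{1/2}(\Gamma^c))^\perp=\{0\}$ is right, since the paper records below \eqref{dualrelns2} that $\tH^{1/2}(\Gamma^c)$ \emph{equals} the annihilator of $H^{-1/2}_\Gamma$); and $u=0$ in $D\supset U^\pm$ gives $\phi=\partial_\bn^+u-\partial_\bn^-u=0$, hence $g=-S_\Gamma\phi=0$ by the BIE of Theorem~\ref{thm:Closed}. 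The substantive content, as you say, is the parenthetical claim that $g=-P_\Gamma\gamma^\pm u^i\neq 0$ when $H^{-1/2}_\Gamma\neq\{0\}$ and $u^i$ is smooth and non-vanishing near $\Gamma$, and your multiplier--localization argument for it is sound: $g=0$ is equivalent to $\langle\psi,\gamma^\pm u^i\rangle=0$ for all $\psi\in H^{-1/2}_\Gamma$ (again by the annihilator identity and $\ker P_\Gamma=\tH^{1/2}(\Gamma^c)$); $C^\infty_0$ multipliers preserve both $H^{-1/2}$ regularity and supports, so $\eta\psi\in H^{-1/2}_\Gamma$; and dividing by $u^i$ on a neighbourhood where it is smooth and non-zero lets you realise any $\zeta\in C^\infty_0$ supported near $\Gamma$ as $\bar\eta\,\gamma^\pm u^i$ there. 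Two details to make explicit in a full write-up, both of which you flag: (i) $\gamma^\pm u^i$ agrees with $u^i|_{\Gamma_\infty}$ only on the neighbourhood where the implicit cutoff equals one, so the supports of $\eta$ and $\zeta$ must be confined to that neighbourhood intersected with $\{u^i\neq 0\}$; (ii) the localization fact that $\langle\psi,w\rangle=0$ whenever $w\in H^{1/2}$ vanishes near $\supp\psi\subset\Gamma$ follows from the same multiplier identity with a real cutoff $\chi\in C^\infty_0$, $\chi=1$ near $\Gamma$, supported where $w=0$: then $\langle\psi,w\rangle=\langle\chi\psi,w\rangle=\langle\psi,\chi w\rangle=0$. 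With these in place, the partition-of-unity step shows $\psi$ annihilates all test functions, hence $\psi=0$, a contradiction; the proof is complete modulo only standard Sobolev-multiplier facts.
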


The question of whether $H^s_K=\{0\}$ for given $s\in\R$ and compact $K\subset\R^m$ was investigated in detail in \cite{HewMoi:15}. For sets of Lebesgue measure zero, the Hausdorff dimension $\dimH{K}$ provides a partial characterisation. Specifically, if $s>(\dimH{K}-m)/2$ then $H^s_K=\{0\}$, and if $s<(\dimH{K}-m)/2$ then $H^s_K\neq\{0\}$. For $s=(\dimH{K}-m)/2$ both behaviours are possible. But if $K$ is a $d$-set for some $0\leq d<m$
(see \S\ref{sec:SobolevSpaces} for definition)
then $H^{(d-m)/2}_K=\{0\}$. For such $d$-sets, the question of whether $H^{t}_K$ is dense in $H^{s}_K$ for $s<t<(d-m)/2$ was investigated in \cite{caetano2018}. The following lemma collects the results from \cite{HewMoi:15,caetano2018} relevant to our current purposes, translated to our spaces $H^s_\Gamma$, where $\Gamma\subset\Gamma_\infty=\R^{n-1}\times\{0\}$.

\begin{lem}[\!\!{\cite[Thms.~2.12, 2.17, Cor.~2.16]{HewMoi:15},
\cite[Prop.~3.7(i), Thm.~6.14]{caetano2018}}] \label{lem:zero}
\label{lem:Density}
{\color{white} .}\\ %
Let $\Gamma\subset\Gamma_\infty=\R^{n-1}\times\{0\}$ ($n=2,3$) be compact. If $d:=\dimH(\Gamma)>n-2$ then $H^{t}_\Gamma\neq\{0\}$ for $-1/2\leq t< (d-n+1)/2$. If $\Gamma$ is countable, or if $n=3$ and $\dimH(\Gamma)<n-2$ or $\Gamma$ is an $(n-2)$-set, then $H^{-1/2}_\Gamma=\{0\}$.
Furthermore, if $\Gamma$ is a compact $d$-set for some $n-2<d<n-1$ then $H^t_\Gamma\neq \{0\}$ is dense in $H^{-1/2}_\Gamma\neq \{0\}$ for  $-1/2\leq t< (d-n+1)/2$. %
\end{lem}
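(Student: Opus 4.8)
The plan is to transfer every assertion to the model set $K:=\widetilde\Gamma\subset\R^{n-1}$ and then quote the results of \cite{HewMoi:15,caetano2018} recalled above, taking $m=n-1$. The only preliminary point is that the isometry $\widetilde\bx\mapsto(\widetilde\bx,0)$ identifying $\R^{n-1}$ with $\Gamma_\infty$ preserves Hausdorff dimension and the $d$-set property, so that $\dimH(\widetilde\Gamma)=\dimH(\Gamma)=d$, $\widetilde\Gamma$ is a $d$-set exactly when $\Gamma$ is, and by definition $H^s_\Gamma=H^s_{\widetilde\Gamma}$; thus each statement about $H^s_\Gamma$ is literally a statement about $H^s_K$. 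Throughout, the relevant critical exponent is $s_*:=(\dimH K-m)/2=(d-n+1)/2$.

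For the nonvanishing assertion I would apply the implication $t<s_*\Rightarrow H^t_K\neq\{0\}$ from \cite[Thm.~2.12]{HewMoi:15}: this gives $H^t_\Gamma\neq\{0\}$ whenever $t<(d-n+1)/2$, and in particular on the stated range $-1/2\le t<(d-n+1)/2$. One need only record that this interval is nonempty precisely when $(d-n+1)/2>-1/2$, i.e.\ when $d>n-2$, which is the standing hypothesis.

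The vanishing assertions split according to the position of $-1/2$ relative to $s_*$. When $n=3$ and $\dimH(\Gamma)<n-2=1$ we have $s_*<-1/2$, so the implication $s>s_*\Rightarrow H^s_K=\{0\}$ of \cite{HewMoi:15} gives $H^{-1/2}_\Gamma=\{0\}$ directly. The two remaining cases sit exactly at a critical exponent and so require the sharper results. If $n=3$ and $\Gamma$ is an $(n-2)$-set (hence $K$ is a $1$-set in $\R^2$), then $(d-m)/2=(1-2)/2=-1/2$, and the $d$-set borderline result $H^{(d-m)/2}_K=\{0\}$ (\cite[Cor.~2.16]{HewMoi:15}) applies at exactly $s=-1/2$. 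For countable $\Gamma$ one invokes the countability result \cite[Thm.~2.17]{HewMoi:15}, that $H^s_K=\{0\}$ for all $s\ge-m/2$: this is automatic for $n=3$ (where $-m/2=-1<-1/2$), but for $n=2$ the value $-1/2=-m/2$ is exactly critical, and it is precisely this dedicated theorem, rather than any dimension comparison, that yields $H^{-1/2}_\Gamma=\{0\}$.

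Finally, the density assertion for a compact $d$-set with $n-2<d<n-1$ follows from \cite[Thm.~6.14]{caetano2018}, which shows $H^t_K$ dense in $H^s_K$ for $s<t<(d-m)/2$ when $K$ is a $d$-set; taking $s=-1/2$ and $m=n-1$ gives density of $H^t_\Gamma$ in $H^{-1/2}_\Gamma$ on the range $-1/2\le t<(d-n+1)/2$, both spaces being nonzero by the first assertion. I expect the genuine content to lie not in the dimension bookkeeping but in the behaviour exactly at $s_*$: the critical vanishing cases (the $1$-set when $n=3$ and, especially, the countable set when $n=2$) together with the density statement all escape the elementary dimension dichotomy and rest on the finer analysis of \cite{HewMoi:15,caetano2018}.
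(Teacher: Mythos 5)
Your proof is correct and is essentially the paper's own argument: the lemma is stated there as a direct translation, via the identification $H^s_\Gamma=H^s_{\widetilde\Gamma}$ with $m=n-1$, of the cited results of \cite{HewMoi:15} and \cite{caetano2018}, with no further proof supplied. Your allocation of the clauses to those results --- the dimension dichotomy for the generic cases, the $d$-set borderline and countability theorems for the critical cases at $s=-1/2$ (including the genuinely critical countable case when $n=2$), and the $d$-set density theorem for the final assertion --- matches the paper's citations exactly.
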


\subsection{Variational formulations of the BIEs} \label{sec:varf}
To state and analyse Galerkin methods for the BIE formulations, in particular to use the Mosco convergence theory of \S\ref{sec:Mosco}, we need
variational formulations.

When $\Gamma$ is a bounded open set
and $\widetilde H^{-1/2}(\Gamma) = H^{-1/2}_{\overline \Gamma}$, or when $\Gamma$ is compact,
we have written down, in Proposition \ref{prop:equiv} and Theorem \ref{thm:Closed}, BIEs that are equivalent to the corresponding BVP formulation $\sD(\Gamma)$.
In each case these take the form \eqref{eqn:SBIE}, i.e.\ $S_\Gamma\phi=-g$.
In this equation $\phi \in V(\Gamma)$, where
\begin{equation}\label{eq:VG}
V(\Gamma)=\begin{cases}
\tH^{-1/2}(\Gamma) & \text{if }\Gamma \text{ is bounded and open,}\\
H^{-1/2}_\Gamma & \text{if }\Gamma \text{ is compact,}
\end{cases}
\end{equation}
and $g\in V(\Gamma)^*$, where $V(\Gamma)^*$ is a unitary realisation of the dual space of $V(\Gamma)$, specifically $V(\Gamma)^* = (H^{1/2}_{\Gamma^c})^\perp$ or $V(\Gamma)^* = (\tH^{1/2}(\Gamma^c))^\perp$, in the respective cases. In each case $S_\Gamma:V(\Gamma)\to V(\Gamma)^*$ is a bounded linear operator, a version of the single-layer potential operator.

As noted in \S\ref{sec:var}, $S_\Gamma$ has an associated sesquilinear form $a_\Gamma(\cdot,\cdot)$ defined by
\begin{equation*} %
a_\Gamma(\phi,\psi) := \langle S_\Gamma \phi,\psi\rangle_{V(\Gamma)^*\times V(\Gamma)}, \quad \phi,\psi\in V(\Gamma).
\end{equation*}
Further (see \S\ref{sec:SobolevSpaces}), the duality pairing on $V(\Gamma)^*\times V(\Gamma)$ is simply the restriction to  $V(\Gamma)^*\times V(\Gamma)$ of the duality pairing \eqref{DualDef} on $H^{1/2}\times H^{-1/2}$.
Thus, for $\phi,\psi\in V(\Gamma)$,
\begin{equation} \label{eq:agendef2}
a_\Gamma(\phi,\psi) =  \langle S_\Gamma\phi,\psi\rangle_{H^{1/2}\times H^{-1/2}}=\langle \gamma^\pm \cS_\Gamma\phi,\psi\rangle_{H^{1/2}\times H^{-1/2}}.
\end{equation}
The second equality in \eqref{eq:agendef2} holds since $S_\Gamma = P_\Gamma \gamma^\pm \cS_\Gamma$, where $P_\Gamma$ is orthogonal projection onto $V(\Gamma)^*$, and since $(V(\Gamma)^*)^\perp$ is the annihilator of $V(\Gamma)$, as noted below \eqref{dualrelns2}.

A consequence of \eqref{eq:agendef2} and the definition of $\cS_\Gamma$ is that, if $\Gamma_\dag\subset \Gamma_\infty$ is any~bound\-ed open set containing $\Gamma$, in which case $V(\Gamma)$ is a closed subspace of $\tH^{-1/2}(\Gamma_\dag)$, then
\begin{equation} \label{eq:agendef3}
a_\Gamma(\phi,\psi) =  \langle \gamma^\pm \cS_{\Gamma_\dag}\phi,\psi\rangle_{H^{1/2}\times H^{-1/2}}= a_{\Gamma_\dag}(\phi,\psi), \quad \phi,\psi\in V(\Gamma),
\end{equation}
i.e. $a_\Gamma(\cdot,\cdot)$ is the restriction to $V(\Gamma)$ of $a_{\Gamma_\dag}(\cdot,\cdot)$. Since we can choose $\Gamma_\dag$ to be as smooth as we wish, e.g. $C^\infty$, or indeed just an open disk, we see that, even when $\Gamma$ is fractal or has fractal boundary, the sesquilinear forms we have to deal with are no more complicated than in the case when $\Gamma$ is a disk.

A further consequence of \eqref{eq:agendef2} is that, in the case when $\Gamma$ is bounded and open,
\begin{equation} \label{eq:agendef4}
a_\Gamma(\phi,\psi) =  \langle (\gamma^\pm \cS_{\Gamma}\phi)|_\Gamma,\psi\rangle_{H^{1/2}(\Gamma)\times \tH^{-1/2}(\Gamma)}, \quad \phi,\psi\in V(\Gamma)=\tH^{-1/2}(\Gamma),
\end{equation}
so that, where $S_\Gamma^{\mathrm{r}}$ is the more familiar screen single-layer potential operator defined by \eqref{SLBIO}, $a_\Gamma(\cdot,\cdot)$ is also the sesquilinear form associated to $S_\Gamma^{\mathrm{r}}$.

Whether $\Gamma$ is bounded and open or is compact, the sesquilinear form $a_\Gamma(\cdot,\cdot)$ is continuous. Further, as a consequence of our assumption that the screen is flat (i.e.~$\Gamma\subset\Gamma_\infty=\R^{n-1}\times\{0\}$), it is coercive.
For the bounded open case this is hinted at in \cite[Rem.~6]{Ha-Du:90} and proved rigorously in \cite{CoercScreen2}, the latter reference also detailing the wavenumber-dependence of the continuity and coercivity constants.
That coercivity of $a_\Gamma(\cdot,\cdot)$ holds also for every compact $\Gamma$ is a simple consequence of \eqref{eq:agendef3}, since coercivity implies coercivity on every closed subspace.

As noted in the general Hilbert space setting in \S\ref{sec:var} (see \eqref{eq:varW} and \eqref{eq:proj}),  the variational problem: given $g\in V(\Gamma)^*$
\begin{align}
\label{eqn:variational}
\text{find }\phi\in V(\Gamma) \text{ s.t. }a_\Gamma(\phi,\psi)=-\langle g,\psi\rangle_{V(\Gamma)^*\times V(\Gamma)},\; \;\;\forall \psi\in V(\Gamma),
\end{align}
is equivalent to the BIE $S_\Gamma\phi=-g$. The duality pairing on the right hand side can be written equivalently as
$$
\langle g,\psi\rangle_{V(\Gamma)^*\times V(\Gamma)}=\langle g,\psi\rangle_{H^{1/2}\times H^{-1/2}}.
$$
Since $a_\Gamma(\cdot,\cdot)$ is coercive, \eqref{eqn:variational} is well-posed by the Lax--Milgram lemma.

\section{Prefractal to fractal convergence}
\label{sec:Conv}

Now suppose we want to study a sequence of problems on a sequence of screens $(\Gamma_j)_{j\in\N_0}$ (each non-empty and either bounded and open or compact) approximating a limiting screen $\Gamma$ (again, non-empty and either bounded and open or compact). Assuming that the $\Gamma_j$ are uniformly bounded, let $\Gamma_\dag\subset\Gamma_\infty$ be a bounded
open set (e.g.\ a disk) such that $\Gamma\subset \Gamma_\dag$ and $\Gamma_j\subset \Gamma_\dag$ for each $j\in\N_0$.
Let
$g_\dag\in (H^{1/2}_{\Gamma_\dag^c})^\perp\subset H^{1/2}$ be fixed.
Then by the continuity and coercivity of the sesquilinear form $a_{\Gamma_\dag}(\cdot,\cdot)$ (discussed in \S\ref{sec:varf}), for any closed subspaces $V_j$ and $V$ of $V(\Gamma_\dag)=\tH^{-1/2}(\Gamma_\dag)$ the variational problems
\begin{align}
\label{eqn:variationalSTAR}
\text{find }\phi\in V \text{ s.t. }a_{\Gamma_\dag}(\phi,\psi)=-\langle g_\dag,\psi \rangle_{H^{1/2}\times H^{-1/2}},\; \;\;\forall \psi\in V,\\
\label{eqn:variationalSTARj}
\text{find }\phi_j\in V_j \text{ s.t. }a_{\Gamma_\dag}(\phi_j,\psi)=-\langle g_\dag,\psi \rangle_{H^{1/2}\times H^{-1/2} },\; \;\;\forall \psi\in V_j,
\end{align}
are well-posed.
The following theorem follows from Lemma \ref{lem:dec3}, combined with the continuity of $\cS_{\Gamma_\dag}:\tH^{-1/2}(\Gamma_\dag)\to W^{1,{\rm loc}}(\R^n)$ (defined by \eqref{SLP}).

\begin{thm}\label{thm:MoscoConv}%
Let $V_j$ and $V$ be closed subspaces of $V(\Gamma_\dag)=\tH^{-1/2}(\Gamma_\dag)$ and let $\phi\in V$ and $\phi_j\in V_j$  denote the solutions of \rf{eqn:variationalSTAR} and \rf{eqn:variationalSTARj} respectively. If $V_j \xrightarrow{M}V$ as $j\to\infty$ then $\phi_j\to \phi$ as $j\to\infty$ in $\tH^{-1/2}(\Gamma_\dag)$, and hence $\cS_{\Gamma_j}\phi_j=\cS_{\Gamma_\dag}\phi_j \to \cS_{\Gamma_\dag}\phi=\cS_{\Gamma}\phi$ as $j\to\infty$ in $W^{1,{\rm loc}}(\R^n)$.%
\end{thm}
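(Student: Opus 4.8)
The plan is to recognise both variational problems as instances of the abstract problem \eqref{eq:varW} of \S\ref{sec:var} and then to apply Lemma \ref{lem:dec3} essentially verbatim. Concretely, I would take the ambient Hilbert space to be $H=\tH^{-1/2}(\Gamma_\dag)=V(\Gamma_\dag)$, the sesquilinear form to be $a=a_{\Gamma_\dag}(\cdot,\cdot)$, the closed subspaces to be $W=V$ and $W_j=V_j$, and the data to be $f=-g_\dag$. The first thing to verify is that the right-hand sides genuinely match the form $\langle f,\cdot\rangle$ required in \eqref{eq:varW}: since $g_\dag\in(H^{1/2}_{\Gamma_\dag^c})^\perp$, which by \eqref{dualrelns2} is a unitary realisation of $(\tH^{-1/2}(\Gamma_\dag))^*$ with duality pairing inherited from \eqref{DualDef}, the functional $\psi\mapsto -\langle g_\dag,\psi\rangle_{H^{1/2}\times H^{-1/2}}$ is exactly $\langle f,\psi\rangle$ with $f=-g_\dag$.

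With this identification the hypotheses of Lemma \ref{lem:dec3} are readily checked. As recalled in \S\ref{sec:varf}, flatness of the screen makes $a_{\Gamma_\dag}$ coercive (by \cite{CoercScreen2}); in particular it is compactly perturbed coercive in the trivial sense $a_1\equiv 0$, and by the Lax--Milgram lemma it is invertible on every closed subspace, in particular on $V$. Combined with the assumed Mosco convergence $V_j\xrightarrow{M}V$, Lemma \ref{lem:dec3} yields directly that $\phi_j\to\phi$ in $\tH^{-1/2}(\Gamma_\dag)$ as $j\to\infty$. Because $a_{\Gamma_\dag}$ is coercive, Remark \ref{rem:coer}(i) lets us take $J=1$, so no finitely many exceptional indices need be discarded and each $\phi_j$ is already well-defined, consistent with the well-posedness noted above \eqref{eqn:variationalSTAR}.

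The remaining assertions are essentially bookkeeping. Convergence of the potentials follows by applying the bounded linear operator $\cS_{\Gamma_\dag}\colon\tH^{-1/2}(\Gamma_\dag)\to W^{1,\mathrm{loc}}(\R^n)$ of \eqref{SLP} to the norm-convergent sequence, giving $\cS_{\Gamma_\dag}\phi_j\to\cS_{\Gamma_\dag}\phi$ in $W^{1,\mathrm{loc}}(\R^n)$. The two equalities $\cS_{\Gamma_j}\phi_j=\cS_{\Gamma_\dag}\phi_j$ and $\cS_{\Gamma}\phi=\cS_{\Gamma_\dag}\phi$ then hold because each single-layer potential depends only on the support of its density: with $V_j\subset V(\Gamma_j)$ and $V\subset V(\Gamma)$ (as is implicit in the setup, since $\cS_{\Gamma_j}$ and $\cS_{\Gamma}$ are being applied to $\phi_j$ and $\phi$), the density $\phi_j$ is supported in $\overline{\Gamma_j}$ and $\phi$ in $\overline{\Gamma}$, so the integral representation \eqref{SLP} produces the same field whether the nominal domain of integration is $\Gamma_\dag$ or the smaller set $\Gamma_j$ (respectively $\Gamma$).

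I do not anticipate a genuine obstacle here, since the analytic substance has already been isolated in Lemma \ref{lem:dec3} and in the coercivity result for $a_{\Gamma_\dag}$. The only points requiring a little care are the identification of the duality realisation so that $-g_\dag$ legitimately plays the role of $f\in\cH$ in \eqref{eq:varW}, and the support argument justifying the replacement of $\Gamma_\dag$ by $\Gamma_j$ and $\Gamma$ in the potentials; both are routine within the framework of \S\ref{sec:SobolevSpaces}.
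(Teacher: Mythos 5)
Your proposal is correct and takes essentially the same approach as the paper, whose entire proof consists of applying Lemma \ref{lem:dec3} (with $H=\tH^{-1/2}(\Gamma_\dag)$, $a=a_{\Gamma_\dag}$ coercive by \cite{CoercScreen2}, $W=V$, $W_j=V_j$, $f=-g_\dag$) together with the continuity of $\cS_{\Gamma_\dag}:\tH^{-1/2}(\Gamma_\dag)\to W^{1,\mathrm{loc}}(\R^n)$. The details you supply — the duality identification making $-g_\dag$ a legitimate $f\in\cH$, and the support argument for replacing $\Gamma_\dag$ by $\Gamma_j$ and $\Gamma$ in the potentials — are precisely the bookkeeping the paper leaves implicit.
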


\begin{defn}[BVP convergence]
Suppose that $\Gamma$ is non-empty and either bounded and open or compact, and that the sequence $(\Gamma_j)_{j\in \N_0}$ is uniformly bounded, and that each $\Gamma_j$ is non-empty and either bounded and open or compact. Then we say
that ``BVP convergence holds'' if $V_j \xrightarrow{M}V$ as $j\to\infty$, where  $V:=V(\Gamma)$ and $V_j:=V(\Gamma_j)$, with $V(\cdot)$ defined as in \eqref{eq:VG}.%

The rationale behind this terminology is that, with these conditions on $\Gamma$ and $\Gamma_j$, if $V_j \xrightarrow{M}V$, then
it follows by Theorem~\ref{thm:MoscoConv} that $\phi_j\to \phi$ in $H^{-1/2}$, where $\phi$ and $\phi_j$ are the solutions to \eqref{eqn:variationalSTAR} and \eqref{eqn:variationalSTARj}, respectively, and that $\cS_{\Gamma_j}\phi_j\to\cS_\Gamma \phi$ in $W^{1,\mathrm{loc}}(\R^n)$, where (noting \eqref{eq:agendef3} and the equivalence of \eqref{eq:agendef4} with the BIE)
$\cS_{\Gamma}\phi$ and $\cS_{\Gamma_j}\phi_j$ are the unique solutions to the BVPs $\sD(\Gamma)$ and $\sD(\Gamma_j)$, with data $g:= P_\Gamma g_\dag$ and $g_j:= P_{\Gamma_j}g_\dag$. In particular, $\cS_{\Gamma}\phi$ and $\cS_{\Gamma_j}\phi_j$ are the scattered fields for scattering of the incident field $u^i$ by $\Gamma$ and $\Gamma_j$, respectively, provided $g_\dag:=-P_{\Gamma_\dag} \gamma^\pm u^i$.
\end{defn}

Sufficient conditions guaranteeing BVP convergence
are given in the following proposition, which follows trivially from \rf{eq:increase}, \rf{eq:decrease}, \cite[Props.~3.33 and 3.34]{ChaHewMoi:13} and the ``sandwich lemma'' of Mosco convergence, Lemma \ref{lem:Sandwich}\footnote{We need only the case $s=-1/2$, but this proposition in fact holds, by the identical argument, with $-1/2$ replaced throughout by any $s\in \R$.}.
\begin{prop}
\label{prop:Mosco}
$V_j \xrightarrow{M}V$ in any of the three following situations:
\begin{enumerate}[(i)]
\item \label{Moscoi} {\bf(increasing open)} %
$V:=V(\Gamma)=\tH^{-1/2}(\Gamma)$ and $V_j:=V(\Gamma_j)=\tH^{-1/2}(\Gamma_j)$, where $\Gamma$ and $\Gamma_j$ are non-empty, bounded and open with $\Gamma_j\subset \Gamma_{j+1}$, $j\in\N_0$, and $\Gamma=\bigcup_{j\in\N_0} \Gamma_j$;
\item \label{Moscoii} {\bf (decreasing compact)}
$V:=V(\Gamma)=H^{-1/2}_\Gamma$ and $V_j:=V(\Gamma_j)=H^{-1/2}_{\Gamma_j}$, where $\Gamma$ and $\Gamma_j$ are non-empty and compact with $\Gamma_j\supset \Gamma_{j+1}$, $j\in\N_0$, and $\Gamma=\bigcap_{j\in\N_0} \Gamma_j$;
\item \label{Moscoiii} {\bf (sandwiched)}
$V:=V(\Gamma)=\tH^{-1/2}(\Gamma)$ and  $V_j=V(\Gamma_j)=\tH^{-1/2}(\Gamma_j)$, where $\Gamma$ and $\Gamma_j$ are non-empty, bounded and open with  $\widetilde{H}^{-1/2}(\Gamma) = H^{-1/2}_{\overline{\Gamma}}$, and there exist $\Gamma_j^-$ non-empty, bounded and open and $\Gamma_j^+$ non-empty and compact such that: $\Gamma_j^-\subset \Gamma_{j+1}^-$, $j\in\N_0$; $\Gamma_j^+\supset \Gamma_{j+1}^+$, $j\in\N_0$; $\Gamma_j^-\subset\Gamma_j\subset\Gamma_j^+$, $j\in\N_0$;  $\Gamma=\bigcup_{j\in\N_0} \Gamma_j^-$; $\overline\Gamma=\bigcap_{j\in\N_0} \Gamma_j^+$.
\end{enumerate}
\end{prop}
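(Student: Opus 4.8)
The plan is to reduce each of the three cases to the elementary monotone criteria for Mosco convergence, \eqref{eq:increase} and \eqref{eq:decrease}, the only genuine analytic input being the identification of the union and intersection of the relevant Sobolev subspaces supplied by \cite[Props.~3.33 and 3.34]{ChaHewMoi:13}. First, in case (i) I would note that the monotonicity $\Gamma_j\subset\Gamma_{j+1}$ of open sets passes directly to the nested inclusions $V_j=\tH^{-1/2}(\Gamma_j)\subset\tH^{-1/2}(\Gamma_{j+1})=V_{j+1}$ of closed subspaces of $H^{-1/2}$; it then remains only to verify the hypothesis of \eqref{eq:increase}, namely that $V=\tH^{-1/2}(\Gamma)=\overline{\bigcup_{j}V_j}$, which is exactly the statement that $\tH^{-1/2}\big(\bigcup_j\Gamma_j\big)$ coincides with the closure of $\bigcup_j\tH^{-1/2}(\Gamma_j)$ furnished by \cite[Props.~3.33 and 3.34]{ChaHewMoi:13}. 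Case (ii) is dual: the inclusions $\Gamma_j\supset\Gamma_{j+1}$ of compact sets give $V_j=H^{-1/2}_{\Gamma_j}\supset H^{-1/2}_{\Gamma_{j+1}}=V_{j+1}$, and the cited propositions identify $V=H^{-1/2}_\Gamma$ with $\bigcap_j V_j$, so that \eqref{eq:decrease} applies.

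For the sandwiched case (iii) I would introduce the auxiliary spaces $W_j^-:=\tH^{-1/2}(\Gamma_j^-)$ and $W_j^+:=H^{-1/2}_{\Gamma_j^+}$ and check the chain $W_j^-\subset V_j\subset W_j^+$ for each $j$. The left inclusion is immediate from $\Gamma_j^-\subset\Gamma_j$ (both open); for the right inclusion the point to watch is that $\Gamma_j\subset\Gamma_j^+$ with $\Gamma_j^+$ compact forces $\overline{\Gamma_j}\subset\Gamma_j^+$, whence $V_j=\tH^{-1/2}(\Gamma_j)\subset H^{-1/2}_{\overline{\Gamma_j}}\subset H^{-1/2}_{\Gamma_j^+}=W_j^+$. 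Cases (i) and (ii), already established, then give $W_j^-\xrightarrow{M}\tH^{-1/2}(\Gamma)$ (from $\Gamma=\bigcup_j\Gamma_j^-$, increasing open) and $W_j^+\xrightarrow{M}H^{-1/2}_{\overline{\Gamma}}$ (from $\overline\Gamma=\bigcap_j\Gamma_j^+$, decreasing compact).

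The final step, and the only place where the standing hypothesis of case (iii) is used, is to invoke the assumption $\tH^{-1/2}(\Gamma)=H^{-1/2}_{\overline\Gamma}$: this identifies the two Mosco limits of $(W_j^-)$ and $(W_j^+)$ as one and the same space $V$, after which the sandwich lemma, Lemma~\ref{lem:Sandwich}, immediately delivers $V_j\xrightarrow{M}V$. I do not anticipate any real obstacle here: granting \cite[Props.~3.33 and 3.34]{ChaHewMoi:13}, which carry all the analytic weight, the remainder is pure bookkeeping with nested subspaces, the only mild subtlety being the closure/compactness step that yields $V_j\subset W_j^+$ in case (iii).
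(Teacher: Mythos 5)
Your proposal is correct and follows essentially the same route as the paper, which proves the result by combining the monotone criteria \eqref{eq:increase} and \eqref{eq:decrease} with the identifications of unions/intersections of Sobolev subspaces from \cite[Props.~3.33 and 3.34]{ChaHewMoi:13} and then, for case (iii), sandwiching $V_j$ between $\tH^{-1/2}(\Gamma_j^-)$ and $H^{-1/2}_{\Gamma_j^+}$ and invoking Lemma~\ref{lem:Sandwich} together with the hypothesis $\tH^{-1/2}(\Gamma)=H^{-1/2}_{\overline\Gamma}$. Your explicit verification of the chain $W_j^-\subset V_j\subset W_j^+$ (including the closure step $\overline{\Gamma_j}\subset\Gamma_j^+$) is exactly the bookkeeping the paper leaves implicit.
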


\begin{rem} By combining Theorem \ref{thm:MoscoConv} with Proposition \ref{prop:Mosco} \rf{Moscoi}-\rf{Moscoii} we reproduce the convergence results of \cite[Thm.~7.1]{ScreenPaper} (these phrased without reference to Mosco convergence). The convergence result obtained by combining Theorem \ref{thm:MoscoConv} with Proposition \ref{prop:Mosco} \rf{Moscoiii}, that applies in more subtle cases where neither $\Gamma_j\subset \Gamma$ nor $\overline{\Gamma}\subset \overline{\Gamma_j}$, is new. We present an example of this type (the ``square snowflake'') in \S\ref{sec:Square} below.
\end{rem}

\begin{rem} The sequences $\Gamma_j^\pm$ required in Proposition \ref{prop:Mosco}\rf{Moscoiii} exist if and only if
\begin{equation} \label{eq:construct}
\bigcap_{j\in \N_0}\Lambda_j^+ \!= \overline{\Gamma} \mbox{ and} \bigcup_{j\in\N_0} \Lambda_j^- = \Gamma,
\mbox{ where }\Lambda_j^+ := \overline{\Gamma \cup \bigcup_{i=j}^\infty \Gamma_i} \mbox{ and } \Lambda_j^- := \Gamma \cap \Bigg(\bigcap_{i=j}^\infty \Gamma_i\Bigg)^\circ;
\end{equation}
indeed if \eqref{eq:construct} holds then we can take $\Gamma_j^\pm := \Lambda_j^\pm$.
\end{rem}
While parts \rf{Moscoi} and \rf{Moscoiii} of Proposition \ref{prop:Mosco}, and their BEM versions in Proposition \ref{prop:DiscreteOpen} below, apply to more general bounded open screens $\Gamma$, we are particularly interested in cases where $\Gamma$ has fractal boundary.
The main challenge in applying part \rf{Moscoiii} in such cases is to show that $\widetilde{H}^{-1/2}(\Gamma) = H^{-1/2}_{\overline{\Gamma}}$. But, as noted in Proposition \ref{prop:TildeSubscript}, this holds if $\Gamma$ is ``thick'' in the sense of Triebel, as recently shown in \cite{caetano2018}. Moreover, it follows from \cite[Prop.~5.1]{caetano2018} that a large class of domains with fractal boundaries are thick. We provide some examples in \S\ref{sec:examples} and \S\ref{sec:Numerics}.

\subsection{Fractals that are attractors of iterated function systems} \label{sec:IFSpart1}
While Proposition \ref{prop:Mosco} \rf{Moscoii}, and its BEM version in Proposition \ref{prop:DiscreteCompact} below, apply to more general compact $\Gamma$, our main motivation for these results is the case when $\Gamma$ is fractal. %
An important fractal class (e.g.\ \cite[Chap.~9]{Fal}) is the set of fractals obtained as attractors of an iterated function system (IFS) $\{s_1,s_2,\ldots,s_\nu\}$. Here $\nu\geq 2$ and each $s_m:\R^{n-1}\to\R^{n-1}$ is a {\em contraction}, meaning that
 \begin{equation*} %
|s_m(x)-s_m(y)| \leq c|x-y|, \quad x,y\in \R^{n-1},
\end{equation*}
for some $c\in (0,1)$.
The attractor of the IFS is the unique non-empty compact set $\Gamma$ satisfying
\begin{equation} \label{eq:fixedfirst}
\Gamma = s(\Gamma), \quad \mbox{where} \quad s(U) := \bigcup_{m=1}^\nu s_m(U), \quad \mbox{for } U\subset \R^{n-1}.
\end{equation}
That \eqref{eq:fixedfirst} has a unique fixed point follows from the contraction mapping theorem since $s$ is a contraction on the set of compact subsets of $\R^{n-1}$, a complete metric space equipped with the standard Hausdorff metric, e.g.\ \cite[Thm.~9.1 and its proof]{Fal}.
If $\Gamma_0$ is any non-empty compact set then the sequence $\Gamma_j$ defined by
\begin{equation} \label{eq:prefract0}
\Gamma_{j+1} := s(\Gamma_j), \quad j=0,1,\ldots
\end{equation}
converges in the Hausdorff metric to $\Gamma$. In particular, if
$\Gamma_0$ is such that $s(\Gamma_0)\subset \Gamma_0$ then \cite[Thm.~9.1]{Fal}
\begin{equation} \label{eq:bigcap}
\Gamma_{j+1} \subset \Gamma_j, \quad j\in \N_0, \quad \mbox{and} \quad \Gamma = \bigcap_{j\in \N_0} \Gamma_j.
\end{equation}
In the case that $\Gamma$ is a fractal or where $\Gamma$ is not fractal but has a fractal boundary it is common to refer to $\Gamma_j$ as a sequence of {\em prefractals}.

The following is an obvious corollary of the above observations, Theorem \ref{thm:MoscoConv}, and Proposition \ref{prop:Mosco}\rf{Moscoii}.

\begin{cor} \label{cor:ifs} Suppose that $\nu\geq 2$, $s_1,\ldots,s_\nu$ are contractions, and that the non-empty compact set $\Gamma\subset \Gamma_\infty \cong \R^{n-1}$ is the unique attractor of the IFS $\{s_1,\ldots,s_\nu\}$, satisfying \eqref{eq:fixedfirst}. Suppose that $\Gamma_0$ is non-empty and compact with $s(\Gamma_0)\subset \Gamma_0$, and define the sequence of compact sets $\Gamma_j$ by \eqref{eq:prefract0}.
Then BVP convergence holds.

\end{cor}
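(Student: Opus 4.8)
The plan is to recognise this as a direct application of Proposition~\ref{prop:Mosco}\rf{Moscoii}, the ``decreasing compact'' case. Since $\Gamma$ and every $\Gamma_j$ are compact here, the definition \eqref{eq:VG} gives $V=V(\Gamma)=H^{-1/2}_\Gamma$ and $V_j=V(\Gamma_j)=H^{-1/2}_{\Gamma_j}$, so ``BVP convergence'' reduces precisely to the Mosco convergence $H^{-1/2}_{\Gamma_j}\xrightarrow{M}H^{-1/2}_\Gamma$, which is exactly the conclusion of that part of Proposition~\ref{prop:Mosco}. Hence the entire task is to check its three hypotheses: non-emptiness and compactness of $\Gamma$ and each $\Gamma_j$; the nested inclusions $\Gamma_{j+1}\subset\Gamma_j$; and the intersection identity $\Gamma=\bigcap_{j\in\N_0}\Gamma_j$.

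First I would dispose of the structural hypotheses. Each $\Gamma_j$ is non-empty and compact by induction on \eqref{eq:prefract0}: $\Gamma_0$ is non-empty and compact by assumption, and $\Gamma_{j+1}=s(\Gamma_j)=\bigcup_{m=1}^\nu s_m(\Gamma_j)$ is a finite union of continuous (contraction) images of a compact set, hence again non-empty and compact; $\Gamma$ is compact as the attractor. The heart of the matter, and the single point at which the standing hypothesis $s(\Gamma_0)\subset\Gamma_0$ is used, is the monotone structure. Here I would simply invoke \eqref{eq:bigcap}: under $s(\Gamma_0)\subset\Gamma_0$ the classical attractor result \cite[Thm.~9.1]{Fal} yields both $\Gamma_{j+1}\subset\Gamma_j$ for all $j\in\N_0$ and $\Gamma=\bigcap_{j\in\N_0}\Gamma_j$, which are precisely the remaining two hypotheses.

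It remains only to confirm the standing requirements in the definition of BVP convergence, namely that the sequence $(\Gamma_j)_{j\in\N_0}$ is uniformly bounded. This is automatic: the decreasing property gives $\Gamma_j\subset\Gamma_0$ for every $j$, and also $\Gamma\subset\Gamma_0$, with $\Gamma_0$ compact and hence bounded, so one may take $\Gamma_\dag$ to be any open ball containing $\Gamma_0$. With all hypotheses of Proposition~\ref{prop:Mosco}\rf{Moscoii} in place, the conclusion $V_j\xrightarrow{M}V$ follows, which is BVP convergence.

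I expect no genuine obstacle: the corollary is bookkeeping that aligns the IFS construction with the abstract Mosco framework. The only subtlety worth flagging is that the containment hypothesis $s(\Gamma_0)\subset\Gamma_0$ is essential and is doing real work---for a general non-empty compact $\Gamma_0$ one has only Hausdorff-metric convergence of the prefractals $\Gamma_j\to\Gamma$, which is not by itself enough to force Mosco convergence of the associated Sobolev spaces. It is exactly the decreasing nestedness coming from $s(\Gamma_0)\subset\Gamma_0$ that supplies the monotone intersection $\Gamma=\bigcap_{j\in\N_0}\Gamma_j$ that Proposition~\ref{prop:Mosco}\rf{Moscoii} requires.
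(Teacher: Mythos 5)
Your proof is correct and follows essentially the same route as the paper, which presents this as an immediate consequence of \eqref{eq:bigcap} (valid under $s(\Gamma_0)\subset\Gamma_0$ by \cite[Thm.~9.1]{Fal}), Theorem \ref{thm:MoscoConv}, and Proposition \ref{prop:Mosco}\rf{Moscoii}. Your verification of compactness, nestedness, the intersection identity, and uniform boundedness is exactly the bookkeeping the paper leaves implicit.
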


\begin{rem}
With $\Gamma_j$ defined by \eqref{eq:prefract0}, it holds for any non-empty compact $\Gamma_0$ that $\Gamma_j\to \Gamma$ in the Hausdorff metric. However, if %
$s(\Gamma_0)\not\subset \Gamma_0$ it may or may not hold that $\phi_j\to \phi$ as $j\to\infty$. In particular, if: i) $\Gamma_0$ is a countable set; or ii) $n=3$ and $\dimH\Gamma_0 < 1$;
then $\Gamma_j$ defined by \eqref{eq:prefract0} is also countable or has $\dimH \Gamma_j<1$, respectively. (The latter case is a consequence of \cite[Prop.~2.3]{Fal}.) In such cases it follows from Lemma \ref{lem:Density} that $H^{-1/2}_{\Gamma_j}=\{0\}$ so that $\phi_j=0$, for $j\in \N_0$. Thus $\phi_j\not\to \phi$ unless $\phi=0$.
\end{rem}
\section{Boundary element methods and their convergence}%
\label{sec:Convergence}
In this section we propose Galerkin boundary element methods, based on piecewise constant approximations, for solving the screen scattering problems of \S\ref{sec:BVPs} and prove their convergence. These methods are based on discretisation of a sequence of more regular screens $(\Gamma_j)_{j\in \N_0}$; as in the previous section these converge in an appropriate sense to a limiting screen $\Gamma\subset \Gamma_\infty$  for which we wish to compute the solution to the scattering  problem $D(\Gamma)$ of Definition \ref{def:Open} or \ref{def:Compact}.

In particular we have in mind cases where $\Gamma$ is a compact set with fractal dimension $<n-1$, and cases where $\Gamma$ is a bounded open set with fractal boundary. We prove convergence results that apply in both of these cases, and our examples in \S\ref{sec:examples} and our numerical results in \S\ref{sec:Numerics} are of these types. In each of our convergence results, $\Gamma_j$ is a sequence of open sets, divided into a mesh of elements. Appropriately, given that we are approximating on very rough domains, the constraints on the elements are mild compared to conventional BEM results. In particular our elements need not be convex or even connected.

In  more detail, we assume each
$\Gamma_j$ is a non-empty bounded open set. On each $\Gamma_j$ we construct a  {\em pre-convex mesh} $M_j=\{T_{j,1},T_{j,2},\ldots,T_{j,N_j}\}$ in the sense of Appendix \ref{sec:fem}, meaning that $T_{j,l}\subset \Gamma_j$ is non-empty and open for $l=1,\ldots,N_j$, the convex hulls of $T_{j,l}$ and $T_{j,l'}$ are disjoint for $l'\neq l$, $\partial T_{j,l}$ has zero $(n-1)$-dimensional Lebesgue measure, and $\Gamma_j$ is the interior of the union of the closures of the $T_{j,l}$, i.e.
\[
\Gamma_j = \left(\,\bigcup\nolimits_{l=1}^{N_j} \overline{T_{j,l}^{\vphantom{1}}}\,\right)^\circ.
\]
We call %
$h_j:= \max_{l\in\{1,\ldots,N_j\}} \diam(T_{j,l})$ the {\em mesh size} %
and $T_{j,1},T_{j,2},\ldots,T_{j,N_j}$ the {\em elements} of the mesh.

Our Galerkin boundary element method (BEM) is to solve the variational problem \eqref{eqn:variationalSTARj} with $V_j$ chosen to be the $N_j$-dimensional space of piecewise constant functions on the mesh $M_j$, which we denote by $\VjBEM$. It follows from
\eqref{eq:agendef3}, \eqref{eq:agendef4}, and the comment following \eqref{DualDef}, that the BEM solution $\phi_j^h$ is defined explicitly by
\begin{equation} \label{eq:BEMdef}
(S_{\Gamma_j}^r\phi_j^h,\psi)_{L^2} = -(g_\dag,\psi)_{L^2}, \quad \forall \psi\in \VjBEM,
\end{equation}
where $(\cdot,\cdot)_{L^2}$ is the inner product on $L^2=L^2(\Gamma_\infty)= L^2(\R^{n-1})$. Moreover, when we are solving the scattering problem, $g_\dag = -P_{\Gamma_\dag} \gamma^\pm u^i$, and \eqref{eq:BEMdef} can be written as
\begin{equation*} %
(S_{\Gamma_j}^r\phi_j^h,\psi)_{L^2} = (\gamma^\pm u^i,\psi)_{L^2}, \quad \forall \psi\in \VjBEM.
\end{equation*}

\begin{defn}[BEM convergence]
Let the discrete approximation space $\VjBEM$ be defined as above, and let the Sobolev space $V(\Gamma)$ be defined as in \eqref{eq:VG}. If $\VjBEM$ Mosco-converges to $V(\Gamma)$
then we say that ``BEM convergence holds''. In this case, it follows by Theorem~\ref{thm:MoscoConv} that $\phi_j^h\to \phi$ in $H^{-1/2}$, where $\phi_j^h$ and $\phi$ are the solutions to \eqref{eq:BEMdef} and \eqref{eqn:variationalSTAR}, respectively, and $\cS_{\Gamma_j}\phi_j^h\to\cS_\Gamma \phi$ in $W^{1,\mathrm{loc}}(\R^n)$, with $\cS_\Gamma \phi$ the solution to $\sD(\Gamma)$ with $g=P_\Gamma g_\dag$.
\end{defn}

\subsection{Bounded open screens}
The following theorem provides a BEM convergence result in the case where the limiting screen $\Gamma$ is non-empty, bounded and open.
The result is stated for case (iii) in Proposition \ref{prop:Mosco}, but it also covers case (i), since if $\widetilde{H}^{-1/2}(\Gamma) = H^{-1/2}_{\overline{\Gamma}}$ (which we require in any case for well-posedness of $\sD(\Gamma)$) then case (i) is a special case of case (iii) with $\Gamma_j^-=\Gamma_j$ and $\Gamma_j^+=\overline\Gamma$, $j\in\N_0$.
\begin{thm}[$\Gamma$ bounded and open]
\label{prop:DiscreteOpen}
Let $\Gamma$, $\Gamma_j$ and $\Gamma_j^\pm$ satisfy the conditions of Proposition \ref{prop:Mosco}\rf{Moscoiii}.
For each $j$, let $M_j$ be a pre-convex mesh on $\Gamma_j$ with mesh size $h_j$.
Then BEM convergence holds provided that $h_j\to 0$ as $j\to\infty$.
\end{thm}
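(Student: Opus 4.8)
The plan is to reduce the claim to the two sufficient conditions for Mosco convergence furnished by Lemma \ref{lem:MoscoSuff}, taking the ambient space to be $H = H^{-1/2}(\R^{n-1})$ (both $\VjBEM$ and $V(\Gamma) = \tH^{-1/2}(\Gamma)$ are closed subspaces of $\tH^{-1/2}(\Gamma_\dag)$, which is weakly closed in $H$, so the Mosco convergence we obtain is exactly the one required for BEM convergence), and setting $W = V(\Gamma)$ and $W_j = \VjBEM$. The two families supplied by Proposition \ref{prop:Mosco}\rf{Moscoiii} play complementary roles: the decreasing compact sets $\Gamma_j^+$ will furnish the majorising subspaces needed for condition (ii) of Lemma \ref{lem:MoscoSuff}, while the increasing open sets $\Gamma_j^-$ will be used to localise supports for condition (i).

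First I would establish condition (ii) with $W_j^+ := H^{-1/2}_{\Gamma_j^+}$. Each element satisfies $T_{j,l}\subset\Gamma_j\subset\Gamma_j^+$, and $\Gamma_j^+$ is compact, hence closed, so every $v\in\VjBEM$ is supported in $\Gamma_j^+$; thus $\VjBEM\subset H^{-1/2}_{\Gamma_j^+}$. Since $(\Gamma_j^+)$ is a decreasing sequence of non-empty compact sets with $\bigcap_{j}\Gamma_j^+ = \overline\Gamma$, Proposition \ref{prop:Mosco}\rf{Moscoii} yields $H^{-1/2}_{\Gamma_j^+}\xrightarrow{M} H^{-1/2}_{\overline\Gamma}$, and the standing assumption $\tH^{-1/2}(\Gamma) = H^{-1/2}_{\overline\Gamma}$ identifies the limit with $V(\Gamma)$. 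This settles condition (ii).

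Next I would verify condition (i) with the dense subspace $\widetilde W := C^\infty_0(\Gamma)$, which is dense in $\tH^{-1/2}(\Gamma)$ by the very definition of the latter. Fix $w\in C^\infty_0(\Gamma)$. Its support is a compact subset of the increasing open union $\Gamma = \bigcup_j \Gamma_j^-$, so there is $J\in\N$ with $\supp w \subset \Gamma_J^- \subset \Gamma_j^- \subset \Gamma_j$ for all $j\geq J$. For $j<J$ I simply set $w_j := 0$; for $j\geq J$ I take $w_j\in\VjBEM$ to be the elementwise-averaged piecewise constant approximation of $w$ on $M_j$. Because each element has diameter at most $h_j$ and $w$ is smooth on all of $\R^{n-1}$, the deviation of $w$ from its element average is bounded pointwise by $h_j\,\|\nabla w\|_{L^\infty}$ (the segment joining two points of an element need not stay in the element, but this is irrelevant since $w$ is globally smooth), whence $\|w - w_j\|_{H^{-1/2}}\leq \|w-w_j\|_{L^2}\leq C\,h_j\,\|\nabla w\|_{L^\infty}\to 0$ as $j\to\infty$, with $C$ depending only on $|\Gamma_\dag|$. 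This is exactly the quantitative piecewise constant estimate of Lemma \ref{lem:PWc}. Since $h_j\to 0$ by hypothesis, condition (i) holds, and Lemma \ref{lem:MoscoSuff} then delivers $\VjBEM\xrightarrow{M} V(\Gamma)$, i.e.\ BEM convergence.

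I expect the only genuine subtlety to lie in the approximation step of condition (i): the pre-convex meshes impose essentially no shape regularity on the elements, which may be non-convex or even disconnected, so the usual shape-regularity hypotheses of finite element approximation theory are unavailable. The saving observation is that for piecewise constant approximation of a fixed smooth, compactly supported function the error is controlled solely by the element diameters, so that control of the mesh size $h_j$ alone suffices; this is precisely why Lemma \ref{lem:PWc} is framed in terms of $h_j$, and invoking it closes the argument. For the present open-screen case the estimate is used in a comparatively mild regime; its more delicate use, where a single element may contain many components of a prefractal, arises in the compact case of Theorem \ref{prop:DiscreteCompact}.
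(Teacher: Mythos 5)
Your proof is correct, and it follows the same skeleton as the paper's: both arguments verify conditions (i) and (ii) of Lemma \ref{lem:MoscoSuff}, with condition (i) handled exactly as you do it --- density of $C^\infty_0(\Gamma)$ in $\tH^{-1/2}(\Gamma)$, localisation of the support inside $\Gamma_{j_*}^-\subset\Gamma_j$ for all large $j$ via the increasing open cover, and elementwise averaging on the mesh. Two points differ, both benign. For condition (ii) the paper takes the enclosing spaces $W_j^+=\tH^{-1/2}(\Gamma_j)$ and cites Proposition \ref{prop:Mosco}\rf{Moscoiii} directly, whereas you take $W_j^+=H^{-1/2}_{\Gamma_j^+}$ and invoke Proposition \ref{prop:Mosco}\rf{Moscoii} together with the standing hypothesis $\tH^{-1/2}(\Gamma)=H^{-1/2}_{\overline{\Gamma}}$; your variant is in fact slightly more economical, since it needs only the support inclusion $\overline{\Gamma_j}\subset\Gamma_j^+$, and not the (true, but worth a sentence) fact that zero-extended piecewise constants on $M_j$ belong to $\tH^{-1/2}(\Gamma_j)$, which the paper's choice of $W_j^+$ requires. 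For condition (i) you derive the elementary first-order bound $\|w-w_j\|_{L^2}\leq C\,h_j\,\|\nabla w\|_{L^\infty}$; this is perfectly adequate here, but it is not ``exactly'' Lemma \ref{lem:PWc} as you claim: the paper applies that lemma with $(s,t)=(-1/2,0)$ to obtain $\|w-w_j\|_{\tH^{-1/2}(\Gamma_j)}\leq (h_j/\pi)^{1/2}\|w\|_{L^2(\Gamma_j)}$, an estimate needing only $L^2$ control of $w$. That distinction is immaterial for the present theorem (where $w$ is smooth), but it is the reason Lemma \ref{lem:PWc} is stated the way it is: in the compact-screen Theorem \ref{prop:DiscreteCompact} the functions being projected are mollifications whose derivative norms blow up as $j\to\infty$, so only the $L^2$-based bound with its explicit constant can be used there. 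Either route closes the argument for the bounded open case.
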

\begin{proof}
To show that $\VjBEM \xrightarrow{M}V=\tH^{-1/2}(\Gamma)$ we proceed by verifying that the conditions (i) and (ii) in Lemma \ref{lem:MoscoSuff} hold (with $W_j = V_j^h$, $W=V$, and $H=\tH^{-1/2}(\Gamma_0^+)$).
Regarding condition (ii), note that $\VjBEM\subset \tH^{-1/2}(\Gamma_j)$, and $\tH^{-1/2}(\Gamma_j)$ $\xrightarrow{M}\tH^{-1/2}(\Gamma)= V$ by Proposition \ref{prop:Mosco}\rf{Moscoiii}.
Regarding condition (i), let $v\in C^\infty_0(\Gamma)$ (which is dense in $V=\tH^{-1/2}(\Gamma)$ by definition of the latter) be given.
Since $\Gamma=\bigcup_{j\in\N_0} \Gamma_j^-$ and $\Gamma_j^-\subset \Gamma_{j+1}^-$ for $j\in \N_0$, it follows (e.g.\ \cite[Lem.~4.15]{ChLi16}) that
there exists $j_*\in \N$ such that
$\supp{v}\subset \Gamma_{j_*}^-\subset\Gamma_{j_*}$, and hence that $v\in C^\infty_0(\Gamma_j)\subset \tH^{-1/2}(\Gamma_j)$, for all $j\geq j_*$. In particular, $v|_{\Gamma_j}\in L^2(\Gamma_j)$, so that by Lemma \ref{lem:PWc}
\[\|v - v_{j}\|_{\tH^{-1/2}(\Gamma)} =  \|v - v_{j}\|_{\tH^{-1/2}(\Gamma_j)} \leq (h_j/\pi)^{1/2}\|v|_{\Gamma_j}\|_{L^2(\Gamma_j)}, \quad j\geq j_*, \]
where $v_{j}\in \VjBEM$ is the $L^2$ projection of $v$ onto the discrete space $\VjBEM$. But since for $j\geq j_*$ the norm $\|v|_{\Gamma_j}\|_{L^2(\Gamma_j)}=\|v|_{\Gamma_{j_*}}\|_{L^2(\Gamma_{j_*})}$ does not depend on $j$, it follows that $v_{j}\to v$ provided $h_j\to 0$.
\end{proof}

\subsection{Compact screens}

When the limiting screen $\Gamma$ is %
compact and $\tH^{-1/2}(\Gamma^\circ) \neq H^{-1/2}_\Gamma$,
neither Theorem \ref{prop:DiscreteOpen} nor its method of analysis can be applied. In particular, if $H^{-1/2}_\Gamma\neq \{0\}$ and $\Gamma$ has empty interior ($\Gamma^\circ=\emptyset$) then it is clearly impossible to approximate a limiting non-trivial integral equation solution $v\in H^{-1/2}_\Gamma$ by a sequence of elements of $C^\infty_0(\R^{n-1})$ supported inside $\Gamma$, since no such non-trivial functions exist. In the following theorem we address this case. The proof relies on
mollification arguments (see Appendix \ref{sec:Mollification})
to obtain smooth approximations to $v$ to which we can apply the BEM approximation theory (Lemma \ref{lem:PWc}). This produces approximating smooth functions whose support is strictly larger than that of $v$. This introduces a constraint on the %
sequence $\Gamma_j$ to which the analysis applies. In particular, each $\Gamma_j$ must contain $\Gamma(\epsilon) := \{\bx \in \Gamma_\infty:\dist(\bx,\Gamma)< \epsilon\}$, the $\epsilon$-neighbourhood of $\Gamma$, for some carefully chosen $j$-dependent $\epsilon>0$. As is the case throughout this section, our approximation space on $\Gamma_j$ remains the space $V_j^h$ of piecewise constants on a mesh $M_j$.

\begin{thm}[$\Gamma$ compact]
\label{prop:DiscreteCompact}
Let $\Gamma\subset\Gamma_\infty$ be non-empty and compact.
Let
$\Gamma_j$ be a sequence of bounded open subsets of $\Gamma_\infty$ such that $\Gamma\subset \Gamma(\epsilon_j)\subset\Gamma_j\subset \Gamma(\eta_j)$, for some $0<\epsilon_j<\eta_j$, with $\eta_j\to 0$ as $j\to\infty$.
Let $M_j$ be a pre-convex mesh on $\Gamma_j$ with mesh size $h_j$.
If $H^t_\Gamma$ is dense in $H^{-1/2}_\Gamma$ for some $t\in [-1/2,0]$
(always true for $t=-1/2$)
then BEM convergence holds if $h_j=o((\epsilon_j)^{-2t})$ as $j\to \infty$.
\end{thm}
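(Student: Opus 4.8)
The plan is to deduce the Mosco convergence $\VjBEM \xrightarrow{M} V(\Gamma) = H^{-1/2}_\Gamma$ from the sufficient criterion of Lemma~\ref{lem:MoscoSuff}, working in the ambient space $H = \tH^{-1/2}(\Gamma_\dag)$ for a fixed bounded open $\Gamma_\dag$ containing every $\Gamma_j$ and $\Gamma$ (such a $\Gamma_\dag$ exists because $\eta_j\to 0$ forces the $\Gamma_j$ to be uniformly bounded); BEM convergence then follows by Theorem~\ref{thm:MoscoConv}. So I must verify the two hypotheses of Lemma~\ref{lem:MoscoSuff}: (ii) the existence of closed superspaces $W_j^+\supset\VjBEM$ in $H$ with $W_j^+\xrightarrow{M}H^{-1/2}_\Gamma$, and (i) an approximation property on a dense subspace of $H^{-1/2}_\Gamma$.

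For (ii) I would take $W_j^+ := H^{-1/2}_{\overline{\Gamma(\eta_j)}}$ (a closed subspace of $H$ since $\overline{\Gamma(\eta_j)}$ is a compact subset of $\Gamma_\dag$). Since $\VjBEM\subset\tH^{-1/2}(\Gamma_j)\subset H^{-1/2}_{\overline{\Gamma_j}}$ and $\overline{\Gamma_j}\subset\overline{\Gamma(\eta_j)}$, the inclusion $\VjBEM\subset W_j^+$ is immediate. Mosco condition (i) for $W_j^+$ is trivial, because $\Gamma\subset\overline{\Gamma(\eta_j)}$ gives $H^{-1/2}_\Gamma\subset W_j^+$, so one may use the constant recovery sequence. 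For Mosco condition (ii), suppose $w_m\in W_{j_m}^+$ with $w_m\rightharpoonup w$; given $\psi\in C_0^\infty(\R^{n-1})$ with $\supp\psi\cap\Gamma=\emptyset$, compactness of $\Gamma$ gives $\dist(\supp\psi,\Gamma)=:\tau>0$, so once $\eta_{j_m}<\tau$ the supports of $w_m$ and $\psi$ are disjoint and $\langle w_m,\psi\rangle=0$; passing to the weak limit yields $\langle w,\psi\rangle=0$, whence $\supp w\subset\Gamma$ and $w\in H^{-1/2}_\Gamma$. Here $\eta_j\to 0$ is exactly what is needed. This step is routine.

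The substance lies in (i). By hypothesis $H^t_\Gamma$ is dense in $H^{-1/2}_\Gamma$, so I take $\widetilde W=H^t_\Gamma$ and must, for each $w\in H^t_\Gamma$, construct $w_j\in\VjBEM$ with $\|w_j-w\|_{H^{-1/2}}\to 0$. The idea is to first mollify and then project. Let $w_\delta:=w*\rho_\delta$ for a standard mollifier $\rho_\delta$ at scale $\delta$; then $w_\delta\in C^\infty$ with $\supp w_\delta\subset\{\bx:\dist(\bx,\Gamma)\le\delta\}$. Choosing $\delta=\delta_j:=\epsilon_j/2<\epsilon_j$ guarantees $\supp w_{\delta_j}\subset\Gamma(\epsilon_j)\subset\Gamma_j$, so $w_{\delta_j}|_{\Gamma_j}\in L^2(\Gamma_j)$ and Lemma~\ref{lem:PWc} applies to its $L^2(\Gamma_j)$-projection $w_j:=\Pi_j w_{\delta_j}\in\VjBEM$, giving
\[
\|w_j-w_{\delta_j}\|_{H^{-1/2}}=\|w_j-w_{\delta_j}\|_{\tH^{-1/2}(\Gamma_j)}\le (h_j/\pi)^{1/2}\,\|w_{\delta_j}\|_{L^2(\R^{n-1})}.
\]
To finish I would invoke the two quantitative mollification bounds of Appendix~\ref{sec:Mollification}: the smoothing estimate $\|w_\delta\|_{L^2}\le C\delta^{t}\|w\|_{H^t}$ (note $t\le 0$, so this grows as $\delta\to0$) and the consistency estimate $\|w_\delta-w\|_{H^{-1/2}}\to 0$ as $\delta\to0$ (valid since $w\in H^t\subset H^{-1/2}$). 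Combined with the triangle inequality these give
\[
\|w_j-w\|_{H^{-1/2}}\le C\,h_j^{1/2}\,(\epsilon_j/2)^{t}\,\|w\|_{H^t}+\|w_{\delta_j}-w\|_{H^{-1/2}},
\]
whose second term vanishes since $\delta_j=\epsilon_j/2\to0$ (as $\epsilon_j<\eta_j\to0$) and whose first term is $\le C' h_j^{1/2}\epsilon_j^{t}\|w\|_{H^t}\to0$ precisely when $h_j\epsilon_j^{2t}\to0$, i.e.\ $h_j=o((\epsilon_j)^{-2t})$ — the stated hypothesis. This establishes (i), and Lemma~\ref{lem:MoscoSuff} then yields $\VjBEM\xrightarrow{M}H^{-1/2}_\Gamma$.

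I expect the main obstacle to be the tension inherent in the mollification step: the smoothing bound forces $\|w_\delta\|_{L^2}$ to blow up like $\delta^{t}$ as $\delta\to0$, while support containment forces $\delta<\epsilon_j$ and consistency forces $\delta\to0$. Taking $\delta_j\asymp\epsilon_j$ is the largest admissible scale, and this is what converts the projection error into $h_j^{1/2}\epsilon_j^{t}$; matching this against $h_j=o((\epsilon_j)^{-2t})$ is the crux, and it is also where the exponent $t$ (hence the density assumption) enters, a larger $t$ weakening the required coupling between $h_j$ and $\epsilon_j$.
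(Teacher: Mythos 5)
Your proposal is correct and follows essentially the same route as the paper's own proof: the same use of Lemma~\ref{lem:MoscoSuff} in the ambient space $\tH^{-1/2}(\Gamma_\dag)$, the same enclosing spaces $H^{-1/2}_{\overline{\Gamma(\eta_j)}}$, and the same mollify-at-scale-$\epsilon_j/2$-then-$L^2$-project construction combining \eqref{eq:mollbound} with Lemma~\ref{lem:PWc} to get the error $h_j^{1/2}\epsilon_j^{t}$ and hence the condition $h_j=o(\epsilon_j^{-2t})$. The only (immaterial) deviation is that you verify the Mosco convergence of $H^{-1/2}_{\overline{\Gamma(\eta_j)}}$ directly by a support/test-function argument, whereas the paper cites Proposition~\ref{prop:Mosco}\rf{Moscoii}; your direct argument has the mild advantage of not relying on nestedness of the sets $\Gamma(\eta_j)$.
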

\begin{proof}
Assuming that $\eta_j\to 0$ as $j\to\infty$,
to show that $\VjBEM \xrightarrow{M}V=H^{-1/2}_\Gamma$ we proceed by verifying the two conditions (i) and (ii) in Lemma \ref{lem:MoscoSuff} (with $W_j=\VjBEM$, $W=V$, and $H= \tH^{-1/2}(\Gamma_\dag)$ for some bounded
open set $\Gamma_\dag\subset \Gamma^\infty$ that contains $\Gamma(\eta_j)$ for $j\in \N_0$).
Regarding condition (ii) we note that $\VjBEM\subset H^{-1/2}_{\overline{\Gamma(\eta_j)}}$, and $H^{-1/2}_{\overline{\Gamma(\eta_j)}}\xrightarrow{M} H^{-1/2}_\Gamma= V$ by Proposition \ref{prop:Mosco}\rf{Moscoii}, since $\eta_j\to 0$.
To establish condition (i), suppose that $v\in H^t_\Gamma$, where $t\in[-1/2,0]$ is such that $H^t_\Gamma$ is dense in $H^{-1/2}_\Gamma$. (Note that we are including $t=-1/2$ as a possibility here, in which case density holds trivially.)
For each $j\in\N$ define $\tilde v_j:=\psi_{\epsilon_j/2}*v$ to be the mollification defined in Appendix \ref{sec:Mollification}.
Then $\tilde v_j\in C^\infty_0(\Gamma_j)$ (since $\tilde v_j$ is smooth and $\supp{\tilde v_j}\subset \overline{\Gamma(\epsilon_j/2)}\subset \Gamma(\epsilon_j)\subset\Gamma_j$) and $\|\tilde v_j- v\|_{H^{-1/2}(\Gamma_\infty)}\to 0$ as $j\to\infty$, since $\epsilon_j\to 0$. It remains to show that there exists $v_j\in \VjBEM$ such that $\|v_j-\tilde v_j\|_{H^{-1/2}(\Gamma_\infty)}\to 0$ as $j\to\infty$.  For this we define $v_j$ to be the orthogonal projection in $L^2(\Gamma_j)$ of $\tilde v_j\in C^\infty_0(\Gamma_j)$ onto $\VjBEM\subset L^2(\Gamma_j)$. From \eqref{eq:mollbound} we have that
$$
\|\tilde v_j\|_{L^2(\Gamma_\infty)} \leq c_{n-1} %
c'_{-t} (\epsilon_j/2)^{t} \|v\|_{H^t_\Gamma},
$$
so that, by Lemma \ref{lem:PWc},
\begin{align*}
\|v_j-\tilde v_j\|_{H^{-1/2}(\Gamma_\infty)} = \|v_j-\tilde v_j\|_{\tH^{-1/2}(\Gamma_j)} &\leq (h_j/\pi)^{1/2}\|\tilde v_j\|_{L^2(\Gamma_j)}\\
&=(h_j/\pi)^{1/2}\|\tilde v_j\|_{L^2(\Gamma_\infty)}\\
&=c_{n-1} %
c'_{-t} (\epsilon_j/2)^t(h_j/\pi)^{1/2}\|v\|_{H^t_\Gamma}.
\end{align*}
Hence $\|v_j-\tilde v_j\|_{H^{-1/2}(\Gamma_\infty)}\to 0$ as $j\to\infty$ provided that $h_j^{1/2}\epsilon_j^{t}\to 0$ as $j\to\infty$, which is equivalent to saying that $h_j=o(\epsilon_j^{-2t})$ as $j\to\infty$.
\end{proof}

\begin{rem} \label{rem:zero} Theorem \ref{prop:DiscreteCompact} applies in the case when $V= H^{-1/2}_\Gamma=\{0\}$ when (trivially) $H^0_\Gamma=\{0\}$ is dense in $V$, to give that $\VjBEM \xrightarrow{M}V$ as $j\to\infty$, provided that $\eta_j\to 0$ and $h_j\to 0$ as $j\to \infty$. But, in this case, if $\eta_j\to 0$ then, as argued in the above proof, $\VjBEM \subset H^{-1/2}_{\overline{\Gamma(\eta_j)}}\xrightarrow{M}V$ as $j\to\infty$, so that, by Lemma \ref{lem:MoscoSuff}, $\VjBEM\xrightarrow{M}V=\{0\}$ as $j\to\infty$, with no constraint on the mesh size $h_j$.  More generally, if $M_j$ is any mesh on any open set $\Gamma_j$ such that $V_j=\tH^{-1/2}(\Gamma_j)\xrightarrow{M}\{0\}$, then $\VjBEM\xrightarrow{M}\{0\}$.
\end{rem}

\begin{rem} \label{rem:every}
For many compact $\Gamma$ of interest it is straightforward to see how to construct sequences $\Gamma_j$ and $M_j$ satisfying the conditions of Theorem \ref{prop:DiscreteCompact} (see, for instance the examples in \S\ref{sec:examples}). But here is a construction that works in every case.
Let $\cM_1$, $\cM_2$, \ldots be a sequence of uniform meshes of convex elements on $\Gamma_\infty=\R^{n-1}\times \{0\}$, i.e. $\cM_j = \{ S_{j,n}:n\in \N\}$ is a family of open, bounded, convex, pairwise disjoint, congruent subsets of $\Gamma_\infty$ that tile $\Gamma_\infty$ in the sense that $\Gamma_\infty$ is the closure of $\bigcup_{n=1}^\infty S_{j,n}$. Let $h_j$ be the (common) diameter of $S_{j,n}$, for $n\in \N$, and assume that $h_j\to 0$ as $j\to\infty$. (For example, we might take (for $n=2$) $\cM_j = \{((n-1)h_j,nh_j):n\in \Z\}\times \{0\}$.)
For $j\in \N$ choose $\epsilon_j > 0$ with $\epsilon_j\to 0$ as $j\to\infty$, let $M_j$
denote the set of those elements of $\cM_j$ that have a non-empty intersection with $\Gamma(\epsilon_j)$, and let $\Gamma_j$ denote the interior of
$\overline{\bigcup_{T\in M_j} T}$, so that $M_j$ is a convex mesh on $\Gamma_j$ in the sense of \S\ref{sec:fem} (and, in particular, is pre-convex).
Then $\Gamma(\epsilon_j) \subset \Gamma_j \subset \overline{\Gamma(\epsilon_j + h_j)}\subset \Gamma(\eta_j)$ provided $\eta_j>\epsilon_j + h_j$, so that $\Gamma_j$ and $M_j$ satisfy the conditions of Theorem \ref{prop:DiscreteCompact} provided that $h_j=o(\epsilon_j^{-2t})$ for some $t\in [-1/2,0]$ such that $H^t_\Gamma$ is dense in $H^{-1/2}_\Gamma$.
\end{rem}

Combining Theorem \ref{prop:DiscreteCompact}  and Remark \ref{rem:zero} with the density results in Lemma \ref{lem:Density} we obtain the following corollary.

\begin{cor} \label{cor:dset}
Suppose that $\Gamma\subset\Gamma_\infty=\R^{n-1}\times\{0\}$, $V$, $\Gamma_j$, $\VjBEM$ and $M_j$ satisfy the assumptions of Theorem \ref{prop:DiscreteCompact}.
Then BEM convergence holds if either
\begin{enumerate}[(i)]
\item $\dimH{\Gamma}<n-2$ or $\Gamma$ is a $(n-2)$-set (so that $V=\{0\}$); %
\item $\Gamma$ is a $d$-set for some $n-2<d<n-1$
(so that $V\neq \{0\}$)
and $h_j=o(\epsilon_j^{\mu})$ as $j\to \infty$, for some $\mu>n-1-d$.
\end{enumerate}

\end{cor}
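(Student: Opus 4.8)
The plan is to treat the two cases separately, in each case reducing the claim to a direct application of results already established: Theorem \ref{prop:DiscreteCompact}, Remark \ref{rem:zero}, and the density/triviality dichotomy of Lemma \ref{lem:Density}. Throughout I use that $\eta_j\to 0$, which is inherited from the assumed hypotheses of Theorem \ref{prop:DiscreteCompact}.

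For case (i) I would first invoke Lemma \ref{lem:Density}: under the hypothesis that $\dimH{\Gamma}<n-2$ or that $\Gamma$ is an $(n-2)$-set, the lemma yields $V=H^{-1/2}_\Gamma=\{0\}$. Since $\eta_j\to 0$, Remark \ref{rem:zero} then applies directly and gives $\VjBEM\xrightarrow{M}V=\{0\}$ with no constraint on the mesh size $h_j$, which is precisely BEM convergence. This case requires no computation beyond citing the two results.

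For case (ii) the substance is choosing an exponent $t\in[-1/2,0]$ that simultaneously satisfies the density and mesh-size hypotheses of Theorem \ref{prop:DiscreteCompact}. I would set $t:=\max\{-1/2,-\mu/2\}$ and then verify three things. First, $t\in[-1/2,0)$: this follows since $n-2<d<n-1$ forces $0<n-1-d<1$, whence $\mu>n-1-d>0$ and $-\mu/2<0$. Second, density of $H^t_\Gamma$ in $H^{-1/2}_\Gamma$: by the final assertion of Lemma \ref{lem:Density} (valid because $\Gamma$ is a $d$-set with $n-2<d<n-1$) this holds for all $-1/2\le t<(d-n+1)/2$, so I must check $t<(d-n+1)/2$; this reduces to the two inequalities $-1/2<(d-n+1)/2$ (equivalent to $d>n-2$) and $-\mu/2<(d-n+1)/2$ (equivalent to $\mu>n-1-d$), both of which are hypotheses. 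Third, the mesh condition $h_j=o(\epsilon_j^{-2t})$: since $-2t=\min\{1,\mu\}\le\mu$ and $0<\epsilon_j\le 1$ for $j$ large (as $\epsilon_j\to 0$), one has $\epsilon_j^\mu\le\epsilon_j^{-2t}$, so the assumed $h_j=o(\epsilon_j^\mu)$ implies $h_j=o(\epsilon_j^{-2t})$. With these three checks in hand, Theorem \ref{prop:DiscreteCompact} with this $t$ delivers BEM convergence.

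The only genuinely delicate point, which I would single out as the main obstacle, is the translation between the single exponent $\mu>n-1-d$ in the statement and the one-parameter family of admissible exponents $t$ allowed by Theorem \ref{prop:DiscreteCompact}. The key observation making everything fit is that the threshold $\mu>n-1-d$ is exactly equivalent to $-\mu/2<(d-n+1)/2$, i.e.\ to $-\mu/2$ lying strictly inside the density range $[-1/2,(d-n+1)/2)$ of Lemma \ref{lem:Density}; this is what guarantees an admissible $t$ exists. The truncation at $-1/2$ in the definition of $t$ handles the subsidiary regime $\mu>1$, where the unclipped value $-\mu/2<-1/2$ would fall outside $[-1/2,0]$, and where density is in any case trivial.
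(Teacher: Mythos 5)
Your proposal is correct and follows exactly the route the paper intends: the paper states this corollary without a written proof, asserting only that it follows by combining Theorem \ref{prop:DiscreteCompact}, Remark \ref{rem:zero} and Lemma \ref{lem:Density}, which is precisely your two-case argument (Remark \ref{rem:zero} for the trivial case $V=\{0\}$, and Theorem \ref{prop:DiscreteCompact} with a suitable $t$ for the $d$-set case). Your choice $t=\max\{-1/2,-\mu/2\}$, i.e.\ $-2t=\min\{1,\mu\}$, is the natural (and correct) way to reconcile the single exponent $\mu>n-1-d$ with the density window $[-1/2,(d-n+1)/2)$ of Lemma \ref{lem:Density} and the constraint $t\in[-1/2,0]$, so nothing is missing.
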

We emphasize that in Corollary \ref{cor:dset}(ii) it is possible (since $0<n-1-d<1$) to take $\mu< 1$, giving convergence when $h_j\sim \eps_j$ or even $h_j\gg \epsilon_j$. See the discussion around \eqref{eq:Mj2} and after Corollary \ref{cor:dsetconv}(b) below, where this result is applied.

\subsection{The BEM on fractals and prefractals arising from iterated function systems} %
An important IFS subclass is where each $s_m$ is a {\em contracting similarity}, %
i.e.
\begin{equation} \label{eq:similarfirst}
|s_m(x)-s_m(y)| = r_m|x-y|, \quad x,y\in \R^{n-1},
\end{equation}
for some $r_m\in (0,1)$.
In the case that the IFS additionally satisfies the standard {\em open set condition} \cite[(9.11)]{Fal}, that there exists a non-empty bounded open set $O\subset \R^{n-1}$ such that
\begin{align} \label{oscfirst}
s(O) \subset O \quad \mbox{and} \quad s_m(O)\cap s_{m'}(O)=\emptyset, \quad m\neq m',
\end{align}
it is well known \cite[Thm.~4.7]{Triebel97FracSpec} that $\Gamma$ is a $d$-set, in particular that $\dimH{\Gamma}=d$, where $d\in (0,n-1]$ is the unique solution of the equation
\begin{align} \label{eq:dfirst}
\sum_{m=1}^\nu r_m^d = 1.
\end{align}
Thus Corollary \ref{cor:dset} applies to this important class of fractal examples.

Suppose that $O$ satisfies the open set condition, and consider the sequence of compact sets $\Gamma_j$ defined by \eqref{eq:prefract0} with $\Gamma_0 := \overline{O}$, i.e.%
\begin{equation} \label{eq:prefract1}
\Gamma_0 := \overline{O}, \quad \Gamma_{j+1} := s(\Gamma_j), \quad j=0,1,\ldots
\end{equation}
so that $\Gamma_j = s^j(\overline{O})$, where $s^j$ is the mapping $s$ iterated $j$ times. Then the open set condition implies that $s(\overline{O}) \subset \overline{O}$, so that \eqref{eq:bigcap} holds, in particular $\Gamma\subset \overline{O}$, and it follows from Corollary \ref{cor:ifs} that the solution to the BVP $\sD(\Gamma_j)$ converges to that of $\sD(\Gamma)$ as $j\to\infty$.
  Note also that $s(O)$ has Lebesgue measure $|s(O)| = \sum_{m=1}^\nu r_m^{n-1}|O|$, so it follows from \eqref{eq:dfirst} that $d<n-1$ unless $|s(O)|=|O|$, in which case $s(\overline{O})=\overline{O}$ so that $\Gamma=\overline{O}$ (as $\Gamma$ is the unique fixed point).

Furthermore, for any $O$ satisfying the open set condition,
the sequence $\Gamma_j$ (cf.\ \eqref{eq:prefract1}) given by
\begin{equation} \label{eq:prefract}
\Gamma_0 := O, \quad \Gamma_{j+1} := s(\Gamma_j), \quad j=0,1,\ldots
\end{equation}
so that $\Gamma_j = s^j(O)$, is a natural sequence of \emph{open} sets converging to the fractal $\Gamma$ that %
can be discretised by the BEM.
For simplicity we assume for the rest of this subsection that
$r_m=r\in (0,1)$ for $m=1,\ldots,\nu$ in \eqref{eq:similarfirst},
in which case \eqref{eq:dfirst} becomes
\begin{equation} \label{eq:d2}
d = \log(1/\nu)/\log(r).
\end{equation}
Assume that $O$ is connected. Then the open set condition implies that $\Gamma_j$ given by \eqref{eq:prefract} has $\nu^j$ components, each component similar to $O$ but reduced in diameter by a factor $r^j$.  If $O$ is convex and $M_0=\{T_{0,1},\ldots,T_{0,N_0}\}$ is a convex mesh on $\Gamma_0$, a natural construction of a convex mesh on $\Gamma_j$
is to take
\begin{equation} \label{eq:Mj}
M_j := \left\{s_{m_1}\circ \cdots \circ s_{m_j}\left(T_{0,l}\right):1\leq m_{j'}\leq \nu \mbox{ for } j'=1,\ldots,j \mbox{ and } 1\leq l\leq N_0\right\}.
\end{equation}
The mesh $M_j$ has $N_0$ elements on each component of $\Gamma_j$, so $N_j = \nu^jN_0$ elements in total. If $h_0$ is the mesh size for $M_0$, then $M_j$ has mesh size $h_j = r^j h_0$.

One can also consider meshes for which
there is less than one degree of freedom (DOF) per component of $\Gamma_j$.
Precisely, for each $j$ choose $i=i(j)\in \{0,\ldots,j\}$, let $\tau_{i,1}$, \ldots, $\tau_{i,\nu^i}$ be the components of $\Gamma_i$, and consider the mesh $M_j$ on $\Gamma_j$ defined by
\begin{equation} \label{eq:Mj2}
M_j := \left\{\Gamma_j\cap\tau_{i,1}, \ldots, \Gamma_j\cap\tau_{i,\nu^i}\right\}.
\end{equation}
(Figure \ref{fig:PreConvexMeshes} shows the meshes given by \eqref{eq:Mj2} for $j=0,1,2$ and $0\leq i\leq j$ for a Cantor dust example from \S\ref{sec:CantorDust} below.)
If $i=j$ the mesh \eqref{eq:Mj2} is convex (the elements are convex sets), indeed $M_j$ coincides with the mesh given by \eqref{eq:Mj} with $N_0=1$. But if $i<j$ then the mesh $M_j$ given by \eqref{eq:Mj2} has only $N_j =\nu^i$ elements and each element is comprised of $\nu^{j-i}$ separate components. This mesh $M_j$ is clearly not convex, if $i<j$, but it is pre-convex (in the sense of Appendix \ref{sec:fem}) under the above assumptions on $O$, as captured in the following straightforward lemma.

\begin{lem} \label{lem:qc}%
Suppose that the bounded open set $O\subset \R^{n-1}$ is convex and satisfies the open set condition \eqref{oscfirst}. Then $M_j$ given by \eqref{eq:Mj2} is a pre-convex mesh on $\Gamma_j$.
\end{lem}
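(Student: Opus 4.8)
The plan is to verify directly the four defining properties of a pre-convex mesh (Appendix \ref{sec:fem}) for the family $M_j$ in \eqref{eq:Mj2}: that each element is non-empty and open, that the convex hulls of distinct elements are disjoint, that each element has boundary of zero $(n-1)$-dimensional Lebesgue measure, and that $\Gamma_j$ is the interior of the union of the closures of the elements. Of these, only the disjointness of the convex hulls carries any content, since for $i<j$ the elements $T_{j,l}:=\Gamma_j\cap\tau_{i,l}$ are disconnected (each comprising $\nu^{j-i}$ separate pieces). The crux is to show that each such element is nonetheless confined to a single convex ``level-$i$ cell'', and that these cells are pairwise disjoint.

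The key structural step is therefore to prove that the $\nu^i$ components $\tau_{i,1},\ldots,\tau_{i,\nu^i}$ of $\Gamma_i=s^i(O)$ are pairwise disjoint open convex sets. Writing $s_{\mathbf m}:=s_{m_1}\circ\cdots\circ s_{m_i}$ for a word $\mathbf m=m_1\cdots m_i$, each component is $\tau_{i,l}=s_{\mathbf m}(O)$; it is open, and, being the image of the convex set $O$ under the affine similarity $s_{\mathbf m}$, it is convex. Pairwise disjointness I would prove by induction on $i$: the base case $i=1$ is exactly the second half of the open set condition \eqref{oscfirst}, and for the inductive step I distinguish two words $\mathbf m\neq\mathbf m'$ according to whether their first letters agree. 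If $m_1\neq m_1'$ then, using $s(O)\subset O$ (hence $s^{i-1}(O)\subseteq O$), one has $s_{\mathbf m}(O)\subseteq s_{m_1}(O)$ and $s_{\mathbf m'}(O)\subseteq s_{m_1'}(O)$, which are disjoint by \eqref{oscfirst}; if $m_1=m_1'$ then the tails differ, so the inductive hypothesis gives disjointness of the corresponding level-$(i-1)$ images, and applying the injective map $s_{m_1}$ preserves this.

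With this in hand the remaining properties follow readily. Each $T_{j,l}=\Gamma_j\cap\tau_{i,l}$ is open (an intersection of open sets) and non-empty, since $\Gamma_j\subset\Gamma_i$ and $T_{j,l}=s_{\mathbf m}(s^{j-i}(O))\neq\emptyset$. Because $T_{j,l}\subseteq\tau_{i,l}$ with $\tau_{i,l}$ convex, the convex hull of $T_{j,l}$ lies in $\tau_{i,l}$; as the $\tau_{i,l}$ are pairwise disjoint, so are the convex hulls of the elements. For the boundary condition I note that $T_{j,l}$ is a finite union of similar copies of $O$, whence $\partial T_{j,l}$ is contained in a finite union of scaled copies of $\partial O$, each of zero $(n-1)$-dimensional Lebesgue measure since $O$ is a bounded convex open subset of $\R^{n-1}$. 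Finally, for the covering condition one computes $\bigcup_l T_{j,l}=\Gamma_j\cap\Gamma_i=\Gamma_j$ and $\bigcup_l\overline{T_{j,l}}=s^j(\overline O)=\overline{\Gamma_j}$, so that $\Gamma_j=\big(\bigcup_l\overline{T_{j,l}}\big)^{\circ}$ follows once $\Gamma_j$ is identified with the interior of its own closure.

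I expect the induction establishing pairwise disjointness of the level-$i$ cells to be the main obstacle, as it is the one place where the open set condition, the injectivity of the similarities, and the convexity of $O$ are genuinely combined (the convexity being what forces each disconnected element into a single cell). The other verifications are routine; the only mild care is in the covering identity, which rests on the elementary facts that finite unions commute with closure and that $\Gamma_j$ coincides with the interior of its closure for the open prefractals \eqref{eq:prefract} under consideration.
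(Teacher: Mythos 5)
The paper states this lemma without proof (it is introduced as a ``straightforward lemma''), so there is no argument of the paper's to compare yours against; your proposal must stand on its own. The parts you identify as the crux are correct: your induction — the open set condition \eqref{oscfirst} when first letters differ, injectivity of $s_{m_1}$ when they agree — does show that the cells $\tau_{i,l}=s_{\mathbf m}(O)$ are pairwise disjoint open convex sets, whence $\mathrm{Conv}(T_{j,l})\subset\tau_{i,l}$ and condition (i) of a pre-convex mesh follows; openness, non-emptiness (via $T_{j,l}=s_{\mathbf m}(s^{j-i}(O))$) and the null-boundary condition (ii) are also handled correctly.

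The genuine gap is in the step you set aside as routine: condition (iii). You reduce it, correctly, to the claim that $\Gamma_j$ is regular open, $\Gamma_j=(\overline{\Gamma_j})^{\circ}$, and then assert this as an ``elementary fact'' about the prefractals \eqref{eq:prefract}. No proof is given, and none is possible from the stated hypotheses: the claim fails whenever distinct closed cells share a face. Concretely, work in $\R^2$ (i.e.\ $n=3$) and take $O=(0,1)^2$, $\nu=2$, $r=\frac14$, $s_1(\bx)=\bx/4+(1/8,3/8)$, $s_2(\bx)=\bx/4+(3/8,3/8)$. Then $O$ is convex, \eqref{oscfirst} holds, and even $\Gamma\subset O$; but $\overline{\Gamma_1}=[1/8,5/8]\times[3/8,5/8]$, whose interior contains the segment $\{3/8\}\times(3/8,5/8)$, which is disjoint from $\Gamma_1=s_1(O)\cup s_2(O)$. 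So $\Gamma_1$ is not regular open, condition (iii) fails for \eqref{eq:Mj2} with $j=1$ and either choice of $i(1)$, and the conclusion of the lemma is false for this IFS — the obstacle is not the disjointness induction you worried about, but exactly the covering identity you waved through. To close it you must add a hypothesis, most naturally that the closed cells are pairwise disjoint, $s_m(\overline{O})\cap s_{m'}(\overline{O})=\emptyset$ for $m\neq m'$, which is what actually holds in the paper's applications of this lemma (the thickened Cantor set and Cantor dust prefractals, where $0<\delta<\frac1{2\alpha}-1$ enforces it). Under that assumption your induction, run with $\overline{O}$ in place of $O$ (using $s(\overline{O})=\overline{s(O)}\subset\overline{O}$), gives pairwise disjoint compact level-$j$ cells; finitely many disjoint compacta are positively separated, so the interior of their union is the union of their interiors, and since a bounded open convex set is regular open this yields $(\overline{\Gamma_j})^{\circ}=\bigcup_{\mathbf w}s_{\mathbf w}\bigl((\overline{O})^{\circ}\bigr)=\bigcup_{\mathbf w}s_{\mathbf w}(O)=\Gamma_j$, completing the proof. (Alternatively one could weaken condition (iii) in Appendix \ref{sec:fem} to the inclusion $\Omega\subset\bigl(\,\overline{\bigcup_l T_l}\,\bigr)^{\circ}$, which your argument does establish.)
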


\begin{figure}
\includegraphics[width=.32\textwidth, clip, trim=114 68 100 20]{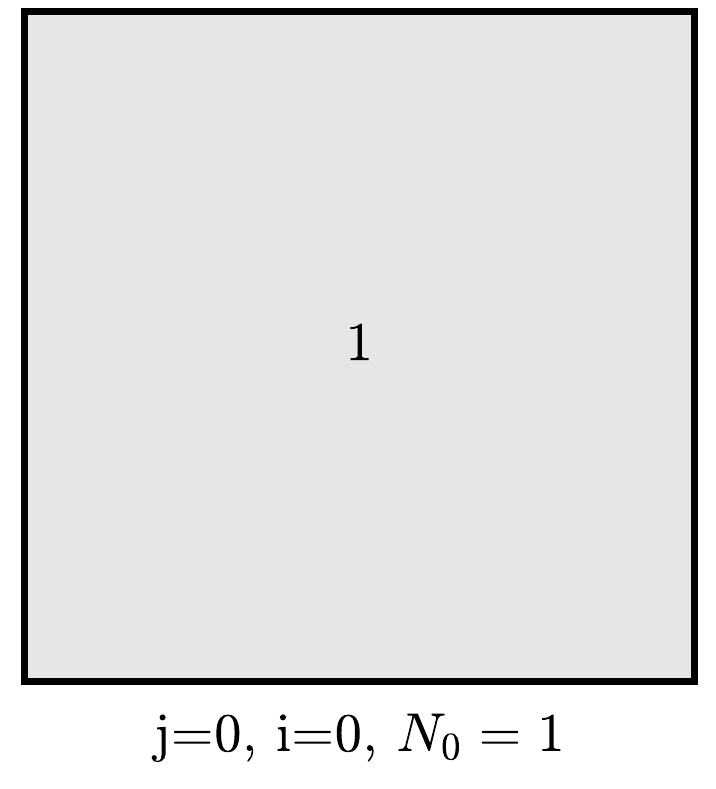}
\includegraphics[width=.32\textwidth, clip, trim=114 68 100 20]{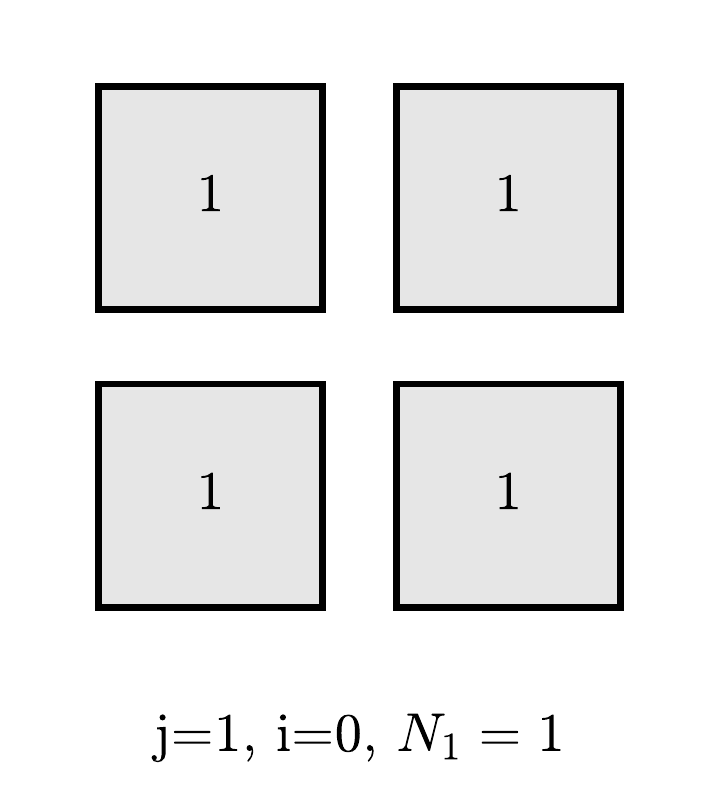}
\includegraphics[width=.32\textwidth, clip, trim=114 68 100 20]{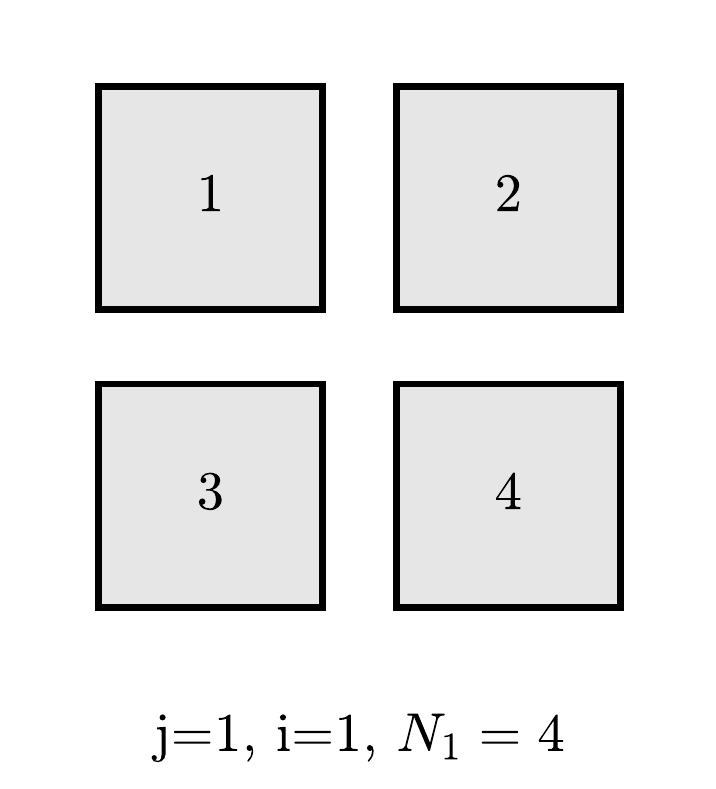}\\
\includegraphics[width=.32\textwidth, clip, trim=114 68 100 20]{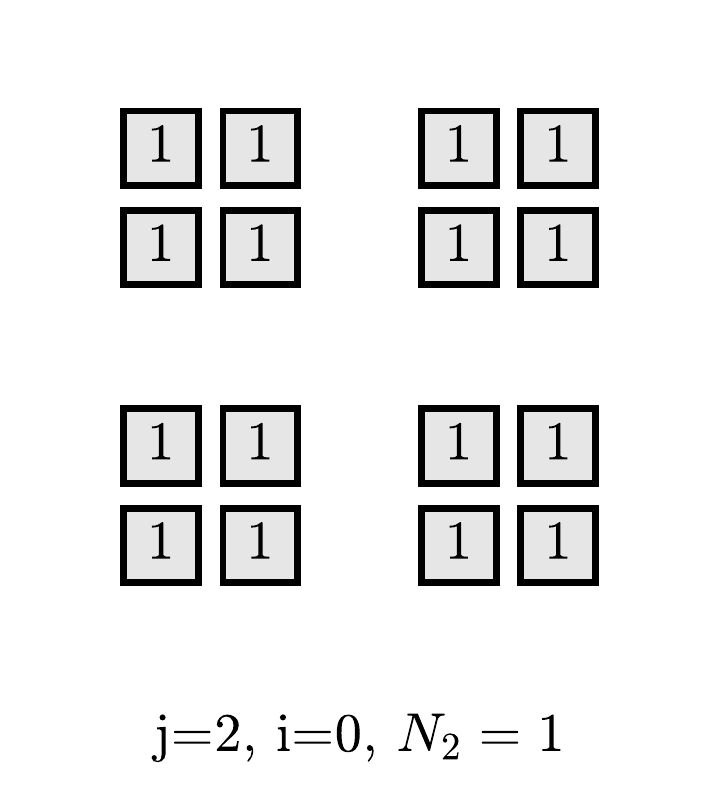}
\includegraphics[width=.32\textwidth, clip, trim=114 68 100 20]{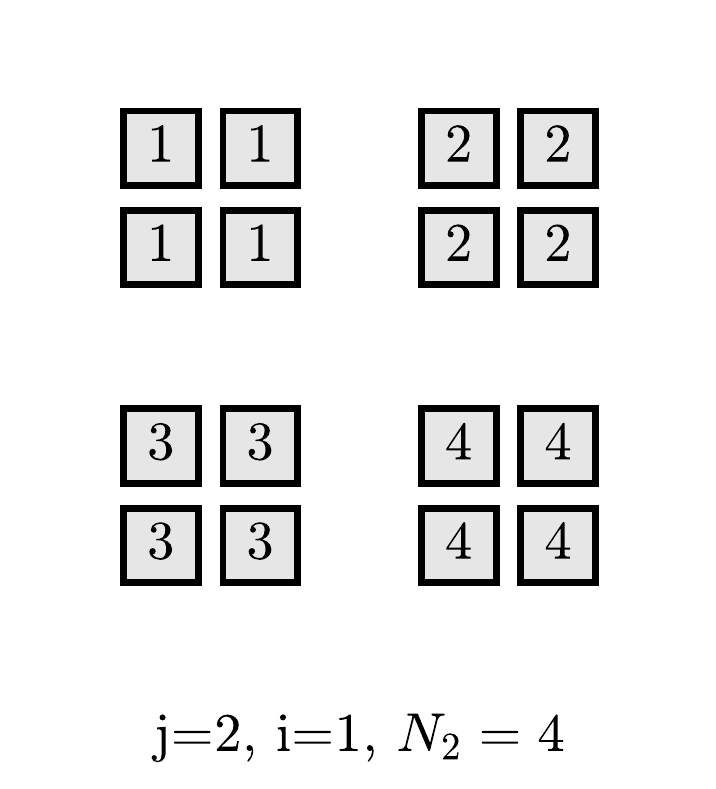}
\includegraphics[width=.32\textwidth, clip, trim=114 68 100 20]{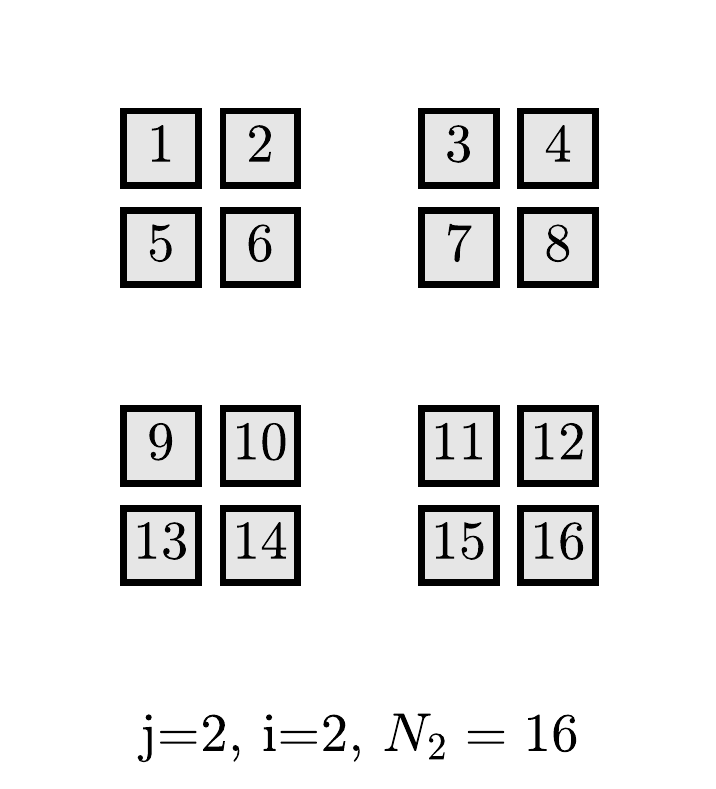}
\caption{Six pre-convex meshes, given by \eqref{eq:Mj2} with $j=0,1,2$ and $i= 0,\ldots,j$, for the Cantor dust example of \S\ref{sec:CantorDust} with parameter $\alpha=1/3$. The prefractals $\Gamma_j$ are defined by \eqref{eq:prefract} with $\Gamma_0:=O:=(-\delta,1+\delta)^2$ and $\delta=1/4$, and with $\nu=4$ and the similarities $s_1,\ldots,s_\nu$ given by \eqref{eq:similar}. Each mesh corresponds to a different pair of values $(j,i)$.
In each mesh the number inside each component of $\Gamma_j$ is the index $\ell\in\{1,\ldots,N_j\}$ of the mesh element $\Gamma_j\cap\tau_{i,\ell}$ to which that component belongs, and $N_j=\nu^i$ is the total number of elements in the mesh on~$\Gamma_j$.}
\label{fig:PreConvexMeshes}
\end{figure}

The following corollary, which follows from Theorems \ref{thm:MoscoConv} and \ref{prop:DiscreteCompact}, Corollary \ref{cor:dset} and Lemma \ref{lem:qc}, justifies convergence of the BEM when $\Gamma_j$ is the sequence of prefractals \eqref{eq:prefract}
and $M_j$ is defined by either
\eqref{eq:Mj} or \eqref{eq:Mj2},
under the additional requirement that $\Gamma\subset O$ (rather than $\Gamma \subset \overline{O}$).
We will see that this condition holds for obvious choices of $O$ in the Cantor set and Cantor dust examples that we treat in the next sections.

\begin{cor} \label{cor:dsetconv} Suppose that $\nu\geq 2$ and $s_1$, \ldots, $s_\nu$ are contracting similarities, satisfying \eqref{eq:similarfirst} with $r_m=r$, for $m=1,\ldots,\nu$ and some $r\in (0,1)$, and that the compact set $\Gamma$ is the unique fixed point of the IFS $\{s_1,\ldots,s_\nu\}$,  satisfying \eqref{eq:fixedfirst}.
Suppose that the bounded open set $O\subset \R^{n-1}$ is convex and satisfies the open set condition \eqref{oscfirst} and that $\Gamma \subset O$. Then $\Gamma$ is a $d$-set with $d\in (0,n-1)$ given by \eqref{eq:d2}.

Define $\Gamma_j$ by \eqref{eq:prefract}.
Then BEM convergence holds
if either
\begin{enumerate}[(a)]
\item  $M_j$ is the convex mesh defined by \eqref{eq:Mj}; or
\item $M_j$ is the pre-convex mesh defined by \eqref{eq:Mj2} and either
\begin{enumerate}[(i)]
\item $d\leq n-2$ (so that $V=\{0\}$);
\item $n-2<d<n-1$ and $i(j)> \mu j$, $j\in\N_0$, for some $\mu>n-1-d$.
\end{enumerate}
\end{enumerate}
\end{cor}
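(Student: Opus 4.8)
The plan is to reduce everything to Corollary \ref{cor:dset} by checking that $\Gamma$, $\Gamma_j$ and $M_j$ meet the hypotheses of Theorem \ref{prop:DiscreteCompact}, and then to read off the required mesh-size conditions in each of the cases (a), (b)(i), (b)(ii). First I would record the $d$-set property: the open set condition \eqref{oscfirst} together with \cite[Thm.~4.7]{Triebel97FracSpec} gives that $\Gamma$ is a $d$-set with $\dim_H\Gamma=d$ as in \eqref{eq:d2}; here $d>0$ since $\nu\ge 2$, and $d<n-1$ because $d=n-1$ would force $\Gamma=\overline O$ (as observed after \eqref{eq:prefract1}), contradicting the hypothesis $\Gamma\subset O$ (a bounded open set cannot contain its own closure). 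Thus $d\in(0,n-1)$.

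The geometric heart of the argument is to produce $\epsilon_j,\eta_j$ with $\Gamma\subset\Gamma(\epsilon_j)\subset\Gamma_j\subset\Gamma(\eta_j)$ and $\eta_j\to0$. Writing $s_{\mathbf m}:=s_{m_1}\circ\cdots\circ s_{m_j}$ for a multi-index $\mathbf m\in\{1,\dots,\nu\}^j$ (a similarity of ratio $r^j$), iterating \eqref{eq:fixedfirst} gives $\Gamma=\bigcup_{\mathbf m}s_{\mathbf m}(\Gamma)$, while $\Gamma_j=s^j(O)=\bigcup_{\mathbf m}s_{\mathbf m}(O)$; since $\Gamma\subset O$ this already yields $\Gamma\subset\Gamma_j$. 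Setting $\rho:=\dist(\Gamma,\Gamma_\infty\setminus O)>0$ (finite and positive, as $\Gamma$ is compact and $O$ open), the scaling of distances under $s_{\mathbf m}$ gives $\dist(s_{\mathbf m}(\Gamma),\Gamma_\infty\setminus s_{\mathbf m}(O))\ge r^j\rho$; hence any point within $r^j\rho$ of $\Gamma$ lies in the same component $s_{\mathbf m}(O)$ as its nearest point of $\Gamma$, so $\Gamma(\epsilon_j)\subset\Gamma_j$ with $\epsilon_j:=r^j\rho$. For the outer inclusion, each component $s_{\mathbf m}(O)$ has diameter $r^j\diam(O)$ and contains $s_{\mathbf m}(\Gamma)\subset\Gamma$, so every point of $\Gamma_j$ is within $r^j\diam(O)$ of $\Gamma$, giving $\Gamma_j\subset\Gamma(\eta_j)$ with $\eta_j:=2r^j\diam(O)\to0$. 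Both $\epsilon_j$ and $\eta_j$ are therefore of order $r^j$. I expect this step, and in particular verifying that a point near $\Gamma$ cannot straddle the gap between two prefractal components, to be the only part requiring real care; it is exactly where $\Gamma\subset O$ enters.

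It then remains to identify the mesh sizes and invoke Corollary \ref{cor:dset}. In case (a) the mesh \eqref{eq:Mj} is convex, hence pre-convex, with $h_j=r^jh_0$; in case (b) the mesh \eqref{eq:Mj2} is pre-convex by Lemma \ref{lem:qc}, and since each of its elements lies in a level-$i(j)$ component of $\Gamma_{i(j)}$ of diameter $r^{i(j)}\diam(O)$, we have $h_j\le r^{i(j)}\diam(O)$. If $d\le n-2$ then $V=\{0\}$, and Corollary \ref{cor:dset}(i) (via Remark \ref{rem:zero}) gives convergence using only $\eta_j\to0$, with no constraint on $h_j$; this settles case (b)(i) and the $d\le n-2$ instances of (a).

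Finally, if $n-2<d<n-1$ I would apply Corollary \ref{cor:dset}(ii), which requires exhibiting some $\mu>n-1-d$ with $h_j=o(\epsilon_j^{\mu})$. In case (a), $h_j\sim r^j\sim\epsilon_j$, so $h_j=o(\epsilon_j^\mu)$ for every $\mu<1$, and since $n-1-d<1$ one may choose $\mu\in(n-1-d,1)$. In case (b)(ii) we are given $i(j)>\bar\mu j$ with $\bar\mu>n-1-d$; choosing $\mu\in(n-1-d,\bar\mu)$ yields $h_j/\epsilon_j^{\mu}\le C\,r^{\,i(j)-\mu j}\le C\,r^{(\bar\mu-\mu)j}\to0$, so $h_j=o(\epsilon_j^\mu)$. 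In every case Corollary \ref{cor:dset} delivers BEM convergence, completing the proof. Aside from the two-sided neighbourhood bounds of the second paragraph, all of this is bookkeeping plus the cited density and $d$-set results.
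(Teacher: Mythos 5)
Your proof is correct and takes essentially the same route as the paper's: establish the sandwich $\Gamma\subset\Gamma(\epsilon_j)\subset\Gamma_j\subset\Gamma(\eta_j)$ with $\epsilon_j,\eta_j\asymp r^j$ by exploiting the scaling of the similarities and $\Gamma=s^j(\Gamma)$, $\Gamma_j=s^j(\Gamma_0)$, identify the mesh sizes $h_j=r^jh_0$ (case (a)) and $h_j\leq r^{i(j)}\mathrm{diam}(O)$ (case (b)), and then invoke Lemma \ref{lem:qc} and Corollary \ref{cor:dset}. If anything, your treatment of the little-$o$ conditions is more careful than the paper's own wording: the paper nominally applies Corollary \ref{cor:dset} ``with $\mu=1$'' in case (a) and reuses the given $\mu$ in case (b)(ii), even though $h_j\asymp\epsilon_j$ is not $o(\epsilon_j)$ and $i(j)>\mu j$ alone does not force $r^{i(j)-\mu j}\to 0$, whereas your interpolated choices $\mu\in(n-1-d,1)$ and $\mu\in(n-1-d,\bar\mu)$ make these steps airtight.
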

\begin{proof} That $\Gamma$ is a $d$-set with $d$ given by \eqref{eq:d2} follows as discussed above; it holds, as discussed above \eqref{eq:prefract1}, that $d<n-1$ since $\Gamma\subset O$ so that $\Gamma\neq \overline{O}$. Since $O$ is bounded and open and $\Gamma\subset O= \Gamma_0$, it follows that $\Gamma\subset \Gamma(\epsilon_0)\subset \Gamma_0 \subset \Gamma(\eta_0)$, for some $0<\epsilon_0<\eta_0$, so that, since $\Gamma=s^j(\Gamma)$ and $\Gamma_j=s^j(\Gamma_0)$, $\Gamma\subset \Gamma(\epsilon_j)\subset \Gamma_j \subset \Gamma(\eta_j)$, for $j\in \N_0$, with $\epsilon^j= r^j\epsilon_0$ and $\eta_j=r^j\eta_0$.

Suppose next that the mesh $M_j$ on $\Gamma_j$ is given by \eqref{eq:Mj}. Then the mesh size for $M_j$ is $h_j = r^jh_0$, and that $\VjBEM \xrightarrow{M}V$ as $j\to\infty$ follows from Corollary \ref{cor:dset} applied with $\mu=1$.

Suppose instead that  $M_j$ is given by \eqref{eq:Mj2} with $i=i(j)\in \{0,1,\ldots,j\}$ and let $L$ be the diameter of $\Gamma_0$. Then $h_j = r^iL$, and that $\VjBEM \xrightarrow{M}V$ as $j\to\infty$ follows from Lemma \ref{lem:qc} and Corollary \ref{cor:dset}, since, in the case $n-2<d<n-1$, $i> \mu j$ for some $\mu>n-1-d$ implies that $h_j=o(\epsilon_j^{\mu})$ as $j\to \infty$.
\end{proof}

Corollary \ref{cor:dsetconv} proves BEM convergence for the case $\Gamma \subset O$ under rather mild mesh refinement. When $d\leq n-2$ (zero limiting solution) there is no restriction on the mesh size, in accordance with Remark \ref{rem:zero}. When $n-2<d<n-1$ (non-zero limiting solution) it is possible to take $\mu< 1$ in Corollary \ref{cor:dsetconv}(ii)(b).\footnote{There is a curious ``discontinuity in convergence'' in Corollaries \ref{cor:dset} and \ref{cor:dsetconv}(ii): the infimum of the permitted $\mu$ values increases from $0$ to $1$ as $d$ decreases from $n-1$ to $n-2$, then jumps back to $0$ for $d<n-2$.
}
This means that BEM convergence holds with just one, or even less than one DOF per component of $\Gamma_j$.

\begin{rem}
Corollary \ref{cor:dsetconv} does not apply to what is the standard choice of prefractal sequence in the Cantor set, Cantor dust, and Sierpinski triangle examples that we treat in \S\ref{sec:examples}, namely to define the sequence $\Gamma_j$ by \eqref{eq:prefract} with $O:= (\mathrm{Conv}(\Gamma))^\circ$, the interior of the convex hull of $\Gamma$, which is an open interval, an open square, an open triangle, in the Cantor set, Cantor dust, and Sierpinski triangle cases, respectively. In each case this choice of $O$ satisfies the open set condition, but it does not hold that $\Gamma \subset O$, only that $\Gamma \subset \overline{O}$.

In such cases, assuming that $\tH^{-1/2}(\Gamma_j)=H^{-1/2}_{\overline{\Gamma_j}}$ for each $j$, given an arbitrary element $v\in V = H^{-1/2}_\Gamma$ one can prove the existence of a sequence of mesh sizes $h_j$ for which $\inf_{\substack{v_j^h\in V_j^h}}\|v-v_j^h\|_{\tH^{-1/2}}\to 0$ as $j\to\infty$ by a simple diagonal argument: since $V\subset V_j=\tH^{-1/2}(\Gamma_j)=H^{-1/2}_{\overline{\Gamma_j}}$, there exists $v_j\in C^\infty_0(\Gamma_j)$ such that $\|v-v_j\|_{H^{-1/2}}\leq (1+j)^{-1}$, say, and then by Lemma \ref{lem:PWc} there exists $h_j$ such that, if the mesh size on $\Gamma_j$ is less than $h_j$, there exists  $v_j^h\in V_j^h$ such that $\|v_j-v_j^h\|_{H^{-1/2}}\leq (1+j)^{-1}$, and the claimed convergence follows by the triangle inequality. However, the required choice of $h_j$ depends on $v_j$, which itself depends on $v$. So such an argument does not prove the existence of a single mesh refinement strategy for which $V_j^h \xrightarrow M V$.%

The development of a satisfactory convergence analysis for BEM on these standard prefractal sequences remains an open problem.%
\end{rem}

\section{Examples}\label{sec:examples}
We now apply the theory developed above to some specific examples of fractal screens.
In our first example, the Cantor set, the scattering problem is posed in $\R^2$ (so $n=2$), the screen being a subset of the one-dimensional hyperplane $\Gamma_\infty=\R\times\{0\}$.
In all other examples, the scattering problem is posed in $\R^3$ (so $n=3$), the screen being a subset of the two-dimensional hyperplane $\Gamma_\infty=\R^2\times\{0\}$. In the first three examples $\Gamma$ is a compact fractal $d$-set (for some $d<n-1$) that is the attractor
\eqref{eq:fixedfirst} of some IFS of contracting similarities $\{s_1,\ldots,s_\nu\}$. In the remaining examples $\Gamma$ is a (relatively) open subset of $\Gamma_\infty$ with fractal boundary.

\subsection{Cantor sets}\label{sec:CantorSet}
We consider first the Cantor set $\Gamma$, a compact subset
of $\R$
with empty interior, depending on a parameter $0<\alpha<1/2$,
defined by
\eqref{eq:fixedfirst} with $\nu=2$ and
\[ s_1(x_1)=\alpha x_1, \quad s_2(x_1)=\alpha x_1 + 1-\alpha.\]
Since the open set condition \eqref{oscfirst} holds with $O=(0,1)$,  $\Gamma$ is a $d$-set with Hausdorff dimension $d=\log{2}/\log(1/\alpha)$.
The standard prefractals $\Gamma_j$ are defined by \eqref{eq:prefract0} with $\Gamma_0:=\overline{O}=[0,1]$, so that $\Gamma_j$ is the union of $2^j$ closed intervals of length $\alpha^j$, and the $\Gamma_j$ are a decreasing nested sequence of compact sets satisfying \eqref{eq:bigcap}.
To construct what we will term ``thickened''
open prefractals %
satisfying the conditions of Theorem \ref{prop:DiscreteCompact}, choose a parameter $0<\delta<\frac1{2\alpha}-1$, and define $\Gamma_j$ by \eqref{eq:prefract} with $\Gamma_0:=O:=(-\delta,1+\delta)$, so that
$\Gamma_j$ is the disjoint union of the $2^j$ intervals of length $\alpha^j(1+2\delta)$ centred at the centres of the $2^j$ components of the standard prefractals.
Then $\Gamma\subset\Gamma(\epsilon_j)\subset \Gamma_j\subset \Gamma(\eta_j)$, for $\epsilon_j=\alpha^j\delta$ and for any
$\eta_j>\sup_{x\in\Gamma_j}\dist(x,\Gamma)=\alpha^j\max\{\delta,1/2-\alpha\}$. Clearly we can choose $\eta_j$ so that $\eta_j=O(\alpha^j)\to 0$ as $j\to\infty$.

The following result follows from Proposition \ref{prop:Scatters}, Lemma \ref{lem:Density}, %
Remark \ref{rem:zero}, Corollary \ref{cor:dset}, and Corollary \ref{cor:dsetconv}.

\begin{prop}[Cantor set]\label{prop:CantorSet}
For $0<\alpha<1/2$, let $\Gamma$ be the Cantor set defined above and let $\Gamma_j$ be either the standard compact prefractals or the thickened open prefractals.
Then
the BVPs $\sD(\Gamma)$ and $\sD(\Gamma_j)$ are well-posed,
BVP convergence holds,
and the solution of $\sD(\Gamma)$ is non-zero if and only if $g\ne0$.

For the thickened prefractals,
BEM convergence holds if $h_j = o(\alpha^{\mu j})$ for some $\mu> \mu_0:=1-\frac{\log2}{\log(1/\alpha)}$. In particular, BEM convergence holds for the convex mesh \eqref{eq:Mj}, and for the pre-convex mesh \eqref{eq:Mj2} with $i(j)>\mu j$, $j\in\N_0$, provided $\mu>\mu_0$.
\end{prop}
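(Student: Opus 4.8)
The plan is to verify that this Cantor set, together with each of the two prefractal sequences, satisfies the hypotheses of the general results of \S\ref{sec:BVPs}--\S\ref{sec:Convergence}, and then simply to read off the four assertions. First I would record the geometric facts: here $n=2$, so $\Gamma_\infty\cong\R$ and $n-1=1$; the open set condition \eqref{oscfirst} holds with $O=(0,1)$, so by \eqref{eq:d2} (with $\nu=2$, $r=\alpha$) $\Gamma$ is a $d$-set with $d=\log 2/\log(1/\alpha)$, and since $0<\alpha<1/2$ we have $n-2=0<d<1=n-1$. Each thickened prefractal $\Gamma_j$ is a finite disjoint union of open intervals, hence $C^0$ (indeed Lipschitz), so Proposition~\ref{prop:TildeSubscript}(i) gives $\tH^{-1/2}(\Gamma_j)=H^{-1/2}_{\overline{\Gamma_j}}$. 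With these in hand, well-posedness is immediate: $\sD(\Gamma)$ and the standard compact $\sD(\Gamma_j)$ are well-posed by Theorem~\ref{thm:Closed} ($\Gamma$ and the $\Gamma_j$ being compact), while the thickened open $\sD(\Gamma_j)$ is well-posed by Theorem~\ref{thm:OpenPrime}, whose hypothesis $\tH^{-1/2}(\Gamma_j)=H^{-1/2}_{\overline{\Gamma_j}}$ has just been checked.

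For BVP convergence, the standard compact prefractals satisfy \eqref{eq:bigcap} (they decrease to $\Gamma$, and $\Gamma_0=[0,1]$ satisfies $s(\Gamma_0)\subset\Gamma_0$), so this is exactly Corollary~\ref{cor:ifs}, equivalently Proposition~\ref{prop:Mosco}\rf{Moscoii}. For the thickened open prefractals I would argue that $V_j=\tH^{-1/2}(\Gamma_j)=H^{-1/2}_{\overline{\Gamma_j}}$, and that the compact sets $\overline{\Gamma_j}$ form a decreasing sequence with $\bigcap_{j}\overline{\Gamma_j}=\Gamma$: the inclusions $\Gamma\subset\Gamma(\epsilon_j)\subset\Gamma_j$ (with $\epsilon_j=\alpha^j\delta$) give $\Gamma\subset\bigcap_j\overline{\Gamma_j}$, while $\Gamma_j\subset\Gamma(\eta_j)$ with $\eta_j=O(\alpha^j)\to 0$ gives the reverse inclusion. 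Then Proposition~\ref{prop:Mosco}\rf{Moscoii} yields $H^{-1/2}_{\overline{\Gamma_j}}\xrightarrow{M}H^{-1/2}_\Gamma$, i.e.\ $V_j\xrightarrow{M}V$, which is precisely BVP convergence.

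Next, since $d>n-2$, Lemma~\ref{lem:Density} gives $H^{-1/2}_\Gamma\neq\{0\}$, so we are in the nontrivial scattering regime (and Remark~\ref{rem:zero} does not apply); Proposition~\ref{prop:Scatters} then gives $u\neq 0$ whenever $g\neq 0$, and conversely $g=0$ forces $\phi=0$ by well-posedness and hence $u=-\cS_\Gamma\phi=0$, establishing $u\neq 0\iff g\neq 0$. For BEM convergence on the thickened prefractals, $\Gamma$ is a $d$-set with $n-2<d<n-1$, so Corollary~\ref{cor:dset}(ii) applies; since $\epsilon_j=\alpha^j\delta$ we have $\epsilon_j^{\mu}=\delta^{\mu}\alpha^{\mu j}$, so the criterion $h_j=o(\epsilon_j^{\mu})$ with $\mu>n-1-d=1-d=\mu_0$ is equivalent to $h_j=o(\alpha^{\mu j})$ with $\mu>\mu_0$, which is the stated general condition. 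The two concrete meshes then follow from Corollary~\ref{cor:dsetconv} applied with $r=\alpha$ and the convex set $O=(-\delta,1+\delta)$, which satisfies the open set condition (the disjointness $s_1(O)\cap s_2(O)=\emptyset$ being exactly the constraint $\delta<\frac{1}{2\alpha}-1$) and $\Gamma\subset O$: part (a) covers the convex mesh \eqref{eq:Mj} (and since $\mu_0<1$ its mesh size $h_j=\alpha^j h_0$ is $o(\alpha^{\mu j})$ for any $\mu\in(\mu_0,1)$), while part (b)(ii) covers the pre-convex mesh \eqref{eq:Mj2} with $i(j)>\mu j$ and $\mu>\mu_0=n-1-d$.

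The only genuinely non-mechanical point is the BVP-convergence claim for the \emph{open} thickened prefractals. The reflexive move is to invoke the decreasing-compact case Proposition~\ref{prop:Mosco}\rf{Moscoii}, but that result is phrased for the spaces $H^{-1/2}_{\overline{\Gamma_j}}$, whereas the open-screen formulation supplies $V_j=\tH^{-1/2}(\Gamma_j)$. The bridge is the identity $\tH^{-1/2}(\Gamma_j)=H^{-1/2}_{\overline{\Gamma_j}}$, which is exactly where the $C^0$-regularity of the thickened prefractals (via Proposition~\ref{prop:TildeSubscript}(i)) is essential; once this is secured, the remainder is bookkeeping of the inclusions $\Gamma\subset\Gamma(\epsilon_j)\subset\Gamma_j\subset\Gamma(\eta_j)$ and the exponent arithmetic $\mu_0=1-d=n-1-d$.
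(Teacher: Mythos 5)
Your proposal is correct and takes essentially the same route as the paper, which offers no written proof but simply cites Proposition~\ref{prop:Scatters}, Lemma~\ref{lem:Density}, Remark~\ref{rem:zero}, Corollary~\ref{cor:dset} and Corollary~\ref{cor:dsetconv} --- precisely the results you invoke, supplemented (as you do) by Theorems~\ref{thm:OpenPrime} and~\ref{thm:Closed}, Proposition~\ref{prop:TildeSubscript}, and Corollary~\ref{cor:ifs}/Proposition~\ref{prop:Mosco}\rf{Moscoii} for the well-posedness and BVP-convergence claims. Your reconstruction of the unstated thickened-prefractal BVP argument (the identity $\tH^{-1/2}(\Gamma_j)=H^{-1/2}_{\overline{\Gamma_j}}$ from $C^0$ regularity, then Mosco convergence of the decreasing compacts $\overline{\Gamma_j}$, whose nestedness follows from $s(O)\subset O$, i.e.\ from the open set condition you verify for $O=(-\delta,1+\delta)$) is exactly the intended bookkeeping.
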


Interpreted in terms of DOFs, the final statement in Proposition \ref{prop:CantorSet} says that BEM convergence holds for the pre-convex mesh \eqref{eq:Mj2} on the thickened prefractal $\Gamma_j$ using $(2^{1-\frac{\log2}{\log1/\alpha}+\epsilon})^j$ DOFs,
for arbitrary $\epsilon>0$. For example, for the middle third Cantor set ($\alpha=1/3$) it suffices to take $1.3^j$ DOFs on $\Gamma_j$ (note that $\Gamma_j$ has $2^j$ components).

\subsection{Cantor dusts}\label{sec:CantorDust}

We now consider the Cantor dust $\Gamma$, a compact subset of $\R^2$ with empty interior, defined for $0<\alpha<1/2$ to be the Cartesian product of two identical Cantor sets from \S\ref{sec:CantorSet}.
Equivalently, the set $\Gamma$ is defined by
\eqref{eq:fixedfirst} with $\nu=4$ and
\begin{equation}\label{eq:similar}
\begin{aligned}
s_1(x_1,x_2)&=\alpha (x_1,x_2), &s_2(x_1,x_2)= \alpha (x_1,x_2) + (1-\alpha)(1,0),\\
s_3(x_1,x_2)&=\alpha (x_1,x_2) + (1-\alpha)(0,1), &s_4(x_1,x_2)= \alpha (x_1,x_2) + (1-\alpha)(1,1).
\end{aligned}
\end{equation}
Since the open set condition \eqref{oscfirst} holds with $O=(0,1)^2$, $\Gamma$ is a $d$-set with Hausdorff dimension $d=\log{4}/\log(1/\alpha)$.
The standard prefractals $\Gamma_j$ are defined by \eqref{eq:prefract0} with $\Gamma_0:=\overline{O}=[0,1]^2$, so that $\Gamma_j$ is the union of $4^j$ closed squares of side length $\alpha^j$, and the $\Gamma_j$ are a decreasing nested sequence of compact sets satisfying \eqref{eq:bigcap}.
See Figure \ref{Fig:CantorDust} for an illustration.

\begin{figure}[htb]
\centering
\includegraphics[width=.19\textwidth, clip, trim=70 20 55 10]{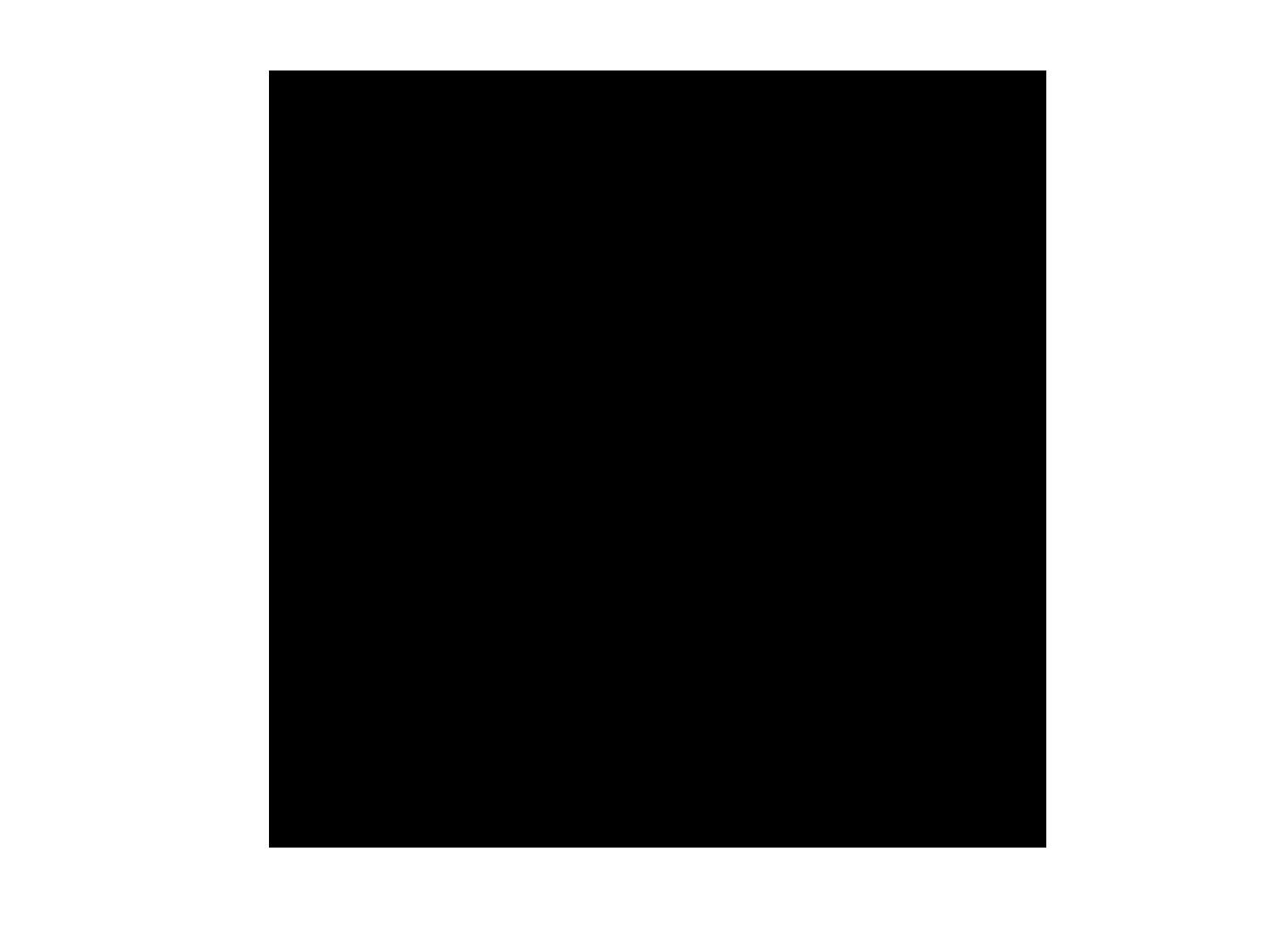}
\includegraphics[width=.19\textwidth, clip, trim=70 20 55 10]{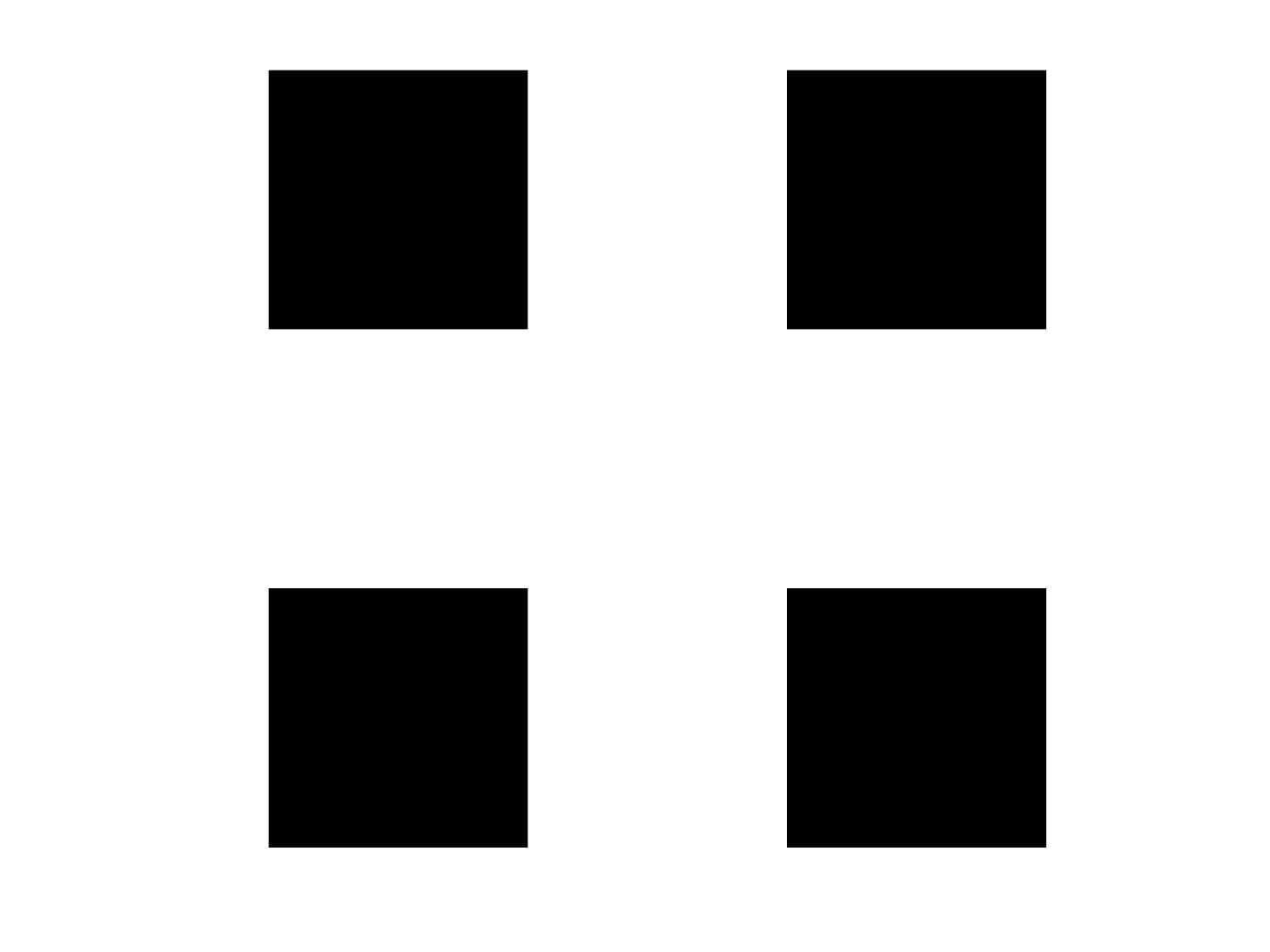}
\includegraphics[width=.19\textwidth, clip, trim=70 20 55 10]{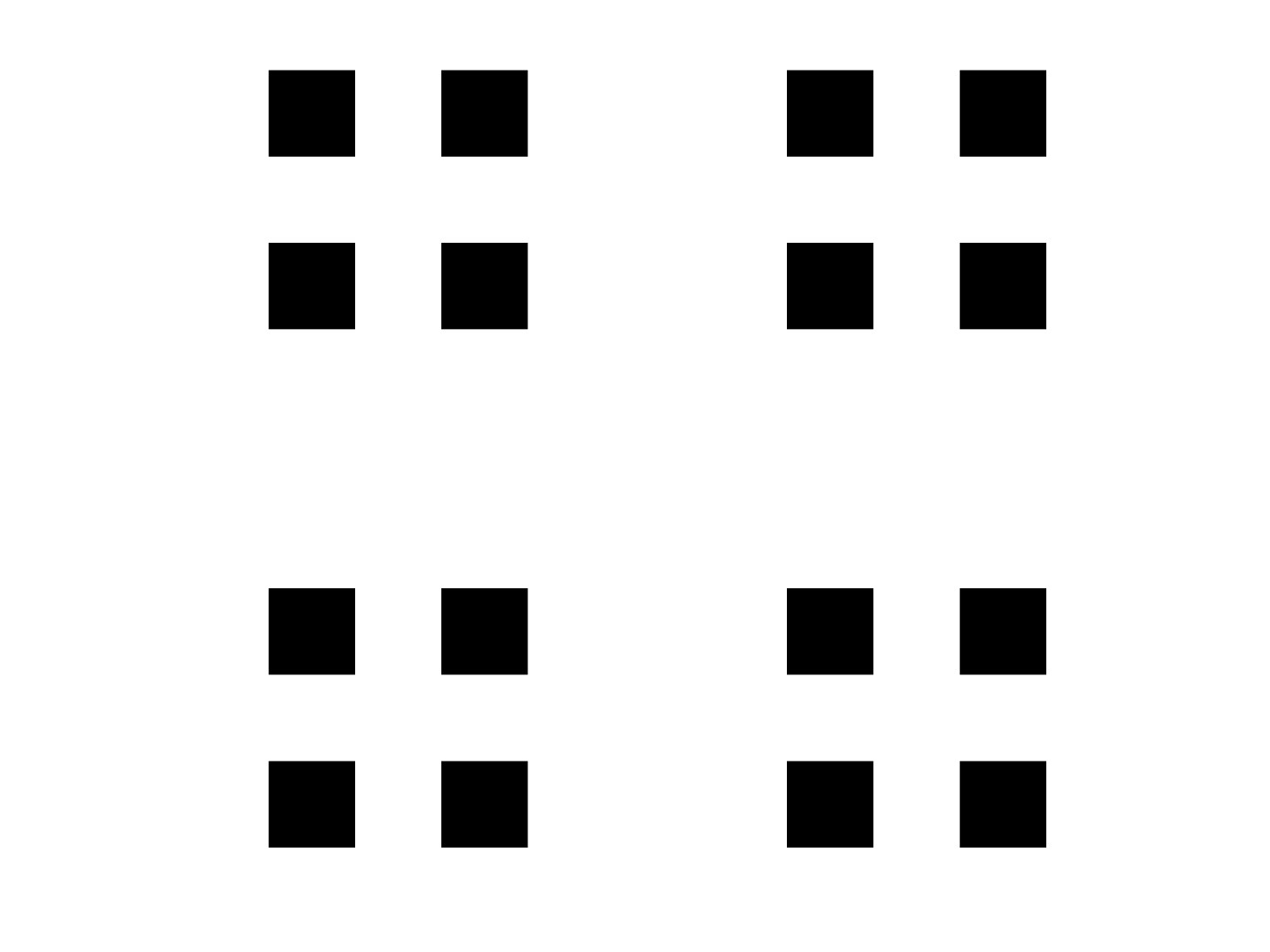}
\includegraphics[width=.19\textwidth, clip, trim=70 20 55 10]{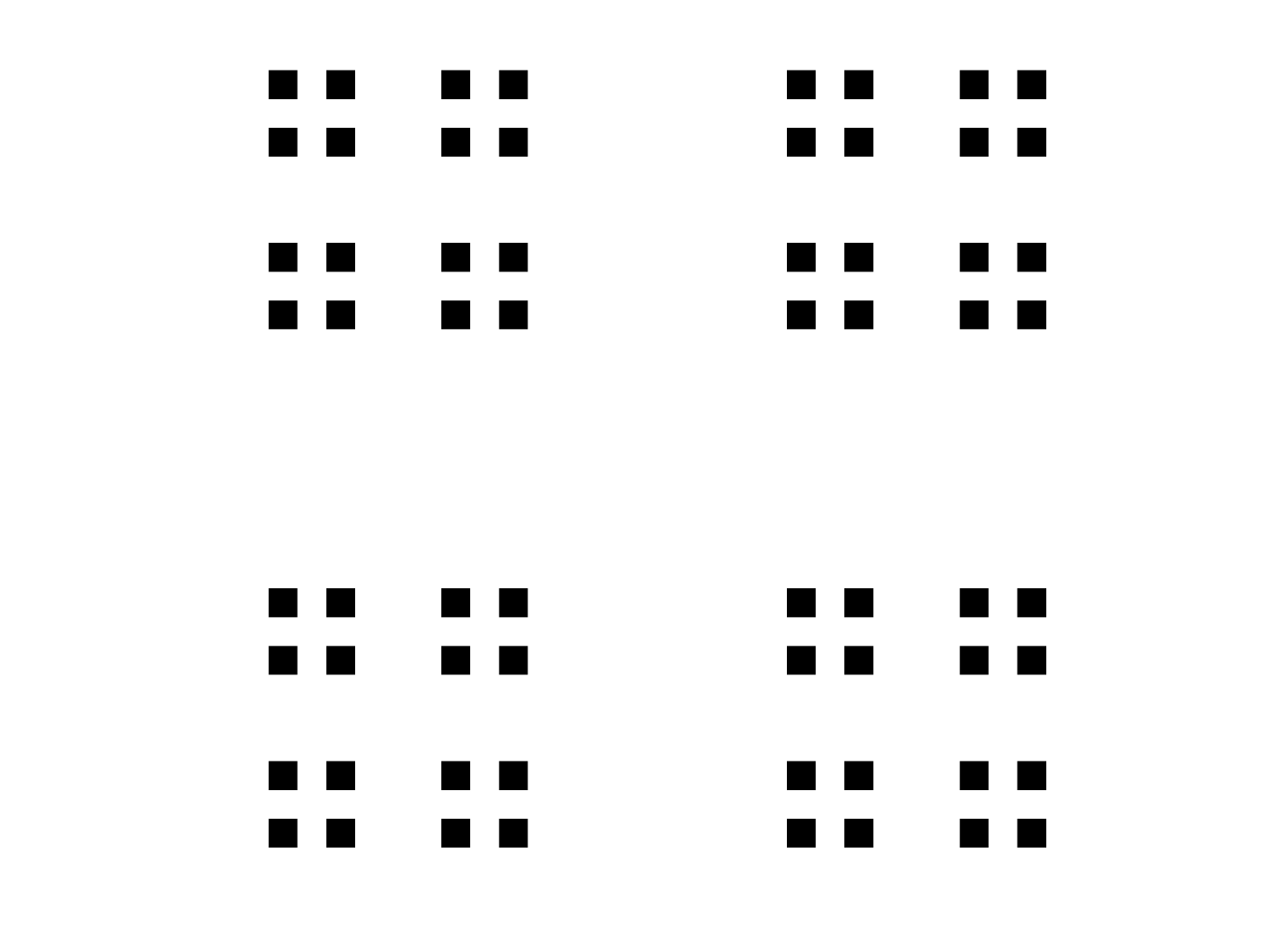}
\includegraphics[width=.19\textwidth, clip, trim=70 20 55 10]{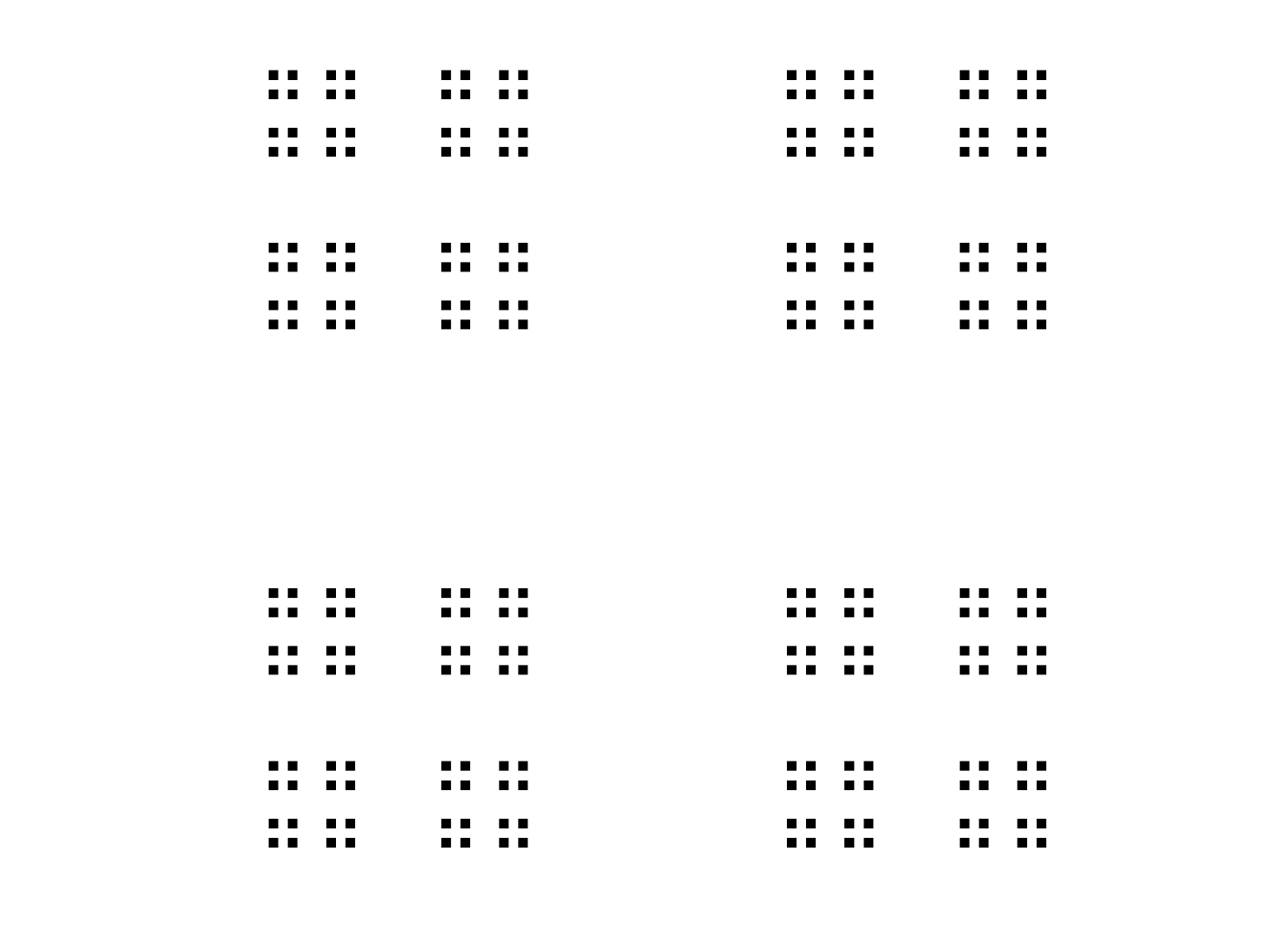}
\caption{The first five standard prefractals $\Gamma_0$, \ldots, $\Gamma_4$ of the middle third Cantor dust ($\alpha=1/3$).\label{Fig:CantorDust}}
\end{figure}

Thickened
open prefractals %
satisfying the conditions of Theorem \ref{prop:DiscreteCompact} can be constructed by taking Cartesian products of the thickened prefractals for the Cantor set. Explicitly, given $0<\delta<\frac1{2\alpha}-1$ we define $\Gamma_j$ by \eqref{eq:prefract} with $\Gamma_0:=O:=(-\delta,1+\delta)^2$, so that
$\Gamma_j$ is the disjoint union of the $4^j$ squares of side length $\alpha^j(1+2\delta)$ centred at the centres of the $4^j$ components of the standard prefractals. (Figure \ref{fig:PreConvexMeshes} shows (meshes on) $\Gamma_0$, $\Gamma_1$,  and $\Gamma_2$ when $\alpha=1/3$ and $\delta = 1/4$.)
Then $\Gamma\subset\Gamma(\epsilon_j)\subset \Gamma_j\subset \Gamma(\eta_j)$, for $\epsilon_j=\alpha^j\delta$ and for any
$\eta_j>\sup_{x\in\Gamma_j}\dist(x,\Gamma)=\sqrt{2}\alpha^j\max\{\delta ,1/2-\alpha\}\to 0$ as $j\to\infty$.
The following result (cf.~Proposition~\ref{prop:CantorSet}) follows from Proposition~\ref{prop:Scatters}, Lemma \ref{lem:Density}, %
Remark~\ref{rem:zero}, Corollary~\ref{cor:dset}, and Corollary~\ref{cor:dsetconv}.
\begin{prop}[Cantor dust]\label{prop:CantorDust}
For $0<\alpha<1/2$, let $\Gamma$ be the Cantor dust defined above and let $\Gamma_j$ be either the standard compact prefractals or the thickened open prefractals.
Then the BVPs $\sD(\Gamma)$ and $\sD(\Gamma_j)$ are well-posed,
and BVP convergence holds.

Now let $\Gamma_j$ be either the interior of
the standard compact prefractals, or the thickened open prefractals. Then
\begin{enumerate}[(i)]
\item for $0<\alpha\leq 1/4$, the solution of $\sD(\Gamma)$ is zero, and BEM convergence holds for any pre-convex mesh on $\Gamma_j$;
\item for $1/4<\alpha<1/2$, the solution of $\sD(\Gamma)$ is non-zero if and only if $g\neq 0$, and BEM convergence holds for the thickened prefractals if $h_j = o(\alpha^{\mu j})$ for some $\mu>\mu_0:=2-\frac{\log4}{\log(1/\alpha)}$. In particular, BEM convergence holds for the convex mesh defined by \eqref{eq:Mj}, and the pre-convex mesh defined by \eqref{eq:Mj2} with $i(j)>\mu j$, $j\in\N_0$, and $\mu>\mu_0$.
\end{enumerate}
\end{prop}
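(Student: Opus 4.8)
The plan is to extract everything from the one-parameter description of the geometry, which fixes a single dimensional threshold that separates the two cases. First I would record that, since $O=(0,1)^2$ satisfies the open set condition \eqref{oscfirst}, $\Gamma$ is a $d$-set with $d=\log 4/\log(1/\alpha)$ by \cite[Thm.~4.7]{Triebel97FracSpec} and \eqref{eq:dfirst}. As $n=3$ the critical exponent is $n-2=1$, and $d=1$ precisely when $\alpha=1/4$; hence $d\le n-2$ for $0<\alpha\le 1/4$ and $n-2<d<n-1$ for $1/4<\alpha<1/2$. This dichotomy is exactly the split into cases (i) and (ii), and it also identifies $\mu_0=n-1-d=2-\log 4/\log(1/\alpha)$.

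Next I would dispose of well-posedness and BVP convergence. Well-posedness of $\sD(\Gamma)$ and of $\sD(\Gamma_j)$ for the standard compact prefractals is immediate from Theorem \ref{thm:Closed}, each set being non-empty and compact; for the thickened open prefractals each $\Gamma_j$ is a finite disjoint union of open squares, hence Lipschitz, so $\tH^{-1/2}(\Gamma_j)=H^{-1/2}_{\overline{\Gamma_j}}$ by Proposition \ref{prop:TildeSubscript}(i) and Theorem \ref{thm:OpenPrime} applies. For the standard compact prefractals BVP convergence is Corollary \ref{cor:ifs}, since by \eqref{eq:prefract1} these are the IFS iterates of $\Gamma_0=\overline{O}$ and form a decreasing sequence with intersection $\Gamma$, cf.\ \eqref{eq:bigcap}. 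For the thickened open prefractals I would instead verify $\tH^{-1/2}(\Gamma_j)\xrightarrow{M}H^{-1/2}_\Gamma$ directly through Lemma \ref{lem:MoscoSuff}: condition (ii) holds by the sandwich $\tH^{-1/2}(\Gamma_j)\subset H^{-1/2}_{\overline{\Gamma(\eta_j)}}\xrightarrow{M}H^{-1/2}_\Gamma$ (Proposition \ref{prop:Mosco}(ii), with $\eta_j=O(\alpha^j)\to 0$), and condition (i) holds for the trivially dense subspace $H^{-1/2}_\Gamma$ by mollification, $\psi_{\epsilon_j/2}*v\in C^\infty_0(\Gamma(\epsilon_j/2))\subset C^\infty_0(\Gamma_j)$, exactly as in the proof of Theorem \ref{prop:DiscreteCompact}.

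The scattering statements then follow from Lemma \ref{lem:Density} and Proposition \ref{prop:Scatters}. For $0<\alpha\le 1/4$ either $d<n-2$, or $\Gamma$ is an $(n-2)$-set at $\alpha=1/4$, so $H^{-1/2}_\Gamma=\{0\}$ and the solution of $\sD(\Gamma)$ vanishes; for $1/4<\alpha<1/2$ we have $d>n-2$, so $H^{-1/2}_\Gamma\neq\{0\}$ and the solution is non-zero iff $g\neq 0$. For the BEM, case (i) is immediate from Remark \ref{rem:zero}: since $V=\{0\}$ and both the interiors of the standard prefractals and the thickened prefractals satisfy $\Gamma_j\subset\overline{\Gamma(\eta_j)}$ with $\eta_j=O(\alpha^j)\to 0$, any pre-convex mesh gives $\VjBEM\xrightarrow{M}\{0\}$ with no mesh constraint. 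For case (ii) I would apply Corollary \ref{cor:dset}(ii) to the thickened prefractals, which satisfy $\Gamma\subset\Gamma(\epsilon_j)\subset\Gamma_j\subset\Gamma(\eta_j)$ with $\epsilon_j=\alpha^j\delta$, so that $h_j=o(\epsilon_j^\mu)$ becomes $h_j=o(\alpha^{\mu j})$ for $\mu>\mu_0$. The two ``in particular'' claims are Corollary \ref{cor:dsetconv}(a) and (b)(ii): the convex mesh \eqref{eq:Mj} has $h_j=\alpha^j h_0=o(\alpha^{\mu j})$ for any $\mu\in(\mu_0,1)$, which is non-empty since $d>n-2$ forces $\mu_0<1$; the pre-convex mesh \eqref{eq:Mj2} with $i(j)>\mu j$ has $h_j=\alpha^{i(j)}L=o(\alpha^{\mu' j})$ for any $\mu'\in(\mu_0,\mu)$.

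I expect the only genuinely delicate point to be the endpoint $\alpha=1/4$, where $d=n-2$ exactly: there one must use that the Cantor dust is then an $(n-2)$-set, not merely a set of dimension $n-2$, in order to conclude $H^{-1/2}_\Gamma=\{0\}$ from Lemma \ref{lem:Density}. The remaining work is the bookkeeping of converting the single geometric ratio $\alpha$ into the neighbourhood radii $\epsilon_j,\eta_j$ and mesh-size exponents demanded by Theorem \ref{prop:DiscreteCompact} and Corollary \ref{cor:dset}, together with checking that the disjointness/open set condition for the thickened squares holds under the stated bound on $\delta$.
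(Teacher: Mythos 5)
Your proposal is correct and takes essentially the same route as the paper, which states this proposition as following from Proposition~\ref{prop:Scatters}, Lemma~\ref{lem:Density}, Remark~\ref{rem:zero}, Corollary~\ref{cor:dset} and Corollary~\ref{cor:dsetconv} (with well-posedness and BVP convergence supplied by Theorems~\ref{thm:OpenPrime} and~\ref{thm:Closed}, Proposition~\ref{prop:TildeSubscript}, Corollary~\ref{cor:ifs}, and the thickened-prefractal Mosco argument being exactly the mollification/sandwich device from the proof of Theorem~\ref{prop:DiscreteCompact}) --- precisely the chain you assemble, including the $(n-2)$-set endpoint $\alpha=1/4$ and the check that $O=(-\delta,1+\delta)^2$ is convex, satisfies the open set condition and contains $\Gamma$. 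The only cosmetic slip is writing $\psi_{\epsilon_j/2}*v\in C^\infty_0(\Gamma(\epsilon_j/2))$: the support is only contained in $\overline{\Gamma(\epsilon_j/2)}$, so one should conclude $\psi_{\epsilon_j/2}*v\in C^\infty_0(\Gamma(\epsilon_j))\subset C^\infty_0(\Gamma_j)$, which is what your argument in fact uses.
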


Recalling Proposition \ref{prop:Scatters}, and comparing Propositions \ref{prop:CantorSet} and \ref{prop:CantorDust}, we see that (provided the incident field doesn't vanish on the fractal screen),
Cantor sets ($n=2$) give rise to non-zero scattered fields for any value of $\alpha$,
while
Cantor dusts ($n=3$) give rise to non-zero scattered fields only for $\alpha>1/4$.

\subsection{Sierpinski triangle}\label{sec:Sierpinski}

The Sierpinski triangle (or gasket) $\Gamma$ is a compact subset of $\R^2$ with empty interior, defined by
\eqref{eq:fixedfirst} with $\nu=3$ and
\begin{align*}
s_1(x_1,x_2)&= \tfrac12(x_1,x_2), &  s_2(x_1,x_2)= \tfrac12 (x_1,x_2) + (\tfrac12,0), \\
s_3(x_1,x_2)&=\tfrac12 (x_1,x_2) + (\tfrac14,\tfrac{\sqrt{3}}{4}). &
\end{align*}
Since the open set condition \eqref{oscfirst} holds with $O$ the open unit equilateral triangle with vertices $(0,0)$, $(1,0)$, $(1/2,\sqrt{3}/2)$, $\Gamma$ is a $d$-set with Hausdorff dimension $d=\log{3}/\log2$.
The standard prefractals $\Gamma_j$ are defined by \eqref{eq:prefract0} with $\Gamma_0:=\overline{O}$ (the closed unit equilaterateral triangle), so that $\Gamma_j$ is the (non-disjoint) union of $3^j$ closed equilateral triangles of side length $2^{-j}$, and the $\Gamma_j$ are a decreasing nested sequence of compact sets satisfying \eqref{eq:bigcap}.
The first five prefractals are shown in Figure \ref{Fig:Sierpinski}.

\begin{figure}[htb]
\centering
\includegraphics[width=20mm]{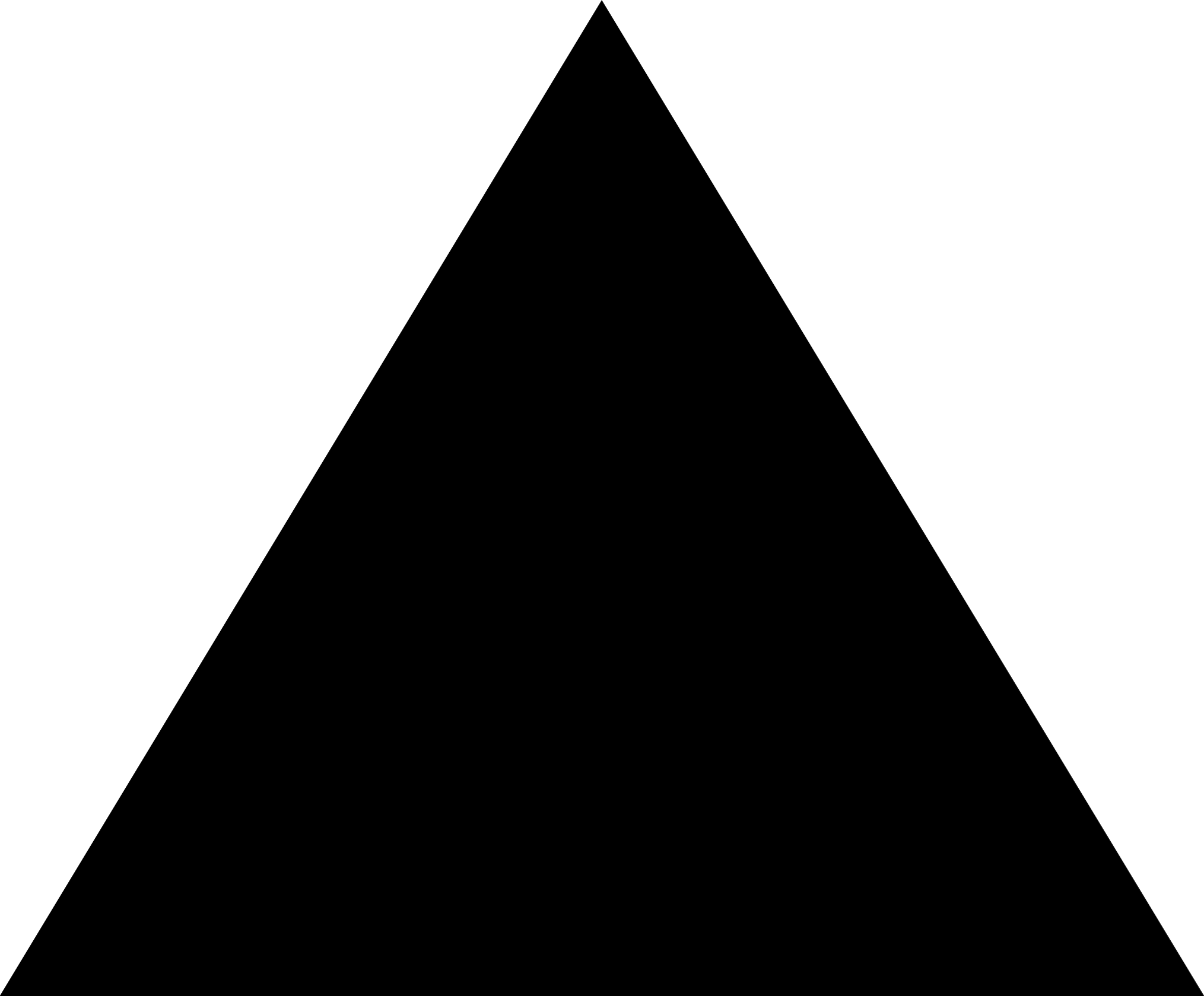}
\hs{2}
\includegraphics[width=20mm]{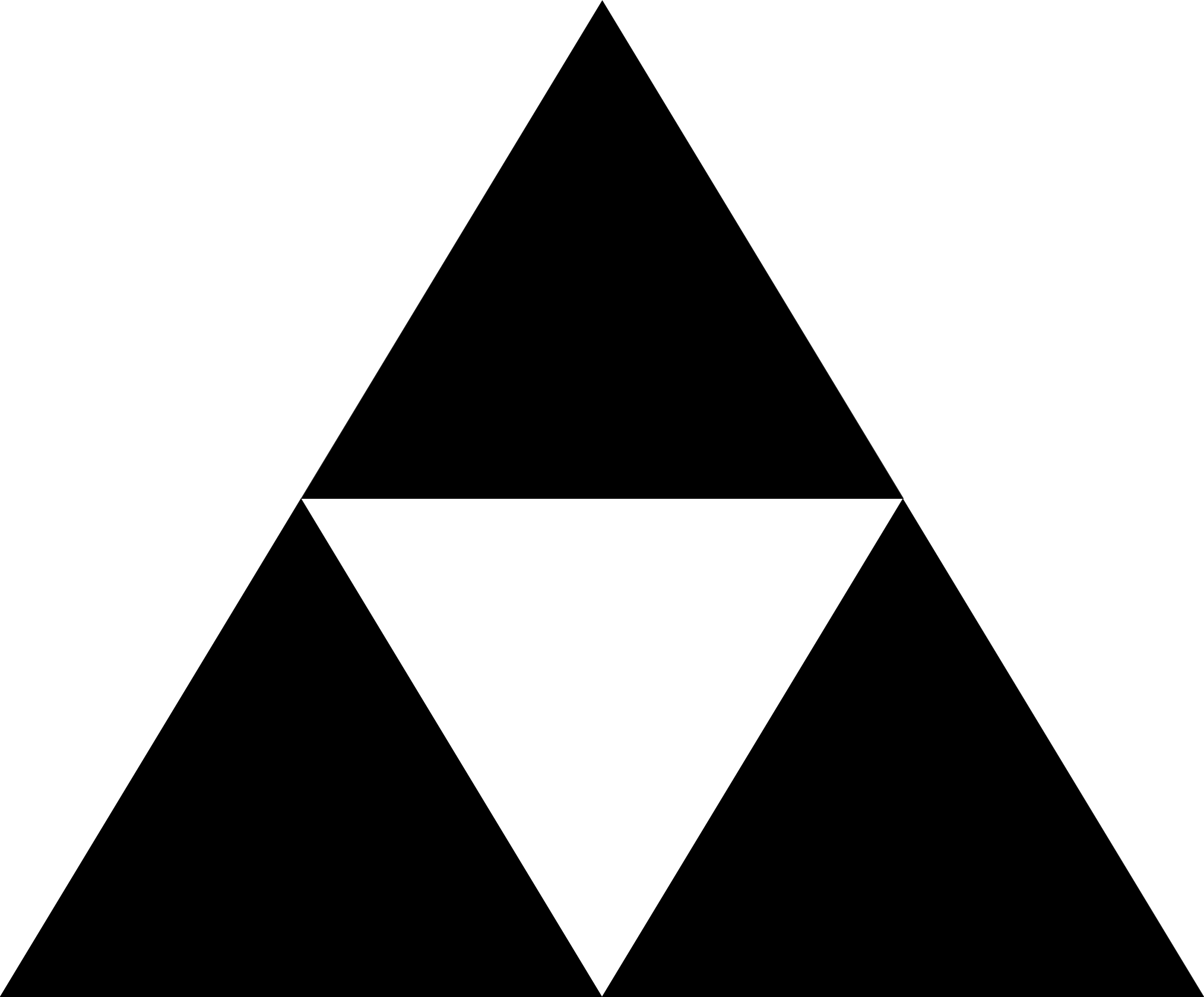}
\hs{2}
\includegraphics[width=20mm]{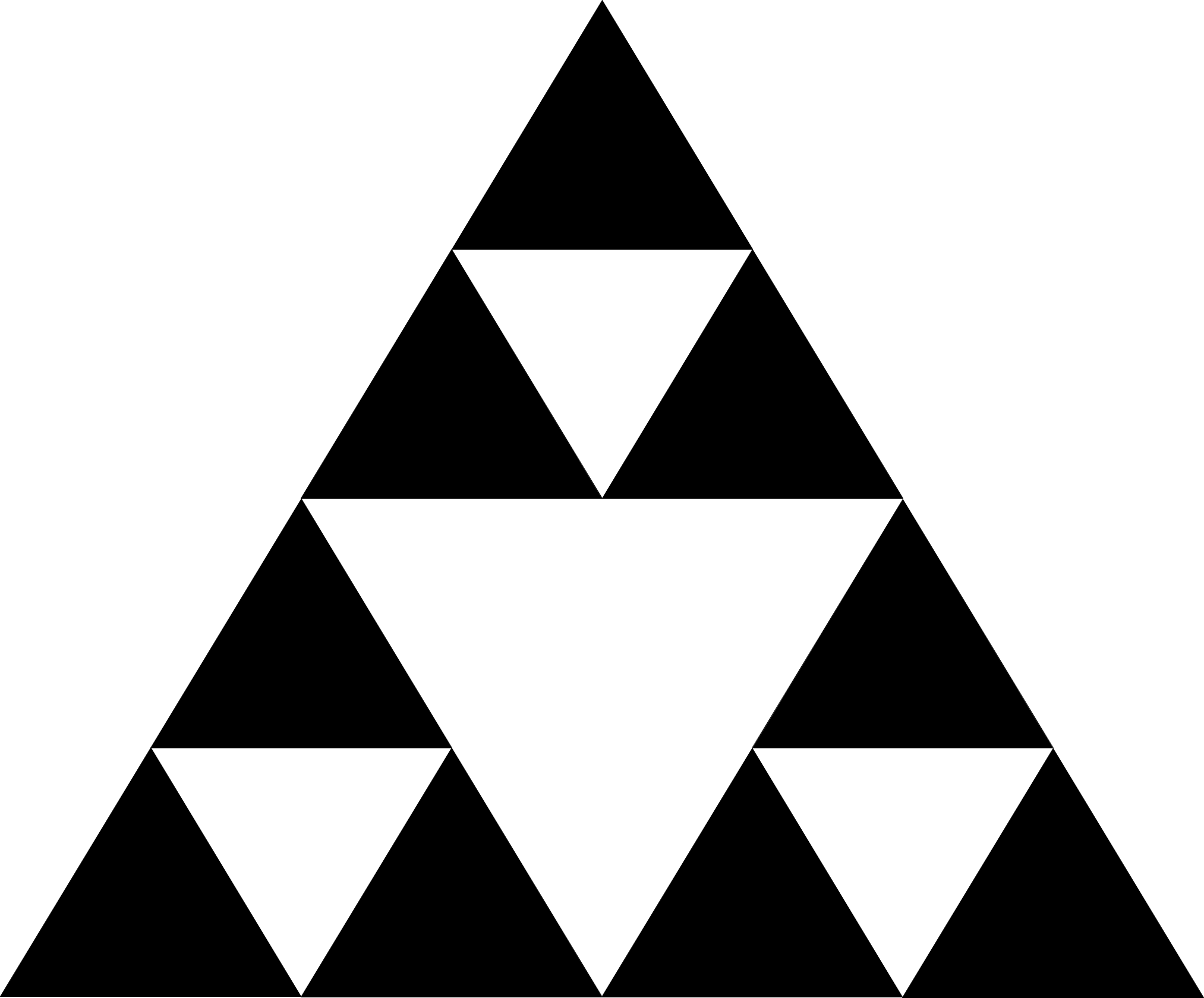}
\hs{2}
\includegraphics[width=20mm]{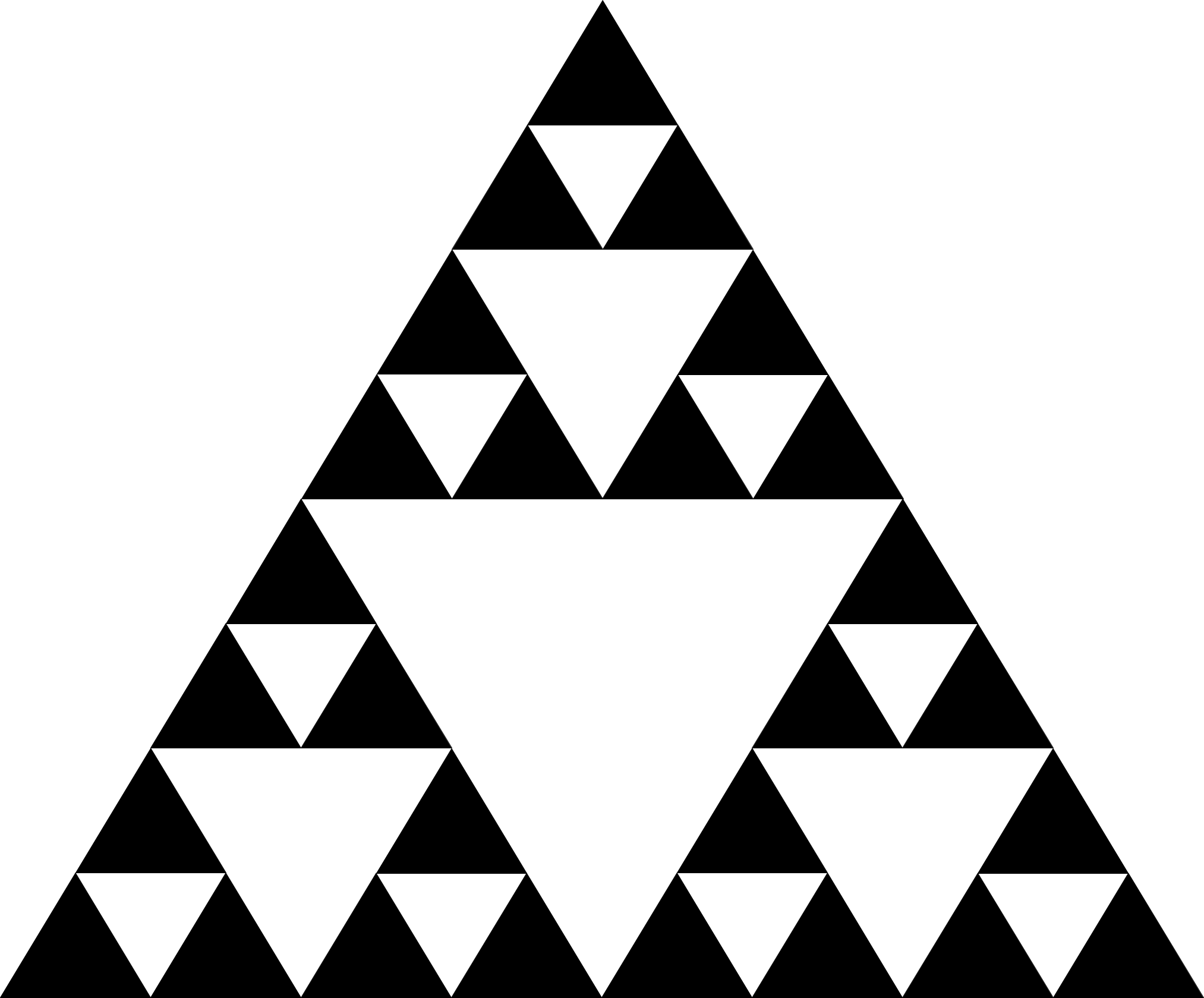}
\hs{2}
\includegraphics[width=20mm]{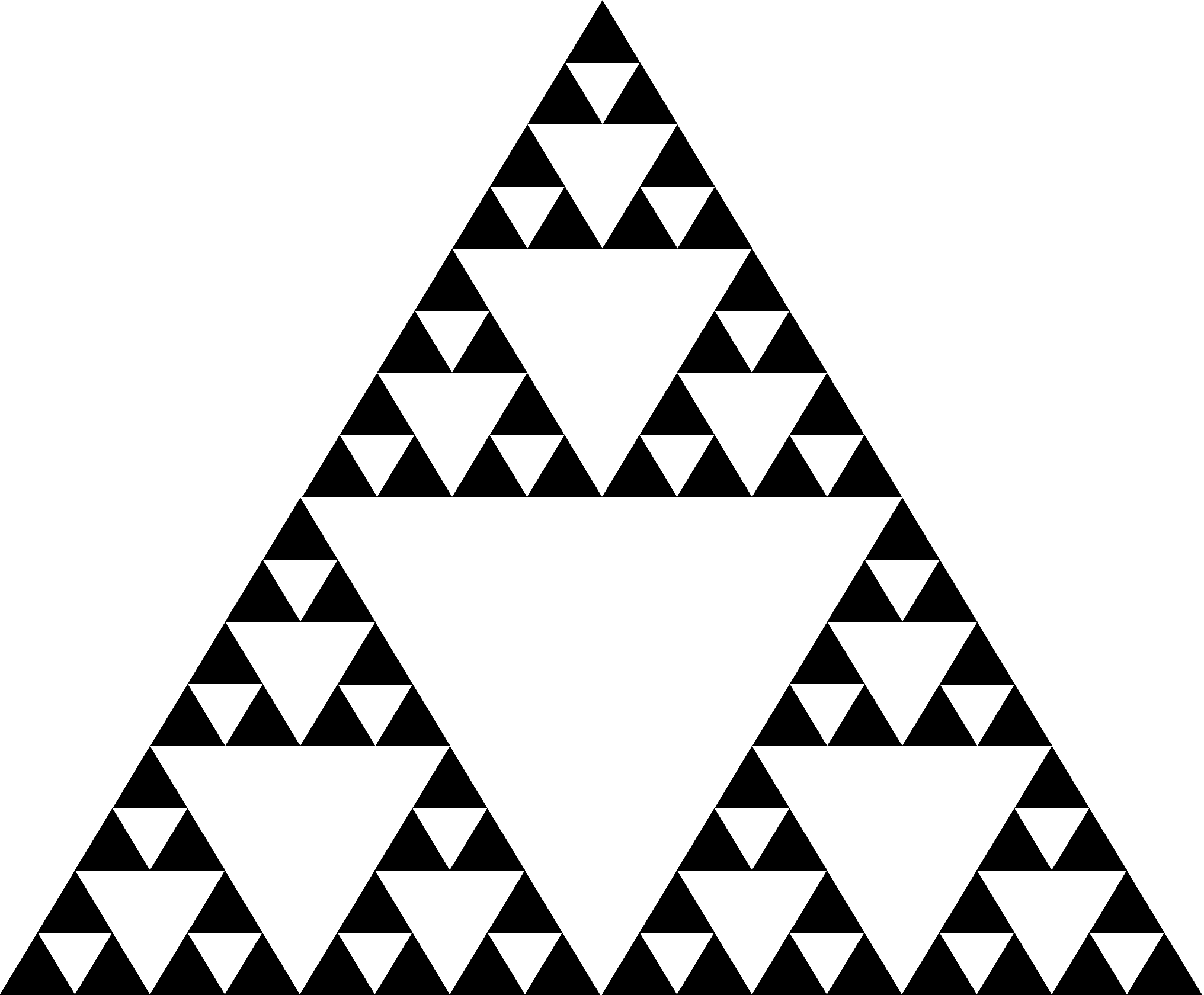}
\caption{The first five standard prefractals $\Gamma_0$, \ldots, $\Gamma_4$ of the Sierpinski triangle.\label{Fig:Sierpinski}}
\end{figure}

The interior $\Gamma_j^\circ$ of each prefractal is $C^0$ except at a finite set of points (the intersection points between neighbouring triangles), and hence by the results of \S\ref{sec:BVPs} (see in particular Remark \ref{rem:equiv}), the problems $\sD(\Gamma_j)$ and $\sD(\Gamma_j^\circ)$ share the same unique solution.
To define thickened
open prefractals %
satisfying the conditions of Theorem \ref{prop:DiscreteCompact}, given $\delta>0$ we define $O$ to be the open triangle of side length $1+2\delta$ with the same centre and side alignment as the unit equilateral triangle considered above, and define $\Gamma_j$ by \eqref{eq:prefract} with $\Gamma_0:=O$, so that $\Gamma_j$ is the (non-disjoint) union of $3^j$ equilateral triangles of side length $2^{-j}(1+2\delta)$. Then $\Gamma\subset\Gamma(\epsilon_j)\subset \Gamma_j\subset \Gamma(\eta_j)$, for $\epsilon_j=2^{-j}\delta/\sqrt{3}$ and for any
$\eta_j>\sup_{x\in\Gamma_j}\dist(x,\Gamma)=(2^{-j+1}/\sqrt{3})\max\{\delta ,1/4\}\to 0$ as $j\to\infty$.

For this choice of $O$, BEM convergence again follows from Theorem \ref{prop:DiscreteCompact}. But Corollary \ref{cor:dsetconv} does not apply here, because $O$ does not satisfy the open set condition (disjointness fails). In fact, it is easy to see that for the Sierpinksi triangle there does not exist an open set $O$ satisfying both the open set condition and the additional requirement that $\Gamma\subset O$.

\begin{prop}[Sierpinski triangle]
\label{prop:Sierpinski}
Let $\Gamma$ be the Sierpinski triangle defined above and let $\Gamma_j$ be either the standard compact prefractals or the thickened open prefractals.
Then the BVPs $\sD(\Gamma)$ and $\sD(\Gamma_j)$ are well-posed,
BVP convergence holds,
and the solution of $\sD(\Gamma)$ is non-zero if and only if $g\ne0$.

For the thickened prefractals,
BEM convergence holds if $h_j = o(2^{-\mu j})$ for some $\mu> 2-\frac{\log3}{\log2}$, in particular if $h_j = O(2^{-j})$.
\end{prop}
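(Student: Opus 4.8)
The plan is to read off all four assertions from the general machinery of \S\ref{sec:BVPs}--\S\ref{sec:Convergence}, using throughout that $\Gamma$ is the attractor of a homogeneous contracting-similarity IFS with $\nu=3$ and ratio $r=1/2$, so that by \eqref{eq:d2} and the open set condition $\Gamma$ is a $d$-set with $d=\log 3/\log 2$, and crucially $n-2=1<d<2=n-1$ (recall $n=3$). First I would dispose of well-posedness: $\Gamma$ and the standard compact prefractals $\Gamma_j$ are non-empty and compact, so $\sD(\Gamma)$ and $\sD(\Gamma_j)$ are well-posed by Theorem \ref{thm:Closed}; each thickened open prefractal is a finite union of (overlapping) triangles, hence $C^0$ except at finitely many points, so $\tH^{-1/2}(\Gamma_j)=H^{-1/2}_{\overline{\Gamma_j}}$ by Proposition \ref{prop:TildeSubscript}(ii) and $\sD(\Gamma_j)$ is well-posed by Theorem \ref{thm:OpenPrime}.

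For BVP convergence there are two cases. For the standard compact prefractals the open set condition gives $s(\overline O)\subset\overline O$, so with $\Gamma_0=\overline O$ the hypotheses of Corollary \ref{cor:ifs} hold and BVP convergence is immediate. For the thickened open prefractals I would instead verify $\tH^{-1/2}(\Gamma_j)\xrightarrow{M}H^{-1/2}_\Gamma$ directly from Lemma \ref{lem:MoscoSuff}, exactly as in the proof of Theorem \ref{prop:DiscreteCompact} but with no discrete space to intervene: condition (ii) follows from $\tH^{-1/2}(\Gamma_j)\subset H^{-1/2}_{\overline{\Gamma(\eta_j)}}$ together with $H^{-1/2}_{\overline{\Gamma(\eta_j)}}\xrightarrow{M}H^{-1/2}_\Gamma$ (Proposition \ref{prop:Mosco}(ii), choosing $\eta_j\downarrow 0$), while condition (i) follows by mollification, since $\psi_{\epsilon_j/2}*v\in C^\infty_0(\Gamma(\epsilon_j/2))\subset\tH^{-1/2}(\Gamma_j)$ converges to $v$ in $H^{-1/2}$ for every $v\in H^{-1/2}_\Gamma$. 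The non-vanishing dichotomy is then handled by Proposition \ref{prop:Scatters}: since $d>1=n-2$, Lemma \ref{lem:Density} gives $H^t_\Gamma\neq\{0\}$ for $-1/2\le t<(d-n+1)/2=(d-2)/2$, and as $(d-2)/2>-1/2$ the endpoint $t=-1/2$ is admissible, so $H^{-1/2}_\Gamma\neq\{0\}$ and the solution of $\sD(\Gamma)$ is non-zero precisely when $g\neq 0$.

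For the BEM convergence claim I would apply Corollary \ref{cor:dset}(ii) (equivalently Theorem \ref{prop:DiscreteCompact}) rather than Corollary \ref{cor:dsetconv}, the latter being unavailable because the enlarged triangle $O$ used for the thickened prefractals violates the disjointness half of the open set condition \eqref{oscfirst}. The thickened construction supplies the sandwich $\Gamma\subset\Gamma(\epsilon_j)\subset\Gamma_j\subset\Gamma(\eta_j)$ with $\epsilon_j=2^{-j}\delta/\sqrt 3=\Theta(2^{-j})$ and $\eta_j\to 0$, and since $\Gamma$ is a $d$-set with $n-2<d<n-1$, Corollary \ref{cor:dset}(ii) yields BEM convergence whenever $h_j=o(\epsilon_j^\mu)$ for some $\mu>n-1-d=2-\log 3/\log 2$; because $\epsilon_j=\Theta(2^{-j})$ this is exactly the stated condition $h_j=o(2^{-\mu j})$. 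The ``in particular'' assertion follows because $2-\log 3/\log 2<1$, so one may fix $\mu\in(2-\log 3/\log 2,1)$, for which $h_j=O(2^{-j})=o(2^{-\mu j})$; such meshes exist since $\Gamma_j$ is a finite union of triangles and so admits pre-convex (indeed convex) meshes of any mesh size.

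I expect the only genuinely delicate step to be this last one. Everything else is assembly of cited results, but for the BEM bound one must (a) recognise that Corollary \ref{cor:dsetconv} is inapplicable and drop down to the more primitive Corollary \ref{cor:dset}, (b) confirm that the thickened sandwich radii are genuinely $\Theta(2^{-j})$ so that the abstract exponent $\mu>n-1-d$ transcribes faithfully into $\mu>2-\log 3/\log 2$, and (c) check the endpoint bookkeeping in Lemma \ref{lem:Density} that makes $t=-1/2$ admissible, which is what simultaneously secures both $H^{-1/2}_\Gamma\neq\{0\}$ and the density of $H^t_\Gamma$ underlying the BEM estimate.
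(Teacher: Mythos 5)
Your proposal is correct and follows essentially the same route as the paper: well-posedness via Theorem \ref{thm:Closed} and Theorem \ref{thm:OpenPrime}/Proposition \ref{prop:TildeSubscript}, BVP convergence via Corollary \ref{cor:ifs} for the standard prefractals and a mollification/Mosco argument for the thickened ones, non-triviality via Proposition \ref{prop:Scatters} and Lemma \ref{lem:Density}, and BEM convergence via Theorem \ref{prop:DiscreteCompact}/Corollary \ref{cor:dset}(ii) with $\epsilon_j=\Theta(2^{-j})$ — including the paper's own key observation that Corollary \ref{cor:dsetconv} is unavailable because the enlarged triangle violates the disjointness part of the open set condition. The only cosmetic slip is writing $\psi_{\epsilon_j/2}*v\in C^\infty_0(\Gamma(\epsilon_j/2))$; its support may touch $\partial\Gamma(\epsilon_j/2)$, so one should say $\supp(\psi_{\epsilon_j/2}*v)\subset\overline{\Gamma(\epsilon_j/2)}\subset\Gamma(\epsilon_j)\subset\Gamma_j$, hence $\psi_{\epsilon_j/2}*v\in C^\infty_0(\Gamma_j)$, exactly as in the paper's proof of Theorem \ref{prop:DiscreteCompact}.
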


\subsection{Classical snowflakes}\label{sec:Snowflake}

We now consider the family of ``classical snowflakes'' studied in \cite{CapitanelliVivaldi15}, which generalise the standard Koch snowflake.
Each snowflake $\Gamma$ is a bounded open subset of $\R^2$ with fractal boundary,
depending on a parameter $0<\beta<\frac\pi2$, or equivalently on $\frac14<\xi:=\frac1{2(1+\sin\beta)}<\frac12$.
The standard Koch snowflake corresponds to the choice $\beta=\pi/6$ ($\xi=1/3$). We note that $\xi$ is denoted $\alpha^{-1}$ in \cite[\S2]{CapitanelliVivaldi15}; %
our notation follows that in \cite{caetano2018}.

To define and approximate $\Gamma$ we introduce a sequence of increasing nested open ``inner prefractals'' $(\Gamma_j^-)_{j\in \N_0}$, defining $\Gamma$ by $\Gamma:=\bigcup_{j\in\N_0}\Gamma_j^-$, and a sequence of decreasing nested closed ``outer prefractals'' $(\Gamma_j^+)_{j\in \N_0}$, such that $\Gamma\subset \Gamma_j^+$, $j\in \N_0$. The inner and outer prefractals for three examples (including the standard Koch snowflake) are shown in Figure \ref{fig:SnowflakesShapesInnerOuter}.

\begin{figure}[htb]
\includegraphics[width=\textwidth, clip, trim=130 130 100 115 ]{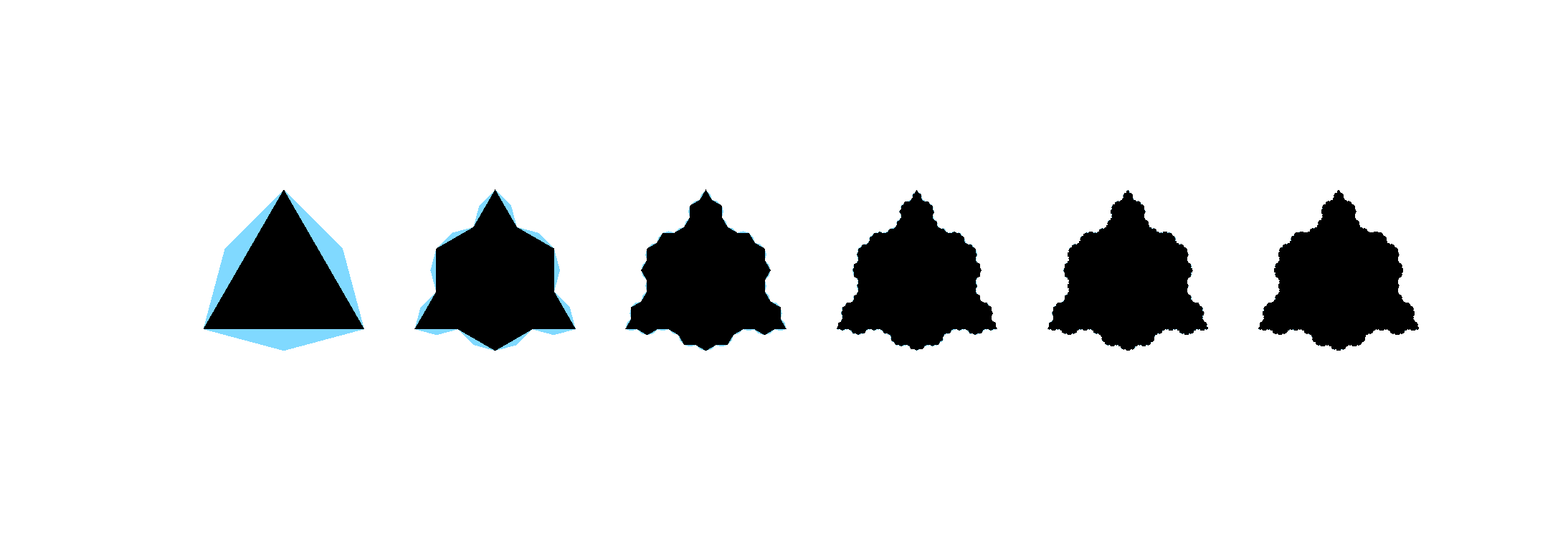}
\includegraphics[width=\textwidth, clip, trim=130 130 100 115 ]{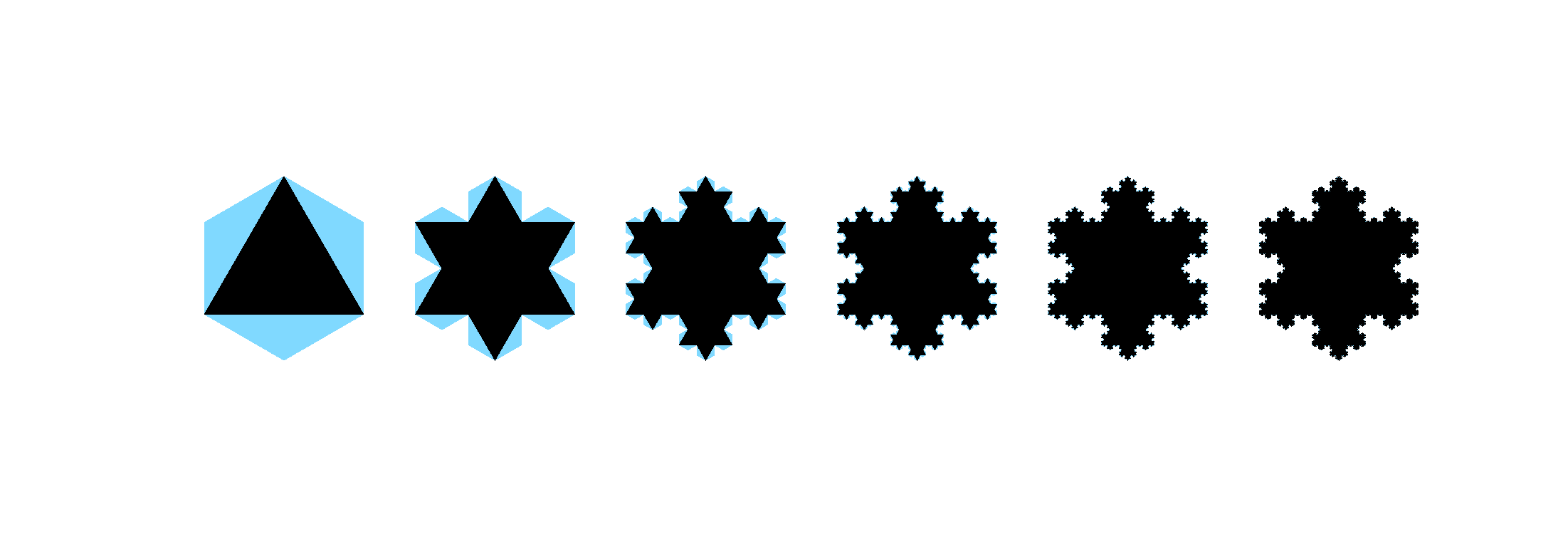}
\includegraphics[width=\textwidth, clip, trim=130 130 100 115 ]{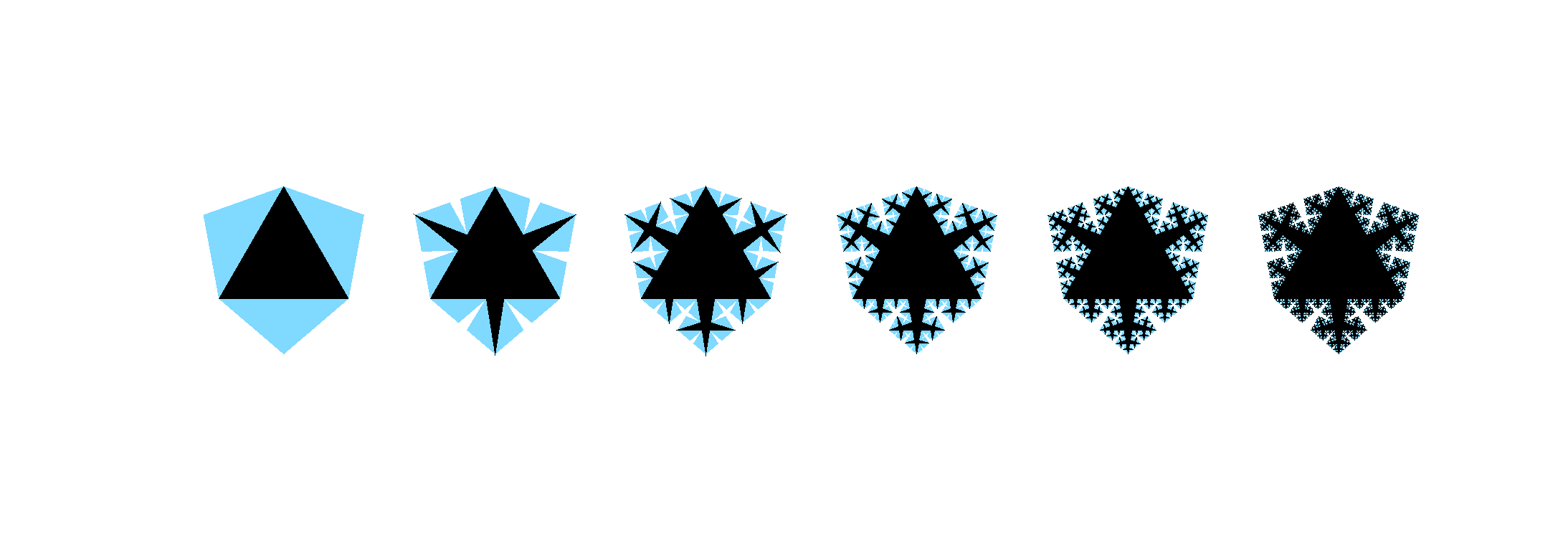}
\caption{The first 6 inner and outer prefractals $\Gamma_0^\pm,\ldots,\Gamma_5^\pm$ of the classical snowflakes for $\beta=\frac\pi3$ (top), $\beta=\frac\pi6$ (centre), $\beta=\frac\pi{20}$ (bottom).
The inner prefractals $\Gamma_j^-$ are the black shapes and the outer ones $\Gamma_j^+$ are the union of the blue and the black shapes. The parameter $0<\beta<\frac\pi2$ represents half the width of each convex angle of the inner prefractals (except possibly the three angles of the first inner prefractal), and the parameter $1/4<\xi=\frac1{2(1+\sin\beta)}<1/2$ represents the ratio of the side lengths of two successive prefractals.}
\label{fig:SnowflakesShapesInnerOuter}
\end{figure}

Each $\Gamma_j^-$ is an open polygon with $M_j^-:=3\cdot4^j$ edges of length $\xi^j$.
$\Gamma_0^-$ is the equilateral triangle with vertices $(0,0)$, $(1,0)$, $(\frac12,\frac12\sqrt3)$.
For $j\in \N$, $\Gamma_j^-$ is the union of $\Gamma_{j-1}^-$ and $M_{j-1}^-$ identical disjoint isosceles triangles (together with their bases) with base length $\xi^{j-1}(1-2\xi)$, side length $\xi^j$, height $\xi^{j-1}\sqrt{\xi-\frac14}$, apex angle $2\beta$, placed in such a way that  the midpoint of the base of the $k$th such triangle coincides with the midpoint of the $k$th side of $\Gamma_{j-1}^-$, for $k=1,\ldots,M_j^-$.

Our sequence of ``outer prefractals'' generalises those considered in \cite{banjai2017poincar} for the standard Koch snowflake.
Each $\Gamma_j^+$ is a closed polygon with $M_j^+:=6\cdot4^j$ edges of length $\xi^{j+\frac12}$.
$\Gamma_0^+$ is the convex hexagon obtained as union of $\Gamma_0^-$ and the three isosceles closed triangles with base the three sides of $\Gamma_0^-$, respectively, and height $\sqrt{\xi-\frac14}$ ($\Gamma_0^+$ is a regular hexagon only if $\beta=\frac\pi6$).
For $j\in \N$, $\Gamma_j^+$ is the difference of $\Gamma_{j-1}^+$ and $M_{j-1}^+$ identical disjoint isosceles triangles (together with their bases) with base length $\xi^{j-\frac12}(1-2\xi)$, side length $\xi^{j+\frac12}$, height $\xi^{j-\frac12}\sqrt{\xi-\frac14}$, apex angle $2\beta$, placed in such a way that the midpoint of the base of the $k$th such triangle coincides with the midpoint of the $k$th side of $\Gamma_{j-1}^+$, for $k=1,\ldots,M_j^+$.

Note that (cf.\ Figure \ref{fig:SnowflakesShapesInnerOuter})
$\Gamma_j^-\subset\Gamma_j^+$, $\Gamma_j^-\subset\Gamma_{j+1}^-$ and $\Gamma_{j+1}^+\subset\Gamma_{j}^+$ for each $j\in\N_0$.
In \cite{caetano2018} we proved that
\begin{itemize}
\item[\textbullet] $\overline{\Gamma}=\bigcap_{j\in\N_0}\Gamma_j^+$, $\Gamma=(\overline{\Gamma})^\circ$ and $|\partial\Gamma|=0$;  %
\item[\textbullet] $\partial\Gamma$ is a $d$-set with Hausdorff dimension $d=\log4/\log(1/\xi)$ (with the standard Koch snowflake having dimension $\log 4/\log 3$);
\item[\textbullet] $\Gamma$ is a ``thick'' domain (in the sense of Triebel), so $\widetilde{H}^{s}(\Gamma)=H_{\overline{\Gamma}}^{s}$ for all $s\in\R$.
\end{itemize}
Combining these facts with Theorem~\ref{thm:OpenPrime}, Proposition~\ref{prop:equiv}, Proposition~\ref{prop:Mosco}\rf{Moscoiii} and Theorem~\ref{prop:DiscreteOpen}, gives the following result.

\begin{prop}[Classical snowflakes]\label{prop:Snowflake}
Let $0<\beta<\frac\pi2$, and define the classical snowflake $\Gamma$ and its inner and outer prefractals $\Gamma_j^\pm$ as above.
Let $\Gamma_j$ be any sequence of bounded open sets satisfying $\Gamma_j^-\subset\Gamma_j\subset\Gamma_j^+$, with %
$\tH^{-1/2}(\Gamma_j)=H^{-1/2}_{\overline{\Gamma_j}}$ (in particular this applies if $\Gamma_j=\Gamma_j^-$ or $\Gamma_j=(\Gamma_j^+)^\circ$, since then $\Gamma_j$ is $C^0$). %
Then the BVPs $\sD(\Gamma)$ and $\sD(\Gamma_j)$ are well-posed, and BVP convergence holds. Furthermore, BEM convergence holds if $h_j\to 0$ as $j\to\infty$.
\end{prop}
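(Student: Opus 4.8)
The plan is to assemble the proposition from the three structural facts about the classical snowflake recalled just above the statement --- namely that $\overline\Gamma=\bigcap_{j\in\N_0}\Gamma_j^+$ with $\Gamma=(\overline\Gamma)^\circ$, that $\partial\Gamma$ is a $d$-set, and above all that $\Gamma$ is thick, so that $\widetilde{H}^{-1/2}(\Gamma)=H^{-1/2}_{\overline\Gamma}$ --- together with the general machinery of \S\ref{sec:BVPs}--\S\ref{sec:Convergence}. No genuinely new analysis is required; the task is to check that each quoted hypothesis holds. First I would record that $\Gamma$ is non-empty, bounded and open: it is open as the union of the increasing open polygons $\Gamma_j^-$, non-empty since $\Gamma_0^-\neq\emptyset$, and bounded since $\Gamma\subset\Gamma_0^+$, a compact hexagon. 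Well-posedness of $\sD(\Gamma)$ is then immediate from Theorem~\ref{thm:OpenPrime} (together with Proposition~\ref{prop:equiv}), whose sole geometric hypothesis $\widetilde{H}^{-1/2}(\Gamma)=H^{-1/2}_{\overline\Gamma}$ is precisely the recalled thickness conclusion. For each $\Gamma_j$ the same condition is part of the hypotheses, so $\sD(\Gamma_j)$ is likewise well-posed by Theorem~\ref{thm:OpenPrime}; for the distinguished choices $\Gamma_j=\Gamma_j^-$ or $\Gamma_j=(\Gamma_j^+)^\circ$ the set $\Gamma_j$ is a bounded open polygon, hence $C^0$, so the condition follows instead from Proposition~\ref{prop:TildeSubscript}(i).

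Next I would establish BVP convergence, i.e.\ $V_j:=\widetilde{H}^{-1/2}(\Gamma_j)\xrightarrow{M}V:=\widetilde{H}^{-1/2}(\Gamma)$, by verifying the hypotheses of the sandwiched case, Proposition~\ref{prop:Mosco}\rf{Moscoiii}. Here the inner prefractals $\Gamma_j^-$ play the role of the increasing open sets and the outer prefractals $\Gamma_j^+$ that of the decreasing compact sets. The required inclusions $\Gamma_j^-\subset\Gamma_{j+1}^-$, $\Gamma_{j+1}^+\subset\Gamma_j^+$ and $\Gamma_j^-\subset\Gamma_j\subset\Gamma_j^+$ hold by construction and by the standing assumption on $\Gamma_j$; the $\Gamma_j^+$ are non-empty and compact (closed polygons) while the $\Gamma_j^-$ are non-empty, bounded and open. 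The two limiting identities $\Gamma=\bigcup_{j\in\N_0}\Gamma_j^-$ and $\overline\Gamma=\bigcap_{j\in\N_0}\Gamma_j^+$ are, respectively, the definition of $\Gamma$ and a recalled fact from \cite{caetano2018}, and $\widetilde{H}^{-1/2}(\Gamma)=H^{-1/2}_{\overline\Gamma}$ is again thickness. Uniform boundedness of $(\Gamma_j)$ follows from $\Gamma_j\subset\Gamma_0^+$. Thus Proposition~\ref{prop:Mosco}\rf{Moscoiii} applies directly and yields $V_j\xrightarrow{M}V$, which is exactly BVP convergence.

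Finally, BEM convergence is obtained immediately from Theorem~\ref{prop:DiscreteOpen}: having verified that $\Gamma$, $\Gamma_j$ and $\Gamma_j^\pm$ satisfy the hypotheses of Proposition~\ref{prop:Mosco}\rf{Moscoiii}, any pre-convex mesh $M_j$ on $\Gamma_j$ with mesh size $h_j\to 0$ gives $\VjBEM\xrightarrow{M}V$, and hence norm convergence of the discrete densities and of the scattered fields via Theorem~\ref{thm:MoscoConv}.

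There is no serious obstacle here: all the hard work --- proving thickness of $\Gamma$ and controlling the inner/outer prefractal sandwich --- is imported from \cite{caetano2018}, and the convergence theory of \S\ref{sec:Conv}--\S\ref{sec:Convergence} was engineered precisely so that this final step is a mechanical verification. If anything, the only point requiring care is the bookkeeping of open versus closed sets in the chain $\Gamma_j^-\subset\Gamma_j\subset\Gamma_j^+$ (the outer prefractals being closed, the inner ones open), and confirming that for the concrete choices $\Gamma_j=\Gamma_j^-$ or $\Gamma_j=(\Gamma_j^+)^\circ$ the polygonal boundary genuinely renders $\Gamma_j$ a $C^0$ domain, so that Proposition~\ref{prop:TildeSubscript}(i) may legitimately be invoked.
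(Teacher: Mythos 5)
Your proposal is correct and follows essentially the same route as the paper, which proves the proposition in one line by combining the recalled facts from \cite{caetano2018} (in particular thickness of $\Gamma$ and the identities $\Gamma=\bigcup_{j}\Gamma_j^-$, $\overline\Gamma=\bigcap_{j}\Gamma_j^+$) with Theorem~\ref{thm:OpenPrime}, Proposition~\ref{prop:equiv}, Proposition~\ref{prop:Mosco}\rf{Moscoiii} and Theorem~\ref{prop:DiscreteOpen}. Your write-up simply makes explicit the hypothesis-checking (including the use of Proposition~\ref{prop:TildeSubscript}(i) for the $C^0$ prefractals) that the paper leaves implicit.
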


\subsection{Square snowflake}\label{sec:Square}
As our final example we consider the ``square snowflake'' studied in \cite{sapoval1991vibrations} (see also \cite[\S7.6]{grebenkov2013geometrical} and the references therein).
This is an open subset of $\R^2$ with fractal boundary, constructed as the limit of a sequence of \textit{non-nested} polygonal prefractals $\Gamma_j$, $j\in\N_0$, the first five of which are shown in Figure \ref{fig:SquareSnowflakeShapesBlack}.
\begin{figure}[htb]
\includegraphics[width=\textwidth, clip, trim = 25 90 0 90]{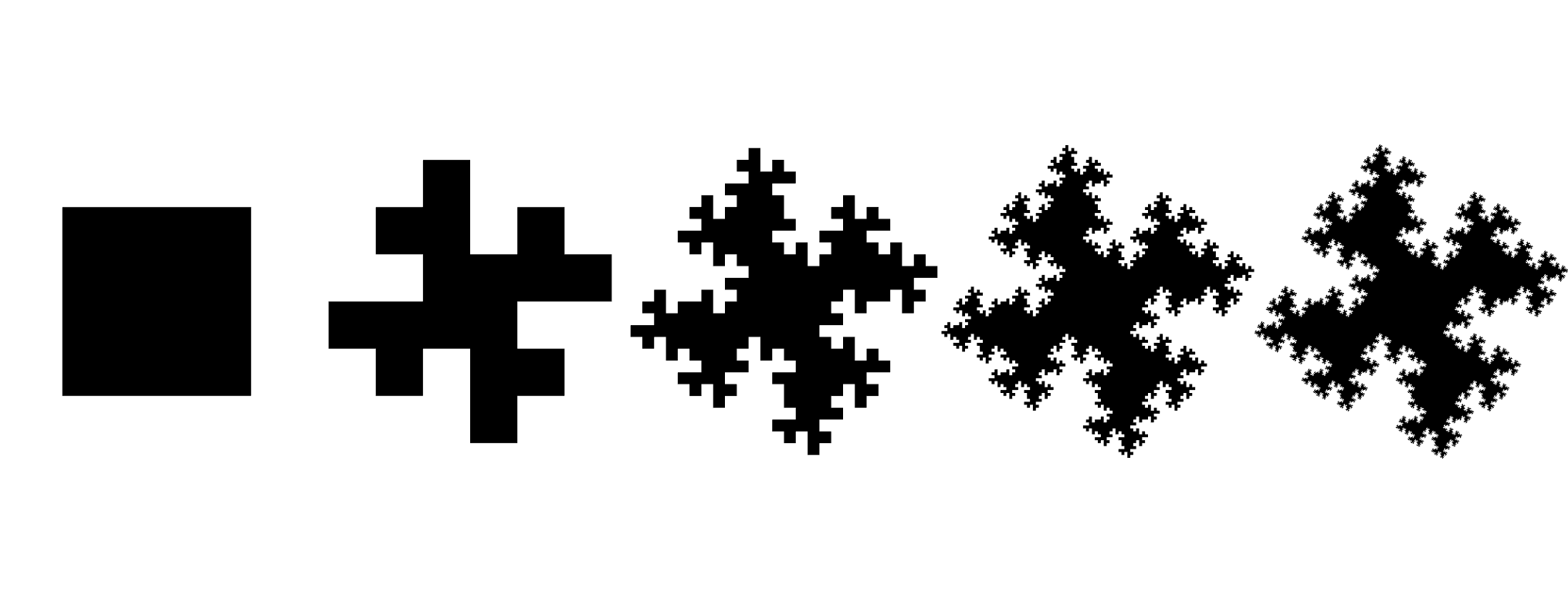}
\caption{The first five standard prefractals $\Gamma_0,\ldots,\Gamma_4$ of the square snowflake.}
\label{fig:SquareSnowflakeShapesBlack}
\end{figure}

Each prefractal $\Gamma_j$ is an open polygon whose boundary is the union of $N_j:=4\cdot 8^j$ segments of length $\ell_j:=4^{-j}$ aligned to the Cartesian axes.
Let $\Gamma_0=(0,1)^2$ be the open unit square.
For $j\in\N$, $\partial\Gamma_j$ is constructed by replacing each horizontal edge and each vertical edge of $\partial\Gamma_{j-1}$ respectively by the following polygonal lines composed of 8 edges each:
\[
\begin{tikzpicture}[scale=.3]
\draw[very thick](-6,0)--(-2,0);
\draw[very thick](0,0)--(1,0)--(1,1)--(2,1)--(2,-1)--(3,-1)--(3,0)--(4,0);
\draw(-1,0)node{$\rightsquigarrow$};
\begin{scope}[xshift=15cm]
\draw[very thick](-3,-2)--(-3,2);
\draw[very thick](1,-2)--(1,-1)--(0,-1)--(0,0)--(2,0)--(2,1)--(1,1)--(1,2);
\draw(-1.5,0)node{$\rightsquigarrow$};
\end{scope}
\end{tikzpicture}
\]
(Note that the fourth and the fifth segments obtained are aligned; in the following however we count them as two different edges of $\Gamma_j$.)
Each polygonal path $\partial\Gamma_j$ constructed with this procedure is the boundary of a simply connected polygon $\Gamma_j$ of unit area, composed of $16^j$ squares of side length $\ell_j$. (See \cite[\S5.2]{caetano2018} for more detail of this construction.)
The resulting sequence of prefractals $\{\Gamma_j\}_{j\in\N_0}$ is not nested: for each $j\in\N$ neither $\Gamma_j\subset\Gamma_{j-1}$ nor $\Gamma_j\supset\Gamma_{j-1}$.
Indeed, the two set differences $\Gamma_j\setminus\Gamma_{j-1}$ and $\Gamma_{j-1}\setminus\Gamma_j$ are composed of $4\cdot 8^{j-1}=2^{3j-1}$ disjoint squares of side length $\ell_j$.
Thus the limit set of the sequence cannot be defined simply as a union or intersection of the prefractals.

In \cite{caetano2018} we showed how to construct inner and outer nested prefractal sequences $\Gamma_j^\pm$ such that $\Gamma_j$ and $\Gamma_j^\pm$ satisfy the assumptions of Proposition \ref{prop:Mosco}\rf{Moscoiii}, with the limit set $\Gamma$ defined as $\Gamma:=\bigcup_{j\in\N_0} \Gamma_j^- = \big(\bigcap_{j\in\N_0} \Gamma_j^+\big)^\circ$.
In \cite{caetano2018} we proved further that $\partial\Gamma$ is a $d$-set with Hausdorff dimension $d=3/2$, and that $\Gamma$ is a thick domain, so that $\widetilde{H}^{s}(\Gamma)=H_{\overline{\Gamma}}^{s}$ for all $s\in\R$.
Combining these facts with Theorem~\ref{thm:OpenPrime}, Proposition~\ref{prop:equiv}, Proposition~\ref{prop:Mosco}\rf{Moscoiii} and Theorem~\ref{prop:DiscreteOpen}, gives the following result.

\begin{prop}[Square snowflake]
\label{prop:SquareSn}
Define the square snowflake $\Gamma$ and its standard prefractals $\Gamma_j$ as above.
Then the BVPs $\sD(\Gamma)$ and $\sD(\Gamma_j)$ are well-posed, and BVP convergence holds. Furthermore, BEM convergence holds if $h_j\to 0$.
\end{prop}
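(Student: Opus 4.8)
The plan is to assemble the proposition from the structural facts about the square snowflake established in \cite{caetano2018} together with the general machinery of \S\ref{sec:BVPs}--\S\ref{sec:Convergence}. The only genuinely new feature compared with the nested examples treated above (Cantor set, Cantor dust, Sierpinski triangle, classical snowflakes) is that the standard prefractals $\Gamma_j$ are \emph{not} nested, so that neither $\Gamma\subset\Gamma_j$ nor $\Gamma_j\subset\Gamma$ holds. This is precisely the configuration that the sandwiched case Proposition~\ref{prop:Mosco}\rf{Moscoiii} is designed to handle, and the work of constructing the sandwiching inner and outer sequences $\Gamma_j^\pm$ and verifying the required nesting and limit properties has already been carried out in \cite{caetano2018}. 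Thus the proof reduces to checking hypotheses and citing the appropriate results.

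First I would establish well-posedness. Since \cite{caetano2018} shows that $\Gamma$ is a thick domain, the $s=-1/2$ case of the thickness identity gives $\tH^{-1/2}(\Gamma)=H^{-1/2}_{\overline{\Gamma}}$ (equivalently this follows from Proposition~\ref{prop:TildeSubscript}(iii), since $|\partial\Gamma|=0$ and $\Gamma$ is thick), so Theorem~\ref{thm:OpenPrime} together with Proposition~\ref{prop:equiv} yields well-posedness of $\sD(\Gamma)$ and the representation \rf{eqn:Rep} for its solution. For each $j$, the prefractal $\Gamma_j$ is an open polygon, hence $C^0$ (at worst $C^0$ except at finitely many points), so Proposition~\ref{prop:TildeSubscript}(i)--(ii) gives $\tH^{-1/2}(\Gamma_j)=H^{-1/2}_{\overline{\Gamma_j}}$ and Theorem~\ref{thm:OpenPrime} gives well-posedness of $\sD(\Gamma_j)$.

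Next I would verify BVP convergence by checking the hypotheses of Proposition~\ref{prop:Mosco}\rf{Moscoiii} with $V:=\tH^{-1/2}(\Gamma)$, $V_j:=\tH^{-1/2}(\Gamma_j)$ and the inner and outer sequences $\Gamma_j^\pm$ imported from \cite{caetano2018}. By construction these satisfy $\Gamma_j^-\subset\Gamma_j\subset\Gamma_j^+$ with $\Gamma_j^-\subset\Gamma_{j+1}^-$ and $\Gamma_{j+1}^+\subset\Gamma_j^+$, together with $\Gamma=\bigcup_{j\in\N_0}\Gamma_j^-$ and $\overline{\Gamma}=\bigcap_{j\in\N_0}\Gamma_j^+$; the thickness of $\Gamma$ supplies the remaining hypothesis $\tH^{-1/2}(\Gamma)=H^{-1/2}_{\overline{\Gamma}}$. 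Hence $V_j\xrightarrow{M}V$, which is BVP convergence by definition, and Theorem~\ref{thm:MoscoConv} then gives convergence of the prefractal solutions to the fractal solution in $\tH^{-1/2}(\Gamma_\dag)$ and of the scattered fields in $W^{1,\mathrm{loc}}(\R^n)$.

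Finally, for BEM convergence I would invoke Theorem~\ref{prop:DiscreteOpen} directly. Since $\Gamma$, $\Gamma_j$ and $\Gamma_j^\pm$ have just been shown to satisfy the hypotheses of Proposition~\ref{prop:Mosco}\rf{Moscoiii}, it suffices to equip each polygonal $\Gamma_j$ with a pre-convex mesh $M_j$ of mesh size $h_j$ and to require $h_j\to 0$; such meshes exist trivially because $\Gamma_j$ is a union of $16^j$ congruent axis-aligned squares, each of which can be subdivided to any prescribed diameter. I do not anticipate a serious obstacle within the proof itself: the substance lies entirely in the geometric and Sobolev-space input from \cite{caetano2018} (the sandwiching construction and, above all, the thickness of $\Gamma$), and the only conceptual subtlety, the non-nestedness of the $\Gamma_j$, is absorbed by the sandwich formulation of Proposition~\ref{prop:Mosco}\rf{Moscoiii}. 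Once these facts are taken as given, the proposition follows by a routine combination of Theorems~\ref{thm:OpenPrime} and~\ref{prop:DiscreteOpen} with Propositions~\ref{prop:equiv} and~\ref{prop:Mosco}.
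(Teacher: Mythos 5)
Your proposal is correct and follows essentially the same route as the paper: the paper obtains Proposition~\ref{prop:SquareSn} precisely by combining the facts from \cite{caetano2018} (the sandwiching sequences $\Gamma_j^\pm$ satisfying the hypotheses of Proposition~\ref{prop:Mosco}\rf{Moscoiii}, and the thickness of $\Gamma$ giving $\tH^{s}(\Gamma)=H^{s}_{\overline{\Gamma}}$) with Theorem~\ref{thm:OpenPrime}, Proposition~\ref{prop:equiv}, Proposition~\ref{prop:Mosco}\rf{Moscoiii} and Theorem~\ref{prop:DiscreteOpen}, exactly as you do. Your write-up merely makes explicit the intermediate steps (well-posedness of $\sD(\Gamma_j)$ via Proposition~\ref{prop:TildeSubscript}, and the existence of pre-convex meshes on the polygonal prefractals) that the paper leaves implicit.
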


\section{Numerical results}
\label{sec:Numerics}
In this section we present numerical results validating our theory, and demonstrate the feasibility of using BEM to calculate scattering by fractal screens.

While our theoretical convergence analysis in \S\S\ref{sec:Convergence}-\ref{sec:examples} is for Galerkin discretisations, the numerical results in this section were obtained using a collocation method, to make implementation as simple and flexible as possible.
Our Matlab collocation code was validated against our own 2D Galerkin code for the case of the Cantor set (see Figure \ref{fig:CantorGalerkinVsColl-FatVsThin} below) and the open-source 3D Galerkin software \mbox{Bempp} \cite{SBAPS15} for the case of the Sierpinski triangle.
In both cases, for fixed prefractal level the collocation code was found to give similar accuracy to the Galerkin codes (using the same meshes), but with a slightly lower computational cost, allowing us to reach slightly higher prefractal levels in 3D than was possible with the Galerkin code
(using default Bempp settings and dense linear algebra).

All our experiments are on prefractals that are finite unions of disjoint segments (when $n=2$) or finite unions of Lipschitz polygons (when $n=3$).
For simplicity we use uniform meshes throughout.
In fact, in each experiment the elements are either congruent segments (when $n=2$), or congruent squares,
or congruent equilateral triangles.%

\subsection{Collocation method} \label{sec:collocation}

Given a prefractal $\Gamma_j$ partitioned by a uniform mesh $M_j=\{T_{j,1},\ldots,T_{j,N_j}\}$ with mesh size $h_j$ (the diameter of each element of the uniform mesh),
our collocation discretisation of the BIE \eqref{eqn:SBIE} computes $\phijBEM\in \VjBEM$ (the $N_j$-dimensional space of piecewise constants on $M_j$) by solving the equations
$$ (S_{\Gamma_j}\phijBEM)(\bx_l)=-g_\dag(\bx_l),\qquad l=1,\ldots,N_j,$$
where $\bx_l$ is the centre of the element $T_{j,l}$.
This is equivalent to approximating the testing integrals in the Galerkin equations \eqref{eq:BEMdef} with a 1-point-per-element quadrature formula.

The vector $\bf u$ containing the values of $\phijBEM$ on each mesh element satisfies a square linear system $A\bf u=b$ where the matrix $A$ and right-hand side vector $\bf b$ have entries
$A_{l,m}=\int_{T_{j,m}}\Phi(\bx_l,\bx)\rd s(\bx)$ and $b_l=-g_\dag(\bx_l)$, respectively.
The integrals in the off-diagonal matrix entries are approximated with Gauss--Legendre quadrature on line segments and the tensorized version of the same rule on square elements; in both cases the number of
quadrature points per element is chosen to be at least $\max\{20 h_j/\lambda,3\}^{n-1}$, $\lambda=2\pi/k$ being the wavelength.
Numerical tests show that higher-order quadratures do not noticeably improve the solution accuracy for the range of parameters considered.
On triangular elements we use a classical 7-point symmetric formula (as in \cite[p.~415]{QSS07}, with degree of exactness 3). %
The integrands of the diagonal entries $A_{l,l}$ have a weak singularity at the element centre $\bx_l$.
For line segments, $A_{l,l}$ is computed by dividing the segment in half and applying a high-order Gauss--Legendre quadrature on each side of the singularity.
For square and equilateral triangle elements we split $T_{j,l}$ into 4 or 3 identical isosceles triangles respectively (with a common corner at the singularity), apply symmetry, and transform to polar coordinates, to evaluate $A_{l,l}$ as
$$
A_{l,l}=\int_{T_{j,l}}\frac{\re^{\ri k|\bx-\bx_l|}}{4\pi|\bx-\bx_l|}\rd \bx =
\begin{cases}
\displaystyle
\frac1{\pi\ri k}\int_{-\pi/4}^{\pi/4} (\re^{\frac{\ri k L}{2\cos\theta}}-1)\rd\theta
&\text{if $T_{j,l}$ is a square,}
\\
\displaystyle
\frac3{4\pi\ri k}\int_{-\pi/3}^{\pi/3} (\re^{\frac{\ri kL}{2\sqrt3\cos\theta}}-1)\rd\theta
&\text{if $T_{j,l}$ is a triangle,}
\end{cases}
$$
where $L$ is the element side length.
The integrals over $\theta$ are computed using Matlab's \verb|integral| function.
Since the mesh is uniform, all diagonal terms coincide and only one such computation is needed for a given value of $kL$.
The (dense, complex, non-Hermitian) linear systems in our numerical experiments are relatively small (with fewer than 11000 DOFs) and are solved with a direct solver (Matlab's backslash).

\subsection{Experiments performed}
We use our BEM code to compare the numerical solutions on a sequence of prefractal screens $\Gamma_j$ approximating a limiting fractal screen $\Gamma$, for the examples in \S\ref{sec:examples}.
In addition to showing domain plots of the scattered fields for different $\Gamma_j$, we study the $j$-dependence of the norm of the numerical solution using the three norms defined in Table \ref{t:PlotNotation}. The table also shows the marker type these norms will be represented by in all the plots.
To validate our theoretical convergence results we also compute near- and far-field errors for the solution on $\Gamma_j$, relative to the solution on the finest prefractal $\Gamma_{j_{\rm max}}$, using these same norms.  %
 In all tests we simulate scattering problems, and the incident field is a plane wave, so that $g_\dag(\bx)=-\re^{\ri k \bd\cdot\bx}$, $\bx\in \Gamma_j$, for some $\bd\in\mathbb S^{n-1}$, the incident wave direction.
In the convergence plots for compact screens, red continuous lines correspond to results for standard prefractals and blue dashed lines to results for thickened prefractals.

\begin{table}[htb]\centering
\normalsize
\begin{tabular}{|c|l|}
\hline
Marker & Norm\\
\hline
$\bigcirc$ &
\begin{minipage}{100mm}
\vspace{1mm}
The $\tH^{\mhalf}(\Gamma_j)$ norm on $\Gamma_j$,
computed
via an accurate numerical integration of the representation
 $\|\phijBEM\|_{\tH^{\mhalf}(\Gamma_j)}^2$ $=2\int_{\Gamma_j}(S_{\Gamma_j}^{\ri}\phijBEM)\overline\phijBEM\rd s$,
where $S_{\Gamma_j}^{\ri}$ is the single-layer operator for the %
equation $\Delta u-u=0$ (compare the definition of the norm in \S\ref{sec:SobolevSpaces} and \cite[eq.\ (3.5), (3.28)]{Ha-Du:90}).
\vspace{1mm}
\end{minipage}
\\
\hline

$\square$ &
\begin{minipage}{100mm}
\vspace{1mm}
The $L^2(Box)$ norm in a near-field region, computed over the ``box'' used for the domain plots. %
For the Cantor set this ``box'' is the square $(-1,2)\times(-1.5,1.5)$.
For all other examples it comprises
three perpendicular faces of the cuboid $(-1,2)\times (-1,2)\times (-1,1)$.
See e.g.\ Figures \ref{fig:CantorFatThinFields} and \ref{fig:CantorDustFields}.
\vspace{1mm}
\end{minipage} \\
\hline
$*$&
\begin{minipage}{100mm}
\vspace{1mm}
The $L^2(\mathbb S^{n-1})$ norm on
$\mathbb S^{n-1}:=\{\hat\bx\in\R^n, |\hat\bx|=1\}$, the unit sphere. In particular we compute this quantity for the far-field pattern $u_{j,\infty}$ of the field scattered from $\Gamma_j$, defined by
$u_{j,\infty}(\hat\bx):=\frac{-\ri k^{(n-3)/2}}{2(2\pi\ri)^{(n-1)/2}}\int_{\Gamma_j}\re^{-\ri k\hat\bx\cdot\by}\phijBEM(\by)\rd s(\by)$, for $\hat\bx\in  \mathbb S^{n-1}$ \cite[(2.13), (3.64)]{CoKr:98}. %
This norm is proportional to the square root of the total acoustic power flux in the scattered field.
\vspace{1mm}
\end{minipage}\\
\hline
\end{tabular}
\caption{The graphical conventions used in the plots.}
\label{t:PlotNotation}
\end{table}

\subsection{Cantor set}
\label{sec:NumCantorSet}
We first fix $\Gamma$ to be the standard Cantor set, as defined in \S\ref{sec:CantorSet}, with $\alpha=1/3$,
set $k=30$ (so that the acoustic wavelength $\lambda:=2\pi/k\approx0.209$ and there are roughly 5 wavelengths across $\Gamma$),
and choose the direction of the incident plane wave as $\bd=(1/2,-\sqrt3/2)$.%

We make BEM computations on both the standard (open) prefractals $\Gamma_j$, defined by \eqref{eq:prefract} with $O=(0,1)$,
which have $2^j$ components of length $3^{-j}$ (cf.~\S\ref{sec:CantorSet}),
and the thickened prefractals as defined in \S\ref{sec:CantorSet} with $\delta=\frac1{4\alpha}-\frac12=\frac14$, which we denote by $\GammajFat$, and which have $2^j$ components of length $\frac32 3^{-j}$.
We present results for the simplest case where the BEM meshes have exactly one element per component of each prefractal, so that we are using the convex mesh \eqref{eq:Mj} with $N_0=1$. Thus there are
$N_j=2^j$ elements and DOFs on the $j$th prefractal. For these simple meshes (Galerkin) BEM convergence is guaranteed for the thickened prefractals by Proposition \ref{prop:CantorSet}.

Figure \ref{fig:CantorFatThinFields} shows the real part and magnitude of the scattered and total fields on the box $(-1,2)\times(-1.5,1.5)$, computed for prefractal level $j=13$, discretising $\Gamma^\delta_{13}$ with $N_{13}=2^{13}=8192$ elements and DOFs.

\begin{figure}[t!]\centering
\includegraphics[width=.24\textwidth,clip, trim=70 10 70 10]{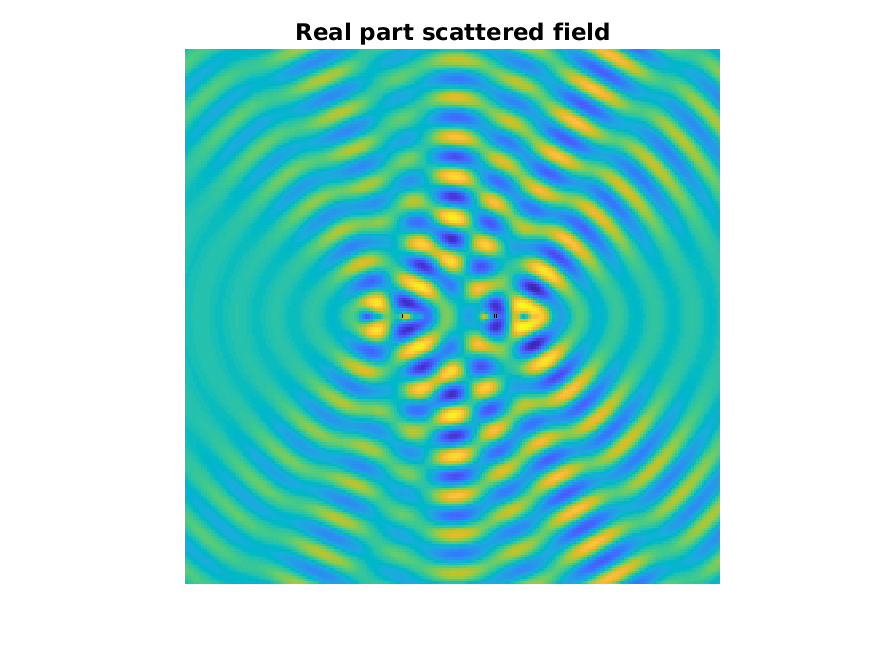}
\includegraphics[width=.24\textwidth,clip, trim=70 10 70 10]{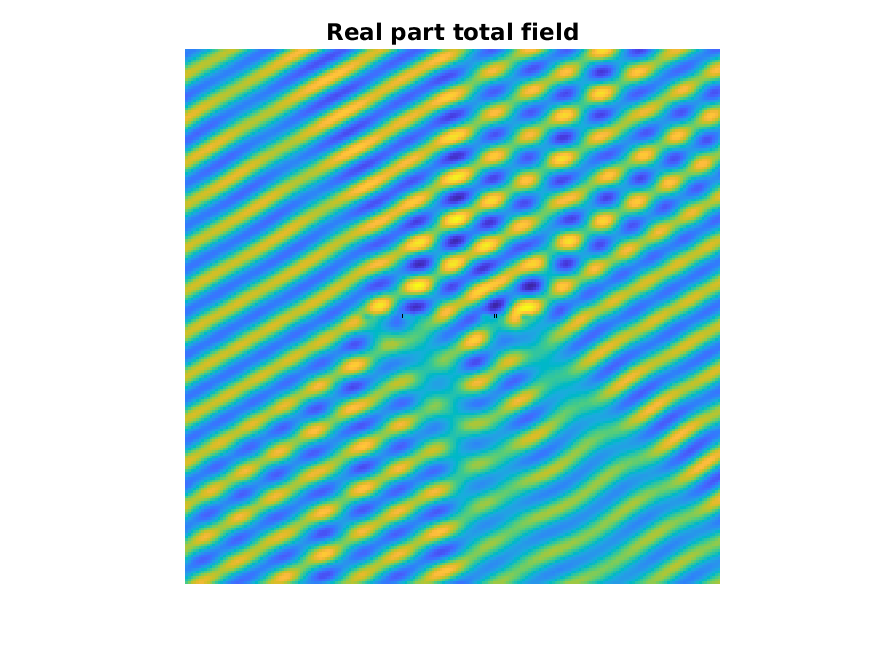}
\includegraphics[width=.24\textwidth,clip, trim=70 10 70 10]{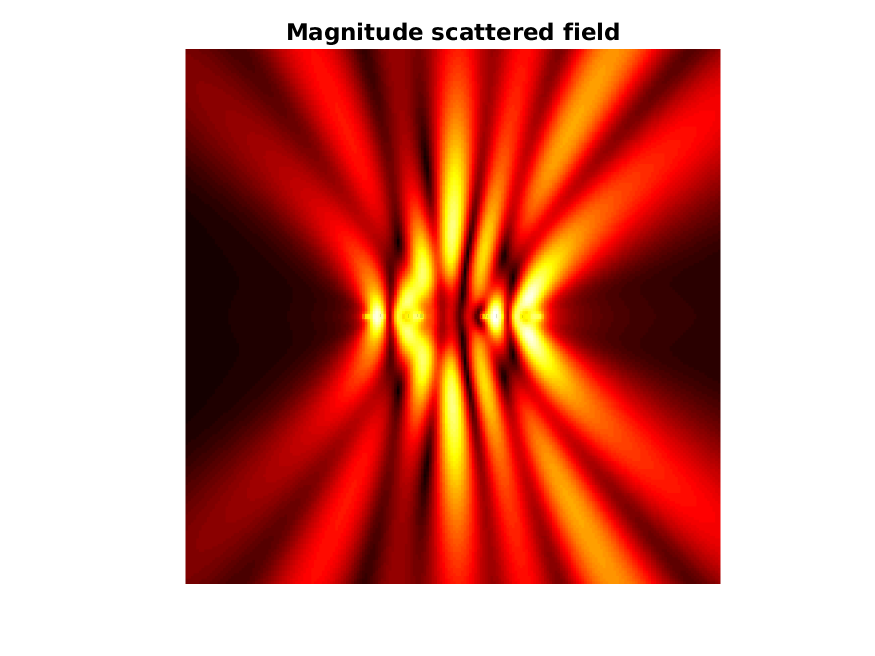}
\includegraphics[width=.24\textwidth,clip, trim=70 10 70 10]{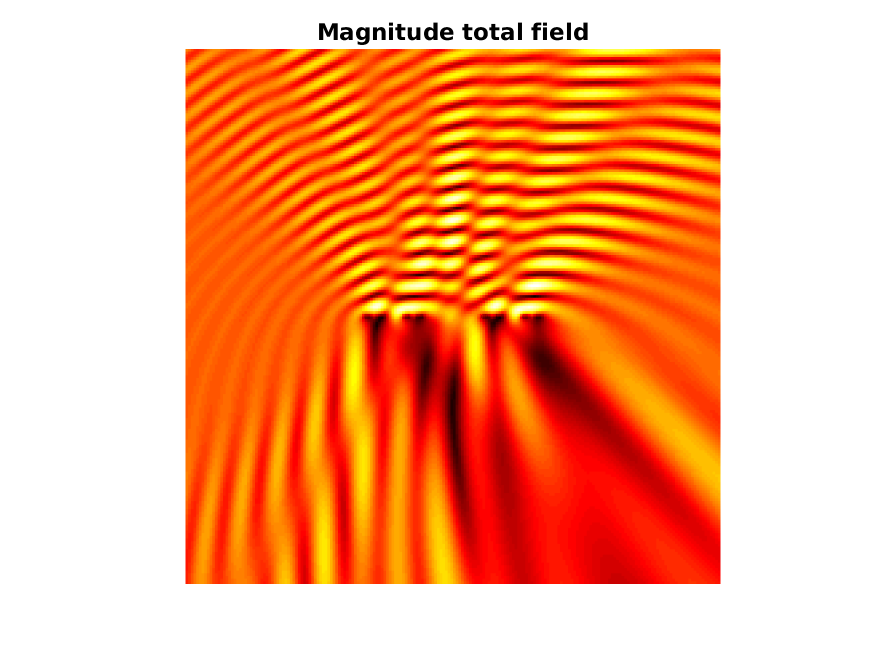}
\caption{The real part and magnitude of the scattered and total fields on the box $(-1,2)\times(-1.5,1.5)$ for the Cantor set problem in \S\ref{sec:NumCantorSet}, %
approximating $\Gamma$ by the level $13$ thickened prefractal $\Gamma^\delta_{13}$ and using $N_{13}=8192$ DOFs. %
}
\label{fig:CantorFatThinFields}
\end{figure}
\begin{figure}[t!]\centering
\includegraphics[width=.49\textwidth,clip,trim=30 0 30 10]{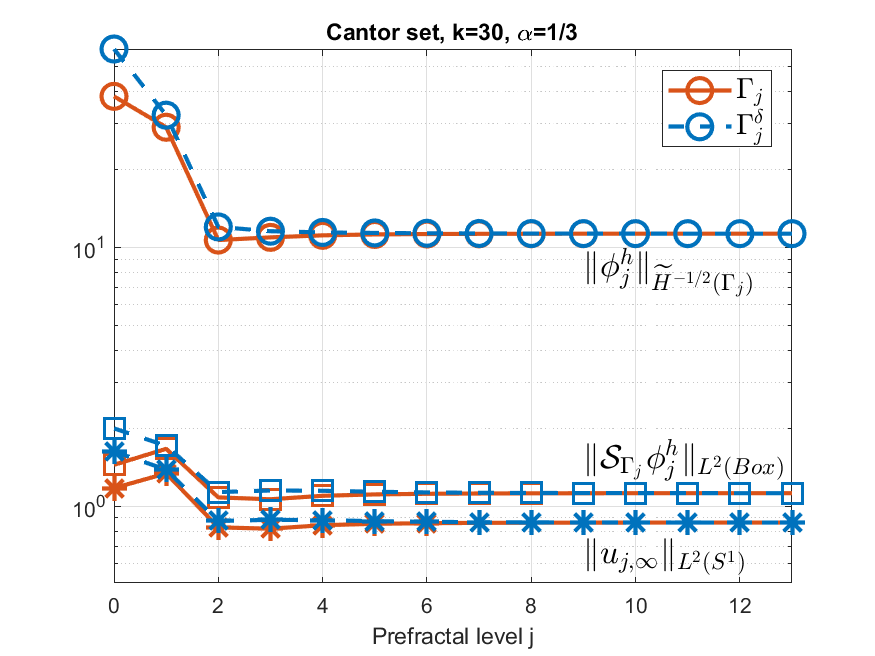}
\includegraphics[width=.49\textwidth,clip,trim=30 0 30 10]{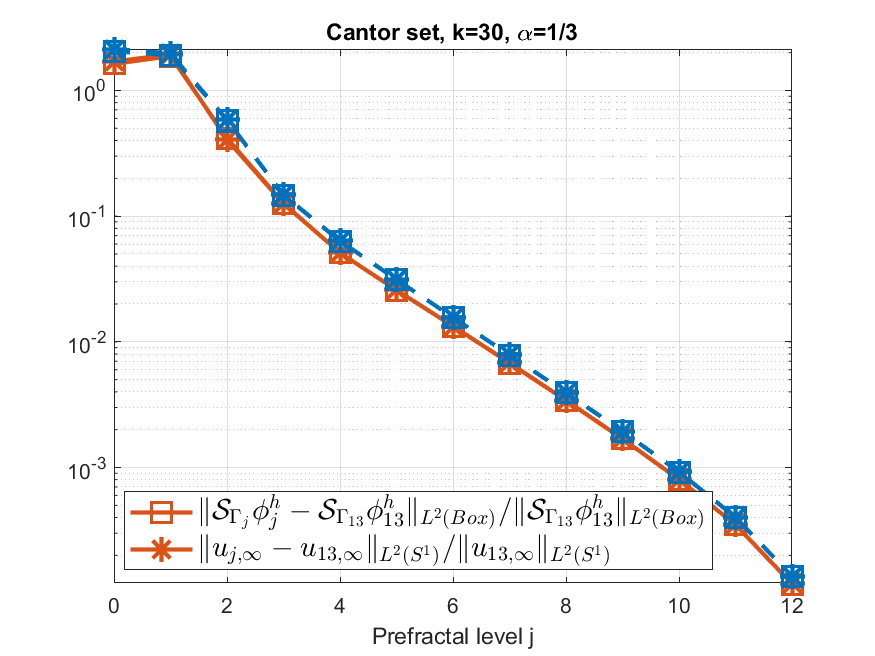}
\caption{Numerical results for the Cantor set problem in \S\ref{sec:NumCantorSet} for prefractal levels $0$ to $13$.
Left: the convergence of the boundary, near- and far-field norms of the discrete solutions.
Right: the exponential decay (in $j$) of the near- and far-field relative errors between the solutions on the $j$th prefractal and the $13$th prefractal.
Continuous red lines correspond to standard prefractals and dashed blue lines show the same quantities for thickened prefractals.
}
\label{fig:CantorAllInOne}
\end{figure}

The left panel in Figure~\ref{fig:CantorAllInOne} shows the norms, as defined in Table~\ref{t:PlotNotation}, of the (collocation) BEM solution on $\Gamma_j$ and $\GammajFat$ for $j=0,\ldots,13$.
In all cases the norms quickly settle to an approximately constant value, suggesting that a limiting value as $j\to\infty$ has been reached.
The right panel in Figure~\ref{fig:CantorAllInOne} shows the near- and far-field relative errors for $j=0,\ldots,12$, measured against the solutions for $j=13$.
Standard prefractals seem to give slightly smaller errors than thickened ones. But in both cases the relative errors decay exponentially in $j$, %
 this numerical evidence of collocation BEM convergence, both for standard and thickened prefractals.

\begin{figure}[htb]\centering
\includegraphics[width=.495\textwidth,clip,trim=30 0 20 10]{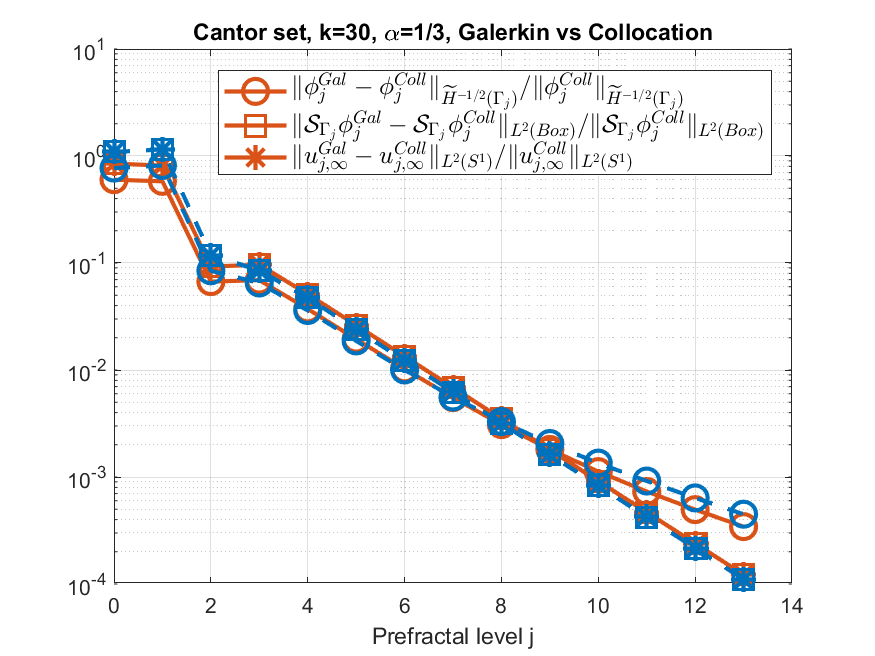}
\includegraphics[width=.495\textwidth,clip,trim=30 0 20 0]{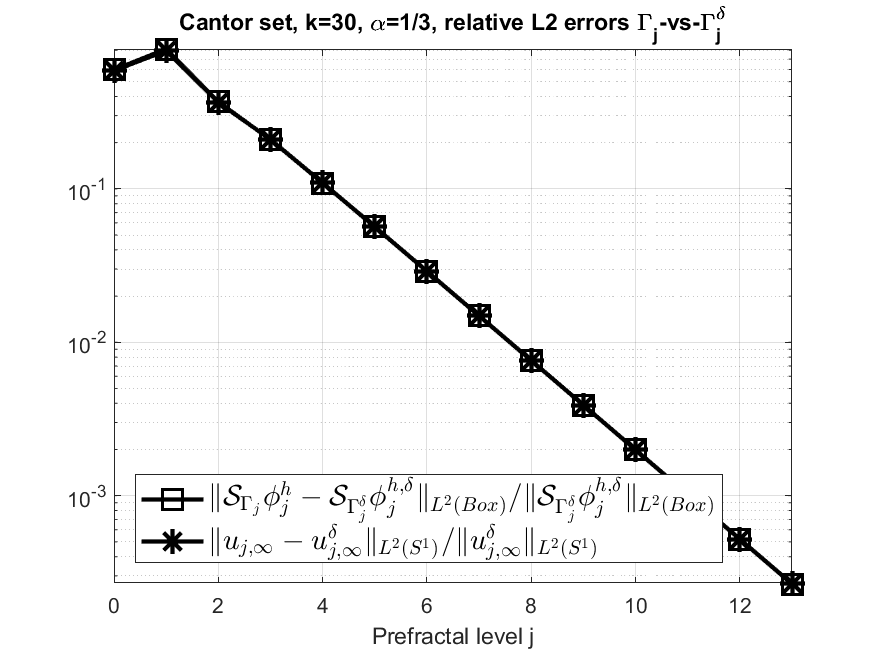}
\caption{Left: validation of our collocation code against a Galerkin implementation for the Cantor set problem in \S\ref{sec:NumCantorSet}. Continuous red lines correspond to standard prefractals and dashed blue lines show the same quantities for thickened prefractals (see Table \ref{t:PlotNotation}). Note that there are 6 lines in total on this graph, but the near-field and far-field relative errors are almost indistinguishable.
Right: exponential decay of the relative difference between the fields scattered by the standard ($\Gamma_j$) and thickened ($\GammajFat$) prefractals for the Cantor set problem in \S\ref{sec:NumCantorSet}.
Here $\phi_j^{h,\delta}$ and $u_{j,\infty}^\delta$ denote the BEM solution on $\Gamma_j^\delta$ and the corresponding far-field pattern.
}
\label{fig:CantorGalerkinVsColl-FatVsThin}
\end{figure}

For this specific 2D problem we have also implemented a Galerkin BEM.
The left panel of Figure \ref{fig:CantorGalerkinVsColl-FatVsThin} demonstrates the close agreement between our collocation solutions and the corresponding Galerkin solution (to which Proposition \ref{prop:CantorSet} applies to prove convergence in the thickened prefractal case). Taken together, since we know from Proposition \ref{prop:CantorSet} that the Galerkin solution on the thickened prefractal sequence converges to the correct limiting solution of the BIE on the Cantor set $\Gamma$, Figures~\ref{fig:CantorAllInOne} and \ref{fig:CantorGalerkinVsColl-FatVsThin} are persuasive numerical evidence that: i) the Galerkin solution on the standard prefractal sequence; and ii) the collocation solutions on both the standard and the thickened prefractal sequences, are all converging to the correct limiting solution for the Cantor set $\Gamma$ as $j\to\infty$.
These conclusions are further supported by the right panel in Figure~\ref{fig:CantorGalerkinVsColl-FatVsThin} which shows that the near and far-field relative differences between the fields computed on the standard and thickened prefractals (using collocation BEM) also decrease exponentially in $j$.
Figure~\ref{fig:CantorAlphaVar} shows how norms of the Cantor set solution, approximated by (collocation BEM) computations on the finest prefractal level, vary as a function of the Cantor set parameter $\alpha$.  %
Recall that $\alpha$ is related to the Hausdorff dimension of the Cantor set $\Gamma$ by $d=\log2/\log(1/\alpha)$.
The strength of the scattered field decreases with decreasing $\alpha$ (decreasing $d$). This is consistent with our earlier theory.
Specifically, let $(\alpha_\ell)_{\ell\in \N_0}\subset (0,1/2)$ be any decreasing sequence such that $\alpha_\ell\to 0$, and let $\Gamma_\ell^C$ denote the corresponding sequence of Cantor sets, and let $\Gamma_\ell^{C,+}:= \bigcup_{i\geq \ell} \Gamma_i^C$. Then, since $\Gamma^{C,+}_{\ell+1}\subset \Gamma^{C,+}_\ell$, it follows from Theorem \ref{thm:MoscoConv} and Proposition \ref{prop:Mosco}(ii) that BVP convergence holds, specifically that the solution for $\Gamma_\ell^{C,+}$ converges to that for $\Gamma^C:= \bigcap_{\ell\in \N_0} \Gamma_\ell^{C,+}$ as $\ell\to\infty$. But $\Gamma^C=\{0,1\}$, since $\{0,1\}\subset \Gamma_\ell^{C,+} \subset [0,\alpha_\ell]\cup [1-\alpha_\ell,1]$ for each $\ell$. Thus, by Lemma \ref{lem:zero} and Proposition \ref{prop:Scatters}, $H^{-1/2}_{\Gamma^C}=\{0\}$ and the field scattered by $\Gamma^C$ is zero. Further, by Lemma~\ref{lem:Sandwich}, since $\Gamma_\ell^C \subset \Gamma_\ell^{C,+}$, the solution for $\Gamma_\ell^C$ also converges to zero as $\ell\to\infty$.

\begin{figure}[htb]\centering
\includegraphics[width=.6\textwidth,clip,trim=30 5 30 0]{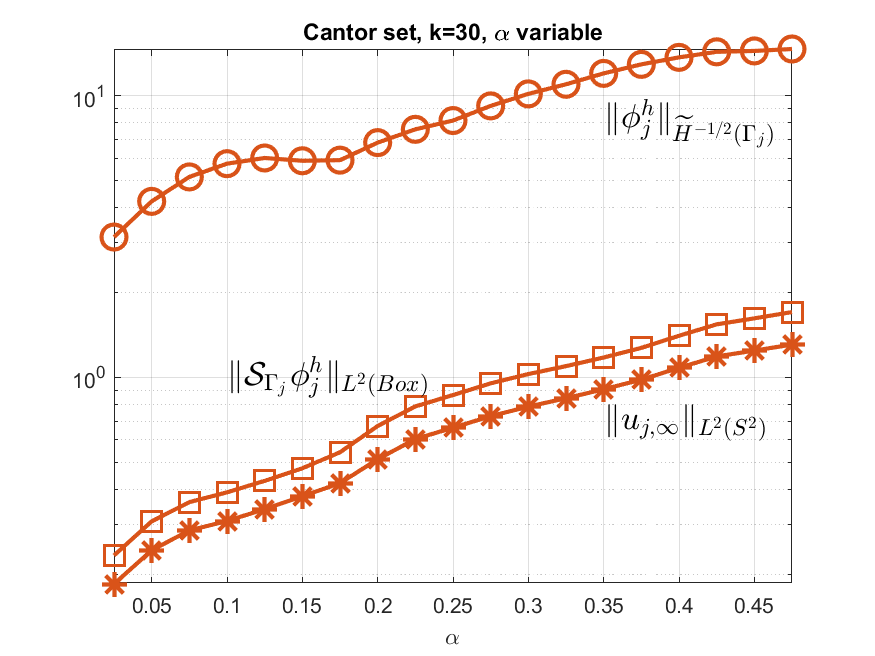}
\caption{
Solution norms for the finest standard prefractal $\Gamma_{13}$, plotted against the Cantor set parameter $\alpha$.
}
\label{fig:CantorAlphaVar}%
\end{figure}

\subsection{Cantor dust}\label{sec:NumCantorDust}

We now make computations for two Cantor dusts, as defined in \S\ref{sec:CantorDust}, with $\alpha=1/3$ and $\alpha=1/10$ respectively, setting
$k=50$ (so that $\lambda\approx0.126$ and there are roughly 11 wavelengths across the diagonal of $\Gamma$),
and choosing $\bd=(0,\frac1{\sqrt2},-\frac1{\sqrt2})$.

Similarly to \S\ref{sec:NumCantorSet} we make (collocation) BEM computations on  both the standard (open) prefractals $\Gamma_j$, defined by \eqref{eq:prefract} with $O=(0,1)^2$
(cf.~\S\ref{sec:CantorDust}),
and the thickened prefractals as defined in \S\ref{sec:CantorDust} with $\delta=\frac14$, which we denote by $\GammajFat$.
As in \S\ref{sec:NumCantorSet} we use BEM meshes with exactly one element per component of each prefractal, giving $N_j=4^j$ DOFs in total. It follows from Proposition \ref{prop:CantorDust} that (Galerkin) BEM convergence is guaranteed for the standard prefractals for $\alpha=1/10$ (since the limiting solution is zero), and for the thickened prefractals for both $\alpha=1/3$ and $\alpha=1/10$.

Figure \ref{fig:CantorDustFields} shows near- and far-field plots of the scattered field for the standard prefractal of level $j=6$ with $N_{6}=4^{6}=4096$ DOFs.

\begin{figure}[t!]\centering
\includegraphics[width=.33\textwidth,clip,trim=40 10 40 10]{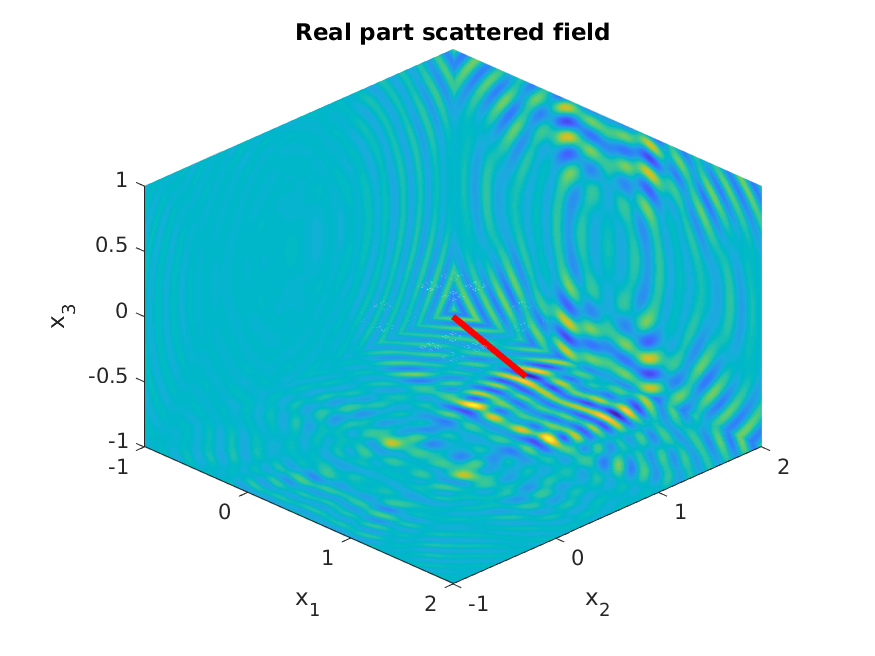}
\includegraphics[width=.33\textwidth,clip,trim=40 10 40 10]{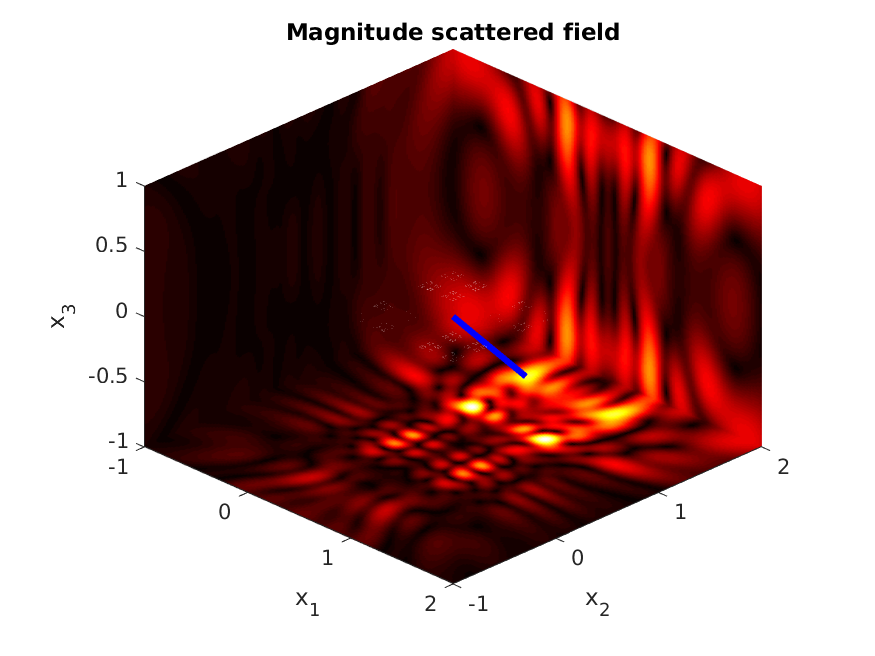}
\includegraphics[width=.32\textwidth,clip,trim=70 30 70 10]{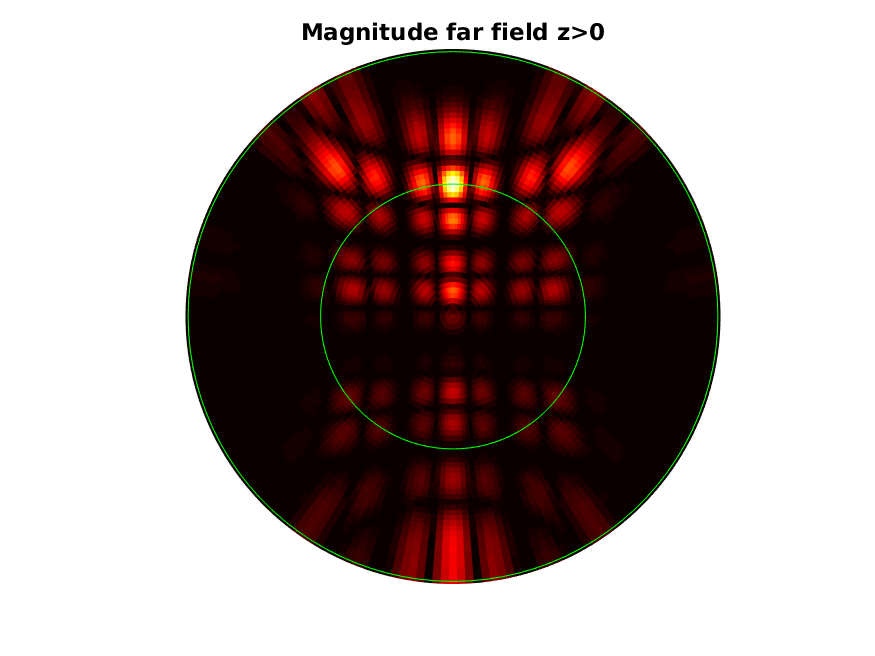}
\caption{The scattered field for the Cantor dust problem in \S\ref{sec:NumCantorDust} with $\alpha=1/3$, computed on $\Gamma_6$ with $N_6=4096$ DOFs.
Left and centre: the real part and magnitude of the scattered field on three faces of the box $(-1,2)\times(-1,2)\times(-1,1)$ (with $\Gamma\subset (0,1)^2\times\{0\}$).
The red/blue segment denotes the direction of the incoming wave.
Right: the magnitude of the far-field pattern $u_{j,\infty}$ on the upper half-sphere $\mathbb S^{2}\cap\{x_3>0\}$; the angular coordinate is the longitude, the radial coordinate the colatitude
(the green circle through the bright spot is the $\pi/4$ parallel).
}
\label{fig:CantorDustFields}
\end{figure}
\begin{figure}[t!]\centering
\includegraphics[width=.49\textwidth,clip,trim=30 0 30 0]{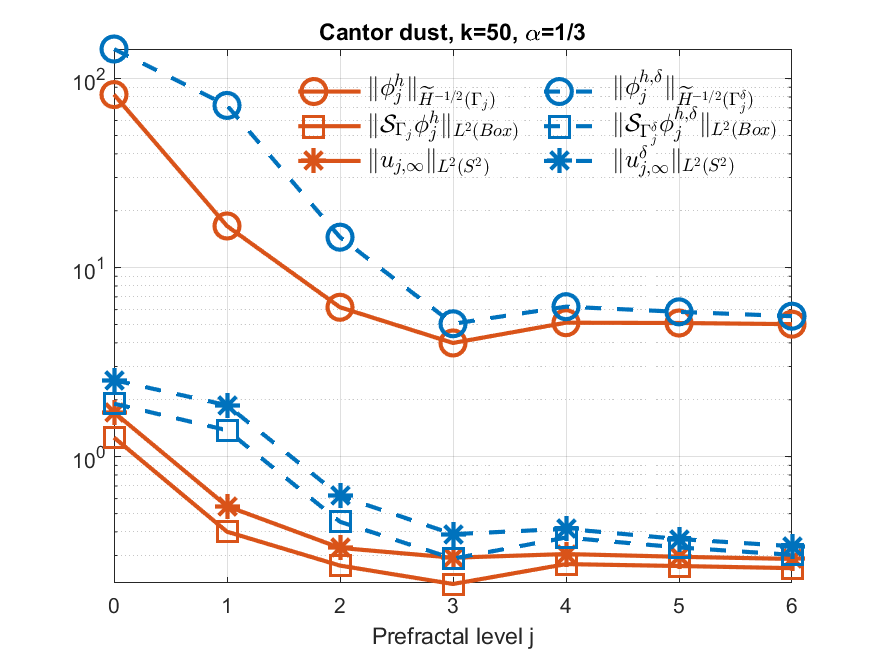}
\includegraphics[width=.49\textwidth,clip,trim=30 0 30 0]{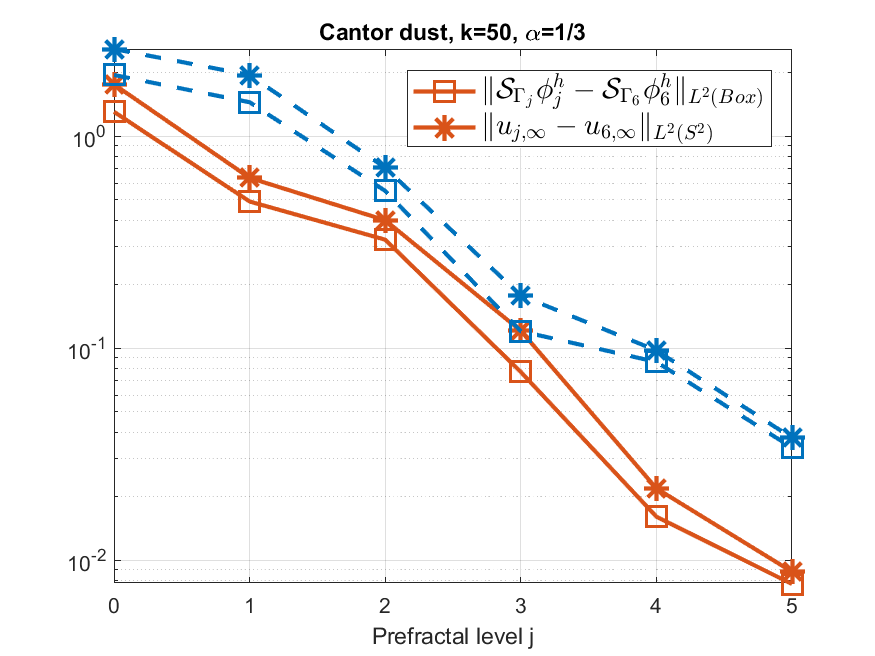}
\includegraphics[width=.49\textwidth,clip,trim=30 0 30 0]{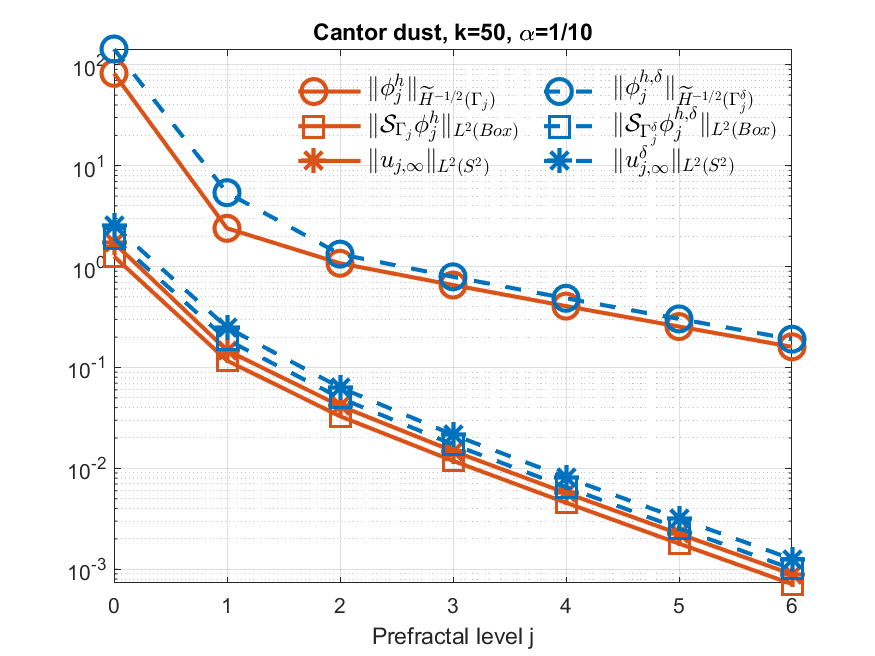}
\includegraphics[width=.49\textwidth,clip,trim=30 0 30 0]{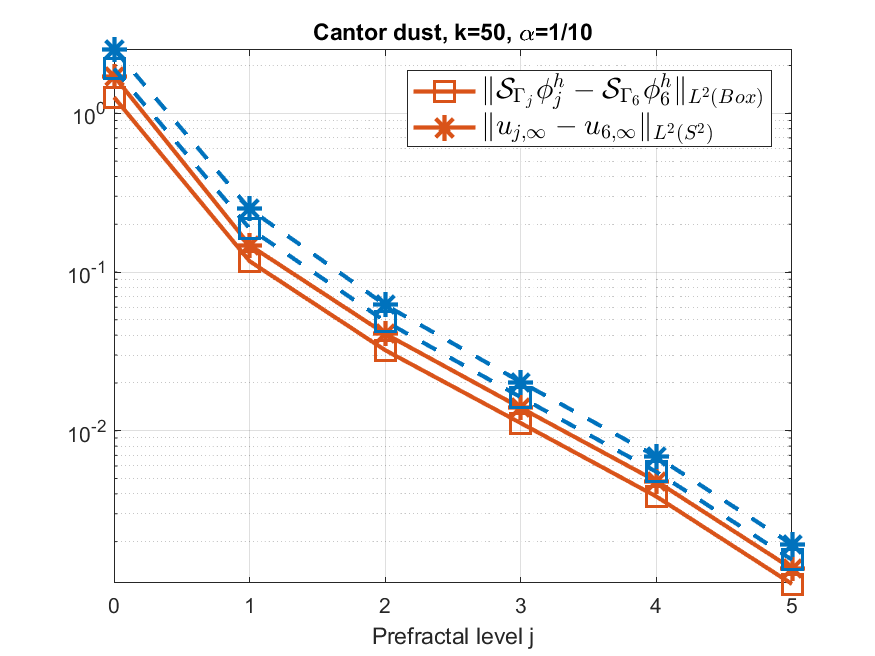}
\caption{Analogue of Figure \ref{fig:CantorAllInOne} for the Cantor dust problem in \S\ref{sec:NumCantorDust}, except that in the right column we show absolute errors.
Top row: $\alpha=1/3$, bottom row: $\alpha=1/10$.
From the left plots we see that, when the prefractal is refined, the solution norms converge to positive values for $\alpha=1/3$ and to $0$ for $\alpha=1/10$, in agreement with theory. }
\label{fig:CantorDust}
\end{figure}

Figure~\ref{fig:CantorDust} shows the solution norms for prefractal levels 0 to 6 and the relative errors against the computations on the finest prefractal.
From the left panels we see that for $\alpha=1/3$ the norms appear to converge to a constant positive value, while for $\alpha=1/10$ they converge (exponentially) to 0, consistent with Proposition~\ref{prop:CantorDust}.
The superior convergence rate in the near- and far-field $L^2$ norms compared to the $\tH^\mhalf$ energy norm, visible for $\alpha=1/10$, is in line with standard superconvergence theory for functionals of a BEM solution---see e.g.\ \cite[\S4.2.5]{sauter-schwab11}.

In the right panels of Figure~\ref{fig:CantorDust} we observe the exponential (in $j$) decay of the errors of near- and far-fields against the solutions on the finest prefractal.
 We have also computed (but do not plot) the differences between standard and thickened prefractals in the near- and far-fields. These behave similarly to those in Figure~\ref{fig:CantorGalerkinVsColl-FatVsThin} (for $n=2$). %
These various numerical experiments, together with validations we have made (see the beginning of \S\ref{sec:Numerics}) of our collocation code against 3D Galerkin code, and the theoretical (Galerkin) BEM convergence results of Proposition \ref{prop:CantorDust}, provide persuasive evidence (cf.~the penultimate paragraph of \S\ref{sec:NumCantorSet}) that our collocation BEM results are converging as $j\to\infty$ to the correct limiting solution for scattering by the Cantor dust $\Gamma$.

\begin{figure}[t!]\centering
\includegraphics[width=.49\textwidth,clip,trim=30 0 30 0]{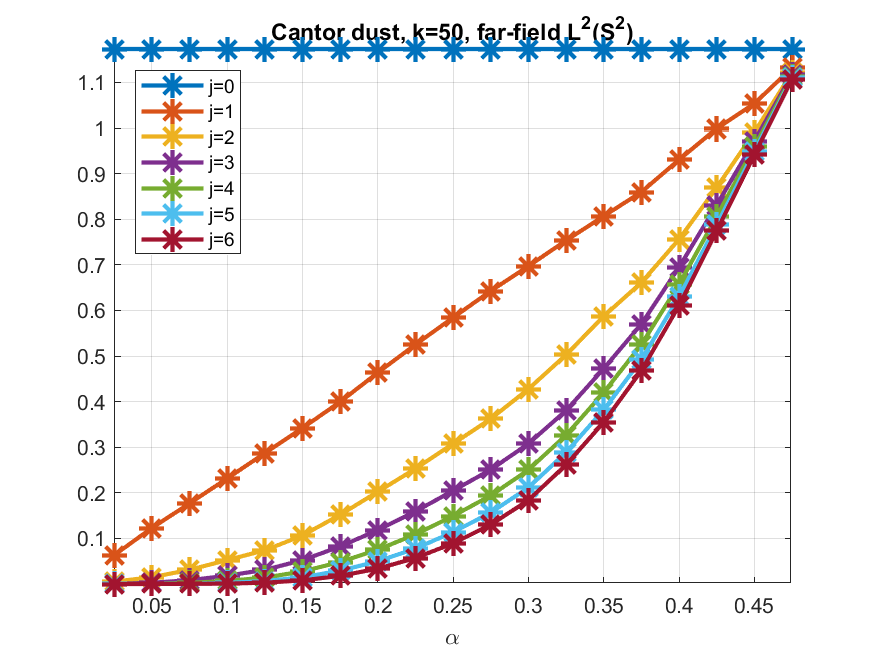}
\includegraphics[width=.49\textwidth,clip,trim=30 0 30 0]{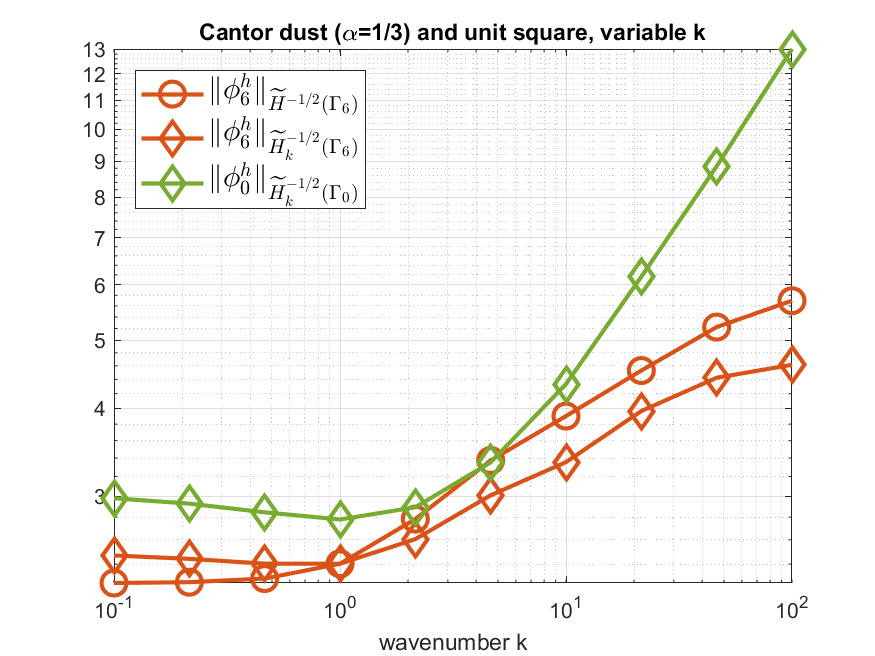}
\caption{Left: the $L^2(\mathbb S^2)$ norms of the far-field pattern for the Cantor dust, plotted against the parameter $\alpha$, %
for standard prefractals of level $j=0,1,\ldots,6$, computed with $N_j=4096$ DOFs for each $j$.
Proposition~\ref{prop:CantorDust} implies that the limit for $j\to\infty$ is $0$ for $\alpha\le1/4$.
Right: solution norms for $\alpha=1/3$ and $j=6$ (in red) and $j=0$ (unit square, in green)
as a function of the wavenumber $k$.%
}
\label{fig:DustAlphaAndK}
\end{figure}

The left panel of
Figure~\ref{fig:DustAlphaAndK} shows how the magnitude of the %
scattered field depends on the parameter $\alpha$, and thus on the Hausdorff dimension $d\!=\!\log{4}/\log(1/\alpha)$, for different prefractal levels $j$. Note that in this experiment we used a fixed total number $N_j=4096$ of DOFs on each prefractal, so that the lower order prefractal solutions are computed more accurately than they would be using our usual prescription $N_j=4^{j}$.
We recall (Proposition \ref{prop:CantorDust}) that in the limit $j\to\infty$ the field should vanish for $\alpha\le1/4$; compare this to the right panel of Figure~\ref{fig:CantorAlphaVar} for the Cantor set ($n=2$), where the limit is non-zero for all $\alpha$.

The right panel of Figure~\ref{fig:DustAlphaAndK} shows the dependence on the wavenumber $k$ of $\|\phi_j^h\|_{\widetilde{H}^{-1/2}(\Gamma_j)}$, for  the largest prefractal level $j=6$ that approximates the fractal limit. We find that $\|\phi_6^h\|_{\widetilde{H}^{-1/2}(\Gamma_6)}$ grows with increasing $k$ like $k^{0.19}$ for the larger values of $k$. In the same panel we also plot $\|\phi_6^h\|_{\widetilde{H}_k^{-1/2}(\Gamma_6)}$ and $\|\phi_0^h\|_{\widetilde{H}_k^{-1/2}(\Gamma_0)}$ against $k$ ($\phi_0^h$ the numerical solution on the screen $\Gamma_0$, i.e. the solution for a unit square screen, this computed with 10000 DOFs corresponding to more than 6 DOFs/wavelength at the highest wavenumber $k=100$). Here  $\|\cdot\|_{\widetilde{H}_k^{-1/2}(\Gamma_j)}$ is a wavenumber-dependent norm on   $\widetilde{H}^{-1/2}(\Gamma_j)$ commonly used in high frequency analysis (see the discussion in \cite[\S2.1]{CoercScreen2}), given by
$$
\|\psi\|^2_{\widetilde{H}_k^{-1/2}(\Gamma_j)} = \int_{\R^{n-1}}(k^2+|\bxi|^2)^{-1/2}|\widehat \psi(\bxi)|^2\, \rd \bxi, \quad \psi\in \widetilde{H}^{-1/2}(\Gamma_j).
$$
Clearly $\|\cdot\|_{\widetilde{H}_k^{-1/2}(\Gamma_j)}$ coincides with the standard norm for $k=1$ and is equivalent to it for any fixed $k$, precisely \cite[Eqn.~(28)]{CoercScreen2}
$$
\min\left(1,k^{-1/2}\right) \|\psi\|_{\widetilde{H}^{-1/2}(\Gamma_j)} \leq \|\psi\|_{\widetilde{H}_k^{-1/2}(\Gamma_j)} \leq \max\left(1,k^{-1/2}\right) \|\psi\|^2_{\widetilde{H}^{-1/2}(\Gamma_j)},
$$
for $\psi\in \widetilde{H}^{-1/2}(\Gamma_j)$.

Plotting these wavenumber-dependent norms enables comparison with the theoretical bounds in \cite{CoercScreen2} that are expressed in terms of these norms. It follows from the coercivity and inhomogeneous term estimates in \cite[Thms.~1.7, Lemma 6.1(i), (27)]{CoercScreen2} that, for every value of $j$, $\|\phi_j\|_{\widetilde{H}_k^{-1/2}(\Gamma_j)}$ ($\phi_j$ the exact solution for the screen $\Gamma_j$) grows with increasing $k$ at a rate no faster than $k^{1/2}$.
In the results in the right panel $\|\phi_0^h\|_{\widetilde{H}^{-1/2}_{k}(\Gamma_0)}$, the solution for a unit square, grows for larger $k$ at a rate $k^{0.48}$, while $\|\phi_6^h\|_{\widetilde{H}^{-1/2}_{k}(\Gamma_6)}$, the solution approximating the fractal limit, grows at a much more modest rate $k^{0.15}$. This suggests that the upper bounds in \cite{CoercScreen2}, which depend only on $k$ and on the diameter of the screen, are sharp for large $k$ in terms of their dependence on screen geometry for a regular screen, but are not sharp for screens with fractal dimension $<n-1$.
\subsection{Sierpinski triangle}\label{sec:NumSierpinski}

We approximate the Sierpinski triangle with the standard prefractals $\Gamma_j$ described in \S\ref{sec:Sierpinski} (to be precise we mesh $\Gamma_j^\circ$, the interior of $\Gamma_j$).
We set $k=40$, so that $\lambda\approx0.157$ and the diameter of $\Gamma$ is approximately 6.4 wavelengths, and consider a downward-pointing incoming wave with $\bd=(0,0,-1)$.

Figure~\ref{fig:SierpinskiFields} shows the near field and the magnitude of the (collocation) BEM solution $\phijBEM$ for prefractal level $j=8$ and $N_8=6561$ DOFs (one per component of $\Gamma_j^\circ$).
We note that $|\phijBEM|$ achieves its maxima at the midpoints of the sides of the triangular holes of side length $1/8$, this size comparable with the wavelength $\lambda$.

\begin{figure}[t!]\centering
\includegraphics[width=.33\textwidth,clip,trim=40 20 40 10]{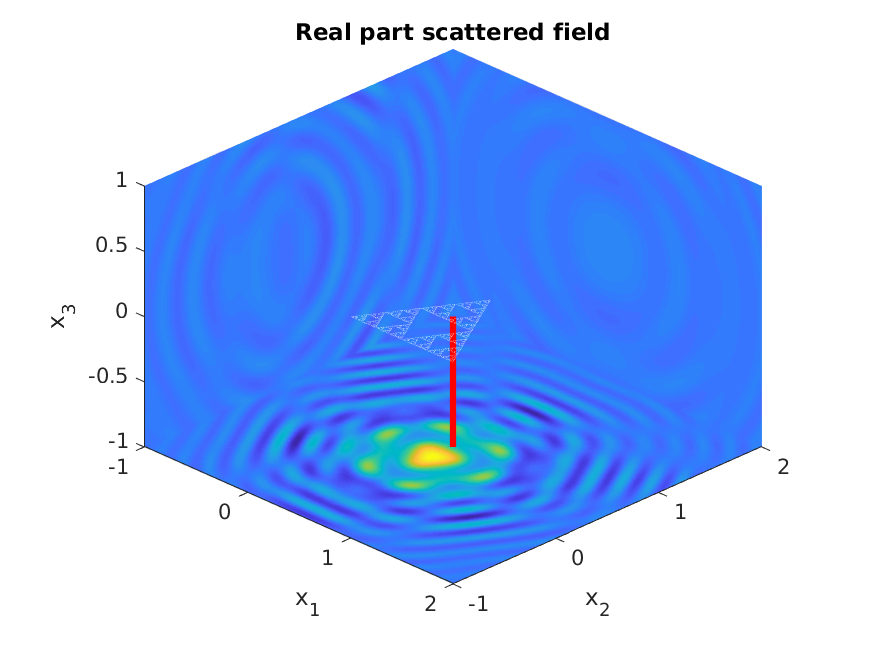}
\includegraphics[width=.33\textwidth,clip,trim=40 20 40 10]{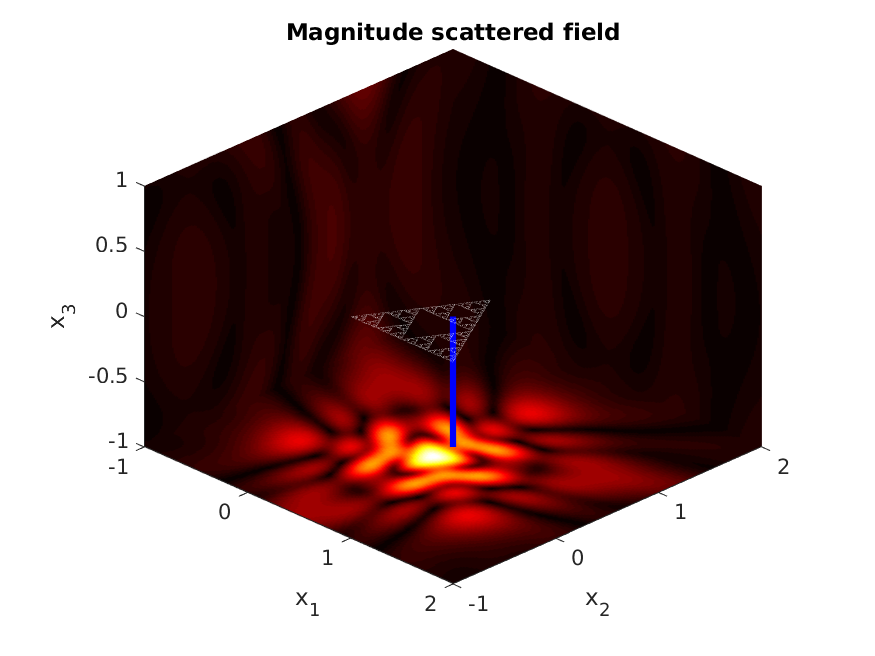}
\includegraphics[width=.32\textwidth,clip,trim=50 30 50 10]{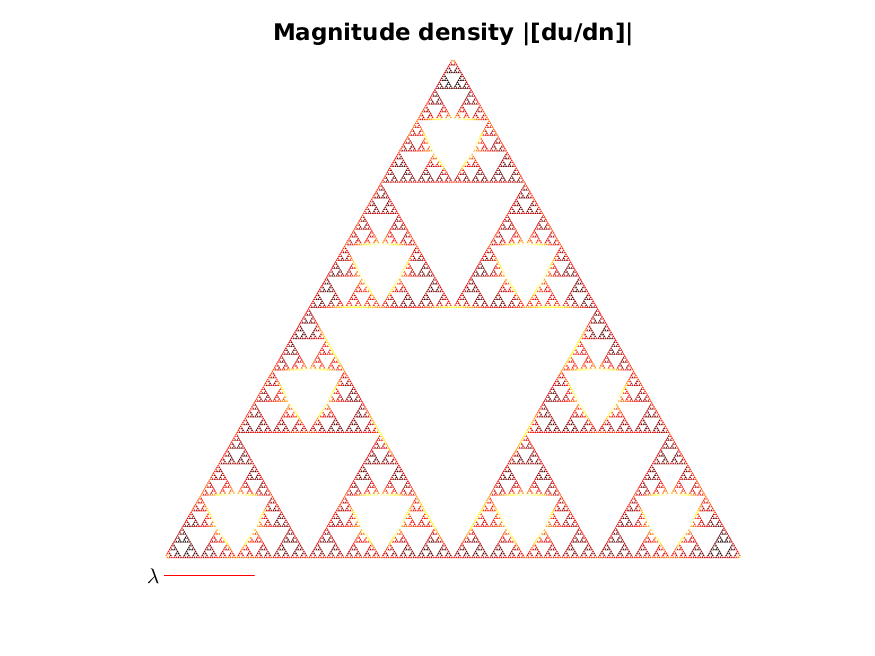}
\caption{Left and centre: the real part and magnitude of the field scattered by the level 8 prefractal approximation of the Sierpinski triangle, for the problem in \S\ref{sec:NumSierpinski}, computed with $N_8=3^8=6561$ DOFs.
Right: the magnitude $|\phi_j^h|$ of the piecewise constant BEM solution. %
Note that the peaks in $|\phi_j^h|$ (shown in yellow) are located close to the midpoints of the sides of the 9 triangular holes of size comparable to the wavelength (red segment).}
\label{fig:SierpinskiFields}
\end{figure}
\begin{figure}[t!]\centering
\includegraphics[width=.49\textwidth,clip,trim=30 0 30 0]{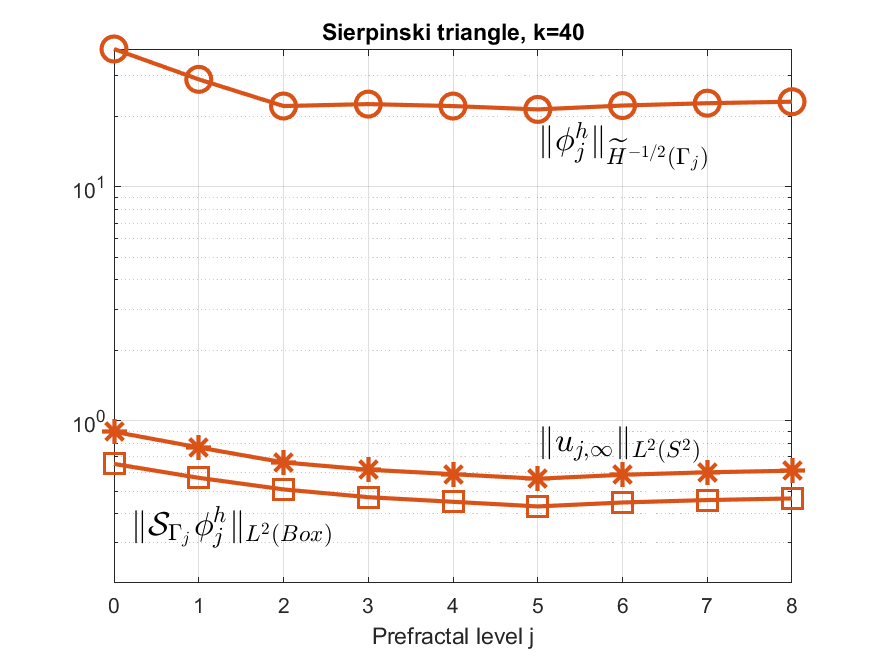}
\includegraphics[width=.49\textwidth,clip,trim=30 0 30 0]{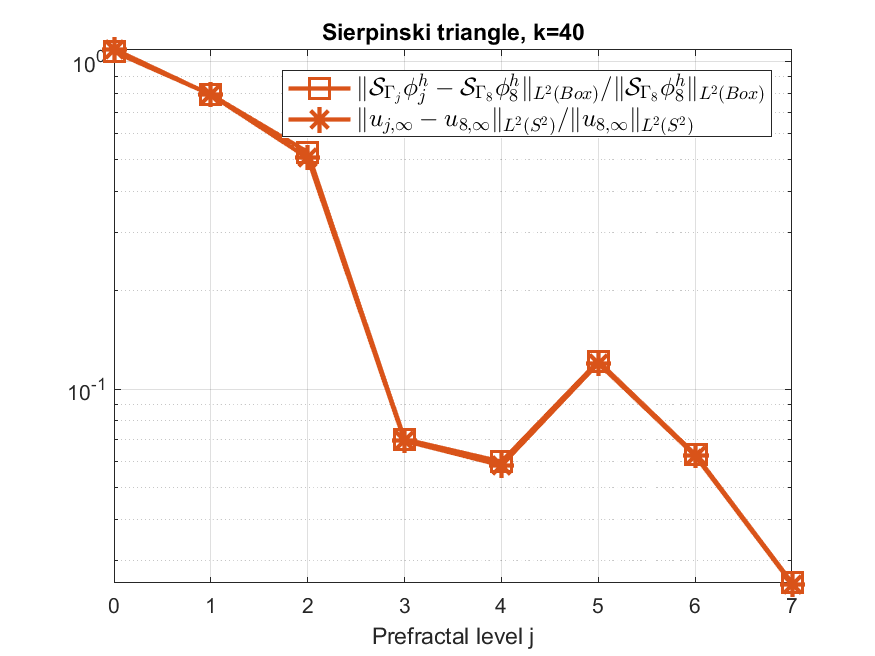}
\caption{Analogue of Figures \ref{fig:CantorAllInOne} and \ref{fig:CantorDust} for the Sierpinski triangle, for the problem of \S\ref{sec:NumSierpinski}, showing results for standard (non-thickened) prefractals only.
}
\label{fig:SierpinskiNorms}
\end{figure}

Figure~\ref{fig:SierpinskiNorms} (left panel) shows the solution norms for prefractal levels 0 to 8.
In these computations we vary from our prescription in the results above of one DOF per component of the prefractal, using a larger number of elements at the lower prefractal levels to ensure that quadrature errors in the evaluation of the coefficients in the linear system are not significant with the 7-point quadrature rule that we use on triangular elements (see \S\ref{sec:collocation}).
Precisely, for prefractals $\Gamma_0$ to $\Gamma_5$ we use a mesh of equilateral triangles of side $h_j=2^{-5}$.
For $\Gamma_5$ to $\Gamma_8$ we use equilateral triangles of size $h_j=2^{-j}$, i.e.\ with one DOF per component of $\Gamma_j^\circ$.
The numbers of DOFs for $j=0,\ldots,8$ are $N_0=1024, N_1=768, N_2=576, N_3=432, N_4=324, N_5=243, N_6=729, N_7=2187, N_8=6561$.
As in previous examples (Figure \ref{fig:CantorAllInOne} and Figure \ref{fig:CantorDust} for $\alpha=1/3$), we observe in the left panel rapid convergence of the norms to positive constant values.
In the right panel we plot relative errors compared to the result for the finest prefractal $\Gamma_8$. As in the right panel of Figure \ref{fig:CantorDust} we see convergence which is exponential in $j$ for $j\geq 5$. But the convergence is not monotonic in $j$ for $j\leq 5$ where we fix $h_j$ as we increase $j$.
\subsection{Classical snowflakes}\label{sec:NumSnowflake}

We now turn to examples in which the limiting screen $\Gamma$ is a bounded open set with fractal boundary. In these cases, as $j\to\infty$ the area of the prefractals must tend to the (non-zero) area of the limiting screen. Thus, in our simulations based on uniform meshes, the cost increase associated with the increasingly fine mesh required to represent the prefractal boundary exactly as $j\to\infty$ will not be compensated by a decrease in the area to be meshed, as was the case for the Cantor sets/dusts and the Sierpinski triangle. This in turn means that our simulations based on uniform meshes and direct solvers will be more expensive than the numerical experiments reported above, limiting the prefractal level that can be attained. More efficient BEM approaches could be developed for these problems using appropriate non-uniform meshes such as those in \cite{Bagnerini13,cefalo2014optimal,lancia2012numerical}, and/or fast iterative matrix-free linear solvers. But for brevity we leave such considerations to future work. %

We consider first the standard Koch snowflake $\Gamma$ with $\beta=\pi/6$ and $\xi=1/3$, in the notation of \S\ref{sec:Snowflake}.
We approximate $\Gamma$ with both the inner (open) and the outer (compact) prefractals $\Gamma_j^-$ and $\Gamma_j^+$ defined in \S\ref{sec:Snowflake}, on which we build uniform meshes conforming to the prefractal geometries (strictly, our mesh is on $(\Gamma_j^+)^\circ$ in the outer prefractal case).
Figure~\ref{fig:SnowflakeFields} shows the scattered field for an incident plane wave with $k=61$ (wavelength $\lambda\approx0.103$) and $\bd=(0,\frac1{\sqrt2},-\frac1{\sqrt2})$, at inner prefractal level 4, computed with $N_4^-=10344$ DOFs.

\begin{figure}[t!]\centering
\includegraphics[width=.33\textwidth,clip,trim=40 20 40 10]{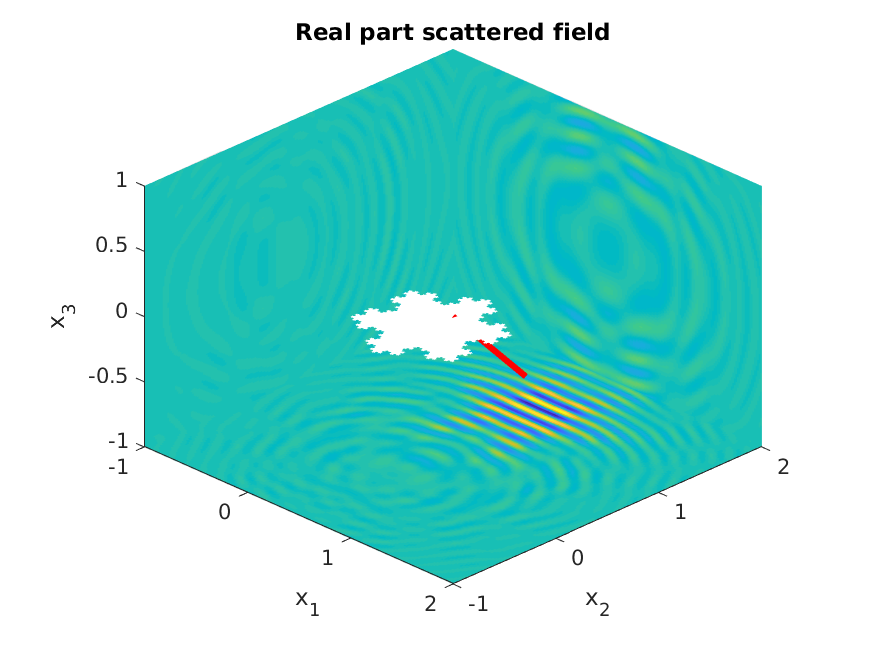}
\includegraphics[width=.33\textwidth,clip,trim=40 20 40 10]{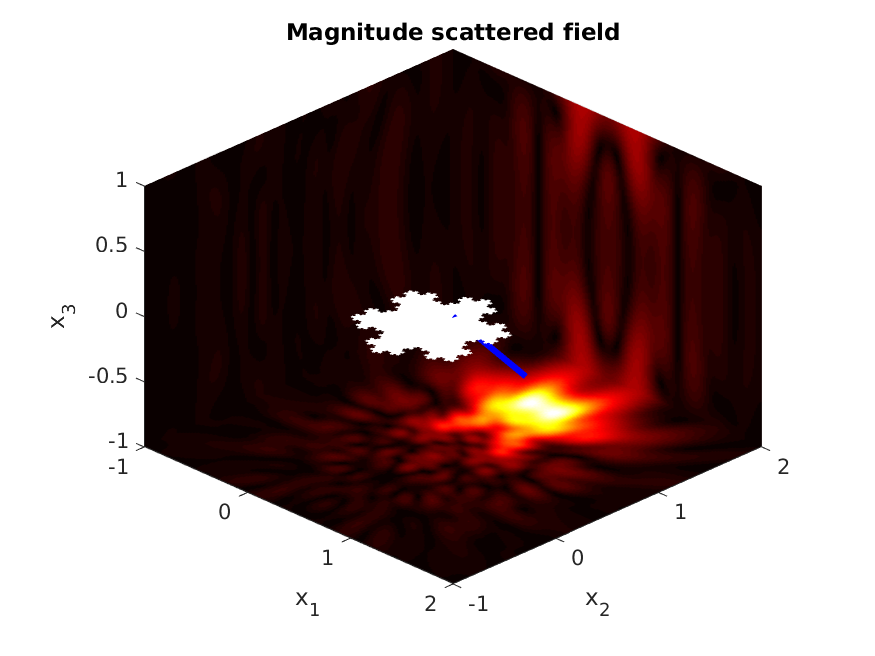}
\includegraphics[width=.32\textwidth,clip,trim=70 30 70 10]{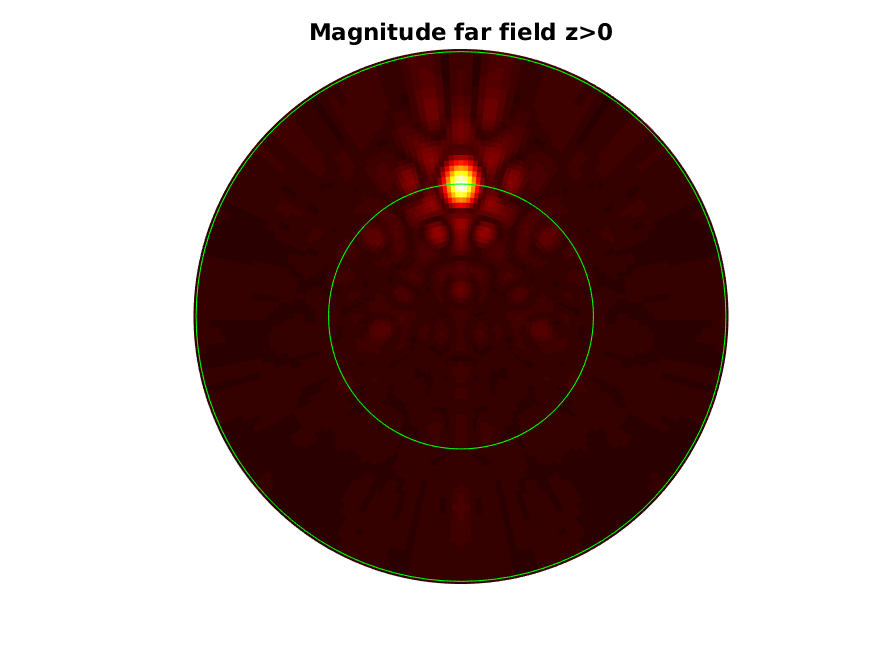}
\caption{The real part, magnitude and far-field pattern of the field scattered by the level 4 inner prefractal approximation $\Gamma_4^-$ of the Koch snowflake, for the problem in \S\ref{sec:NumSnowflake}, computed using a uniform BEM mesh with $N_4^-=10344$ DOFs.}
\label{fig:SnowflakeFields}
\end{figure}

By Proposition~\ref{prop:Snowflake}, which relies on the fact that $\Gamma$ is thick, so that $\widetilde{H}^{-1/2}(\Gamma)=H^{-1/2}_{\overline\Gamma}$, Galerkin BEM solutions for the inner and outer prefractals should both converge to the unique limiting solution on $\Gamma$, provided that $h_j\to 0$ as $j\to\infty$.
We investigate this numerically (for the collocation BEM) in Figure \ref{fig:FractalBEM-SnowflakeComparison-k61} (though our simulations use a fixed $h_j$ on each $\Gamma_j^-$ and a fixed mesh size also on each $\Gamma_j^+$, as is described in more detail below). This figure shows that the alternating inner/outer sequence of prefractal approximations $\Gamma_0^-,\Gamma_0^+,\Gamma_1^-,\Gamma_1^+,\Gamma_2^-,\Gamma_2^+,\Gamma_3^-,\Gamma_3^+,\Gamma_4^-$ has the property that the $H^\mhalf(\R^2)$ norm of the difference between the BEM solutions on consecutive prefractals in the sequence tends to zero monotonically (and approximately exponentially) as one moves along the sequence.
The figure also suggests that plane waves hitting the screen perpendicularly lead to the lowest relative difference
between solutions on inner and outer prefractals,
that grazing incident waves lead to the largest difference, and that the relative errors are rather insensitive to the wavenumber for the values of $k$ investigated.

\begin{figure}[t!]\centering
\includegraphics[width=0.6\textwidth,clip,trim=30 0 30 0]{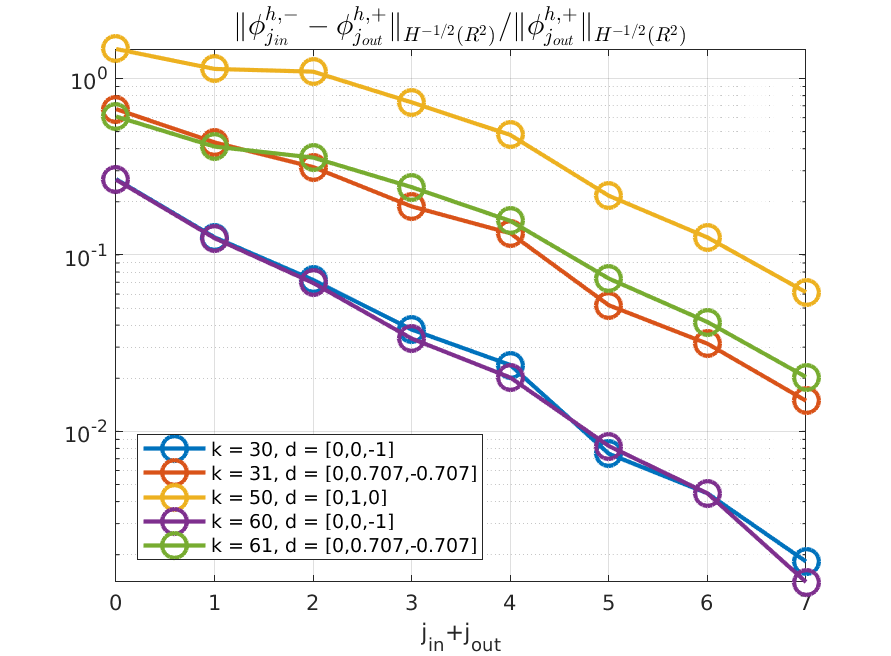}
\caption{The relative difference, measured in the $H^\mhalf$ norm, between the BEM solutions on $\Gamma_{j_{\rm in}}^-$ and on $\Gamma_{j_{\rm out}}^+$, for $j_{\rm in}+j_{\rm out}=0,1,\ldots,7$, with $j_{\rm out}\in\{j_{\rm in}-1,j_{\rm in}\}$,
for the problem described in \S\ref{sec:NumSnowflake} (green curve). We show results also for four other similar problems with different values of $k$ and either vertically, horizontally or obliquely incident plane waves.
}
\label{fig:FractalBEM-SnowflakeComparison-k61}
\end{figure}

For completeness we give a brief explanation of how the results in Figure \ref{fig:FractalBEM-SnowflakeComparison-k61} were computed.
The inner prefractals $\Gamma_j^-$ of levels $0\le j\le4$ are the union of $1$, $12$, $120$, $1128$ and $10344$ equilateral triangles of side $3^{-j}$, respectively.
We mesh them all with equilateral triangles of the same side length $3^{-4}\approx 0.0123$, so that the total numbers of DOFs on the respective meshes $M_j^-$ are $N_0^-=6561=3^8$, $N_1^-=8748=3^6\cdot12$, $N_2^-=9720=3^4\cdot120$, $N_3^-=10152=3^2\cdot1128$, and $N_4^-=10344$.
The outer prefractals $\Gamma_j^+$ of levels $0\le j\le3$ are the union of $6$, $48$, $408$ and $3576$ equilateral triangles of side $3^{-j-\frac12}$, respectively. We mesh them all with equilateral triangles of the same side length $3^{-\frac72}\approx 0.0214$, so that the total numbers of DOFs on the respective meshes $M_j^+$ are $N_0^+=4374=3^6\cdot 6$, $N_1^+=3888= 3^4\cdot 48$, $N_2^+=3672=3^2\cdot 408$ and $N_3^+=3576$.
Figure~\ref{fig:SnowflakeFieldsInOut} shows the real part of the BEM solution $\phijBEM^\pm$ for these prefractals and meshes.

\begin{figure}[t!]\centering
\includegraphics[width=\textwidth,clip,trim= 200 108 150 85]{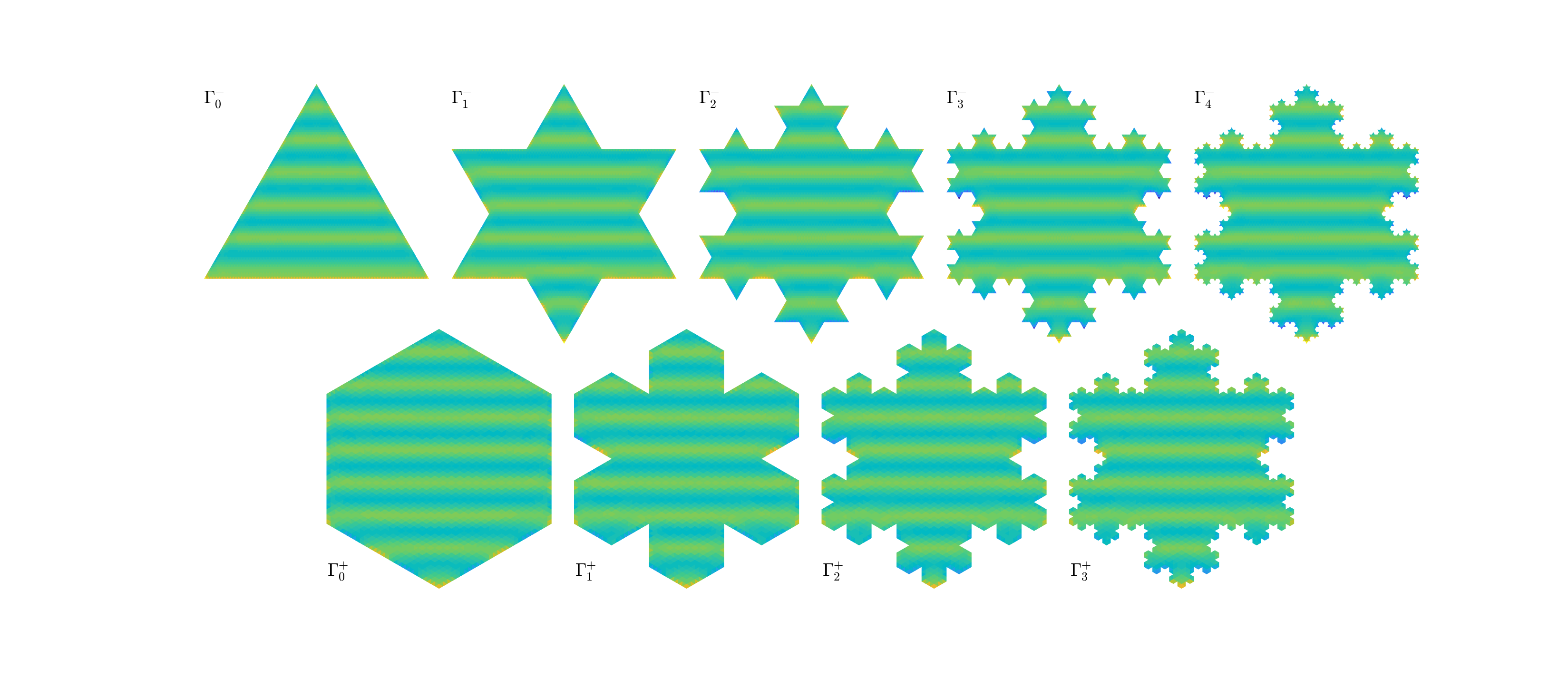}
\caption{The real part of the BEM solutions $\phijBEM^\pm$ on the inner and outer prefractals $\Gamma_0^-,\ldots,\Gamma_4^-$ and $\Gamma_0^+,\ldots,\Gamma_3^+$ of the standard Koch snowflake, for the problem in \S\ref{sec:NumSnowflake}.
All plots are in the same colour scale.}
\label{fig:SnowflakeFieldsInOut}
\end{figure}

To compute an approximation to the $H^\mhalf(\R^2)$ norm of the difference $\phijBEM^--\phijBEM^+$, as plotted in Figure \ref{fig:FractalBEM-SnowflakeComparison-k61}, we first represent
both piecewise-constant fields on the same mesh.
We note that the equilateral triangles of $M_j^+$ are rotated by an angle of $\pi/2$ with respect to those of $M_j^-$ and are larger by a (linear) factor $\sqrt3$.
We define $M_j^*$ to be the smallest uniform mesh that extends $M_j^-$ and covers $\Gamma_j^+$; see Figure~\ref{fig:SnowflakeMeshInOut} for an illustration.
We define two piecewise constant functions $\psi_j^\pm$ on this mesh.
The first, $\psi_j^-$, is simply the zero-extension of $\phijBEM^-$. The second, $\psi_j^+$, is defined from $\phijBEM^+$ as follows: noting that the centre of each element of $M_j^*$ lies on an edge of $M_j^+$, we define the value of $\psi_j^+$ on $T_{j,l}^*\in M_j^*$ to be the average of the values of (the zero-extension of) $\phijBEM^+$ on the two triangles of (the uniform extension of) $M_j^+$ intersecting $T_{j,l}^*$.
The norm $\|\psi_j^--\psi_j^+\|_{H^\mhalf(\R^2)}$ approximates $\|\phijBEM^--\phijBEM^+\|_{H^\mhalf(\R^2)}$ and can be computed using the reaction--diffusion single-layer operator as in the previous examples.
To compute the norm of
$\phi_{j+1}^{h^-}-\phijBEM^+$ we proceed similarly.

\begin{figure}[t!]\centering
\includegraphics[width=0.49\textwidth,clip,trim=60 120 50 130]{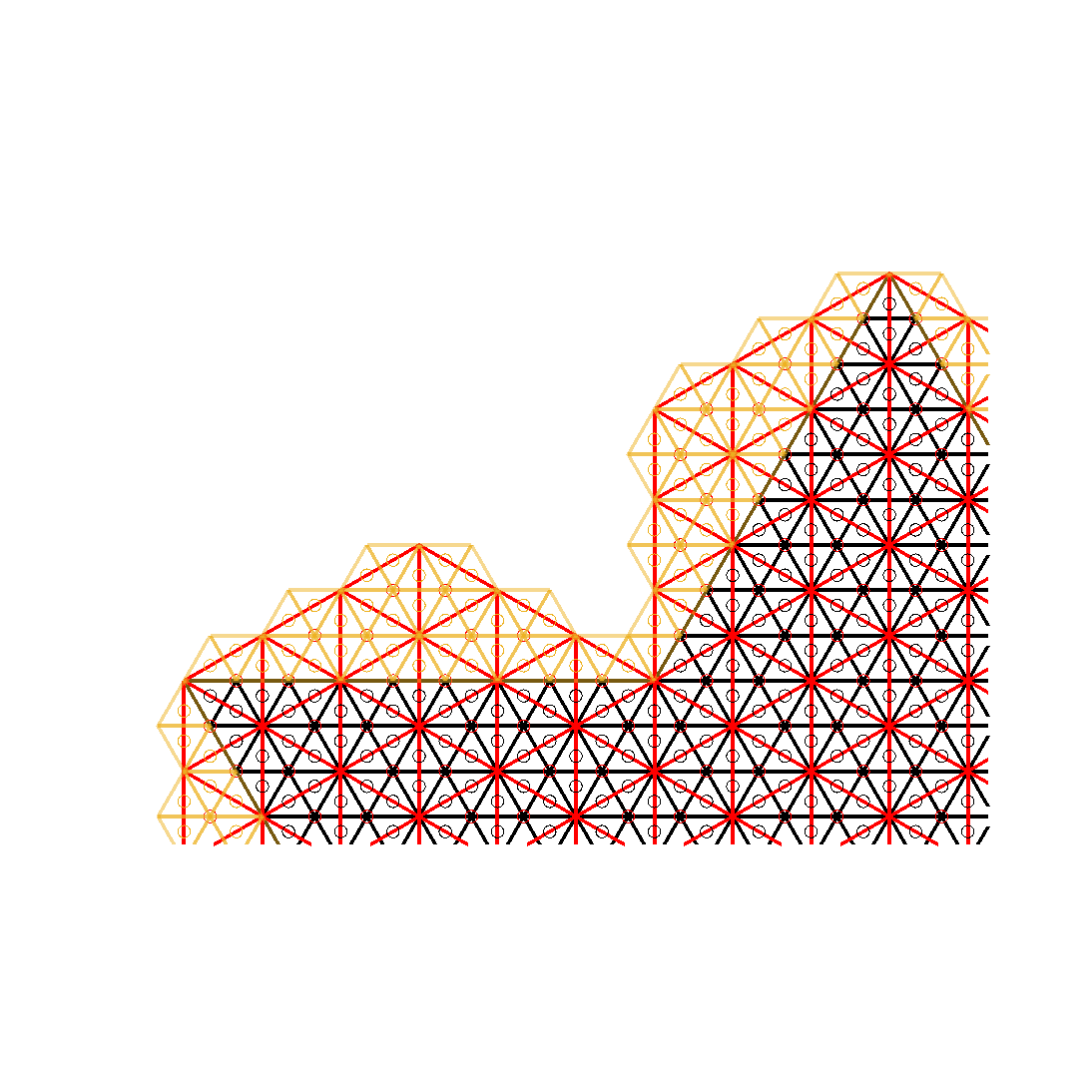}
\includegraphics[width=0.49\textwidth,clip,trim=60 120 50 130]{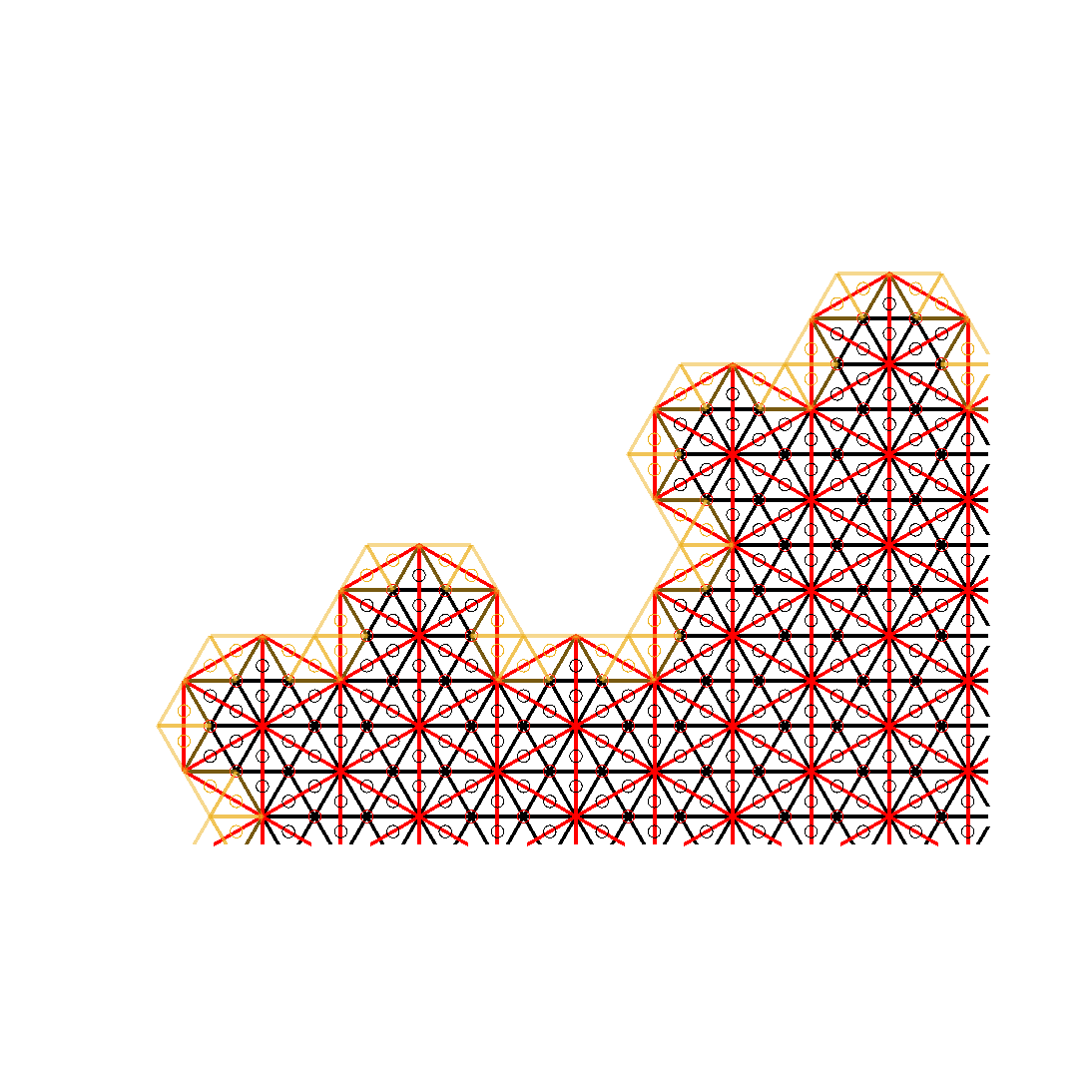}
\caption{Left: a portion of the meshes used to compute the norm of the difference of the BEM solutions ${\phi_2^h}^-$ and ${\phi_2^h}^+$ on the inner and the outer prefractals $\Gamma_2^-$ and $\Gamma_2^+$.
The black triangles are the elements of $M_2^-$ and the %
red triangles are the elements of $M_2^+$. %
The circles denote the centres of the triangles of the same colour.
The extended uniform mesh $M_2^*$ is composed of the black and the yellow triangles, which cover the whole of $\Gamma_2^+$.
Right: the analogous construction for the comparison between the solutions ${\phi_4^h}^-$ and ${\phi_3^h}^+$ on the prefractals $\Gamma_4^-$ and $\Gamma_3^+$.
}
\label{fig:SnowflakeMeshInOut}
\end{figure}

\subsection{Square snowflake}\label{sec:NumSquare}

Finally, we consider the square snowflake $\Gamma$ and the associated sequence of non-nested prefractals $\Gamma_j$ described in \S\ref{sec:Square}.
We choose $k=40$ and $\bd=(0,0,-1)$.
Plots of the resulting near- and far-field solutions for prefractal level $j=3$ are shown in Figure \ref{fig:SquareFields}.
Proposition~\ref{prop:SquareSn} implies that we have (Galerkin) BEM convergence provided $h_j\to0$ as $j\to\infty$. But numerical validation of this convergence is hampered by the fact that the minimal number of DOFs required to discretise the $j$th-level prefractal with a uniform mesh of squares is $16^j$. In order to simulate higher-level prefractals more sophisticated BEM implementations are needed, e.g.\ using fast matrix--vector multiplications, and non-uniform or non-convex meshes.
This will be considered in future work.

\begin{figure}[t!]\centering\
\includegraphics[width=.33\textwidth,clip,trim=40 20 40 10]{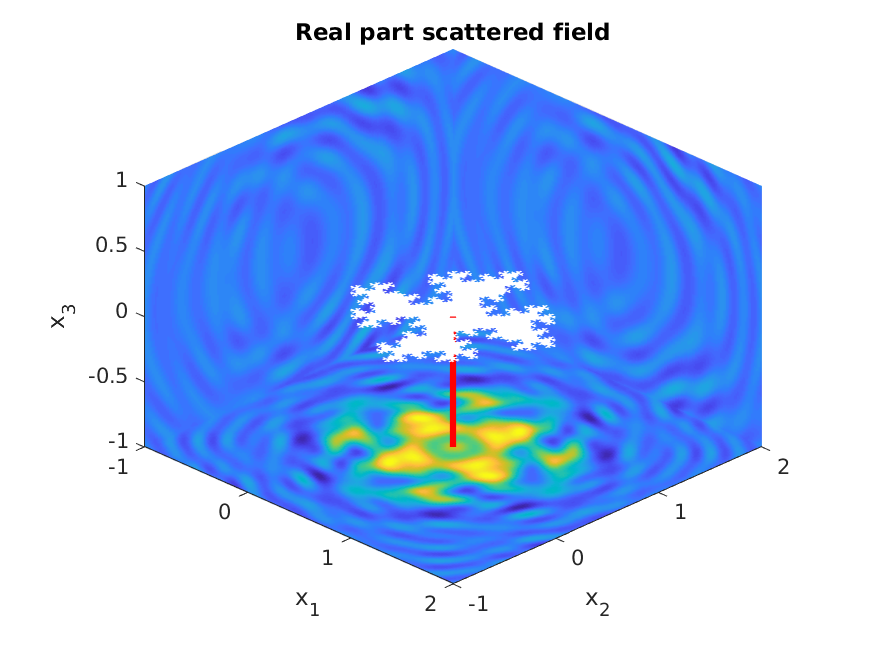}
\includegraphics[width=.33\textwidth,clip,trim=40 20 40 10]{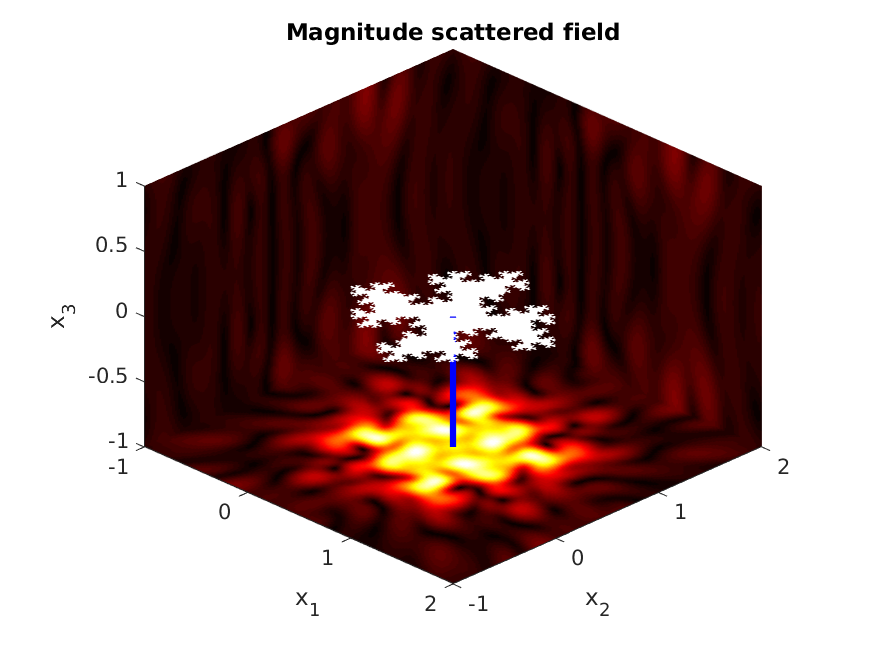}
\includegraphics[width=.31\textwidth,clip,trim=70 30 70 10]{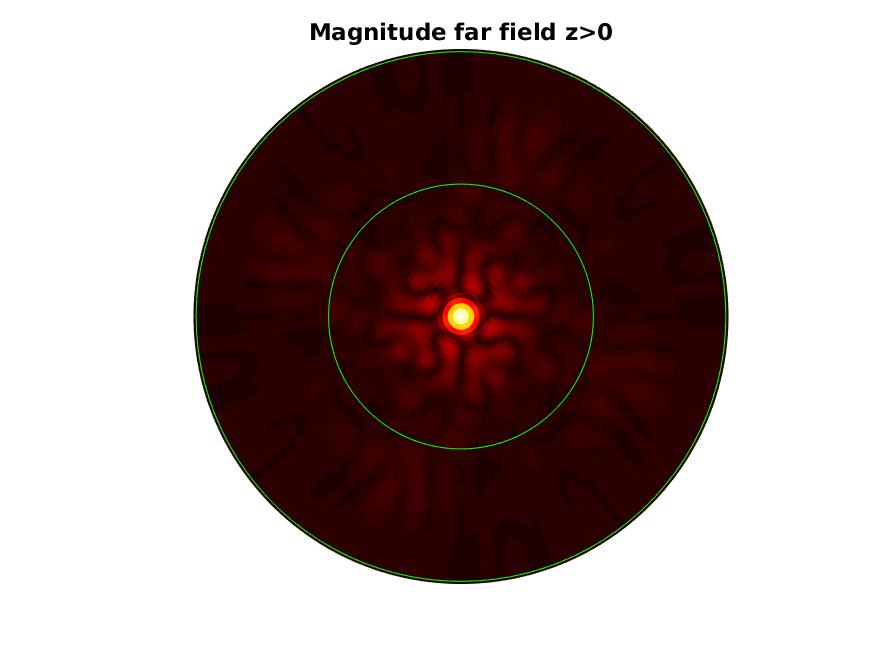}
\caption{Left and centre: the real part and magnitude of the field scattered by the level 3 prefractal approximation of the square snowflake, for the problem in \S\ref{sec:NumSquare}, computed with $N_3=16^3=4096$ DOFs, i.e.\ $\approx10$ DOFs per wavelength.
Right: the far-field pattern for the same problem.}
\label{fig:SquareFields}
\end{figure}

\section{Conclusion and open problems} \label{sec:conclude}
In this paper we have written down in \S\ref{sec:BVPs} well-posed BVP and BIE formulations for scattering by an acoustic screen $\Gamma$ that is either fractal or has fractal boundary, refining, simplifying, and extending (in particular through application of recent results from \cite{caetano2018}) the earlier results of \cite{ScreenPaper}. Generalising the treatment in \cite{ScreenPaper}, we have studied in \S\ref{sec:Conv} (using Mosco convergence for the first time in this context) the convergence of BIE solutions on a sequence of prefractals $\Gamma_j$ to the solution on a limiting set $\Gamma$; in particular we have proved in Theorem \ref{thm:MoscoConv} and Proposition~\ref{prop:Mosco} new convergence results in cases where the sequence of  prefractals $\Gamma_j$ is not monotonically convergent to $\Gamma$, and have applied these results in \S\ref{sec:IFSpart1} to sequences of prefractals generated by general iterated function systems.

But the crucial novelty of the paper has been the analysis and implementation of the BEM in \S\S\ref{sec:Convergence}--\ref{sec:Numerics}. In \S\ref{sec:IFSpart1} we have obtained,  to our knowledge, the first rigorous convergence analysis of a numerical method for scattering by a fractal object and the first proofs of convergence of BEM, applied on a sequence of prefractal sets $\Gamma_j$, to the solution of the BIE on the limiting set $\Gamma$, both in cases when $\Gamma$ is fractal (in particular the fixed point of an IFS of contracting similarities satisfying the standard open set condition, for example the Cantor set or dust, the Sierpinski triangle) and in cases where $\Gamma$ is an open set with fractal boundary (for example the Koch or square snowflake). We have studied these specific cases as examples of applying these new convergence results in \S\ref{sec:examples}, and in numerical experiments in \S\ref{sec:Numerics}. %

These results are, we consider, important first steps in understanding BIEs and their solutions on sets that are fractal or have fractal boundary, and the convergence to these solutions of BEM approximations on sequences of more regular prefractals.
More generally, these results and methods are applicable to the convergence analysis of Galerkin methods for BIEs (and, we anticipate, other integral equations and PDEs) posed on {\em any} rough domain that cannot be discretised exactly but first needs to be approximated by a sequence of more regular sets.

For the specific screen scattering problems that have been the focus of this paper, and for large classes of related problems, there remain many intriguing and important open questions. These include:

\begin{enumerate}
\item What is the regularity of the BIE solution on the limiting screen $\Gamma$, and how does this depend on the fractal dimension of $\Gamma$ or its boundary? %
    In the case when $\Gamma$ is fractal the density results summarised in Lemma \ref{lem:Density} are one step towards addressing this question.
    \item At what rate do BIE solutions on a prefractal sequence $\Gamma_j$ converge to the limiting BIE solution on $\Gamma$?
    \item If the BIE is additionally discretised by BEM, how does the convergence rate depend jointly on the prefractal level $j$ and the discretisation? (The numerical simulations in \S\ref{sec:Numerics} provide some experimental data.)
    \item Extending this question, what is the optimal balance (to achieve an accurate approximation of the solution on $\Gamma$ with least work) between increasing the prefractal level and mesh refinement? For example, for the family of pre-convex meshes \eqref{eq:Mj2}, what is the optimal choice of the parameter $i(j)$?
        \item In the case when $\Gamma$ is a self-similar fractal (is the attractor of an IFS of contracting similarities), can efficient BEM solvers be built making use of the self-similarity?
\end{enumerate}
We hope to address some of these questions in future work, together with exploring more efficient BEM implementations (e.g.\ using locally-refined meshes, fast iterative solvers for structured meshes),
and the extension of the methods developed here to more general problems (e.g.\ curvilinear screens, different boundary conditions, elastic and electromagnetic waves, other PDEs).

\section{Acknowledgements}
We acknowledge support from EPSRC grants EP/S01375X/1 and EP/N019407/1, from GNCS--INDAM, from MIUR through the ``Dipartimenti di Eccellenza'' Programme (2018--2022)--Dept.\ of Mathematics, University of Pavia, and from PRIN for the project ``NA\_FROM-PDEs''. %
The last author acknowledges funding from the ``Bourses de stages \`a l'\'etranger'' scheme of Universit\'e Paris-Saclay, which supported a research visit to UCL in May-August 2018.
The authors are grateful to M. Berry, A.~Buffa, A.~Caetano, R. Capitanelli, R.~Hiptmair, M.~R.~Lancia, U.~Mosco, M.~Negri, G.\ Savar\'e, C.~Schwab, E.~Spence and M.~A.~Vivaldi for instructive discussions.

\appendix

\section{Finite element approximation by piecewise constants} \label{sec:fem}

Let $m\in\N$ and $\Omega \subset \R^m$ be a non-empty bounded open set. Let us say that $M$ is a {\em pre-convex mesh} on $\Omega$ if, for some $N\in \N$, $M=\{T_1,T_2,\ldots,T_N\}$ is a collection of open subsets $T_j\subset \Omega$ such that: (i) the convex hulls $\mathrm{Conv}(T_j)$ are pairwise disjoint; (ii) each $\partial T_j$ has zero $m$-dimensional Lebesgue measure; and (iii)
$\Omega = \left(\,\overline{\bigcup_{j=1}^N T_j}\,\right)^\circ$.  We note that (ii) holds (and this will be the case in the applications that we make) if each $T_j$ is the union of a finite number of pairwise disjoint Lipschitz open sets, in particular if each $T_j$ is convex, in which case we term $M$ a {\em convex mesh}.

In the case %
that $\Omega$ is a curvilinear Lipschitz polygon and each $T_j$ is a curvilinear triangle or quadrilateral, the following lemma is a standard BEM error estimate (e.g.\ \cite[Thm.~10.4, (10.10)]{Steinbach}), except that in the standard versions the explicit constant $\pi^{s-t}$ in \eqref{eq:errest} is replaced by an unknown  constant that depends (in an unspecified way) on the domain and the shape regularity of the elements. The version we prove here, which applies to any bounded open set $\Omega$ and any pre-convex mesh $M$, and provides explicit constants independent of the domain and element shape, should be of some independent interest and is essential for our application to BEM on sequences of prefractals converging to a fractal limit.

\begin{lem} \label{lem:PWc}
Let $m\in\N$ and $N\in \N$, and let $\Omega \subset \R^m$ be a non-empty bounded open set and $M=\{T_1,T_2,\ldots,T_N\}$ be a pre-convex mesh on $\Omega$.
Let $V\subset L^2(\Omega)$ denote the set of piecewise constant functions on $M$, so that $u\in V$ if $u\in L^2(\Omega)$ and $u|_{T_j}$ is constant, for $j=1,\ldots,N$. Let $\Pi:L^2(\Omega)\to V$ be orthogonal projection, so that
 $$
 (\Pi u)|_{T_j} = \frac{1}{|T_j|} \int_{T_j} u(\bx)\, \rd \bx, \quad j=1,\ldots,N.
 $$
Let $h:= \max_{T\in M} \diam(T)$.
Then, for $-1\leq s\leq 0$ and $0\leq t\leq 1$, if $u\in H^t(\Omega)$, then
\begin{equation} \label{eq:errest}
\|u-\Pi u\|_{\tH^s(\Omega)} \leq (h/\pi)^{t-s} \|u\|_{H^t(\Omega)},
\end{equation}
where on the left hand side we extend $u-\Pi u$ from $\Omega$ to $\R^m$ by zero to become an element of $\tH^0(\Omega)\subset \tH^s(\Omega)$.
\end{lem}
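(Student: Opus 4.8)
The plan is to reduce everything to the single ``energy'' estimate at $(s,t)=(0,1)$ and then propagate to the full range $-1\le s\le 0$, $0\le t\le 1$ by interpolation in $t$ and duality in $s$.

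First I would treat the case $s=0$, $t=1$, i.e.\ the $L^2$ bound $\|u-\Pi u\|_{L^2(\Omega)}\le (h/\pi)\|u\|_{H^1(\Omega)}$. The source of the sharp constant $\pi$ is the Payne--Weinberger Poincar\'e inequality, which on a \emph{convex} set $D$ of diameter $d$ reads $\|f-\bar f\|_{L^2(D)}\le (d/\pi)\|\nabla f\|_{L^2(D)}$, where $\bar f$ is the mean of $f$ over $D$. The difficulty is that a pre-convex element $T_j$ need be neither convex nor even connected, so Poincar\'e cannot be applied on $T_j$ directly (the intrinsic Poincar\'e constant of a disconnected set is infinite). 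I would circumvent this using the convex hulls $\widehat T_j:=\mathrm{Conv}(T_j)$, which by pre-convexity are pairwise disjoint and satisfy $\diam\widehat T_j=\diam T_j\le h$. Fix a near-optimal extension $U\in H^1(\R^m)$ of $u$ with $\|U\|_{H^1(\R^m)}$ close to $\|u\|_{H^1(\Omega)}$. Since $(\Pi u)|_{T_j}$ is the $L^2(T_j)$-mean of $U$, it is the best constant approximation to $U$ on $T_j$, so for any constant $c$, $\|U-(\Pi u)|_{T_j}\|_{L^2(T_j)}\le\|U-c\|_{L^2(T_j)}$; taking $c=\bar U_{\widehat T_j}$ and using $T_j\subset\widehat T_j$ gives $\|U-(\Pi u)|_{T_j}\|_{L^2(T_j)}\le\|U-\bar U_{\widehat T_j}\|_{L^2(\widehat T_j)}\le (h/\pi)\|\nabla U\|_{L^2(\widehat T_j)}$ by Payne--Weinberger on the convex set $\widehat T_j$. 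Squaring, summing over $j$, and using disjointness of the hulls together with $\|\nabla U\|_{L^2(\R^m)}\le\|U\|_{H^1(\R^m)}$ yields the claim after passing to the infimum over $U$.

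Next I would record the trivial endpoint $s=t=0$: since $\Pi$ is the $L^2(\Omega)$-orthogonal projection onto $V$, the map $I-\Pi$ has norm at most $1$, whence $\|u-\Pi u\|_{L^2(\Omega)}\le\|u\|_{L^2(\Omega)}=\|u\|_{H^0(\Omega)}$. Interpolating the operator $I-\Pi\colon H^t(\Omega)\to L^2(\Omega)$ between $t=0$ (norm $\le 1$) and $t=1$ (norm $\le h/\pi$), and using that the restriction spaces interpolate as $[H^0(\Omega),H^1(\Omega)]_t=H^t(\Omega)$ (e.g.\ \cite{InterpolationCWHM,McLean}), I obtain $\|u-\Pi u\|_{L^2(\Omega)}\le (h/\pi)^{t}\|u\|_{H^t(\Omega)}$ for all $t\in[0,1]$.

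Finally I would handle $s\in[-1,0]$ by duality. Recalling from \S\ref{sec:SobolevSpaces} that $\tH^s(\Omega)$ is a unitary realisation of $(H^{-s}(\Omega))^*$ with the pairing inherited from \eqref{DualDef}, and that $H^{-s}(\Omega)\subset L^2(\Omega)$ for $-s\in[0,1]$, I would write, for the zero-extended $w:=u-\Pi u$,
\[
\|w\|_{\tH^s(\Omega)}=\sup_{0\ne v\in H^{-s}(\Omega)}\frac{|(w,v)_{L^2(\Omega)}|}{\|v\|_{H^{-s}(\Omega)}}.
\]
Self-adjointness of the $L^2$-projection gives $(w,v)_{L^2}=((I-\Pi)u,(I-\Pi)v)_{L^2}$, so Cauchy--Schwarz combined with the $L^2$ estimate of the previous paragraph applied to both $u$ (exponent $t$) and $v$ (exponent $-s$) yields $|(w,v)_{L^2}|\le (h/\pi)^{t-s}\|u\|_{H^t(\Omega)}\|v\|_{H^{-s}(\Omega)}$, and \eqref{eq:errest} follows on taking the supremum. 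The main obstacle throughout is the one already flagged---that pre-convex elements may be non-convex or disconnected---and the convex-hull/optimal-mean device, combined with the extension-based definition of $H^t(\Omega)$ (which, unlike the intrinsic weak-derivative norm, remains large on disconnected sets), is exactly what lets the sharp constant survive.
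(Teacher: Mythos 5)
Your overall strategy coincides with the paper's: establish the $(s,t)=(0,1)$ case via convex hulls and the sharp Payne--Weinberger constant, extend to $0\le t\le 1$ by interpolation, then obtain $-1\le s<0$ by the duality/self-adjointness trick, which you execute exactly as in the paper. Your treatment of the $(0,1)$ case is a legitimate minor variant: where the paper builds a convex mesh $M^H=\{T_1^H,\ldots,T_N^H\}$ on the hull domain $\Omega^H$ and compares $\Pi u$ against the hull projection $\Pi^H u^H$ via best approximation in $V$, you use elementwise best approximation by constants together with monotonicity of the $L^2$ norm under $T_j\subset\mathrm{Conv}(T_j)$; both arguments reduce to Payne--Weinberger on the pairwise disjoint hulls and yield the same constant.

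The gap is in your interpolation step. You justify $\|u-\Pi u\|_{L^2(\Omega)}\le (h/\pi)^t\|u\|_{H^t(\Omega)}$ by asserting the identity $[H^0(\Omega),H^1(\Omega)]_t=H^t(\Omega)$ for the restriction spaces, citing \cite{InterpolationCWHM,McLean}. This identity is \emph{false} in the generality the lemma requires: it holds for Lipschitz $\Omega$ (which is all \cite{McLean} provides), but Lemma \ref{lem:PWc} is stated for arbitrary non-empty bounded open $\Omega$ --- precisely because it must apply to rough, prefractal-type sets --- and one of the results of \cite{InterpolationCWHM} is a counterexample showing that $\{H^s(\Omega)\}$ need not form an interpolation scale for non-Lipschitz open sets. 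What survives in full generality, and what the paper actually uses, is the one-sided statement: $H^t(\Omega)$ embeds into the $K$-method space $K_{t,2}\big((L^2(\Omega),H^1(\Omega))\big)$ with norm $\le 1$, provided the interpolation norm is normalised so that $K_{t,2}\big((X,X)\big)=X$ with equal norms (see \cite[Lem.~4.2]{InterpolationCWHM}); this follows, for instance, by interpolating the restriction operator from the exact scale on $\R^m$ and using minimal-norm extensions. Since your argument only needs this direction (you interpolate the operator $I-\Pi$ and then must pull $H^t(\Omega)$ into the interpolation space), the proof is repairable by replacing the false identity with this embedding --- and the normalisation is not cosmetic, since without it the exact constant $(h/\pi)^{t-s}$ claimed in \eqref{eq:errest} degrades to an unspecified multiple.
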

\begin{proof} Clearly \rf{eq:errest} holds for $s=t=0$ as $H^0(\Omega)=L^2(\Omega)$, with identical norms, and $\Pi$ is an orthogonal projection operator on $L^2(\Omega)$.
Now suppose that $s=0$ and $t=1$.
In this case, recalling that $\tH^0(\Omega)=\overline{C^\infty_0(\Omega)}^{L^2(\R^m)}=L^2(\Omega)$ with equal norms, and that $\|u\|_{H^1(\Omega)}=\min_{\substack{U\in H^1(\R^m),\;u=U|_\Omega}}\|U\|_{H^1(\R^m)}$, the required bound \eqref{eq:errest} is implied by
\begin{equation} \label{eq:poin}
\|u-\Pi u\|_{L^2(\Omega)} \leq \frac{h}{\pi} \|\nabla U\|_{L^2(\R^m)},
\end{equation}
where $U\in H^1(\R^m)$ is such that $u=U|_{\Omega}$ and $\|u\|_{H^1(\Omega)}=\|U\|_{H^1(\R^m)}$. To see that this bound holds, let $T_j^H$ denote the convex hull of $T_j$, for $j=1,\ldots,N$, which has the same diameter as $T_j$, and let $\Omega^H:= \left(\,\bigcup_{j=1}^N \overline{T^H_j}\,\right)^\circ$, so that $M^H:=\{T_1^H,\ldots,T_N^H\}$ is a convex mesh on $\Omega^H$. Let $V^H\subset L^2(\Omega^H)$ denote the space of piecewise constant functions on $M^H$,  $\Pi^H:L^2(\Omega^H)\to V^H$ be orthogonal projection, and $u^H := U|_{\Omega^H}$. Then $(\Pi^H u^H)|_\Omega \in V$ so that
\begin{eqnarray*} \nonumber
\|u-\Pi u\|^2_{L^2(\Omega)} &\leq & \|u-(\Pi^H u^H)|_\Omega\|^2_{L^2(\Omega)}\\
& \leq &\|u^H-\Pi^H u^H\|^2_{L^2(\Omega^H)}=  \sum_{j=1}^N \|(u^H-\Pi^H u^H)|_{T^H_j}\|^2_{L^2(T^H_j)}.
\end{eqnarray*}
But
$$
\|(u^H-\Pi^H u^H)|_{T^H_j}\|_{L^2(T^H_j)} \leq C \|(\nabla U)|_{T^H_j}\|_{L^2(T_j^H)}
$$
for some $C>0$, by the standard Poincar\'e inequality. Indeed, since $T_j^H$ is convex, this bound holds with $C=h_j/\pi$, where $h_j=\mathrm{diam}(T_j)=\mathrm{diam}(T^H_j)$, which is known \cite{Bebendorf03,PayneWeinberger60} to be the best constant in this inequality for convex domains. The inequality \eqref{eq:poin} follows since the $T_H^j$ are pairwise disjoint.

Thus \eqref{eq:errest} is proved for $s=0$ and $t=0,1$, and by interpolation it extends to $s=0$ and $0< t< 1$. Precisely, let $\|\cdot\|_t$ denote the norm on the $K$-method interpolation space $K_{t,2}((L^2(\Omega),H^1(\Omega)))$, with the normalisation defined in \cite[(7,8)]{InterpolationCWHM}. Then %
 \begin{equation} \label{eq:errest2}
\|u-\Pi u\|_{L^2(\Omega)} \leq (h/\pi)^{t} \|u\|_t \leq  (h/\pi)^{t} \|u\|_{H^t(\Omega)}, \quad 0<t<1,
\end{equation}
where the first inequality follows by interpolation since $K_{t,2}((L^2(\Omega),L^2(\Omega)))=L^2(\Omega)$ with equality of norms given the normalisation \cite[(7,8)]{InterpolationCWHM} (see \cite[Lem.~2.1(iii), Thm.~2.2(i)]{InterpolationCWHM}); the second inequality holds since, again given this normalisation,  the embedding of $H^t(\Omega)$ into $K_{t,2}((L^2(\Omega),H^1(\Omega)))$ has norm $\leq 1$ \cite[Lem.~4.2]{InterpolationCWHM}.

Finally, suppose that $-1\leq s <0$ and $0\leq t\leq 1$. %
Then, as $H^{-s}(\Omega)$ is a unitary realisation of the dual space of $\tH^s(\Omega)$ via an extension of the $L^2(\Omega)$ inner product, if $u\in H^t(\Omega)$ then %
\begin{align*}
\|u-\Pi u\|_{\tH^s(\Omega)} &= \sup_{0\neq v\in H^{-s}(\Omega)} \frac{|(u-\Pi u,v)_{L^2(\Omega)}|}{\|v\|_{H^{-s}(\Omega)}}\\
&= \sup_{0\neq v\in H^{-s}(\Omega)} \frac{|(u-\Pi u,v-\Pi v)_{L^2(\Omega)}|}{\|v\|_{H^{-s}(\Omega)}}%
 \leq  (h/\pi)^{t-s} \|u\|_{H^t(\Omega)},
\end{align*}
where the final bound follows from the Cauchy--Schwarz inequality and two applications of \eqref{eq:errest2}.
\end{proof}

\section{Mollification and Sobolev regularity}\label{sec:Mollification}
Given $m\in\N$, choose $\psi\in C^\infty_0(\R^{m})$ with $\psi(\bx)=0$ for $|\bx|>1$ and $\int_{\R^{m}}\psi(\bx) \rd \bx = 1$, and for $\epsilon>0$ let
$$
\psi_\epsilon(\bx) := \epsilon^m \psi(\bx/\epsilon), \quad \bx\in \R^{m}.
$$
Given $s\in\R$, for $\phi\in H^s(\R^{m})$ and $\epsilon>0$ we define $\phi_\epsilon$, the $\epsilon$-mollification of $\phi$, by $\phi_\epsilon := \psi_\epsilon * \phi$, where
$$
\psi_\epsilon * \phi(\bx) := \int_{\R^{m}}\psi_\epsilon(\bx-\by)\phi(\by)\, \rd \by, \quad \bx \in \R^{m},
$$
in the case that $\phi\in L^2(\Gamma)$, and, in the case that $s<0$, the definition of $\psi_\epsilon * \phi$ is extended to all $\phi\in H^s(\R^m)$ by continuity and density.
It is standard that $\phi_\eps\to \phi$ in $H^s(\R^m)$ as $\epsilon\to0$ (e.g.\ \cite[Exercise 3.17]{McLean}). Moreover, with the Fourier transform normalised as in \S\ref{sec:SobolevSpaces},
we have that
$$
\widehat \psi_\epsilon(\bxi) = \widehat\psi(\epsilon \bxi)
\qquad \text{and}\qquad
\widehat\phi_\epsilon(\bxi)= c_{m} \widehat \psi_\epsilon (\bxi)\widehat \phi(\bxi), \quad \mbox{for } \bxi\in \R^{m},
$$
where $c_{m} := (2\pi)^{m/2}$.
Hence it holds for $t\geq s$ that
\begin{eqnarray*}
\|\phi_\epsilon\|^2_{H^t(\R^{m})} &=& c_{m}^2\int_{\R^{m}}|\widehat\psi(\epsilon \bxi)|^2\, |\widehat \phi(\bxi)|^2(1+\bxi^2)^{t} \, \rd \bxi \leq  c_{m}^2C_\epsilon^2 \,\|\phi\|^2_{H^s(\R^{m})},
\end{eqnarray*}
where
$$
C_\epsilon := \sup_{\bxi\in \R^{m}}\left|\widehat \psi(\epsilon \bxi)(1+\bxi^2)^{(t-s)/2}\right|  = \sup_{\bxi\in \R^{m}}\left|\widehat \psi( \bxi)(1+(\bxi/\epsilon)^2)^{(t-s)/2}\right|.
$$
Since $\psi\in C^\infty_0(\R^{m})$, it holds for $0<\epsilon\le1$ that
$$
C_\epsilon \leq c'_{t-s} \sup_{\bxi\in \R^{m}}\left(\frac{1+\bxi^2/\epsilon^2}{1+\bxi^2}\right)^{(t-s)/2} \leq c'_{t-s}\, \epsilon^{s-t},
\;\,
\text{where}
\;\,
c'_{p}:=\max_{\bxi\in\R^m}|\widehat\psi(\bxi)|(1+\bxi^2)^{p/2},
$$
for $p\geq 0$. Thus, for $t\geq s$, $\epsilon>0$, and $\phi\in H^s(\R^m)$, it holds that
\begin{eqnarray} \label{eq:mollbound}
\|\phi_\epsilon\|_{H^t(\R^{m})}  \leq c_{m}c'_{t-s}\, \epsilon^{s-t} \,\|\phi\|_{H^s(\R^{m})}.
\end{eqnarray}

\bibliographystyle{siamplain}
\bibliography{BEMbib_short2014a}
\end{document}